\documentclass[10pt]{article}
	
	\newcommand{\blind}{0}
	\date{}
	\addtolength{\oddsidemargin}{-.5in}%
	\addtolength{\evensidemargin}{-.5in}%
	\addtolength{\textwidth}{1in}%
	\addtolength{\textheight}{1.3in}%
	\addtolength{\topmargin}{-.8in}%
    \makeatletter
    \renewcommand\section{\@startsection {section}{1}{\z@}%
                                       {-3.5ex \@plus -1ex \@minus -.2ex}%
                                       {2.3ex \@plus.2ex}%
                                       {\normalfont\fontfamily{phv}\fontsize{16}{19}\bfseries}}
    \renewcommand\subsection{\@startsection{subsection}{2}{\z@}%
                                         {-3.25ex\@plus -1ex \@minus -.2ex}%
                                         {1.5ex \@plus .2ex}%
                                         {\normalfont\fontfamily{phv}\fontsize{14}{17}\bfseries}}
    \renewcommand\subsubsection{\@startsection{subsubsection}{3}{\z@}%
                                        {-3.25ex\@plus -1ex \@minus -.2ex}%
                                         {1.5ex \@plus .2ex}%
                                         {\normalfont\normalsize\fontfamily{phv}\fontsize{14}{17}\selectfont}}
    \makeatother
	
	\usepackage{amsfonts}
	\usepackage[leqno]{amsmath}
        
	\usepackage{amssymb}
	\numberwithin{equation}{section}
	\usepackage{graphicx}
	\usepackage{enumerate}
	\usepackage{xcolor}
        \usepackage{bm}
	\usepackage{url} 
        \usepackage{amsthm}
        \usepackage{appendix}
        \newtheorem{proposition}{Proposition}[section]
        \newtheorem{corollary}[proposition]{Corollary}
        \newtheorem{theorem}{Theorem}[section]
        \newtheorem{lemma}[proposition]{Lemma}
        \newtheorem{definition}[proposition]{Definition}
        \newtheorem{remark}{Remark}
        \usepackage[colorlinks=true, linkcolor=blue, citecolor=blue, urlcolor=blue]{hyperref}
	
	
	\begin{document}
		
		\def\spacingset#1{\renewcommand{\baselinestretch}%
			{#1}\small\normalsize} \spacingset{1}
		
		\if0\blind
		{
			\title{Construction of Quasi-periodic solutions with the same Gevrey index as nonlinear terms in Multi-Dimensional NLS}
			\author{Zuhong You $^a$ and Xiaoping Yuan $^a$ \\
			$^a$School of Mathematical Sciences, Fudan University, Shanghai 200433, P. R. China }
			\maketitle
   \renewcommand{\thefootnote}{\fnsymbol{footnote}}
   \footnotetext[1]{The work was supported  by National Natural Science Foundation of China
( Grant NO. 12371189).}
		} \fi
		
		\if1\blind
		{

            \title{\bf \emph{IISE Transactions} \LaTeX \ Template}
			\author{Author information is purposely removed for double-blind review}
			
\bigskip
			\bigskip
			\bigskip
			\begin{center}
				{\LARGE\bf \emph{IISE Transactions} \LaTeX \ Template}
			\end{center}
			\medskip
		} \fi
		\bigskip

	\spacingset{1} 

 \begin{minipage}{\textwidth}
    \begin{abstract}
        We investigate the persistency of quasi-periodic solutions to multi-dimensional nonlinear Schr\"{o}dinger equations (NLS)  involving Gevrey smooth nonlinearity with an arbitrary Gevrey index $\alpha>1$. 
        By applying the Craig-Wayne-Bourgain (CWB) method, we establish the existence of  quasi-periodic solutions that are Gevrey smooth with the same Gevrey index as the nonlinearity.
    \end{abstract}

    \vspace{0.2cm} 

    \noindent \textbf{Keywords:} Quasi-periodic solutions, Gevrey class,  NLS, Craig-Wayne-Bourgain method,  KAM
\end{minipage}

\tableofcontents

\section{Introduction}
In this paper, we consider the persistency of quasi-periodic solutions of the multi-dimensional nonlinear Schr\"{o}dinger equation (NLS):
\begin{equation}
    \frac{1}{i} q_{t}(\tilde{x},t)=-\Delta q(\tilde{x},t)+V(\tilde{x})*q(\tilde{x},t)+\varepsilon \frac{\partial H}{\partial \bar{q}}(q,\bar{q})
    \label{e1}
\end{equation}
under periodic boundary conditions (i.e., $\tilde{x}\in\mathbb{T}^{d}$). The convolution potential $V:\mathbb{T}^{d}\to \mathbb{C}$ has real Fourier coefficients $\hat{V}(n)$, $n\in\mathbb{Z}^{d}$. The Hamiltonian $H(q,\bar{q})=f(q\bar{q})$, where $f:\mathbb{R}\to \mathbb{R}$, belongs to the Gevrey class. In other words, we are intended to prove a Kolmogorov-Arnold-Moser (KAM) theorem for equation (\ref{e1}). 

When $d=1$, the KAM theory of Hamiltonian PDEs is  well established; see, for instance,  \cite{kuksin1987hamiltonian}, \cite{KP96}, \cite{wayne1990periodic}, \cite{Kuk93}, \cite{Pos96} and \cite{craig1993newton}. This equation  is intrinsically interesting, when the spatial dimension $d\ge 2$.   However,  the study of KAM theory for Hamiltonian PDEs with $d\ge 2$ remains in its early stages. For $d=2$, \cite{bourgain1998quasi}
 developed a new method initialed by \cite{craig1993newton} to obtain the Gevrey smooth quasi-periodic solutions of NLS with analytic nonlinearity. His method  relies on  Newton iteration, Fr\"{o}hlich-Spencer techniques (i.e., multi-scale analysis), and the Malgrange preparation theorem. Later, \cite{bourgain2005green} introduced harmonic analysis and semi-algebraic set theory to refine the techniques in \cite{bourgain1998quasi},  proving the persistency of quasi-periodic solutions of NLS and NLW with analytic nonlinearity for arbitrary $d$, and he claimed the obtained solutions  are analytic. However, he only gave a sketch of the proof, and there is a key coupling lemma seems unlikely to hold. We believe the sketch he described there can only obtain the Gevrey smooth solutions. This method is now referred to as  the Craig-Wayne-Bourgain (CWB) method.

 For the case of finitely differentiable nonlinearity, Berti, Bolle, Corsi, and Procesi among others, 
(\cite{BB10waveperiodic}, \cite{BBP10}, \cite{BP11duke}, \cite{BB12}, \cite{berti2013quasi}, \cite{BC15}, and \cite{BB20}, for example),   improved the Nash-Moser iterative scheme and developed a novel multi-scale inductive analysis.
They established the existence of finitely differentiable periodic or quasi-periodic solutions for NLS and NLW with finitely differentiable nonlinearity for arbitrary spatial dimension $d$.

In this paper, we aim to show that for (\ref{e1}) with Gevrey nonlinearity, most quasi-periodic solutions persist and \textbf{the persisted solutions have the same Gevrey index as the nonlinearity}.  To state our results, we begin with the definition of a Gevrey-$(\alpha, L)$ function.
 Let $\alpha>1, L>0$, and let $K$ be a compact set of the form $K'\times \mathbb{T}^{n_2}$, where $K'\subset \mathbb{R}^{n_1}$ is compact, and $n_1+n_2\ge 1$. We define 
 \begin{equation}
     G^{\alpha,L}(K)=\{\varphi\in C^{\infty}(K): \left\lVert \varphi \right\rVert_{\alpha,L}=\sum\limits_{k\in \mathbb{N}^{n_1+n_2}} \left( \frac{L^{|k|_{1}}}{k!}  
    \right)^{\alpha} \left\lVert \partial^{k}\varphi  \right\rVert_{C^{0}(K)}<\infty \},
  \label{fanshuaaa}
 \end{equation}
where $|k|_{1}=\sum\limits_{j=1}^{n_1+n_2} |k_j|$, $k!=\prod\limits_{j=1}^{n_1+n_2}k_{j}!$, and $\left\lVert \cdot \right\rVert_{C^{0}(K)}$ denotes the uniform norm for continuous functions defined on $K$. 
A function  $\varphi\in G^{\alpha,L}(K)$ is called a  Gevrey-$(\alpha, L)$ function. We refer to $\alpha$ as the Gevrey index and $L$ as the Gevrey width of $\varphi$. 
Furthermore, we  define
\begin{equation}
    G^{\alpha}(K)=\mathop{\cup}\limits_{L>0} G^{\alpha,L}(K),
\end{equation}
and  functions in $G^{\alpha}(K)$ are referred  as  Gevrey-$\alpha$ functions.  
We assume $f,f^{\prime}, f''\in G^{\alpha,L_0}([-1,1])$ for some $\alpha>1$ and $ L_0>0$.

One says $\tilde{q}(\tilde{x},t)$ is a quasi-periodic function in time $t$ if there exist $\lambda\in \mathbb{R}^{b}$ and $q(\tilde{x},\theta)\in L^{2}(\mathbb{T}^{d+b})\,$ (here $\tilde{x}\in \mathbb{T}^{d}, \theta\in\mathbb{T}^{b}$) such that 
$$
\tilde{q}(\tilde{x},t)=q(\tilde{x},\lambda t).
$$
We say that $\lambda$ is the frequency and $q(\tilde{x},\theta)$ is the hull of $\tilde{q}(\tilde{x},t)$. See \cite{KP96} for the definitions of quasi-periodic function and its hull, for example.

Select  finite Fourier modes $0\in \{n_1, n_2, ..., n_b\}\subset\mathbb{Z}^{d}$. Denote 
$\mathcal{R}=\{ (n_j,e_j):1\le j\le b \}$, where $e_j$ is the $j$-th unit vector in $\mathbb{Z}^{b}$. Let $\textup{supp } \hat{V}=\{n_1,..., n_b\}$. Then, the spectral of the operator $\mathcal{L}:=-\Delta+V(\tilde{x})*$ are 
\begin{equation}
    \begin{cases}
        \mu_{n_j}=\lambda_{j},\ \ &(1\le j\le b),\\
        \mu_{n}=\left\lvert n \right\rvert^{2} \ \ &n\in\mathbb{Z}^d\setminus\{n_1,...,n_b\},
    \end{cases}
    \label{e2}
\end{equation}
where $|n|^{2}=\sum\limits_{j=1}^{d}|n_j|^2$ and $\lambda:=(\lambda_j=|n_j|^{2}+\hat{V}(n_j):1\le j \le b)$.
Given $a_1,..., a_b\in\mathbb{R}_{+}$, 
\begin{equation}
    \tilde{q}_0(\tilde{x},t)=q_{0}(\tilde{x}, \lambda t)=\sum\limits_{j=1}^{b}a_j e^{i\lambda_{j}t+in_j\cdot \tilde{x}}
    \label{e3}
\end{equation}
yields a solution to the linear equation
\begin{equation}
    \frac{1}{i} q_{t}(\tilde{x},t)=-\Delta q(\tilde{x},t)+V(\tilde{x})*q(\tilde{x},t).
    \label{e4}
\end{equation}
This solution is quasi-periodic and corresponds to a flow on a $b$-torus 
$$\mathbb{T}^{b}=\mathcal{T}_0 \{|a_{j}|\}=\left\{\sum\limits_{j=1}^{b}z_je^{in_{j}\cdot \tilde{x}}:z_j\in\mathbb{C},|z_j|=a_j\right\}.$$
The problem we study is the ``persistency" of solutions (\ref{e3}) for the Gevrey perturbed equation (\ref{e1}). The term ``persistency'' in this paper is defined as follows:

There exists a quasi-periodic solution
\begin{equation}
    \tilde{q}(\tilde{x},t)=q(\tilde{x},\lambda't)
    \notag
\end{equation}
to (\ref{e1}), such that 
\begin{equation}
    |\lambda'-\lambda|<C \varepsilon,
    \notag
\end{equation}
and
\begin{equation}
    \lVert q-q_{0} \rVert_{\alpha, L'}<C \varepsilon,
    \notag
\end{equation}
for some  $L'>0$.
Here,  $C$ is a constant depending only on $b, d, L_0, \alpha$ and the bound of $\lambda$.  

Note that the Gevrey index $\alpha>1$ of the solutions here is the same as the Gevrey index of nonlinearity, and $\alpha>1$ is arbitrary. In previous papers, the persisted solutions only have Gevrey index $\alpha\gg 1$, even if the nonlinearity is polynomial.

In KAM theory, the vector $(\hat{V}(n_j):1\le j \le b)$ is typically considered  as a parameter vector. This is equivalent to  regarding $\lambda=(\lambda_{1},...,\lambda_{b})$ in the same way. Thus, we take $\lambda\in I_{0}=[0,1]^{b}$ as parameters. \\

Here is our main theorem.
\begin{theorem}
For arbitrary $\alpha>1$,
let $H(q,\bar{q})=f(q\bar{q})$, $f,f',f''\in G^{\alpha,L_0}([-1,1])$ $( L_0>0)$.  Fix $a_{1},...,a_{b}\in\mathbb{R}_{+}$ and let $I_0=[0,1]^{b}$. For $\varepsilon$ sufficiently small, there exists a Cantor set $I_{0,\varepsilon}\subset I_0$ such that $mes(I_0\setminus I_{0,\varepsilon}) \to 0$  as $\varepsilon\to 0$.   For $\lambda\in I_{0,\varepsilon}$, there exists a quasi-periodic solution to (\ref{e1}) with frequency $\lambda'$ (depending on $\lambda$), whose hull $q(\tilde{x},\theta)$ is in $G^{\alpha,L_0/2}(\mathbb{T}^{d+b})$.
Moreover, we have
\begin{equation}
    \hat{q}(n_j,e_j)=a_j, \ \ 1\le j\le b,
\end{equation}
 \begin{equation}
      |\lambda'-\lambda|\le C \varepsilon,
 \end{equation}
 and
 \begin{equation}
     \sum\limits_{\substack{(n,k)\in\mathbb{Z}^{d+b}\\(n,k)\notin \mathcal{R}}}|\hat{q}(n,k)|e^{ \frac{\alpha L_0}{2}|(n,k)|_{\alpha}}\le C \varepsilon,
 \end{equation}
 which implies
 \begin{equation}
     \lVert q-q_{0} \rVert_{\alpha,\frac{L_{0}}{2}}<C\varepsilon,
 \end{equation}
where $\hat{q}(n,k)$ is the $(n,k)$-Fourier coefficient of $q(\tilde{x},\theta)$, $q_{0}(\tilde{x},\theta)=\sum\limits_{j=1}^{b}a_{j}e^{i\theta_{j}+i n_{j}\cdot \tilde{x}}$,  and $C$ is a constant depending  on $\alpha, L_0, I_0, b$ and $d$ only. 

Here, the term $|(n,k)|_{\alpha}=\sum\limits_{j=1}^{d}|n_j|^{1/\alpha}+\sum\limits_{j=1}^{d} |k_j|^{1/\alpha}$, and  the weight function $e^{\alpha L' |(n,k)|_{\alpha}}$ corresponds to Gevrey-$(\alpha,L')$ regularity.
 \label{zhudingli}
\end{theorem}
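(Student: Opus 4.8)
The plan is to reformulate the existence of a quasi-periodic solution as the solvability of a nonlinear functional equation $F(q,\lambda',\lambda)=0$ on a space of Fourier series with Gevrey weights, and then solve it by a Newton–Nash–Moser iteration in the spirit of the CWB method. Writing $q(\tilde x,\theta)=\sum_{(n,k)}\hat q(n,k)e^{i(n\cdot\tilde x+k\cdot\theta)}$ and substituting into \eqref{e1} with $\partial_t\mapsto \lambda'\cdot\partial_\theta$, equation \eqref{e1} becomes, after projecting onto Fourier mode $(n,k)$,
\begin{equation}
    \big(\lambda'\cdot k - |n|^2 - \hat V(n)\big)\,\hat q(n,k) = \varepsilon\, \widehat{\Big(\tfrac{\partial H}{\partial\bar q}(q,\bar q)\Big)}(n,k).
    \notag
\end{equation}
I would normalize the $b$ tangential modes by imposing $\hat q(n_j,e_j)=a_j$ and use the $b$ parameters $\lambda'$ to kill the corresponding $b$ resonant equations (the standard ``range equation'' bookkeeping that fixes the frequency), leaving the infinite-dimensional ``$P$-equations'' for the normal modes. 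On those, the linearized operator at an approximate solution is $D + \varepsilon(\text{Toeplitz-type multiplication by }f',f'')$, where $D=\mathrm{diag}(\lambda'\cdot k-|n|^2-\hat V(n))$.

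The iteration proceeds by: (i) at scale $N$, truncate to Fourier modes $|(n,k)|\le N$ and solve the truncated equation by Newton's method, gaining quadratic convergence; (ii) pass from scale $N$ to scale $N'=N^{C}$ (or $N'\sim e^{N^{\sigma}}$ adapted to the Gevrey loss) by inverting the new linearized block. The crux is the \emph{inversion of the linearized operator}: one needs, for most $\lambda$ (outside a set whose measure $\to 0$), a lower bound on $\|(D+\varepsilon T)^{-1}\|$ with only a subexponential loss $e^{cN^{\sigma}}$ compatible with Gevrey-$\alpha$ weights. This is done via multi-scale analysis (Fröhlich–Spencer): the diagonal $D$ is large away from a ``resonant'' set of sites $(n,k)$; one shows the resonant sites cluster into well-separated components of size $\lesssim (\log N)^{C}$ using a covariance/semi-algebraic argument on the sublevel sets of $\lambda'\cdot k - |n|^2-\hat V(n)$ as functions of $\lambda$; then a resolvent identity / Neumann series couples the small resonant blocks to the large off-diagonal part, yielding the needed bound with a loss that is only \emph{subexponential}, hence absorbable into the Gevrey width (this is why the solution is Gevrey-$\alpha$, not analytic, and why the width drops from $L_0$ to $L_0/2$).

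The new ingredient, and the place where the Gevrey hypothesis on $f$ is essential, is controlling the nonlinear term $\frac{\partial H}{\partial\bar q}(q,\bar q)=\bar q\, f'(q\bar q)$ on the Gevrey-weighted Fourier space: I would prove an algebra/composition estimate showing that $G^{\alpha,L}$ (in the Fourier-weight formulation $\sum|\hat q(n,k)|e^{\alpha L|(n,k)|_\alpha}<\infty$) is a Banach algebra closed under composition with the Gevrey-$\alpha$ function $f$, with a quantitative bound on the width loss. Concretely, because $|(n,k)|_\alpha$ is subadditive only up to the $1/\alpha$-concavity defect, a product of $m$ Fourier series with weights at width $L$ lands at width $\gtrsim L\,m^{1/\alpha-1}$ at worst; summing the Taylor series of $f$ against the Gevrey-$\alpha$ bounds $\|f^{(m)}\|_{C^0}\lesssim (m!)^{\alpha}L_0^{-m}$ exactly matches this loss, so the composition stays in $G^{\alpha,L'}$ for a slightly smaller $L'$. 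I would establish this estimate once and reuse it at every iteration step.

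The main obstacle I expect is the \emph{separation/measure estimate in the multi-scale step}: one must show that the bad parameter set (where $D+\varepsilon T$ has a too-small singular value, or where resonant clusters are too large) has measure $\to 0$, and crucially that the resolvent loss is only subexponential in $N$ rather than exponential, since an exponential loss would destroy any fixed Gevrey width and the scheme would not close. This requires carefully tuning the scale growth $N\mapsto N'$ against the Gevrey index $\alpha$: the admissible loss per step is $e^{O(N^{1-1/\alpha})}$-type, the block sizes must be kept polylogarithmic, and the Diophantine/semi-algebraic excisions at each scale must be summable. Handling the quasi-periodic (not merely periodic) time, i.e. the genuinely $b$-dimensional small-divisor geometry of $\lambda'\cdot k-|n|^2-\hat V(n)$, together with this Gevrey-calibrated scale growth, is the technical heart of the argument; the rest (Newton convergence, fixing $\lambda'$, verifying $\hat q(n_j,e_j)=a_j$ and the claimed bounds) is then routine bookkeeping.
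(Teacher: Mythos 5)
Your proposal correctly identifies the CWB skeleton: Lyapunov--Schmidt (Q/P) splitting, Newton iteration with polynomial truncation in Fourier space, multi-scale Green-function estimates of only subexponential size, semi-algebraic/Diophantine excisions, and a summable per-step width loss taking $L_0$ down to $L_0/2$. That much agrees with the paper. But there are two substantive gaps where the outline you sketch would not close as stated.

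\textbf{Parameter regularity of the approximate solution.} You relegate ``fixing $\lambda'$'' and propagating estimates to ``routine bookkeeping,'' but this is where a central new idea of the paper lives. Because the nonlinearity is Gevrey, not analytic, the approximate solution $q_r(\lambda,\lambda')$ is defined only on a \emph{real} parameter domain, so there is no Cauchy estimate available to control $\partial_\lambda^k\partial_{\lambda'}^{k'} q_r$. Yet the iteration needs exactly this: to verify $q_r\in G^{\alpha,\,\cdot\,,L_r}(I\times\mathbb T^{b+d})$ so that the derivatives of $H$ evaluated at $q_r$ remain Gevrey in $(\lambda,\lambda')$ for the next linearization, for the semi-algebraic description of the good parameter set, and for the Lipschitz/Whitney extensions used in the measure estimates. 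The paper resolves this by Taylor-truncating $T_{q_{r-1}}$ and $F(q_{r-1})$ to polynomials of controlled degree in $(\lambda,\lambda')$ on a fine mesh (Lemma~\ref{di}), then invoking the \emph{Markov inequality} to transport a real Green-function estimate at the center of a box to a full complex neighborhood, after which the Cauchy estimate for $\widehat{\Delta q_r}$ becomes available (Lemma~\ref{qvyvraodong}, culminating in \eqref{5.57}). Without this or a genuine substitute, the chain (r.3.c)$\Rightarrow$(r.3.b) at step $r+1$ breaks.

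\textbf{The coupling lemma.} You say the resonant sites ``cluster into well-separated components of size $\lesssim (\log N)^C$'' and that a resolvent identity then couples the scales. In the actual multi-scale step the clusters $\Omega_{1,\beta}$ have diameter up to $M_1^{\theta_1}$ while their mutual separation is only $M_1^{\theta_2}$ with $\theta_2<\theta_1$ — i.e., the clusters are \emph{larger} than their separation, which is precisely the obstruction noted by Wang and which Bourgain's original coupling lemma does not handle; a naive resolvent iteration would then lose a fixed fraction of the Gevrey width per scale. The paper's Lemma~\ref{couplelemma1} fixes this by exploiting the strict concavity of $s\mapsto s^{1/\alpha}$: large-jump steps in the resolvent expansion contribute disproportionately to the $|\cdot|_\alpha$-distance, so the accumulated loss is only $O(M_0^{-\gamma})$. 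This is a nontrivial improvement that your sketch assumes away.

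\textbf{A smaller point on the algebra/composition step.} Your heuristic that a product of $m$ factors loses width by a concavity defect is off: $|(n,k)|_\alpha$ is genuinely subadditive, so $\mathcal F_{\alpha,L}$ is a Banach algebra with \emph{no} width loss under products. The real issue is that $\mathcal F_{\alpha,L}$ is \emph{not} closed under composition with a Gevrey-$\alpha$ function (the Taylor series of $f$ brings in $(m!)^{\alpha-1}$ which kills convergence in $\mathcal F_{\alpha,L}$). The paper sidesteps this by performing compositions in the $\sup$-of-derivatives norm $G^{\alpha,\bm L}$ (Proposition~\ref{composition}, no width loss there either under a smallness hypothesis) and then converting to the Fourier-weight norm at the cost of an arbitrarily small width loss $\delta$ (Corollary~\ref{coroA.7}). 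A closed composition estimate directly in $\mathcal F_{\alpha,L}$ with a ``slightly smaller $L'$,'' as you propose, is not available.
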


\textbf{The following are some key highlights that we would like to emphasize:}

In this paper, we propose a new approach based on the Craig-Wayne-Bourgain (CWB) method to handle Gevrey smooth nonlinear perturbations.
Unlike earlier works, which 
approximated the Gevrey Hamiltonian by
analytic ones (see \cite{Pop04} for example), we directly employ the Gevrey norm introduced in \cite{marco2003stability} (i.e., (\ref{fanshuaaa})).  \textbf{We find that the Gevrey norm  possesses an almost equivalent norm described by  Fourier coefficients, allowing us to use it  similarly to the sup-norm of real analytic functions in most cases}. 

To apply the semi-algebraic techniques, we explored the relationship between the rate of polynomial approximation of Gevrey functions in Gevrey norm and the domain,  and \textbf{developed an effective strategy for the polynomial approximation of Gevrey funtions}. 

During the Newton iteration, we need to estimate the Gevrey norm of the approximate solution $q(\lambda,\lambda')$.  \textbf{We apply the Markov Inequality to  obtain the  Gevrey norm estimate of  $q(\lambda,\lambda')$ only from the sup-norm of $T_{r-1}^{-1}(\eta, \eta')$ and $F(q_{r})$ on a real domain}.

In addition, as we have mentioned in the beginning, Bourgain only provided an outline of the proof of the persistency of quasi-periodic solutions for NLS and NLW using the CWB method in Chapters 19 and 20 of \cite{bourgain2005green}. There is a key coupling lemma in his argument, but he did not give a proof for this coupling lemma. The coupling lemma he claimed seems unlikely to hold,
 as noted by \cite{wang2016energy}, ``This is because the sizes of the resonant clusters are in general, much larger than their separation."
 \textbf{In our paper, we   exploit  the strict concavity of $g(s)=s^{1/\alpha}$ to improve Lemma 7 of \cite{bourgain1998quasi} and obtain our Lemma 6.1. In Lemma 7 of \cite{bourgain1998quasi}, the loss of the decay rate is $\frac{1}{2}L$, while  the loss of the decay rate  in  Lemma \ref{couplelemma1}  can be arbitrarily small. This improvement helps us to get solutions with arbitrary Gevrey index $\alpha>1$}.

Moreover, when applying the CWB method, the reality of the drifted frequency $\lambda'$ is straightforward when the nonlinearity is polynomial or real analytic. However,  for Gevrey smooth nonlinearity,  the reality of the drifted frequency $\lambda'$ seems not to be obvious. We provide a proposition in Appendix A to clarify this issue.

Here are more remarks.

\begin{remark}
    It seems that the scheme in the present paper could apply to NLW by combining with Lemma 20.14 in \cite{bourgain2005green}. 
\end{remark}

\begin{remark}
    Typically, when $d=1$, the second Melnikov conditions hold, and the solutions obtained from the classical KAM techniques are linearly stable. However, the  linear stability of   solutions obtained from CWB method is unknown.  In 2010, \cite{eliasson2010kam} applied classical KAM techniques to prove the KAM theorem for NLS with analytic nonlinearity for arbitrary $d$, obtaining  KAM tori that  are analytic in time and $C^{\infty}$ in space. For $d\ge 1$,   Yuan\cite{yuan2021kam}  proved the KAM theorem for the generalized Pochhammer-Chree  equations  using the traditional KAM techniques,  and the obtained KAM tori  are also  linearly stable.    
    Recently, He, J. Shi, Y. Shi and Yuan \cite{he2024linear} proved the  the  linear stability of the KAM tori without the second Melnikov conditions by combining the traditional KAM techniques and the CWB method. Thus,  it seems plausible that the solutions here are also  linearly stable. 
\end{remark}

\begin{remark}
If we apply the amplitude-frequency modulation, as in \cite{bourgain1995construction}, \cite{KP96},  \cite{PMPC12}, \cite{Wang2019}, \cite{BB20}, etc, we could replace the convolution potential $V(\tilde{x})*$ by the multiplicative potential $V(\tilde{x})\equiv constant$. However, for the simplicity of the argument, we do not pursue this in the present work.
\end{remark}

\begin{remark}
    For more history and applications of the CWB method to Hamiltonian PDEs, we refer to \cite{craig1993newton}, \cite{bourgain1994construction},\cite{bourgain1995construction}, \cite{bourgain1997melnikov}, \cite{bourgain1998quasi},  \cite{wang2016energy}, \cite{Wang2019}, \cite{Wang2020}.
\end{remark}
  
\textbf{Notations.}
For $a,b\in \mathbb{R}^{d}$, we denote by $a\cdot b$ the standard product in $R^{d}$. The symbol $\left\lVert \cdot \right\rVert$ denotes $l^{2}$ norm, $L^{2}$ norm or the operator norm corresponding to these norms.

Let $K$ be a compact set, and $\varphi\in C(K)$. We denote the uniform norm of $\varphi$ by $\left\lVert \varphi\right\rVert_{C^{0}(K)}$.

Let $C^{k}(K)$ be the set of $k$-times continuously differentiable complex-valued functions.

We set $0^0=1$, $\mathbb{N}=\{0,1,2,...\}$, $\mathbb{R}_{+}=\{s\in\mathbb{R}:s>0\}$. 

Given an integer $n\ge 1$, $\alpha>1$, $k=(k_{1},...,k_n)\in\mathbb{Z}^{n}$ and $\bm{L}=(L_1,L_2,...,L_n)\in\mathbb{R}^n$, we define
$$
|k|=\max\limits_{i=1,...,n} \{|k_{i}|\},\ \ |L|=\max\limits_{i=1,...,n} \{|L_{i}|\}, \ \ |k|_{\alpha}=\sum\limits_{i=1}^{n} |k_{i}|^{\frac{1}{\alpha}},\ \ \bm{L}^{k}=\prod\limits_{i=1}^{n} |L_i|^{k_i}.
$$

Let $\{x_{1}, x_{2},...,x_{k}\}\subset \mathbb{R}^{n}$. We denote by $[x_{1}, x_{2},..., x_{k}]$ the subspace spanned by these vectors.

If we use $k_1$ to denote an element in $\mathbb{N}^n$, we  use $k_1(j)$ to denote the j-th component of $k_1$, and the same convention applies to elements in $\mathbb{R}^n$.
We write $\bm{L}<\bm{L}'$ to indicate that $\bm{L}(j)<\bm{L}'(j)\textup{ for } 1\le j\le n$.

When we say $I\in\mathbb{R}^{n}$ is an interval of size $d$, we mean that $I$ is a closed cube with sides parallel to the axes, and each side has  length  $d$.  Two intervals are said to be disjoint if their interiors are disjoint.

Let $I$ be an interval. We denote by $CI$ the interval with the same center as $I$ but  expanded by the factor $C$.

Whenever we encounter a situation where a specific integer cannot be found, we will choose the closest integer.

For any positive numbers $a, b$ the notation $a\lesssim b$ means $Ca\le b$ for some constant $C>0$ depending only on $\alpha$, $L_0$, $I_0$, $b$ and $d$. By $a\ll b$ we mean that the constant $C$ is very large.  The various constants can be defined by the context in which they arise. Finally, $N^{a-}$ means $N^{a-\epsilon}$ with some small $\epsilon>0$ (the precise meaning of ``small" can again be derived from the context).

\textbf{This paper is organized as follows:} In Section 2, we provide the definitions of Gevrey functions, the Gevrey norm, and the properties of the Gevrey norm. Section 3 and Section 4  describe the Lyapounov-Schimidt decomposition and the Newton iteration scheme. In Section 5, we state  the Iterative Lemma and provide its proof, except for the measure estimates. Notably, we demonstrate that the estimate of the Gevrey norm can be obtained at each step by applying  polynomial truncation and the Markov Inequality. In Section 6, we present the proof of the   measure estimates in the Iterative Lemma.  In Section 7, we give the proof of  Theorem \ref{zhudingli}. Finally,   Appendices A and B present two technical propositions, Appendix C states the matrix-valued Cartan-type theorem,  Appendix D states some facts about semialgebraic sets, Appendix E presents the multi-scale reasoning for the large deviation theorem.

\section{Gevrey functions and their properties}

In this section, we provide the definition of  Gevrey functions and   some technical lemmas. 
In the Introduction, we have introduced the definition of a Gevrey-$(\alpha,L)$ function for $L>0$.
Here, we extend the definition by defining the Gevrey norm with  different Gevrey widths. Let $\bm{L}\in\mathbb{R}_{+}^{n}$ and let $K$ be a compact set in $n$-dimensional space. For $\varphi\in C^{\infty}(K)$, the Gevrey norm is defined as follows:
\begin{equation}
    \left\lVert \varphi \right\rVert_{\alpha,\bm{L}}=\sum_{k\in\mathbb{N}^{n}}\left(\frac{\bm{L}^{k}}{k!}\right)^{\alpha}\left\lVert \partial^{k}\varphi \right\rVert_{C^{0}(K)}.
    \label{AA3.2}
\end{equation}
If $\left\lVert \varphi \right\rVert_{\alpha,\bm{L}}<\infty$,  we say that $\varphi$ is Gevrey-$(\alpha,\bm{L})$ on $K$, denoted by $\varphi\in G^{\alpha,\bm{L}}(K)$. The vector $\bm{L}$ is referred to as the Gevrey width of $\varphi$. We also have
$$
G^{\alpha}(K)=\bigcup\limits_{\bm{L}\in\mathbb{R}_{+}^n} G^{\alpha,\bm{L}}(K).
$$

In this paper, we frequently employ two distinct Gevrey widths. For clarity, we restate the definition in this context.

Let $n_1,n_2\in \mathbb{N}$ and $ n=n_1+n_2$. Let $K_1$ be a compact interval in $\mathbb{R}^{n_1}$  (corresponding to parameters)  and $K_2=\mathbb{T}^{n_2}$. Let $K=K_1\times K_2$ and  $ L_1,L_2>0$. For a function $\varphi(\lambda,\theta)\in C^{\infty}(K)$, where $\lambda\in K_1$ and $\theta\in K_2$,  the Gevrey norm is defined as
\begin{equation}
    \left\lVert \varphi \right\rVert_{\alpha,L_1,L_2}=\sum\limits_{k_1\in \mathbb{N}^{n_1},k_2\in\mathbb{N}^{n_2}} \left(\frac{L_1^{|k_1|_1}L_2^{|k_2|_1}}{k_1! k_2!}\right)^{\alpha}\left\lVert \partial_{\lambda}^{k_1}\partial_{\theta}^{k_2}\varphi\right\rVert_{C^{0}(K)}.
\end{equation}
If $\left\lVert \varphi\right\rVert_{\alpha,L_1,L_2}< \infty$,  we say that $\varphi(\lambda,\theta)$ is Gevrey-$(\alpha,L_1,L_2)$ on $K$, denoted by $\varphi\in G^{\alpha,L_1,L_2}(K)$. The parameters $L_1$ and $L_2$ are referred to the Gevrey widths  with respect to $\lambda$ and $\theta$, respectively.

\begin{proposition}[\textbf{Product}]
Let $\bm{L}\in\mathbb{R}_{+}^{n}$ and  $\varphi,\psi\in G^{\alpha,\bm{L}}(K)$, then we have 

$$\left\lVert \varphi \psi \right\rVert_{\alpha,\bm{L}}\le \left\lVert \varphi \right\rVert_{\alpha,\bm{L}}\left\lVert \psi \right\rVert_{\alpha,\bm{L}}. $$
    In particular, for $L_1,L_2>0$ and $\varphi,\psi\in G^{\alpha,L_1,L_2}(K)$, we have
    $$\left\lVert \varphi \psi \right\rVert_{\alpha,L_1,L_2}\le \left\lVert \varphi \right\rVert_{\alpha,L_1,L_2}\left\lVert \psi \right\rVert_{\alpha,L_1,L_2}. $$
    \label{A1}
\end{proposition}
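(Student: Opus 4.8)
The plan is to reduce the multiplicative inequality for the Gevrey norm to two standard facts: the Leibniz formula for derivatives of a product, and the binomial identity $\sum_{j \le k}\binom{k}{j} = 2^{|k|_1}$ packaged in the right way against the factor $L^{|k|_1}/k!$. First I would write out, for a multi-index $k \in \mathbb{N}^n$,
\begin{equation}
\partial^{k}(\varphi\psi) = \sum_{0 \le j \le k}\binom{k}{j}\,\partial^{j}\varphi\,\partial^{k-j}\psi,
\notag
\end{equation}
so that, taking $C^0(K)$ norms and using the triangle inequality,
\begin{equation}
\left\lVert \partial^{k}(\varphi\psi)\right\rVert_{C^0(K)} \le \sum_{0 \le j \le k}\binom{k}{j}\,\left\lVert\partial^{j}\varphi\right\rVert_{C^0(K)}\left\lVert\partial^{k-j}\psi\right\rVert_{C^0(K)}.
\notag
\end{equation}
Then I would multiply both sides by $(\bm{L}^k/k!)^{\alpha}$ and sum over $k \in \mathbb{N}^n$, and the goal is to bound the result by the product of the two norms.

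The key algebraic step is the following observation about the weights. For $0 \le j \le k$ we have $\bm{L}^k = \bm{L}^j \bm{L}^{k-j}$ and $k! = j!\,(k-j)!\binom{k}{j}^{-1}$... more precisely $\binom{k}{j} = k!/(j!(k-j)!)$, so
\begin{equation}
\binom{k}{j}\left(\frac{\bm{L}^k}{k!}\right)^{\alpha} = \binom{k}{j}^{1-\alpha}\left(\frac{\bm{L}^j}{j!}\right)^{\alpha}\left(\frac{\bm{L}^{k-j}}{(k-j)!}\right)^{\alpha}.
\notag
\end{equation}
Since $\alpha > 1$, the exponent $1-\alpha$ is negative, and since $\binom{k}{j} \ge 1$, we get $\binom{k}{j}^{1-\alpha} \le 1$. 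This is exactly the point where the hypothesis $\alpha > 1$ is used: the combinatorial factor coming from Leibniz is absorbed rather than accumulated. (Note it would even work for $\alpha = 1$ with equality $\binom{k}{j}^0 = 1$, but then one would not get an improvement; for $\alpha > 1$ it is a genuine inequality in our favor.) Substituting this bound, the double sum over $k$ and $j \le k$ factorizes after the change of variables $(j, k-j) \mapsto (j, m)$ with $m = k-j$ ranging freely over $\mathbb{N}^n$:
\begin{equation}
\sum_{k}\left(\frac{\bm{L}^k}{k!}\right)^{\alpha}\left\lVert\partial^k(\varphi\psi)\right\rVert_{C^0(K)} \le \sum_{j}\sum_{m}\left(\frac{\bm{L}^j}{j!}\right)^{\alpha}\left\lVert\partial^j\varphi\right\rVert_{C^0(K)}\left(\frac{\bm{L}^m}{m!}\right)^{\alpha}\left\lVert\partial^m\psi\right\rVert_{C^0(K)} = \left\lVert\varphi\right\rVert_{\alpha,\bm{L}}\left\lVert\psi\right\rVert_{\alpha,\bm{L}}.
\notag
\end{equation}
This proves the first inequality, and the second (with two distinct Gevrey widths $L_1, L_2$) is the special case $n = n_1 + n_2$, $\bm{L} = (L_1,\dots,L_1,L_2,\dots,L_2)$, so it follows immediately.

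I do not anticipate a serious obstacle here; the only thing to be careful about is the interchange of summation order and the factorization, which is justified because all terms are nonnegative (Tonelli), and the bookkeeping with multi-indices — writing $\binom{k}{j} = \prod_{i=1}^n \binom{k_i}{j_i}$, $k! = \prod_i k_i!$, and $\bm{L}^k = \prod_i |L_i|^{k_i}$, so that every identity used is just the coordinatewise product of its one-dimensional version. If one wanted to be fully rigorous about finiteness, one notes that if either norm on the right is infinite the inequality is trivial, and otherwise the right-hand side is a convergent product of absolutely convergent series, which also certifies $\varphi\psi \in G^{\alpha,\bm{L}}(K)$.
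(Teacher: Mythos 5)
Your proof is correct and is essentially the argument the paper points to: the paper does not spell out a proof of this proposition but refers to Marco--Sauzin, and your combination of the Leibniz rule with the rewriting $\binom{k}{j}\left(\bm{L}^{k}/k!\right)^{\alpha}=\binom{k}{j}^{1-\alpha}\left(\bm{L}^{j}/j!\right)^{\alpha}\left(\bm{L}^{k-j}/(k-j)!\right)^{\alpha}$, followed by absorbing $\binom{k}{j}^{1-\alpha}\le 1$ (here using $\alpha\ge 1$) and factorizing the double sum, is exactly the standard proof given there.
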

This proposition shows that $G^{\alpha,\bm{L}}$ is a Banach algebra.

\begin{proposition}[\textbf{Derivative}]
    Suppose $0\le\bm{h}<\bm{L}$ and $\varphi\in G^{\alpha,\bm{L}}(K)$. Then all partial derivatives of $\varphi$ belong to $G^{\alpha,\bm{L}-\bm{h}}(K)$, and we have 
    $$
    \sum\limits_{m\in\mathbb{N}^{n}}\frac{\bm{h}^{m\alpha}}{m!^{\alpha}}\left\lVert\partial^{m}\varphi\right\rVert_{\alpha,\bm{L-h}}\le \left\lVert \varphi \right\rVert_{\alpha,\bm{L}}.
    $$
In particular, for all $m\in\mathbb{N}^{n}$,
$$
\left\lVert \partial^{m}\varphi \right\rVert_{\alpha,\bm{L-h}}\le \left( \frac{m!}{\bm{h}^{m}} \right)^{\alpha}\left\lVert \varphi \right\rVert_{\alpha,\bm{L}}.
$$
\label{Ap3.2}
\end{proposition}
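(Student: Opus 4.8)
The plan is to reduce the statement to a single regrouping of the $C^{0}$-norms of the derivatives of $\varphi$, after which the claimed inequality becomes the componentwise binomial theorem combined with the elementary bound $\sum_{\nu}x_{\nu}^{\alpha}\le\bigl(\sum_{\nu}x_{\nu}\bigr)^{\alpha}$, valid for $\alpha\ge 1$ and $x_{\nu}\ge 0$.

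First I would unfold the left-hand side by the definition \eqref{AA3.2}:
$$\sum_{m\in\mathbb{N}^{n}}\frac{\bm{h}^{m\alpha}}{(m!)^{\alpha}}\left\lVert\partial^{m}\varphi\right\rVert_{\alpha,\bm{L}-\bm{h}}=\sum_{m\in\mathbb{N}^{n}}\sum_{k\in\mathbb{N}^{n}}\frac{\bm{h}^{m\alpha}}{(m!)^{\alpha}}\,\frac{(\bm{L}-\bm{h})^{k\alpha}}{(k!)^{\alpha}}\left\lVert\partial^{m+k}\varphi\right\rVert_{C^{0}(K)}.$$
All terms are nonnegative, so Tonelli's theorem lets me regroup by the multi-index $j=m+k$, giving
$$\sum_{j\in\mathbb{N}^{n}}\left\lVert\partial^{j}\varphi\right\rVert_{C^{0}(K)}\sum_{m\le j}\frac{\bm{h}^{m\alpha}(\bm{L}-\bm{h})^{(j-m)\alpha}}{(m!)^{\alpha}\,((j-m)!)^{\alpha}},$$
where $m\le j$ is meant componentwise. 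Since $\binom{j}{m}:=\prod_{i=1}^{n}\binom{j_{i}}{m_{i}}=\frac{j!}{m!\,(j-m)!}$, the inner sum equals $\frac{1}{(j!)^{\alpha}}\sum_{m\le j}\bigl(\binom{j}{m}\bm{h}^{m}(\bm{L}-\bm{h})^{j-m}\bigr)^{\alpha}$.

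Next I would apply $\sum_{\nu}x_{\nu}^{\alpha}\le(\sum_{\nu}x_{\nu})^{\alpha}$ with $x_{m}=\binom{j}{m}\bm{h}^{m}(\bm{L}-\bm{h})^{j-m}\ge 0$ (the hypothesis $\bm{h}<\bm{L}$ guarantees $\bm{L}-\bm{h}>0$, so these are indeed nonnegative), and then the binomial theorem coordinate by coordinate,
$$\sum_{m\le j}\binom{j}{m}\bm{h}^{m}(\bm{L}-\bm{h})^{j-m}=\prod_{i=1}^{n}\bigl(h_{i}+(L_{i}-h_{i})\bigr)^{j_{i}}=\bm{L}^{j},$$
to bound the inner sum by $\bm{L}^{j\alpha}/(j!)^{\alpha}$. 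Substituting back gives
$$\sum_{m\in\mathbb{N}^{n}}\frac{\bm{h}^{m\alpha}}{(m!)^{\alpha}}\left\lVert\partial^{m}\varphi\right\rVert_{\alpha,\bm{L}-\bm{h}}\le\sum_{j\in\mathbb{N}^{n}}\left(\frac{\bm{L}^{j}}{j!}\right)^{\alpha}\left\lVert\partial^{j}\varphi\right\rVert_{C^{0}(K)}=\left\lVert\varphi\right\rVert_{\alpha,\bm{L}},$$
which is the first inequality; in particular each summand on the left is finite, so $\partial^{m}\varphi\in G^{\alpha,\bm{L}-\bm{h}}(K)$ for every $m$. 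The pointwise estimate then follows by keeping only the single index $m$ on the left and multiplying through by $(m!/\bm{h}^{m})^{\alpha}$ (when $\bm{h}>0$ in each coordinate; otherwise the right-hand side is $+\infty$ and there is nothing to prove).

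I do not anticipate a genuine obstacle: the whole argument is the interchange of two nonnegative series plus the two elementary inequalities above. The only points worth flagging explicitly are that $0\le\bm{h}<\bm{L}$ is precisely what makes $\bm{L}-\bm{h}$ a legitimate positive Gevrey width (and keeps every summand nonnegative), and that the binomial identity must be applied in each coordinate separately rather than to a single scalar.
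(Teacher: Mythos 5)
Your argument is correct and is exactly the Marco--Sauzin computation (double-sum regrouping by $j=m+k$, the $\ell^{\alpha}\le\ell^{1}$ inequality $\sum_{\nu}x_{\nu}^{\alpha}\le(\sum_{\nu}x_{\nu})^{\alpha}$, and the componentwise binomial theorem) to which the paper defers without writing out a proof. Since the paper's only stated modification of \cite{marco2003stability} is the allowance of a vector of Gevrey widths, and you handle that by applying the binomial identity coordinate by coordinate, your proof matches the intended one.
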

This result is often referred to as the ``Cauchy estimate" for Gevrey functions.

\begin{proposition}[\textbf{Composition}]
Let $L>0$, $\bm{L}\in \mathbb{R}_{+}^{m}$, and let $K$ be an n-dimensional compact set, and $K_1$ be an m-dimensional compact set as previously defined. Let $Y\in G^{\alpha,L}(K)$,  and let $u=(u_1,...,u_n)$ be a mapping whose components belong to $G^{\alpha,\bm{L}}(K_1)$. If $u(K_1)\subset K$ and 
\begin{equation}
    \left\lVert u_j \rVert_{\alpha,\bm{L}}-\lVert u_j \right\rVert_{C^0(K_1)}\le \frac{L^\alpha}{m^{\alpha-1}},\ \ 1\le j\le n,
    \label{co1}
\end{equation}
then $Y\circ u\in G^{\alpha,\bm{L}}(K_1)$ and $\left\lVert Y\circ u \right\rVert_{\alpha,\bm{L}}\le \left\lVert Y \right\rVert_{\alpha,L}$.
\label{composition}
\end{proposition}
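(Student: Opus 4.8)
The plan is to reduce the statement to a Faà di Bruno–type expansion of the derivatives of $Y\circ u$ and then to sum the resulting majorant series, exploiting that $G^{\alpha,\bm L}(K_1)$ is a Banach algebra (Proposition \ref{A1}). First I would write, for a multi-index $k\in\mathbb N^{n_1+n_2}$, the derivative $\partial^{k}(Y\circ u)$ as a sum over partitions of the $|k|_1$ differentiations into blocks, each block falling either on one of the $u_j$'s or producing a factor $(\partial^{\beta}Y)\circ u$; schematically $\partial^k(Y\circ u)=\sum (\partial^{\beta}Y)\circ u\cdot \prod_{\ell}\partial^{\gamma_\ell}u_{j_\ell}$, where $|\beta|_1=$ number of blocks and the $\gamma_\ell$ partition $k$. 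The key point is to control the combinatorial coefficients: the number of ways to realize a given shape is bounded by the standard Faà di Bruno count, which after multiplication by $(\bm L^{k}/k!)^{\alpha}$ and regrouping gets absorbed, because $(\tfrac{a+b}{k!})^\alpha$-type weights behave multiplicatively up to the factor $m^{\alpha-1}$ that appears in hypothesis (\ref{co1}).

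Concretely, the second step is the estimate
$$
\Big\|(\partial^{\beta}Y)\circ u\Big\|_{\alpha,\bm L}\le \big\|\partial^{\beta}Y\big\|_{C^0(K)},
$$
which holds because composing with $u$ cannot increase the $C^0$ norm and, once the outer function is already differentiated, the remaining dependence on the $\bm L$-weights is handled exactly as in the scalar chain-rule bound; combined with the algebra property this yields
$$
\big\|Y\circ u\big\|_{\alpha,\bm L}\le \sum_{\beta\in\mathbb N^{n}}\big\|\partial^{\beta}Y\big\|_{C^0(K)}\prod_{j=1}^{n}\frac{1}{\beta_j!}\Big(\frac{m^{\alpha-1}}{L^{\alpha}}\big(\|u_j\|_{\alpha,\bm L}-\|u_j\|_{C^0(K_1)}\big)\Big)^{?}\cdots
$$
Rather than carry the exact bookkeeping here, the idea is that each "inner" block $\partial^{\gamma_\ell}u_{j_\ell}$ with $\gamma_\ell\neq 0$ contributes, after summation over all its possible values, a factor at most $\|u_{j_\ell}\|_{\alpha,\bm L}-\|u_{j_\ell}\|_{C^0(K_1)}$ (the subtraction removes the $\gamma_\ell=0$ term, which is already accounted for in the $C^0$ part that feeds into $(\partial^\beta Y)\circ u$), while the $\beta$-fold product of such factors, divided by $\beta!$, reconstructs the Taylor-type series for $Y$. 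Hypothesis (\ref{co1}) with the constant $L^{\alpha}/m^{\alpha-1}$ is precisely what makes the geometric-type series in each of the $m$ inner coordinates converge and sum to something bounded by $L^{|\beta|_1}/\beta!$ raised to the $\alpha$, so that
$$
\big\|Y\circ u\big\|_{\alpha,\bm L}\le \sum_{\beta\in\mathbb N^{n}}\Big(\frac{L^{|\beta|_1}}{\beta!}\Big)^{\alpha}\big\|\partial^{\beta}Y\big\|_{C^0(K)}=\|Y\|_{\alpha,L}.
$$

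The main obstacle I expect is the combinatorial/majorant step: controlling the Faà di Bruno coefficients against the super-multiplicative weight $(\bm L^k/k!)^{\alpha}$ when $\alpha>1$, since $(p+q)!^{\alpha}\ge p!^{\alpha}q!^{\alpha}$ goes the "wrong" way and one must instead use $k!\ge \prod \gamma_\ell!$ together with the multinomial bound on the number of partitions, and it is exactly here that the factor $m^{\alpha-1}$ (the number of inner variables raised to $\alpha-1$) is forced to appear — compensating for the fact that distributing $|\beta|_1$ derivative-blocks among $m$ coordinates costs a factor $m^{|\beta|_1}$ but the weight only recovers $m^{|\beta|_1/\alpha'}$-type gains. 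I would first prove the scalar case $m=n=1$ cleanly to isolate this mechanism, then handle general $m,n$ by treating the $n$ components of $u$ via the Banach algebra property (Proposition \ref{A1}) and the $m$ inner variables via the per-coordinate geometric summation, which is where (\ref{co1}) enters with its specific constant.
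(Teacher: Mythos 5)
The paper itself does not prove Proposition~\ref{composition}; it defers to Marco--Sauzin \cite{marco2003stability}, noting only that the same argument carries over to vector-valued Gevrey widths. That reference proceeds exactly in the way you sketch — Fa\`a di Bruno plus a majorant summation against the weights $(\bm L^k/k!)^\alpha$ — so your overall plan is the right one. However, your write-up has one wrong line and, more importantly, it omits the single combinatorial inequality that actually carries the proof.

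The wrong line is
$$\bigl\lVert(\partial^\beta Y)\circ u\bigr\rVert_{\alpha,\bm L}\le\bigl\lVert\partial^\beta Y\bigr\rVert_{C^0(K)},$$
which is false (take $Y$ the identity and $\beta=0$: the left side is $\lVert u\rVert_{\alpha,\bm L}$). What you actually need, and what is true because $u(K_1)\subset K$, is the $C^0$ estimate $\lVert(\partial^\beta Y)\circ u\rVert_{C^0(K_1)}\le\lVert\partial^\beta Y\rVert_{C^0(K)}$; in the Fa\`a di Bruno expansion the outer factor is only ever evaluated pointwise, so its sup norm is all that enters.

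The genuine gap is the "bookkeeping" you explicitly set aside. Running the Fa\`a di Bruno expansion with the weight $(\bm L^k/k!)^\alpha$ and only the weak fact $k!\ge\prod_\ell\gamma^\ell!$, i.e.\ $\binom{k}{\gamma^1,\ldots,\gamma^p}\ge1$, yields the majorant
$$\lVert Y\circ u\rVert_{\alpha,\bm L}\le\sum_{\beta}\frac{1}{\beta!}\,\lVert\partial^\beta Y\rVert_{C^0(K)}\prod_{j}\bigl(\lVert u_j\rVert_{\alpha,\bm L}-\lVert u_j\rVert_{C^0}\bigr)^{\beta_j},$$
with a factor $1/\beta!$ rather than the needed $1/\beta!^{\alpha}$. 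That is not enough: to dominate $\lVert Y\rVert_{\alpha,L}=\sum_\beta(L^{|\beta|_1}/\beta!)^\alpha\lVert\partial^\beta Y\rVert_{C^0}$ one would then need $a^p\le L^{\alpha p}/p!^{\alpha-1}$ for all $p$, which forces $a=0$. The step that rescues the argument, and which produces the constant $m^{\alpha-1}$ in \eqref{co1}, is the sharper lower bound on the Fa\`a di Bruno multinomial. For $k\in\mathbb N^m$, $p\ge1$, and any decomposition $k=\gamma^1+\cdots+\gamma^p$ with each $\gamma^\ell\ne0$, one has
$$\binom{k}{\gamma^1,\ldots,\gamma^p}\ \ge\ \frac{p!}{m^p},$$
obtained by noting that coordinatewise $\binom{k_s}{\gamma^1_s,\ldots,\gamma^p_s}\ge p_s!$ where $p_s=\#\{\ell:\gamma^\ell_s\ne0\}$, that $\sum_s p_s\ge p$, and that $\prod_s p_s!\ge p!/m^p$ by the multinomial bound $\binom{p}{p_1,\ldots,p_m}\le m^p$. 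Raising this to the power $\alpha-1$ turns the factor $\binom{k}{\gamma}^{1-\alpha}$ coming from $\bigl(\bm L^k/k!\bigr)^\alpha\binom{k}{\gamma}=\binom{k}{\gamma}^{1-\alpha}\prod_\ell(\bm L^{\gamma^\ell}/\gamma^\ell!)^\alpha$ into $(m^p/p!)^{\alpha-1}$, upgrading $1/p!$ to $m^{(\alpha-1)p}/p!^\alpha$ in the majorant, and this is exactly absorbed by hypothesis \eqref{co1}. Your remark that "distributing derivative-blocks among $m$ coordinates costs $m^{|\beta|_1}$" points in this direction but locates the loss in the wrong place ($\beta\in\mathbb N^n$ indexes the components of $u$, not the $m$ inner variables); the factor $m^{\alpha-1}$ comes from the distribution of the partition blocks $\gamma^\ell$ among the $m$ inner coordinates, not from the assignment of blocks to components of $u$. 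Without this sharp multinomial bound the proposal does not close.
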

The proofs of Proposition \ref{A1}, \ref{Ap3.2} and \ref{composition} follow  the same approaches as in \cite{marco2003stability}, except for the allowance of different Gevrey widths in this context. For detailed proofs, we refer to \cite{marco2003stability}.

We now discuss the decay of Fourier coefficients of Gevrey functions. 
Let $\alpha>1$ and $ K=\mathbb{T}^{d}$.  Lemma 4.8 in \cite{dias2017renormalization} shows that if $f\in G^{\alpha,L}(\mathbb{T}^{d})$, then $|\hat{f}(k)|\le C \left\lVert f \right\rVert_{\alpha,L}e^{-L|k|_{1}^{1/\alpha}}$, where $C$ is a constant independent of $\alpha$ and $L$. However, with a minor modification to the proof, we can deduce a more intrinsic description:

\begin{proposition}[\textbf{Decay of Fourier coefficients}]
    Let $\alpha>1$ and $L>0$. If $f\in G^{\alpha,L}(\mathbb{T}^{d})$, then for any $k\in\mathbb{Z}^{d}$, we have
    \begin{equation}
        |\hat{f}(k)|\le \left(\frac{h(\varepsilon,\alpha)}{2\pi}\right)^{d}\left\lVert f \right\rVert_{\alpha,L} e^{-\alpha(1-\varepsilon)L|k|_{\alpha}},
    \end{equation}
where $h(\varepsilon,\alpha)=\left(\frac{1}{1-(1-\varepsilon)^{\frac{\alpha}{\alpha-1}}}\right)^{\alpha-1}$.
\label{pfc}
\end{proposition}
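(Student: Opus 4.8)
The plan is to bound $|\hat f(k)|$ by the standard Fourier-coefficient estimate $|\hat f(k)|\le (2\pi)^{-d}\|\partial^m f\|_{C^0}\,|k^m|^{-1}$ (for any multi-index $m$ with $k_j\ne 0$ whenever $m_j\ne 0$), optimize the choice of $m$ componentwise, and then recognize the resulting bound as a Gevrey weight. First I would fix $k\in\mathbb Z^d$ and, for each coordinate $j$, choose a nonnegative integer $m_j$ (to be optimized, zero if $k_j=0$); integrating by parts $m_j$ times in the $j$-th variable gives $|\hat f(k)|\le (2\pi)^{-d}\prod_j |k_j|^{-m_j}\,\|\partial^m f\|_{C^0}$. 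Multiplying and dividing by $(L^{|m|_1}/m!)^\alpha$ and summing trivially over the single term, one gets
\begin{equation}
|\hat f(k)|\le (2\pi)^{-d}\left(\frac{m!}{L^{|m|_1}}\right)^{\alpha}\prod_j |k_j|^{-m_j}\,\|f\|_{\alpha,L}.
\notag
\end{equation}
So the problem reduces to choosing $m$ to make $\bigl(m!/L^{|m|_1}\bigr)^{\alpha}\prod_j|k_j|^{-m_j}$ small, i.e. to a one-variable optimization in each $m_j$.

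For a single coordinate with $|k_j|=N\ge 1$, I want to minimize $(m_j!)^{\alpha} L^{-\alpha m_j} N^{-m_j}$ over $m_j\in\mathbb N$. Using $m_j!\le m_j^{m_j}$ (or Stirling) and treating $m_j$ as continuous, the near-optimal choice is $m_j\approx c\,N^{1/\alpha}$ for an appropriate constant $c=c(\varepsilon,\alpha,L)$ arising from balancing $(m_j/e)^{\alpha m_j}$ against $(L^\alpha N)^{m_j}$; this is exactly the calculation behind the Gevrey/analytic-type decay $e^{-c N^{1/\alpha}}$. Carrying this through, and inserting the slack parameter $\varepsilon$ to absorb the rounding of $m_j$ to an integer and the Stirling factors, one obtains a bound of the form $C_j\,e^{-\alpha(1-\varepsilon)L N^{1/\alpha}}$ where the constant $C_j$ is the supremum over $N$ of the ratio between the true minimum and the target exponential; a geometric-series estimate identifies this constant as $h(\varepsilon,\alpha)/(2\pi) = (2\pi)^{-1}\bigl(1-(1-\varepsilon)^{\alpha/(\alpha-1)}\bigr)^{-(\alpha-1)}$. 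Taking the product over the $d$ coordinates (and noting $\sum_j N_j^{1/\alpha} = |k|_\alpha$) gives the claimed inequality with the factor $\bigl(h(\varepsilon,\alpha)/(2\pi)\bigr)^d$.

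The main obstacle is the bookkeeping in the one-variable optimization: one must choose the integer $m_j$ carefully so that the loss from rounding, from Stirling's approximation, and from passing from the sup over $N$ to a closed-form constant all get packaged cleanly into the single factor $h(\varepsilon,\alpha)$ with exponent $\alpha-1$ and into the $(1-\varepsilon)$ in the exponent. Concretely, I would set $m_j = \lceil (1-\varepsilon)^{1/(\alpha-1)} (L^{\alpha} N)^{1/\alpha}\rceil$ or a close variant, plug into $(m_j!)^{\alpha}(L^\alpha N)^{-m_j}$, use $m_j!\le (m_j/e)^{m_j}\sqrt{2\pi m_j}\,e^{1/(12 m_j)}$ or just $m_j!\le e\,m_j^{m_j+1/2}e^{-m_j}$, and simplify $\exp\bigl(\alpha m_j \log(m_j/e) - m_j\log(L^\alpha N)\bigr)$; the exponent is maximized (over the continuous relaxation) precisely when $m_j$ equals the critical value, and the residual after substituting the rounded value is controlled by $\varepsilon$. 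Since the paper explicitly says this follows from Lemma 4.8 of \cite{dias2017renormalization} ``with a minor modification,'' I expect the intended proof to be exactly this: reproduce the integration-by-parts bound, then redo the optimization keeping the $\varepsilon$-slack and the sharper exponent $|k|_\alpha$ instead of $|k|_1^{1/\alpha}$, which is the ``more intrinsic description'' alluded to.
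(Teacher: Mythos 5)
Your approach is genuinely different from the paper's, and it has a gap exactly where you need the closed-form constant $h(\varepsilon,\alpha)$. The paper does \emph{not} pick a single optimal multi-index: it multiplies $|\hat f(k)|$ by the full weighted sum $\sum_{\beta}\frac{L^{|\beta|_1\alpha}}{\beta!^{\alpha}}|k^{\beta}|$, factors it over coordinates, and bounds each one-dimensional sum $\sum_{n}\bigl((LN^{1/\alpha})^{n}/n!\bigr)^{\alpha}$ from below via H\"older's inequality with the geometric weight $y_n=(1-\varepsilon)^{n}$ and conjugate exponent $t=\alpha/(\alpha-1)$: this yields $e^{\alpha(1-\varepsilon)LN^{1/\alpha}}\le \bigl(\sum_n(1-\varepsilon)^{tn}\bigr)^{\alpha/t}\sum_n (L^{n\alpha}/n!^{\alpha})N^{n}$, and the prefactor $\bigl(\sum(1-\varepsilon)^{tn}\bigr)^{\alpha/t}$ \emph{is} $h(\varepsilon,\alpha)$. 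That is the only place a geometric series appears.

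Your plan instead keeps only the single term $\beta=m$, which replaces the one-dimensional sum by its largest summand $\max_n\bigl((LN^{1/\alpha})^{n}/n!\bigr)^{\alpha}$ — a strictly smaller quantity — and then tries to recover the loss via Stirling. This is a perfectly standard way to get \emph{some} sub-exponential decay (it is essentially the Dias--Gaiv\~ao Lemma 4.8 argument), but it does not produce $h(\varepsilon,\alpha)$. By Stirling the largest term is $\approx e^{\alpha LN^{1/\alpha}}/(2\pi LN^{1/\alpha})^{\alpha/2}$, so the constant your method yields is controlled by $\sup_{N\ge 1}(2\pi LN^{1/\alpha})^{\alpha/2}e^{-\alpha\varepsilon LN^{1/\alpha}}$; this depends on $L$ and scales like $\varepsilon^{-\alpha/2}$ as $\varepsilon\to 0$, whereas $h(\varepsilon,\alpha)$ is $L$-independent and scales like $\varepsilon^{-(\alpha-1)}$. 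In particular for $1<\alpha<2$ and small $\varepsilon$ the single-term constant exceeds $h(\varepsilon,\alpha)$, so the claim ``a geometric-series estimate identifies this constant as $h(\varepsilon,\alpha)/(2\pi)$'' is the missing step: no geometric series arises from the single-term Stirling computation, and the supremum it controls is not $h(\varepsilon,\alpha)$. To match the stated proposition you must retain the whole sum and invoke H\"older, as the paper does; the rest of your setup (integration by parts, factoring over coordinates, recognizing $|k|_\alpha=\sum_j|k_j|^{1/\alpha}$ as the natural exponent) is correct and identical to the paper's.
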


\begin{proof}
 Since for any $\beta\in\mathbb{N}^{d}$, we have $|(\partial^{\beta}f)^{\land}(k)|\le \frac{1}{(2\pi)^{d}}\left\lVert \partial^{\beta} f \right\rVert_{C^{0}(K)}$, \  it follows that
    \begin{equation}(2\pi)^{d}\sum\limits_{\beta\in\mathbb{N}^{d}}\frac{L^{|\beta|_{1}\alpha}}{\beta!^{\alpha}}|(\partial^{\beta }f)^{\land}(k)|\le
       \sum\limits_{\beta\in\mathbb{N}^{d}}\frac{L^{|\beta|_{1}\alpha}}{\beta!^{\alpha}}\left\lVert \partial^{\beta} f \right\rVert_{C^{0}(K)}=
        \left\lVert f \right\rVert_{\alpha,L}.
    \end{equation}
Thus, since $(\partial^{\beta}f)^{\land}(k)=(ik)^{\beta}\hat{f}(k)$, we obtain 
$$(2\pi)^{d}|\hat{f}(k)|\sum\limits_{\beta\in\mathbb{N}^{d}}\frac{L^{|\beta|_{1}\alpha}}{\beta!^{\alpha}}k^{\beta}\le \left\lVert f \right\rVert_{\alpha,L}.$$
Therefore, it suffices to estimate $$\sum\limits_{\beta\in\mathbb{N}^{d}}\frac{L^{|\beta|_{1}\alpha}}{\beta!^{\alpha}}k^{\beta}.$$
Notice that $$\sum\limits_{\beta\in\mathbb{N}^{d}}\frac{L^{|\beta|_{1}\alpha}}{\beta!^{\alpha}}k^{\beta}=\prod\limits_{j=1}^{d}\sum\limits_{n=0}^{\infty}\frac{L^{n\alpha}}{n!^{\alpha}}|k_j|^n.$$ 
We now consider $$\sum\limits_{n=0}^{\infty}\frac{L^{n\alpha}}{n!^{\alpha}}|k_j|^n=\sum\limits_{n=0}^{\infty}\left( \frac{(L|k_j|^{1/\alpha})^n}{n!} \right)^{\alpha}.$$
By H\"{o}lder inequality, we have 
    \begin{equation}
        \left( \sum\limits_{n=0}^{\infty} x_n y_n \right)^{\alpha}\le \left( \sum\limits_{n=0}^{\infty} y_n^t \right)^{\frac{\alpha}{t}}\sum\limits_{n=0}^{\infty}x_n^{\alpha},
        \nonumber
    \end{equation}
    where $\frac{1}{\alpha}+\frac{1}{t}=1$. Taking $x_n=\frac{(L|k_j|^{1/\alpha})^n}{n!}$ and $ y_n=(1-\varepsilon)^n$, we have
    \begin{equation}
        \left(\sum\limits_{n=0}^{\infty} \frac{((1-\varepsilon)L|k_j|^{1/\alpha})^n}{n!} \right)^{\alpha}
        \le \left( \sum\limits_{n=0}^{\infty} (1-\varepsilon)^{tn} \right)^{\alpha/t} 
        \sum\limits_{n=0}^{\infty}\frac{L^{n\alpha}}{n!^{\alpha}}|k_j|^n.
        \label{fc1}
    \end{equation}
    Denote $h(\varepsilon,\alpha)=\left( \sum\limits_{n=0}^{\infty} (1-\varepsilon)^{tn} \right)^{\alpha/t} =\left(\frac{1}{1-(1-\varepsilon)^{\frac{\alpha}{\alpha-1}}}\right)^{\alpha-1}$, then the inequality (\ref{fc1}) can be written as 
    \begin{equation}
        \exp(\alpha (1-\varepsilon)L|k_j|^{1/\alpha})\le h(\varepsilon,\alpha)\sum\limits_{n=0}^{\infty}\frac{L^{n\alpha}}{n!^{\alpha}}|k_j|^n.
        \label{fc2}
    \end{equation}
    Thus, we can obtain
    \begin{equation}
        \begin{split}
            |\hat{f}(k)|
            &\le \frac{1}{(2\pi)^{d}}\left\lVert f \right\rVert_{\alpha,L} \frac{1}{\sum\limits_{\beta\in\mathbb{N}^{d}}\frac{L^{|\beta|_{1}\alpha}}{\beta!^{\alpha}}}\\
            &=\frac{1}{(2\pi)^{d}}\left\lVert f \right\rVert_{\alpha,L}\frac{1}{\prod\limits_{j=1}^{d}\sum\limits_{n=0}^{\infty}\frac{L^{n\alpha}}{n!^{\alpha}}|k_j|^n}\\
            &\le \left(\frac{h(\varepsilon,\alpha)}{2\pi}\right)^{d}\left\lVert f \right\rVert_{\alpha,L} \exp\left(-\alpha(1-\varepsilon)L\sum\limits_{j=1}^{d}|k_j|^{1/\alpha}\right). \qedhere
        \end{split}
    \end{equation}
    \end{proof}

 Proposition \ref{pfc} tells us that the $k$-Fourier coefficients of Gevrey functions exhibit sub-exponential decay with rate $\alpha (L-\varepsilon)|k|_{\alpha}$, corresponding to the Gevrey width. On the other hand, if  the Fourier coefficients of a function $f$ exhibit sub-exponential decay, then $f$ is Gevrey function. In the following, we provide a precise description.

We first consider the Gevrey norm of $e^{ik\cdot x}$.
\begin{lemma}
   Let $\alpha>1$ and $L>0$, then we have
    $$\left\lVert e^{ik\cdot x} \right\rVert_{\alpha,L}\le e^{\alpha L|k|_{\alpha}}$$.
    \label{leg}
\end{lemma}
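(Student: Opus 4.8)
The plan is to compute the Gevrey norm of $e^{ik\cdot x}$ directly from the definition \eqref{AA3.2} applied to $K=\mathbb{T}^d$ with uniform Gevrey width $\bm{L}=(L,\dots,L)$. Writing $\varphi(x)=e^{ik\cdot x}$, for a multi-index $\beta\in\mathbb{N}^d$ we have $\partial^{\beta}\varphi=(ik)^{\beta}e^{ik\cdot x}$, so $\lVert\partial^{\beta}\varphi\rVert_{C^0(\mathbb{T}^d)}=|k^{\beta}|=\prod_{j=1}^d|k_j|^{\beta_j}$. Hence
\begin{equation}
\left\lVert e^{ik\cdot x}\right\rVert_{\alpha,L}=\sum_{\beta\in\mathbb{N}^d}\left(\frac{L^{|\beta|_1}}{\beta!}\right)^{\alpha}\prod_{j=1}^d|k_j|^{\beta_j}=\prod_{j=1}^d\sum_{n=0}^{\infty}\left(\frac{(L|k_j|^{1/\alpha})^n}{n!}\right)^{\alpha},
\notag
\end{equation}
exactly the product that already appears in the proof of Proposition \ref{pfc}.

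So the whole task reduces to the one-variable estimate $\sum_{n=0}^{\infty}\bigl((L|k_j|^{1/\alpha})^n/n!\bigr)^{\alpha}\le e^{\alpha L|k_j|^{1/\alpha}}$, and then multiplying over $j=1,\dots,d$ and using $\sum_j|k_j|^{1/\alpha}=|k|_{\alpha}$. To prove the one-variable bound, set $t=L|k_j|^{1/\alpha}\ge 0$; I must show $\sum_{n\ge0}(t^n/n!)^{\alpha}\le e^{\alpha t}=\bigl(\sum_{n\ge0}t^n/n!\bigr)^{\alpha}$. This is immediate from the elementary inequality $\sum_n a_n^{\alpha}\le\bigl(\sum_n a_n\bigr)^{\alpha}$ valid for nonnegative terms $a_n$ when $\alpha\ge 1$ (super-additivity of $s\mapsto s^{\alpha}$: for $a,b\ge0$, $a^{\alpha}+b^{\alpha}\le(a+b)^{\alpha}$, extended to series by monotone convergence), applied with $a_n=t^n/n!$. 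This needs $\alpha>1$ (or $\ge1$), which is part of the hypothesis.

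I would present it in this order: (i) expand the norm and factor into a product over coordinates; (ii) reduce to the scalar inequality; (iii) invoke $\sum a_n^\alpha\le(\sum a_n)^\alpha$ for $\alpha\ge1$; (iv) recombine to get $\prod_j e^{\alpha L|k_j|^{1/\alpha}}=e^{\alpha L|k|_\alpha}$. There is essentially no obstacle here — the statement is a short corollary of the power-series computation and the super-additivity of $s^\alpha$. The only point requiring a word of care is the interchange of summation when passing from the finite super-additivity inequality to the infinite series, which is justified by all terms being nonnegative (monotone convergence / taking limits of partial sums), and the edge case $k_j=0$, where both sides of the scalar inequality equal $1$ (using the convention $0^0=1$).
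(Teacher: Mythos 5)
Your proof is correct and takes essentially the same route as the paper: expand the Gevrey norm, factor it into a product of one-dimensional series, apply $\sum_n a_n^{\alpha}\le(\sum_n a_n)^{\alpha}$ for $\alpha\ge 1$ to each factor, and recombine. The paper simply applies the super-additivity inequality to the whole $d$-fold product at once rather than coordinate by coordinate, but that is a cosmetic difference.
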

\begin{proof}
By the definition of $\lVert \cdot \rVert_{\alpha,L}$, we have
\begin{equation}
    \left\lVert e^{ik\cdot x} \right\rVert_{\alpha,L} 
            =\sum\limits_{\beta\in\mathbb{N}^{d}} \frac{L^{|\beta|_{1}\alpha}}{\beta!^{\alpha}} \prod\limits_{j=1}^{d}|k_j|^{\beta_j}=\sum\limits_{\beta\in\mathbb{N}^{d}}\prod\limits_{j=1}^{d}|k_j|^{\beta_{j}}\frac{L^{\beta_j \alpha}}{\beta_j!^{\alpha}}.
            \notag
\end{equation}
This simplifies to
\begin{equation}
    \begin{split}
    \sum\limits_{\beta\in\mathbb{N}^{d}}\prod\limits_{j=1}^{d}|k_j|^{\beta_{j}}\frac{L^{\beta_j \alpha}}{\beta_j!^{\alpha}}
            &=\prod\limits_{j=1}^{d}\sum\limits_{n=0}^{\infty}\left( \frac{|k_j|^{n/\alpha}}{n!}L^n \right)^{\alpha}\\
            &\le \left( \prod\limits_{j=1}^{d}\sum\limits_{n=0}^{\infty}\frac{|k_j|^{n/\alpha}}{n!}L^n \right)^{\alpha}\\
            &=\exp(\alpha L\sum\limits_{j=1}^{d}|k_j|^{1/\alpha}).\qedhere
    \end{split}
    \notag
\end{equation}
\end{proof}
 
Therefore, we can naturally define a norm for $f$ defined on $\mathbb{T}^{d}$ as follows:
\begin{equation}
    \left\lVert f \right\rVert_{\mathcal{F}_{\alpha,L}}:=\sum\limits_{k\in \mathbb{Z}^{d}} e^{\alpha L|k|_{\alpha}}|\hat{f}(k)|.
\end{equation}
We denote by $\mathcal{F}_{\alpha,L}$ the set of smooth functions $f\in C^{\infty}(\mathbb{T}^{d})$ such that $\left\lVert f \right\rVert_{\mathcal{F}_{\alpha,L}} < \infty$. The two norms we have defined are related as follows:

\begin{proposition}
    Let $f\in C^{\infty}(\mathbb{T}^{d})$. Then:
    \begin{itemize}
        \item[\textup{(1)}] $\left\lVert f \right\rVert_{\alpha,L}  \le  \left\lVert f \right\rVert_{\mathcal{F}_{\alpha,L}}$.
        \item[\textup{(2)}] $\left\lVert f \right\rVert_{\mathcal{F}_{\alpha,(1-\varepsilon)L-\delta}} \le \left( \frac{h(\varepsilon,\alpha)}{2\pi} \right)^{d} C_{\delta} \left\lVert f \right\rVert_{\alpha,L}$, where $C_{\delta}=\sum\limits_{k\in\mathbb{Z}^d} e^{-\alpha \delta|k|_{\alpha}} \lesssim (\alpha\delta)^{-\alpha}$, and $h(\varepsilon,\alpha )$ is as defined above.
    \end{itemize}
    \label{pe}
\end{proposition}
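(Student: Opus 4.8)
The plan is to read part (1) as the easy inclusion $\mathcal{F}_{\alpha,L}\hookrightarrow G^{\alpha,L}(\mathbb{T}^d)$ and part (2) as the converse inclusion with a small loss of Gevrey width, each obtained by expanding $f$ in its Fourier series and invoking, respectively, Lemma \ref{leg} (the Gevrey norm of a single exponential) and Proposition \ref{pfc} (sub-exponential decay of the Fourier coefficients of a Gevrey function). No iteration or new machinery is needed: each part is a triangle inequality followed by a summation, the only genuine computation being a Gamma-type estimate for the constant $C_\delta$.

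For (1), I would argue directly. Since $f\in C^{\infty}(\mathbb{T}^{d})$, for every multi-index $\beta\in\mathbb{N}^{d}$ the series $\partial^{\beta}f=\sum_{k\in\mathbb{Z}^{d}}(ik)^{\beta}\hat f(k)\,e^{ik\cdot x}$ converges absolutely and uniformly, so $\|\partial^{\beta}f\|_{C^{0}(\mathbb{T}^{d})}\le\sum_{k}|\hat f(k)|\prod_{j=1}^{d}|k_{j}|^{\beta_{j}}$. Summing against the Gevrey weights and interchanging the two nonnegative sums (Tonelli),
\[
\|f\|_{\alpha,L}=\sum_{\beta}\Bigl(\tfrac{L^{|\beta|_{1}}}{\beta!}\Bigr)^{\alpha}\|\partial^{\beta}f\|_{C^{0}}\le\sum_{k}|\hat f(k)|\sum_{\beta}\Bigl(\tfrac{L^{|\beta|_{1}}}{\beta!}\Bigr)^{\alpha}\prod_{j}|k_{j}|^{\beta_{j}}=\sum_{k}|\hat f(k)|\,\|e^{ik\cdot x}\|_{\alpha,L},
\]
and the last factor is $\le e^{\alpha L|k|_{\alpha}}$ by Lemma \ref{leg}, which gives $\|f\|_{\alpha,L}\le\sum_{k}|\hat f(k)|e^{\alpha L|k|_{\alpha}}=\|f\|_{\mathcal{F}_{\alpha,L}}$. (Equivalently: $G^{\alpha,L}(\mathbb{T}^{d})$ is complete and $\sum_{k}\hat f(k)e^{ik\cdot x}$ converges absolutely in it, to $f$.)

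For (2), I would apply Proposition \ref{pfc}: $|\hat f(k)|\le(h(\varepsilon,\alpha)/2\pi)^{d}\|f\|_{\alpha,L}\,e^{-\alpha(1-\varepsilon)L|k|_{\alpha}}$ for all $k$. Multiplying by the weight $e^{\alpha((1-\varepsilon)L-\delta)|k|_{\alpha}}$ and summing over $k\in\mathbb{Z}^{d}$, the exponents collapse to $-\alpha\delta|k|_{\alpha}$, so
\[
\|f\|_{\mathcal{F}_{\alpha,(1-\varepsilon)L-\delta}}=\sum_{k}e^{\alpha((1-\varepsilon)L-\delta)|k|_{\alpha}}|\hat f(k)|\le\Bigl(\tfrac{h(\varepsilon,\alpha)}{2\pi}\Bigr)^{d}\|f\|_{\alpha,L}\sum_{k}e^{-\alpha\delta|k|_{\alpha}}=\Bigl(\tfrac{h(\varepsilon,\alpha)}{2\pi}\Bigr)^{d}C_{\delta}\,\|f\|_{\alpha,L}.
\]
It then remains to estimate $C_{\delta}=\sum_{k\in\mathbb{Z}^{d}}e^{-\alpha\delta|k|_{\alpha}}$. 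Since $|k|_{\alpha}=\sum_{j}|k_{j}|^{1/\alpha}$, the sum factors coordinatewise, $C_{\delta}=\bigl(1+2\sum_{n\ge1}e^{-\alpha\delta n^{1/\alpha}}\bigr)^{d}$, and comparing the monotone series with the integral $\int_{0}^{\infty}e^{-\alpha\delta t^{1/\alpha}}\,dt$ and substituting $u=\alpha\delta t^{1/\alpha}$ gives $\int_{0}^{\infty}e^{-\alpha\delta t^{1/\alpha}}\,dt=\alpha(\alpha\delta)^{-\alpha}\int_{0}^{\infty}e^{-u}u^{\alpha-1}\,du=\Gamma(\alpha+1)(\alpha\delta)^{-\alpha}$, hence $C_{\delta}\lesssim(\alpha\delta)^{-\alpha}$ (the factors $\Gamma(\alpha+1)$ and the power of $d$ being absorbed into $\lesssim$, which may depend on $\alpha$ and $d$).

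The proposition carries essentially no obstacle: its content is entirely in Lemma \ref{leg} and Proposition \ref{pfc}, both already established, and the only calculation is the Gamma-integral bound for $C_{\delta}$. The one point that needs a word of care is the term-by-term differentiation of the Fourier series and the Tonelli interchange in part (1), which are routine because $f\in C^{\infty}(\mathbb{T}^{d})$ forces rapid decay of $\hat f(k)$.
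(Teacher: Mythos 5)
Your proof is correct and follows the same route as the paper: part (1) is the triangle inequality for the Fourier series combined with Lemma \ref{leg}, and part (2) is a direct application of Proposition \ref{pfc}. One small caveat about your final parenthetical remark: the coordinatewise factorization you carry out actually gives $C_\delta \lesssim (\alpha\delta)^{-\alpha d}$, and the $d$ in the exponent cannot be absorbed into the implicit constant since $\delta$ is a free parameter; this is consistent with the factor $\epsilon^{-\alpha d}$ appearing in Corollary \ref{coroA.7}, so the exponent $(\alpha\delta)^{-\alpha}$ stated in Proposition \ref{pe} is a typo that your computation in fact corrects rather than reproduces.
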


\begin{proof}
    Part \textup{(1)} is straightforward by Lemma \ref{leg}. 
    For part \textup{(2)}, we have 
    \begin{equation}
    \begin{split}
        \left\lVert f \right\rVert_{\mathcal{F}_{\alpha,(1-\varepsilon)L-\delta}}
        &=\sum\limits_{k\in \mathbb{Z}^{d}}\exp(\alpha ((1-\varepsilon)L-\delta) |k|_{\alpha})|\hat{f}(k)|  \\
        &\le 
        \left(\frac{h(\varepsilon,\alpha)}{2\pi}\right)^{d}\left\lVert f \right\rVert_{\alpha,L} 
        \sum\limits_{k\in \mathbb{Z}^{d}} e^{-\alpha\delta|k|_{\alpha}}\\
        &=\left( \frac{h(\varepsilon,\alpha)}{2\pi} \right)^{d} C_{\delta} \left\lVert f \right\rVert_{\alpha,L}. \qedhere
    \end{split}
    \end{equation}
\end{proof}

 We can express part (2) of Proposition $\ref{pe}$ in a more concise form as follows:

 \begin{corollary}
     There exists a constant $C_{\alpha,d}$ that  depends only  on $\alpha$ and $d$ such that 
     \begin{equation}
         \left\lVert f \right\rVert_{\mathcal{F}_{\alpha,(1-\varepsilon)L-\delta}} \le C_{\alpha,d} \varepsilon^{-(\alpha-1)d}\delta^{-\alpha d} \left\lVert f \right\rVert_{\alpha,L}.
     \end{equation}
     \label{coroA.7}
 \end{corollary}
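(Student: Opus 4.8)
The plan is to derive Corollary~\ref{coroA.7} directly from part~(2) of Proposition~\ref{pe} by bounding each of the three ingredients $h(\varepsilon,\alpha)$, $C_\delta$, and the constant $(2\pi)^{-d}$ in terms of elementary quantities. The target inequality simply re-packages $\left\lVert f \right\rVert_{\mathcal{F}_{\alpha,(1-\varepsilon)L-\epsilon}} \le \left( \frac{h(\varepsilon,\alpha)}{2\pi} \right)^{d} C_{\epsilon} \left\lVert f \right\rVert_{\alpha,L}$ into the form $C_{\alpha,d}\,\varepsilon^{-(\alpha-1)d}\epsilon^{-\alpha d}\left\lVert f \right\rVert_{\alpha,L}$, so the whole content is the two asymptotic estimates $h(\varepsilon,\alpha)^d \lesssim \varepsilon^{-(\alpha-1)d}$ and $C_\epsilon \lesssim (\alpha\epsilon)^{-\alpha}$.

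First I would estimate $h(\varepsilon,\alpha) = \left(1-(1-\varepsilon)^{\frac{\alpha}{\alpha-1}}\right)^{-(\alpha-1)}$. Writing $\gamma = \frac{\alpha}{\alpha-1} > 1$, I need a lower bound of the form $1-(1-\varepsilon)^{\gamma} \ge c_\alpha\,\varepsilon$ for $\varepsilon \in (0,1)$. This follows from convexity of $t\mapsto (1-t)^\gamma$ (or from the mean value theorem): $1-(1-\varepsilon)^\gamma = \gamma\int_0^\varepsilon (1-t)^{\gamma-1}\,dt \ge \gamma(1-\varepsilon)^{\gamma-1}\varepsilon$, which is not uniform as $\varepsilon\to 1$, so instead I would use the cleaner bound $1-(1-\varepsilon)^\gamma \ge 1-(1-\varepsilon) = \varepsilon$ when $\gamma \ge 1$ — valid since $(1-\varepsilon)^\gamma \le 1-\varepsilon$ for $\gamma\ge1$ and $1-\varepsilon\in[0,1]$. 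Hence $h(\varepsilon,\alpha) \le \varepsilon^{-(\alpha-1)}$ outright, giving $h(\varepsilon,\alpha)^d \le \varepsilon^{-(\alpha-1)d}$ with no $\alpha$-dependent constant at all.

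Next I would confirm the bound $C_\epsilon = \sum_{k\in\mathbb{Z}^d} e^{-\alpha\epsilon|k|_\alpha} \lesssim (\alpha\epsilon)^{-\alpha d}$ (note the exponent should be $\alpha d$, matching the statement's $\epsilon^{-\alpha d}$; the ``$(\alpha\delta)^{-\alpha}$'' written inside Proposition~\ref{pe} is the one-dimensional factor). Since $|k|_\alpha = \sum_{j=1}^d |k_j|^{1/\alpha}$, the sum factors: $C_\epsilon = \left(\sum_{m\in\mathbb{Z}} e^{-\alpha\epsilon|m|^{1/\alpha}}\right)^d = \left(1 + 2\sum_{m\ge1} e^{-\alpha\epsilon m^{1/\alpha}}\right)^d$. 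For the one-dimensional sum I would compare with the integral $\int_0^\infty e^{-\alpha\epsilon s^{1/\alpha}}\,ds$; substituting $u = \alpha\epsilon s^{1/\alpha}$, i.e. $s = (u/(\alpha\epsilon))^\alpha$, gives $\int_0^\infty e^{-u}\cdot \alpha (u/(\alpha\epsilon))^{\alpha-1}\cdot \frac{du}{\alpha\epsilon} = (\alpha\epsilon)^{-\alpha}\int_0^\infty u^{\alpha-1}e^{-u}\,du = \Gamma(\alpha)(\alpha\epsilon)^{-\alpha}$. Accounting for the ``$+1$'' and the monotonicity correction between sum and integral, $\sum_{m\in\mathbb{Z}} e^{-\alpha\epsilon|m|^{1/\alpha}} \le C'_\alpha (\alpha\epsilon)^{-\alpha}$ for a constant depending only on $\alpha$ (for, say, $\alpha\epsilon \le 1$; for $\alpha\epsilon$ bounded below the sum is bounded and the inequality is trivial after enlarging the constant). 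Raising to the $d$-th power yields $C_\epsilon \le C''_{\alpha,d}(\alpha\epsilon)^{-\alpha d} \le C''_{\alpha,d}\,\alpha^{-\alpha d}\,\epsilon^{-\alpha d}$.

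Combining the three pieces: $\left\lVert f \right\rVert_{\mathcal{F}_{\alpha,(1-\varepsilon)L-\epsilon}} \le (2\pi)^{-d}\,h(\varepsilon,\alpha)^d\,C_\epsilon\,\left\lVert f \right\rVert_{\alpha,L} \le (2\pi)^{-d}\,\varepsilon^{-(\alpha-1)d}\cdot C''_{\alpha,d}\alpha^{-\alpha d}\epsilon^{-\alpha d}\cdot\left\lVert f \right\rVert_{\alpha,L}$, so setting $C_{\alpha,d} := (2\pi)^{-d}\alpha^{-\alpha d}C''_{\alpha,d}$ — a constant depending only on $\alpha$ and $d$ — gives exactly $\left\lVert f \right\rVert_{\mathcal{F}_{\alpha,(1-\varepsilon)L-\epsilon}} \le C_{\alpha,d}\,\varepsilon^{-(\alpha-1)d}\epsilon^{-\alpha d}\left\lVert f \right\rVert_{\alpha,L}$. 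The only mild subtlety — the ``main obstacle'', such as it is — is handling the regime $\alpha\epsilon \gtrsim 1$ in the $C_\epsilon$ estimate, where the integral comparison gives the wrong shape; there one just notes $C_\epsilon \le C_0$ is bounded while $(\alpha\epsilon)^{-\alpha d}$ is bounded below, so the inequality holds after enlarging $C_{\alpha,d}$. Everything else is routine bookkeeping with the constants from Proposition~\ref{pe}.
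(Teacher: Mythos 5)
Your proof is correct and arrives at the stated bound, but it takes a cleaner route than the paper on one key point and makes explicit a second point the paper leaves silent. The paper proves the corollary by a single formal Taylor expansion:
\begin{equation*}
h(\varepsilon,\alpha)=\left(\frac{1}{1-(1-\varepsilon)^{\frac{\alpha}{\alpha-1}}}\right)^{\alpha-1}
=\left(\frac{1}{\frac{\alpha}{\alpha-1}\varepsilon+O(\varepsilon^2)}\right)^{\alpha-1},
\end{equation*}
which is an asymptotic statement as $\varepsilon\to 0$ and says nothing about $C_\epsilon$. You instead observe that $(1-\varepsilon)^{\gamma}\le 1-\varepsilon$ for $\gamma\ge 1$, hence $1-(1-\varepsilon)^{\gamma}\ge\varepsilon$ and $h(\varepsilon,\alpha)\le\varepsilon^{-(\alpha-1)}$ uniformly on $(0,1)$ with constant $1$ — strictly stronger than what the paper's $O(\varepsilon^2)$ bookkeeping delivers, and more rigorous since there is no uncontrolled error term. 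You also do the integral comparison $\sum_{m\ge 1}e^{-\alpha\epsilon m^{1/\alpha}}\lesssim\Gamma(\alpha)(\alpha\epsilon)^{-\alpha}$ in each of the $d$ factors, correctly producing the $d$-dimensional exponent $\epsilon^{-\alpha d}$ (and in doing so you implicitly flag that the ``$C_\delta\lesssim(\alpha\delta)^{-\alpha}$'' written inside Proposition \ref{pe} is missing a factor of $d$ in the exponent; Corollary \ref{coroA.7} itself has the correct $\epsilon^{-\alpha d}$). Finally, your treatment of the regime $\alpha\epsilon\gtrsim 1$, where the integral comparison no longer has the right shape and one just enlarges the constant, is a genuine detail worth pinning down that the paper does not mention. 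In short: same overall plan (bound the three factors $(2\pi)^{-d}$, $h^d$, $C_\epsilon$ from Proposition \ref{pe}), but your bound on $h$ is by monotonicity rather than Taylor expansion, which buys uniformity, and you supply the quantitative $C_\epsilon$ estimate that the paper asserts without proof.
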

 \begin{proof}
     It suffices to notice that
      \begin{equation}
        \begin{split}
            h(\varepsilon,\alpha)
            &=\left(\frac{1}{1-(1-\epsilon)^{\frac{\alpha}{\alpha-1}}}\right)^{\alpha-1}\\
            &=\left(\frac{1}{1-(1-\frac{\alpha}{\alpha-1}\varepsilon+O(\varepsilon^2))}\right)^{\alpha-1}\\
            &=\left( \frac{1}{\frac{\alpha}{\alpha-1}\varepsilon+O(\varepsilon^2)} \right)^{\alpha-1}.  \qedhere 
        \end{split}
    \end{equation}
 \end{proof}

Next, we discuss truncation techniques in Gevrey space. 

\begin{definition}
    Let $I\subset \mathbb{N}^n$ be a closed interval, and let $\lambda_{0}\in I$. Let $V$ be a Banach algebra with norm  denoted by $\left\lVert \cdot \right\rVert_{V}$, and let $\varphi: I\to V$ be a $V$-valued map. We say that $\varphi$ admits a Gevrey-$(\alpha,L)$ asymptotic expansion at $\lambda_{0}$ if there exists a formal series 
    \begin{equation}
        \widetilde{\varphi}(\lambda)=\sum\limits_{k\in\mathbb{N}^{n}}(\lambda-\lambda_0)^{k}\varphi_{k},
        \nonumber
    \end{equation}
    where $\varphi_k\in V$ for each $k\in\mathbb{N}^{n}$, and a constant $C$ such that, for all $\lambda\in I$ and for all $p>0$,
    \begin{equation}
        \left\lVert \varphi(\lambda)-\sum_{|k|<p}  (\lambda-\lambda_0)^{k}\varphi_k\right\rVert_{V} \le C p!^{\alpha-1}L^{-p\alpha}|\lambda-\lambda_0|_{1}^{p}.
        \nonumber
    \end{equation}
\end{definition}

This definition differs slightly from the traditional definition of Gevrey asymptotic expansions, as it involves multiple variables. In fact, this can be viewed as a definition of a Gevrey-type $V$-valued function. 
For a more usual framework of Gevrey asymptotic expansions, the reader is referred to \cite{balser1994divergent},\cite{ramis1996gevrey}, and \cite{tougeron1990introduction}.

\begin{lemma}
    Let $K_1\subset{\mathbb{R}^n}$ be a closed interval, and let $K_2$ be an m-dimensional compact set. Let $\alpha>1$ and $ L_1,L_2>0$. If $\varphi(\lambda,x)\in G^{\alpha,L_1,L_2}(K_1\times K_2)$, then we have 
    \begin{equation}
        \forall k_1\in\mathbb{N}^{n},\ \ \sup\limits_{\lambda\in K_1} \left\lVert \partial_{\lambda}^{k_1}\varphi(\lambda,\cdot) \right\rVert_{\alpha,L_2} \le \left(\frac{k_1!}{L_1^{|k_1|_1}}\right)^{\alpha}\left\lVert \varphi \right\rVert_{\alpha,L_1,L_2}.
    \end{equation}
    \label{supg}
\end{lemma}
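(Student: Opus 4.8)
The plan is to reduce the claimed bound to the ordinary Cauchy estimate for Gevrey functions (Proposition~\ref{Ap3.2}) applied in the parameter variable $\lambda$ only. The key observation is that the Gevrey norm $\lVert\cdot\rVert_{\alpha,L_1,L_2}$ on $K_1\times K_2$ is, up to rearrangement of the summation, a Gevrey-type norm in $\lambda\in K_1$ with values in the Banach algebra $V=G^{\alpha,L_2}(K_2)$: indeed, grouping the double sum over $(k_1,k_2)$ by first summing over $k_2$,
\begin{equation}
    \left\lVert \varphi \right\rVert_{\alpha,L_1,L_2}=\sum_{k_1\in\mathbb{N}^{n}}\left(\frac{L_1^{|k_1|_1}}{k_1!}\right)^{\alpha}\sum_{k_2\in\mathbb{N}^{m}}\left(\frac{L_2^{|k_2|_1}}{k_2!}\right)^{\alpha}\left\lVert \partial_{\lambda}^{k_1}\partial_{\theta}^{k_2}\varphi\right\rVert_{C^{0}(K)},
    \nonumber
\end{equation}
and the inner sum dominates $\sup_{\lambda\in K_1}\lVert \partial_{\lambda}^{k_1}\varphi(\lambda,\cdot)\rVert_{\alpha,L_2}$, because $\lVert \partial_{\lambda}^{k_1}\partial_{\theta}^{k_2}\varphi\rVert_{C^0(K)}\ge \sup_{\lambda}\lVert \partial_{\lambda}^{k_1}\partial_{\theta}^{k_2}\varphi(\lambda,\cdot)\rVert_{C^0(K_2)}$ and the $\alpha$-power weights in the $k_2$-sum are exactly those defining $\lVert\cdot\rVert_{\alpha,L_2}$.

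Concretely, first I would fix $k_1\in\mathbb{N}^n$ and write
\begin{equation}
    \sup_{\lambda\in K_1}\left\lVert \partial_{\lambda}^{k_1}\varphi(\lambda,\cdot)\right\rVert_{\alpha,L_2}
    =\sup_{\lambda\in K_1}\sum_{k_2\in\mathbb{N}^{m}}\left(\frac{L_2^{|k_2|_1}}{k_2!}\right)^{\alpha}\left\lVert \partial_{\theta}^{k_2}\bigl(\partial_{\lambda}^{k_1}\varphi(\lambda,\cdot)\bigr)\right\rVert_{C^{0}(K_2)}
    \le \sum_{k_2\in\mathbb{N}^{m}}\left(\frac{L_2^{|k_2|_1}}{k_2!}\right)^{\alpha}\left\lVert \partial_{\lambda}^{k_1}\partial_{\theta}^{k_2}\varphi\right\rVert_{C^{0}(K)},
    \nonumber
\end{equation}
moving the supremum inside the sum by monotone convergence / subadditivity of $\sup$. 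Then I would apply the second (pointwise-in-$m$) form of Proposition~\ref{Ap3.2} with $n$ replaced by the full dimension $n+m$, the multi-index $m$ there taken to be $(k_1,0)$, and $\bm{h}$ chosen so that $\bm{L}-\bm{h}=(0,\dots,0,L_2,\dots,L_2)$, i.e. removing all the width in the $\lambda$-directions and keeping $L_2$ in the $\theta$-directions; the factor produced is exactly $(k_1!/L_1^{|k_1|_1})^{\alpha}$ since $\bm{h}^{(k_1,0)}=L_1^{|k_1|_1}$ and $(k_1,0)!=k_1!$. This yields the right-hand side $\le (k_1!/L_1^{|k_1|_1})^{\alpha}\lVert\varphi\rVert_{\alpha,L_1,L_2}$ directly, without even needing the intermediate display above. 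Alternatively one can bypass Proposition~\ref{Ap3.2} entirely and just bound the last sum above by inserting the missing $k_1$-term: $(L_2^{|k_2|_1}/k_2!)^{\alpha}\lVert\partial_\lambda^{k_1}\partial_\theta^{k_2}\varphi\rVert_{C^0(K)}=(k_1!/L_1^{|k_1|_1})^{\alpha}\cdot(L_1^{|k_1|_1}L_2^{|k_2|_1}/(k_1!k_2!))^{\alpha}\lVert\partial_\lambda^{k_1}\partial_\theta^{k_2}\varphi\rVert_{C^0(K)}$, and summing over $k_2\in\mathbb{N}^m$ gives a sub-sum of $\lVert\varphi\rVert_{\alpha,L_1,L_2}$, hence the bound.

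There is no real obstacle here: the statement is essentially a bookkeeping lemma that isolates one block of derivatives from the full Gevrey norm. The only point requiring a line of care is the interchange of $\sup_{\lambda\in K_1}$ with the infinite sum over $k_2$, which is justified because all terms are nonnegative and each partial sum is continuous in $\lambda$ on the compact set $K_1$ (so $\sup$ of a sum $\le$ sum of $\sup$s, applied to partial sums, then pass to the limit). Everything else is the identity $\bm{h}^{(k_1,0)}=L_1^{|k_1|_1}$ and $(k_1,0)!=k_1!$ together with the elementary inequality $\lVert\partial^{k_2}_\theta g(\lambda,\cdot)\rVert_{C^0(K_2)}\le \lVert\partial^{k_2}_\theta g\rVert_{C^0(K_1\times K_2)}$.
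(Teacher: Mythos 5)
Your proposal is correct and, in the ``alternatively'' variant, is precisely the paper's argument: fix $k_1$, expand the definition, multiply and divide by $(L_1^{|k_1|_1}/k_1!)^{\alpha}$, bound $\left\lVert\cdot\right\rVert_{C^0(K_2)}$ by $\left\lVert\cdot\right\rVert_{C^0(K)}$, and recognize the resulting $k_2$-sum as a sub-sum of $\left\lVert\varphi\right\rVert_{\alpha,L_1,L_2}$. Note only that your first route via Proposition~\ref{Ap3.2} would require $\bm{h}(j)=L_1$ in the $\lambda$-directions, which sits on the boundary of that proposition's strict hypothesis $\bm{h}<\bm{L}$, so the direct-insertion version is the cleaner one to keep.
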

\begin{proof}
    Fix $\lambda\in K_1$, we have
    \begin{equation}
        \begin{split}
            \left\lVert \partial_{\lambda}^{k_1}\varphi(\lambda,\cdot) \right\rVert_{\alpha,L_2}
            &=\sum\limits_{k_2\in\mathbb{N}^{m}}\left(\frac{L_2^{|k_2|_1}}{k_2!}\right)^{\alpha}\left\lVert \partial_{x}^{k_2}\partial_{\lambda}^{k_1} \varphi\right\rVert_{C^{0}(K_2)}\\
            &=\left(\frac{k_1!}{L_1^{|k_1|_1}}\right)^{\alpha}\sum\limits_{k_2\in\mathbb{N}^{m}}\left(\frac{L_1^{|k_1|_1}L_2^{|k_2|_1}}{k_1!k_2!}\right)^{\alpha}\left\lVert \partial_{\lambda}^{k_1}\partial_{x}^{k_2}\varphi \right\rVert_{C^{0}(K_2)}\\
            &\le \left(\frac{k_1!}{L_1^{|k_1|_1}}\right)^{\alpha}\left\lVert \varphi \right\rVert_{\alpha,L_1,L_2}.    \qedhere
        \end{split}
    \end{equation}
\end{proof}

\begin{proposition}
    Let $K=K_1\times K_2$, where $K_1\subset \mathbb{R}^{n}$ is a closed interval, and $K_2$ is an $m$-dimensional compact set. Let $\lambda_0\in K_1$. For any $\varphi(\lambda,x)\in G^{\alpha,L_1,L_2}(K)$, we can regard $\varphi$ as a map from $K_1$ to $G^{\alpha,L_2}(K_2)$, and the Taylor series 
    \begin{equation}
        \widetilde{\varphi}(\lambda_0+\Delta\lambda,x)=\sum\limits_{k=0}^{\infty}\frac{1}{k!}\left( \Delta\lambda_1\partial_{\lambda_1}+\cdots+ \Delta\lambda_n\partial_{\lambda_n} \right)^{k}\varphi(\lambda_0,x)
    \end{equation}
is the Gevrey-$(\alpha,L_2)$ asymptotic expansion of $\varphi$ at $\lambda_{0}$.
\label{bijinjino}
\end{proposition}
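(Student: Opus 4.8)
The plan is to reduce the statement to the one–variable Taylor formula with integral remainder applied to the curve $t\mapsto\varphi(\lambda_0+t\Delta\lambda,\cdot)$ with values in the Banach algebra $V:=G^{\alpha,L_2}(K_2)$ (a Banach algebra by Proposition \ref{A1}), and then to estimate the remainder by means of the Cauchy–type bound of Lemma \ref{supg}.

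\textbf{Coefficients and the remainder identity.} Put $\Delta\lambda=\lambda-\lambda_0$. Expanding $\bigl(\Delta\lambda_1\partial_{\lambda_1}+\cdots+\Delta\lambda_n\partial_{\lambda_n}\bigr)^{k}$ by the multinomial theorem shows that the formal series in the statement equals $\widetilde\varphi(\lambda_0+\Delta\lambda,x)=\sum_{k_1\in\mathbb{N}^{n}}\Delta\lambda^{k_1}\varphi_{k_1}$ with $\varphi_{k_1}:=\frac{1}{k_1!}\partial_{\lambda}^{k_1}\varphi(\lambda_0,\cdot)$, and by Lemma \ref{supg} each $\partial_{\lambda}^{k_1}\varphi(\lambda_0,\cdot)$ lies in $G^{\alpha,L_2}(K_2)$, so the $\varphi_{k_1}$ are genuine elements of $V$ and $\widetilde\varphi$ is a legitimate $V$–valued formal power series in $\Delta\lambda$. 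Next, for each fixed $x\in K_2$ apply the scalar Taylor formula with integral remainder to $g(t)=\varphi(\lambda_0+t\Delta\lambda,x)$ on $[0,1]$. Since $g^{(m)}(t)=\sum_{|k_1|_1=m}\frac{m!}{k_1!}\Delta\lambda^{k_1}\partial_{\lambda}^{k_1}\varphi(\lambda_0+t\Delta\lambda,x)$, this yields, for every integer $p\ge1$,
\[
\varphi(\lambda,x)-\sum_{|k_1|_1<p}\Delta\lambda^{k_1}\varphi_{k_1}(x)=\frac{1}{(p-1)!}\int_{0}^{1}(1-t)^{p-1}\sum_{|k_1|_1=p}\frac{p!}{k_1!}\Delta\lambda^{k_1}\partial_{\lambda}^{k_1}\varphi(\lambda_0+t\Delta\lambda,x)\,dt .
\]
Both sides are functions of $x$ belonging to $V$, so this is an identity in $V$.

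\textbf{Estimate of the remainder.} Take the norm $\left\lVert\cdot\right\rVert_{\alpha,L_2}$ of the right–hand side; since all terms in the defining series of that norm are nonnegative, by Tonelli it may be moved inside the $t$–integral. Using $\int_0^1(1-t)^{p-1}\,dt=1/p$ together with Lemma \ref{supg} in the form
\[
\sup_{\lambda\in K_1}\left\lVert \partial_{\lambda}^{k_1}\varphi(\lambda,\cdot)\right\rVert_{\alpha,L_2}\le\left(\frac{k_1!}{L_1^{|k_1|_1}}\right)^{\alpha}\left\lVert \varphi\right\rVert_{\alpha,L_1,L_2}\qquad(|k_1|_1=p),
\]
we get
\[
\left\lVert \varphi(\lambda,\cdot)-\sum_{|k_1|_1<p}\Delta\lambda^{k_1}\varphi_{k_1}\right\rVert_{\alpha,L_2}\le\frac{\left\lVert \varphi\right\rVert_{\alpha,L_1,L_2}}{L_1^{p\alpha}}\sum_{|k_1|_1=p}(k_1!)^{\alpha-1}|\Delta\lambda|^{k_1}.
\]
Finally, since $\alpha>1$ and $k_1!\le(|k_1|_1)!=p!$ whenever $|k_1|_1=p$, we have $(k_1!)^{\alpha-1}\le(p!)^{\alpha-1}$, while $\sum_{|k_1|_1=p}|\Delta\lambda|^{k_1}\le\bigl(|\Delta\lambda_1|+\cdots+|\Delta\lambda_n|\bigr)^{p}=|\lambda-\lambda_0|_1^{p}$. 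Hence the remainder is bounded by $\left\lVert \varphi\right\rVert_{\alpha,L_1,L_2}\,p!^{\alpha-1}L_1^{-p\alpha}|\lambda-\lambda_0|_1^{p}$, which is precisely the bound required in the definition of a Gevrey asymptotic expansion, with Gevrey width $L_1$ and constant $C=\left\lVert \varphi\right\rVert_{\alpha,L_1,L_2}$. Since a remainder bound with base $L_1$ holds a fortiori with any base $\le L_1$, this yields the stated Gevrey–$(\alpha,L_2)$ asymptotic expansion when $L_2\le L_1$; in general one symmetrizes by replacing both widths with $\min(L_1,L_2)$ at the outset.

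\textbf{The main point.} There is no genuine obstacle; the argument is bookkeeping. The only step that needs care is the interplay between the exponent $\alpha$ in the Gevrey norm and the factor $1/k_1!$ in the Taylor coefficient: the Cauchy estimate of Lemma \ref{supg} contributes $(k_1!)^{\alpha}$, which against $1/k_1!$ leaves $(k_1!)^{\alpha-1}$, and it is precisely the elementary inequality $k_1!\le p!$ for multi-indices of total degree $p$ that converts this into the clean factor $p!^{\alpha-1}$ demanded by the definition.
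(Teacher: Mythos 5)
Your proof is correct and follows essentially the same route as the paper's: multivariate Taylor's theorem with integral remainder, applied to the curve $t\mapsto\varphi(\lambda_0+t\Delta\lambda,\cdot)$ valued in $G^{\alpha,L_2}(K_2)$, the norm moved inside the integral, Lemma \ref{supg} used to produce $(k_1!/L_1^{|k_1|_1})^\alpha$, then $k_1!\le p!$ and the multinomial inequality $\sum_{|k_1|_1=p}|\Delta\lambda^{k_1}|\le|\Delta\lambda|_1^p$ to close. Your indexing differs trivially (you truncate below $p$ rather than through $p$), but the estimate is the same. Your closing observation about the mismatch between the name ``Gevrey-$(\alpha,L_2)$'' and the width $L_1$ actually appearing in the remainder bound $Cp!^{\alpha-1}L_1^{-p\alpha}|\Delta\lambda|_1^p$ is a fair reading of the Definition and the paper's own inequality does use $L_1$; but note your proposed fix of ``symmetrizing to $\min(L_1,L_2)$'' does not actually upgrade the bound to $L_2^{-p\alpha}$ when $L_2>L_1$, so the cleanest resolution is simply to read the proposition as asserting an asymptotic expansion in the Banach space $V=G^{\alpha,L_2}(K_2)$ whose remainder width in the parameter variable is $L_1$, exactly as both your estimate and the paper's final display show.
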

\begin{proof}
    Denote 
    $$\widetilde{\varphi}_{p}(\lambda_0+\Delta\lambda,x)=\sum\limits_{k=0}^{p}\frac{1}{k!}\left( \Delta\lambda_1\partial_{\lambda_1}+\cdots+ \Delta\lambda_n\partial_{\lambda_n} \right)^{k}\varphi(\lambda_0,x).$$ 
    By Taylor's formula, we have
    \begin{equation}
    \begin{split}
        \varphi(\lambda,x)-\widetilde{\varphi}_{p}(\lambda,x)
        =\frac{1}{p!}\int_{0}^{1}(1-t)^{p}\left( \Delta\lambda_1\partial_{\lambda_1}+\cdots+ \Delta\lambda_n\partial_{\lambda_n} \right)^{p+1}\varphi(\lambda_{0}+t\Delta\lambda,x)dt,
    \end{split}
    \notag
    \end{equation}
    where $\lambda=\lambda_0+\Delta\lambda$. Thus, we have
\begin{equation}
    \begin{split}
        &\left\lVert \varphi(\lambda,x)-\widetilde{\varphi}_{p}(\lambda,x) \right\rVert_{\alpha,L_2}\\
        \le& \frac{1}{p!}
        \int_{0}^{1}|(1-t)|^{p}\left\lVert\left( \Delta\lambda_1\partial_{\lambda_1}+\cdots+ \Delta\lambda_n\partial_{\lambda_n} \right)^{p+1}\varphi(\lambda_{0}+t\Delta\lambda)\right\rVert_{\alpha,L_2}dt\\
        \le&\frac{1}{p!}\int_{0}^{1}|(1-t)|^{p}\left\lVert \sum\limits_{k\in\mathbb{N}^{n}\atop |k|=p+1}\frac{(p+1)!}{k!}\Delta\lambda^{k}\partial_{\lambda}^{k}\varphi(\lambda_0+t\Delta\lambda,x) \right\rVert_{\alpha,L_2}dt\\
        \le&\frac{1}{p!}\int_{0}^{1}(1-t)^{p}dt
        \sum\limits_{k\in\mathbb{N}^{n}\atop |k|=p+1}\frac{(p+1)!}{k!}|\Delta\lambda^{k}|\sup\limits_{0\le t\le 1}\left\lVert \partial_{\lambda}^{k}\varphi(\lambda_0+t\Delta\lambda,x) \right\rVert_{\alpha,L_2}.
    \end{split}
    \notag
\end{equation}
Using Lemma \ref{supg}, we have
\begin{equation}
    \left\lVert \partial_{\lambda}^{k}\varphi(\lambda_0+t\Delta\lambda,x) \right\rVert_{\alpha,L_2}\le \left(\frac{k!}{L_{1}^{|k|_{1}}}\right)^{\alpha}\lVert \varphi \rVert_{\alpha,L_{1}, L_{2}}.
    \notag
\end{equation}
Thus, we obtain
\begin{equation}
    \left\lVert \varphi(\lambda,x)-\widetilde{\varphi}_{p}(\lambda,x) \right\rVert_{\alpha,L_2}\le \frac{1}{p!}\int_{0}^{1}(1-t)^{p}dt
        \sum\limits_{k\in\mathbb{N}^{n}\atop |k|_{1}=p+1}\frac{(p+1)!}{k!}|\Delta\lambda^{k}| \left(\frac{k!}{L_{1}^{|k|_{1}}}\right)^{\alpha}\lVert \varphi \rVert_{\alpha,L_{1}, L_{2}}.
        \notag
\end{equation}
This simplifies to
\begin{equation}
\begin{split}
    &\left\lVert \varphi(\lambda,x)-\widetilde{\varphi}_{p}(\lambda,x) \right\rVert_{\alpha,L_2}\\
    \le&\frac{1}{(p+1)!}\frac{\left\lVert \varphi \right\rVert_{\alpha,L_1,L_2}}{L_1^{(p+1)\alpha}}
        \sum\limits_{k\in\mathbb{N}^{n}\atop |k|_{1}=p+1} \frac{(p+1)!}{k!}|\Delta\lambda^k|\cdot k!^{\alpha}\\
        \le&\frac{(p+1)!^{\alpha-1}}{L_1^{(p+1)\alpha}}\left\lVert \varphi \right\rVert_{\alpha,L_1,L_2}
        \sum\limits_{k\in\mathbb{N}^{n}\atop |k|_1=p+1}\frac{(p+1)!}{k!}|\Delta\lambda^{k}|.
\end{split}
\notag
\end{equation}
Finally, using the multi-dimensional binomial formula, we conclude that
\begin{equation}
    \left\lVert \varphi(\lambda,x)-\widetilde{\varphi}_{p}(\lambda,x) \right\rVert_{\alpha,L_2}\le \left\lVert \varphi \right\rVert_{\alpha,L_1,L_2}(p+1)!^{\alpha-1}L_1^{-(p+1)\alpha}|\Delta\lambda|_1^{p+1}.
\end{equation}
Thus, the proof is complete.    
\end{proof}

We now present a frequently used lemma.

\begin{lemma}
    Let $F$ be a Gevrey function, and suppose its derivative $F'\in G^{\alpha,L}([-1,1])$. Let $q$ and $ \Delta q$ be functions defined on $K$, and assume $ q+t\Delta q\in G^{\alpha,\bm{L}}(K)$ for all $t\in[0,1]$, where $L,\bm{L}>0$ and $K$ is an m-dimensional compact set. If $\left\lVert q+t\Delta q \right\rVert_{\alpha,\bm{L}}\le\frac{L^{\alpha}}{m^{\alpha-1}}$ for all $t\in [0,1]$, $\left\lVert q\right\rVert_{C^{0}(K)}\ll 1$, $\left\lVert \Delta q\right\rVert_{C^{0}(K)}\ll 1$, and $\left\lVert q+\Delta q\right\rVert_{C^{0}(K)}\ll 1$,
    then we have
    \begin{equation}
        \left\lVert F(q+\Delta q) -F(q)\right\rVert_{\alpha,\bm{L}}\le \left\lVert F' \right\rVert_{\alpha,L} \left\lVert \Delta q\right\rVert_{\alpha,\bm{L}}.
        \nonumber
    \end{equation}
\label{chajv}
\end{lemma}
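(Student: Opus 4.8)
The plan is to use the fundamental theorem of calculus to write $F(q+\Delta q)-F(q)=\Delta q\cdot\int_0^1 F'(q+t\Delta q)\,dt$, and then control the Gevrey norm of the integrand by the Composition Proposition \ref{composition}. First I would apply Proposition \ref{composition} with the outer function $Y=F'\in G^{\alpha,L}([-1,1])$ and the inner map $u=q+t\Delta q$ (an $m$-dimensional Gevrey map into $[-1,1]$, single component so the factor $m^{\alpha-1}$ reads exactly as in the hypothesis and in \eqref{co1}). The hypotheses $\lVert q+t\Delta q\rVert_{\alpha,\bm L}\le L^\alpha/m^{\alpha-1}$ and $\lVert q\rVert_{C^0}, \lVert\Delta q\rVert_{C^0}, \lVert q+\Delta q\rVert_{C^0}\ll 1$ are precisely what is needed to verify the smallness condition \eqref{co1} (the $C^0$ bounds guarantee $u(K)\subset[-1,1]$ and ensure $\lVert u\rVert_{\alpha,\bm L}-\lVert u\rVert_{C^0}$ does not exceed $L^\alpha/m^{\alpha-1}$, since subtracting the $C^0$ part only helps). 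This gives $\lVert F'(q+t\Delta q)\rVert_{\alpha,\bm L}\le\lVert F'\rVert_{\alpha,L}$ for every $t\in[0,1]$, with a bound uniform in $t$.

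Next I would combine this with the Banach algebra property (Proposition \ref{A1}) and the fact that the Gevrey norm is compatible with integration in the parameter $t$ (the norm of an integral is at most the integral of the norms, since $\lVert\cdot\rVert_{\alpha,\bm L}$ is built from suprema of derivatives, and $t$-differentiation commutes with $\partial^k$ in the $K$-variables). Concretely:
\begin{equation}
\left\lVert F(q+\Delta q)-F(q)\right\rVert_{\alpha,\bm L}
=\left\lVert \Delta q\int_0^1 F'(q+t\Delta q)\,dt\right\rVert_{\alpha,\bm L}
\le \left\lVert\Delta q\right\rVert_{\alpha,\bm L}\int_0^1\left\lVert F'(q+t\Delta q)\right\rVert_{\alpha,\bm L}\,dt
\le \left\lVert F'\right\rVert_{\alpha,L}\left\lVert\Delta q\right\rVert_{\alpha,\bm L},
\end{equation}
which is the desired inequality. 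One should also note that the integrand $t\mapsto F'(q+t\Delta q)$ is continuous (indeed smooth) as a map into the Banach space $G^{\alpha,\bm L}(K)$, so the vector-valued integral is well-defined; this follows from $F'$ being $C^\infty$ and the $C^0$-smallness keeping everything inside the domain $[-1,1]$.

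I expect the only real subtlety is the justification that the norm passes through the integral, i.e. $\lVert\int_0^1 g(t)\,dt\rVert_{\alpha,\bm L}\le\int_0^1\lVert g(t)\rVert_{\alpha,\bm L}\,dt$ and that $\int_0^1 F'(q+t\Delta q)\,dt$ is itself a legitimate element of $G^{\alpha,\bm L}(K)$ — this requires observing that $\partial^k\big(\int_0^1 F'(q+t\Delta q)\,dt\big)=\int_0^1\partial^k\big(F'(q+t\Delta q)\big)\,dt$ (differentiation under the integral sign, valid by dominated convergence using the uniform-in-$t$ bounds from Proposition \ref{composition}) followed by the triangle inequality for the $C^0$ norm inside the defining series \eqref{AA3.2}. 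Everything else is a direct bookkeeping application of Propositions \ref{A1} and \ref{composition}; there is no delicate estimate, since the Composition Proposition already absorbs all the combinatorial work.
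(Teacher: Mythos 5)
Your proof is correct and follows essentially the same route as the paper: apply the fundamental theorem of calculus to write $F(q+\Delta q)-F(q)=\int_0^1 F'(q+t\Delta q)\,\Delta q\,dt$, pass the Gevrey norm under the integral, then combine the Banach algebra property (Proposition~\ref{A1}) with the composition estimate (Proposition~\ref{composition}) to bound $\lVert F'(q+t\Delta q)\rVert_{\alpha,\bm L}$ uniformly in $t$ by $\lVert F'\rVert_{\alpha,L}$. The only difference is that you spell out the justification for moving the norm inside the integral and for verifying condition~\eqref{co1}, which the paper leaves implicit.
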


\begin{proof}
    Observe that 
    $$F(q+\Delta q)-F(q)=\int_{0}^{1} F'(q+t\Delta q)\cdot \Delta q dt.$$ 
    Thus, we have
    \begin{equation}
    \begin{split}
        \left\lVert F(q+\Delta q)-F(q) \right\rVert_{\alpha,\bm{L}}
        &=\left\lVert \int_{0}^{1} F'(q+t\Delta q)\cdot \Delta q dt \right\rVert_{\alpha,\bm{L}}        \\
        &\le \int_{0}^{1}\left\lVert F'(q+t\Delta q) \right\rVert_{\alpha,\bm{L}} \left\lVert \Delta q \right\rVert_{\alpha,\bm{L}} dt \\
        &\le\left \lVert F' \right\rVert_{\alpha,L'}  \left\lVert \Delta q \right\rVert_{\alpha,\bm{L}} .\qedhere
    \end{split}
    \end{equation}
\end{proof}


\section{Lyapounov-Schmidt decomposition}

In this section, we describe the Lyapunov-Schmidt decomposition. We begin by defining quasi-periodic functions, along with their associated frequencies and hulls.

\begin{definition}
    A function  $\tilde{q}(t,\tilde{x})$ is called  quasi-periodic in time $t$ if there exist $\lambda\in\mathbb{R}^{b}$ and  $q(\theta,\tilde{x})\in L^{2}(\mathbb{T}^{b+d}) $, with $\theta\in \mathbb{T}^{b}$ and $\tilde{x}\in\mathbb{T}^{d}$, such that
    \begin{equation}
        \tilde{q}(t,\tilde{x})=q(\lambda t,\tilde{x}).
    \end{equation}
Here, $\lambda$ is referred to as the frequency, and $q(\theta,\tilde{x})$ as the hull of $\tilde{q}(t,\tilde{x})$. 
  When it is expressed as a Fourier series,  we obtain
    \begin{equation}
        \tilde{q}(t,\tilde{x})=\sum\limits_{k\in\mathbb{Z}^{b},n\in\mathbb{Z}^{d}} \hat{q}(k,n)e^{i\lambda\cdot k t+in\cdot \tilde{x}},
    \end{equation}
where $\hat{q}(k,n)$ denotes the Fourier coefficients of $q(\theta,\tilde{x})$.
\end{definition}
To find quasi-periodic solutions with frequency $\lambda'$ to equation (\ref{e1}) is equivalent to solve
\begin{equation}
    \frac{1}{i}\lambda'\cdot\partial_{\theta}q-\mathcal{L}q-\varepsilon\frac{\partial H}{\partial\bar{q}}(q,\bar{q})=0,
    \label{e233}
\end{equation}
where $\lambda'\cdot \partial_{\theta}:=\sum\limits_{j=1}^{b}\lambda_{j}'\partial_{\theta_{j}}$. 
Taking the conjugate of (\ref{e233}), we obtain
\begin{equation}
    -\frac{1}{i}\lambda'\cdot\partial_{\theta}\bar{q}-\mathcal{L}'\bar{q}-\varepsilon\frac{\partial H}{\partial q}(q,\bar{q})=0,
    \label{e233.1}
\end{equation}
where $\mathcal{L}'$ is the operator with the multiplier $\{\mu_{n}'=\mu_{-n}\}_{n\in \mathbb{Z}^{d}}$.

The lattice formulation of (\ref{e233}) is as follows:
\begin{equation}
    (k\cdot\lambda'-\mu_n)\hat{q}(n,k)-\varepsilon\widehat{\frac{\partial H}{\partial\bar{q}}}(q,\bar{q})(n,k)=0,\ \ \ \ (n\in\mathbb{Z}^d,k\in\mathbb{Z}^b),
    \label{e2.2}
\end{equation}
where $\widehat{\frac{\partial H}{\partial\bar{q}}}(q,\bar{q})(n,k)$ represents the $(n,k)$-Fourier coefficient of $\frac{\partial H}{\partial\bar{q}}(q,\bar{q})$. If we can find $\lambda'\in \mathbb{R}^b$ and a sequence $\{\hat{q}(n,k)\}_{n\in\mathbb{Z}^d,k\in\mathbb{Z}^b}$ with a certain degree of decay that satisfies  (\ref{e2.2}), then the function
\begin{equation}
    \tilde{q}(t,\tilde{x})=\sum\limits_{k\in\mathbb{Z}^{b},n\in\mathbb{Z}^{d}} \hat{q}(k,n)e^{i\lambda'\cdot k t+in\cdot \tilde{x}}.
    \label{e2.1.1.1}
\end{equation}
with the found $\lambda'$ and $\hat{q}(n,k)$  is a quasi-periodic solution to equation (\ref{e1}) with the corresponding regularity.
Denote by $\left\lVert\cdot\right\rVert$ the norm of $L^{2}(\mathbb{T}^{b+d})$. If there exists $q(\theta,\tilde{x})\in L^{2}(\mathbb{T}^{b+d})$ such that 
\begin{equation}
    \left\lVert\frac{1}{i}\lambda'\cdot\partial_{\theta}q-\mathcal{L}q-\varepsilon\frac{\partial H}{\partial\bar{q}}(q,\bar{q})\right\rVert<\delta
\end{equation}
for $\lambda'\in\mathbb{R}^{b}$, then the quasi-periodic function $\tilde{q}(t,x)$, with the hull $q(\theta,\tilde{x})$ and frequency $\lambda'$, is referred to as an $O(\delta)$ approximate solution to (\ref{e1}). 

Let $\mathcal{R}=\{(n_j,e_j)\vert 1\le j\le b\}$, which we refer to as the resonant set. We specify the Fourier coefficients as
\begin{equation}
    \hat{q}(n_j,e_j)=a_j, \ \   1\le j\le b.
    \label{fixfour}
\end{equation}
To describe the Lyapounov-Schmidt decomposition, we introduce the following definition.

\begin{definition}
    Let $g(\theta,\tilde{x})\in L^{2}(\mathbb{T}^{b+d})$. We define the projection operators $\bm{\Gamma}_{Q}$, $\bm{\Gamma}_{P}$ and $\bm{\Gamma}_{P'}$   as follows:
    \begin{equation}
    \begin{split}
        &\bm{\Gamma}_{Q}g=\sum\limits_{(n,k)\in \mathcal{R}} \hat{g}(n,k)e^{in\cdot \tilde{x}+ik\cdot\theta} ,\\
        &\bm{\Gamma}_{P}g=\sum\limits_{(n,k)\in \mathbb{Z}^{b+d}\setminus\mathcal{R}}\hat{g}(n,k)e^{in\cdot \tilde{x}+ik\cdot\theta},\\
        &\bm{\Gamma}_{P'}g=\sum\limits_{(n,k)\in \mathbb{Z}^{b+d}\setminus(-\mathcal{R})}\hat{g}(n,k)e^{in\cdot \tilde{x}+ik\cdot\theta}.
    \end{split}
    \notag
    \end{equation}
\end{definition}
Denote the left side of (\ref{e233}) by $F(q)$, i.e., 
\begin{equation}
    F(q)=\frac{1}{i}\lambda'\cdot \partial_{\theta}q-\mathcal{L}q-\varepsilon \frac{\partial H}{\partial \bar{q}}(q,\bar{q})
\end{equation}
 then (\ref{e233}) is equivalent to
\begin{equation}
   \bm{\Gamma}_{Q}F(q)=0,
   \label{Qeq}
\end{equation}
\begin{equation}    
      \bm{\Gamma}_{P}F(q)=0.
    \label{Peq}
\end{equation}
Passing equations (\ref{Qeq}) and (\ref{Peq}) to Fourier coefficients,  we obtain the following  lattice equations:
\begin{equation}
    (\lambda_j'-\lambda_j)a_j-\varepsilon\widehat{\frac{\partial H}{\partial\bar{q}}}(q,\bar{q})(n_j,e_j)=0,\ \ 1\le j\le b,
    \label{e2.3}
\end{equation}
\begin{equation}
    (k\cdot\lambda'-\mu_n)\hat{q}(n,k)-\varepsilon\widehat{\frac{\partial H}{\partial\bar{q}}}(q,\bar{q})(n,k)=0, \ \ (n,k)\notin \mathcal{R}.
    \label{e2.4}
\end{equation}
we refer to the finite equations (\ref{e2.3}) as the \textbf{Q-equations}
and the infinite equations (\ref{e2.4}) as the  \textbf{P-equations}.

However, the P-equations cannot be addressed using a standard implicit function theorem due to the presence of small divisors. Consequently, we will solve P-equations using Newton's algorithm. To apply Newton's algorithm, it is necessary to duplicate  equations 
 (\ref{e2.4}).
Note that $\hat{\bar{q}}(-n,-k)=\bar{\hat{q}}(n,k)$. Taking the conjugate of equations (\ref{e2.4}),  we can duplicate them as follows:

\begin{equation}
    \begin{cases}
        (k\cdot\lambda'-\mu_n)\hat{q}(n,k)-\varepsilon\widehat{\frac{\partial H}{\partial\bar{q}}}(q,\bar{q})(n,k)=0 \ \ &(n,k)\notin \mathcal{R},\\
        (-k\cdot\lambda'-\mu_{-n})\hat{\bar{q}}(n,k)-\varepsilon\widehat{\frac{\partial H}{\partial{q}}}(q,\bar{q})(n,k)=0 \ \ &(n,k)\notin -\mathcal{R}.
    \end{cases}
    \label{e2.5}
\end{equation}
We will see that the relation $\hat{\bar{q}}(-n,-k)=\bar{\hat{q}}(n,k)$ is preserved for the approximate solutions in  Newton's algorithm. In fact, the second equality in (\ref{e2.5}) is simply $\bm{\Gamma}_{P'}\bar{F}(q)=0$ modulo a Fourier transform. Therefore,  up to a Fourier transform, (\ref{e2.5}) can be written as follows:
\begin{equation}
    \begin{cases}
        \bm{\Gamma}_{P}F(q)=0,\\
        \bm{\Gamma}_{P'}\bar{F}(q)=0.
    \end{cases}
    \label{phh}
\end{equation}
We also refer to both  (\ref{e2.5}) and (\ref{phh}) as the $P$-equations.

Thus, the general procedure is to determine $\hat{q}(n,k)\vert_{(n,k)\notin \mathcal{R}}$(which depends on $\lambda'$ and $\lambda$)   from the $P$-equations,  and then substitute it  into $Q$-equations  to obtain the new frequencies $\lambda'=(\lambda_1',...,\lambda_b')$. Note that the frequencies $\lambda_1',...,\lambda_b'$ need to be real,  otherwise the function (\ref{e2.1.1.1}) cannot represent a quasi-periodic solution to (\ref{e1}).  We will demonstrate the reality of $\lambda'$ through our construction by applying Proposition \ref{p1} in the Appendix.

\section{The homological equations and the estimate of the remians}

Now we describe the Newton iteration scheme to solve $P$-equations. Recall that $H(q,\bar{q})=f(q\bar{q})$. Let $q\in L^{2}(\mathbb{T}^{b+d})$, $\Delta q\in L^{2}(\mathbb{T}^{b+d})$ and $\textup{supp } \widehat{\Delta q}\subset \mathbb{Z}^{b+d}\setminus\mathcal{R}$, then we have 
\begin{equation}
\begin{split}
    F(q+\Delta q)=&F(q)+\frac{1}{i}\lambda'\cdot\partial_{\theta}\Delta q-\mathcal{L}\Delta q-\varepsilon \frac{\partial^{2}H}{\partial q\partial\bar{q}}(q,\bar{q})\Delta q-\varepsilon\frac{\partial^{2}H}{\partial\bar{q}^2}(q,\bar{q})\overline{\Delta q}   \\
    &-\varepsilon \int_{0}^{1}(1-s)(\Delta q \frac{\partial}{\partial q}+\overline{\Delta q}\frac{\partial}{\partial\bar{q}})^{2}\frac{\partial H}{\partial \bar{q}}(q+s\Delta q, \bar{q}+s\overline{\Delta q})ds.
\end{split}
\label{n.1}
\end{equation}
Taking the conjugate on both sides of (\ref{n.1}),  we have 
\begin{equation}
    \begin{split}
        \bar{F}(q+\Delta q)=\bar{F}(q)-\frac{1}{i}\lambda'\cdot\partial_{\theta}\overline{\Delta q}-\mathcal{L}'\overline{\Delta q}-\varepsilon\frac{\partial^{2}H}{\partial q^{2}}(q,\bar{q})\Delta q-\varepsilon\frac{\partial^2 H}{\partial q\partial\bar{q}}(q,\bar{q})\overline{\Delta q}\\
        -\varepsilon \int_{0}^{1}(1-s)(\Delta q \frac{\partial}{\partial q}+\overline{\Delta q}\frac{\partial}{\partial\bar{q}})^{2}\frac{\partial H}{\partial q}(q+s\Delta q, \bar{q}+s\overline{\Delta q})ds.
    \end{split}
    \label{n.2}
\end{equation}
Summarizing equations (\ref{n.1}) and (\ref{n.2}), we conclude that
\begin{equation}
    \begin{split}
        \begin{pmatrix}
            F(q+\Delta q)\\
            \bar{F}(q+\Delta q)
        \end{pmatrix}
        =&\begin{pmatrix}
            F(q)\\
            \bar{F}(q)
            \end{pmatrix}
            +\widetilde{T}_{q}
            \begin{pmatrix}
                \Delta q\\
                \overline{\Delta q}
            \end{pmatrix}\\    
        &+
            \begin{pmatrix}
                -\varepsilon \int_{0}^{1}(1-s)(\Delta q \frac{\partial}{\partial q}+\overline{\Delta q}\frac{\partial}{\partial\bar{q}})^{2}\frac{\partial H}{\partial \bar{q}}(q+s\Delta q, \bar{q}+s\overline{\Delta q})ds\\
                -\varepsilon \int_{0}^{1}(1-s)(\Delta q \frac{\partial}{\partial q}+\overline{\Delta q}\frac{\partial}{\partial\bar{q}})^{2}\frac{\partial H}{\partial q}(q+s\Delta q, \bar{q}+s\overline{\Delta q})ds
            \end{pmatrix},
    \end{split}
\end{equation}
where
\begin{equation}
    \widetilde{T}_{q}=\left[
    \begin{pmatrix}
        \frac{1}{i}\lambda'\cdot\partial_{\theta}-\mathcal{L}& 0\\
        0 & -\frac{1}{i}\lambda'\cdot\partial_{\theta}-\mathcal{L}'
    \end{pmatrix}
    -\varepsilon
    \begin{pmatrix}
        \frac{\partial^2 H}{\partial q \partial \bar{q}}(q,\bar{q}) &\frac{\partial^2 H}{\partial \bar{q}^2}(q,\bar{q})\\
        \frac{\partial^2 H}{\partial q^2}(q,\bar{q}) & \frac{\partial^2 H}{\partial q\partial \bar{q}}(q,\bar{q})
    \end{pmatrix}
    \right].
    \label{huanmianshangxianxinghuasuanzi}
\end{equation}
Let
\begin{equation}
\widetilde{T}_{q,P}
    \begin{pmatrix}
        \Delta q\\
        \overline{\Delta q}
    \end{pmatrix}
     =-
        \begin{pmatrix}
            \bm{\Gamma}_{P}F(q)\\
            \bm{\Gamma}_{P'}\bar{F}(q)
        \end{pmatrix},  
        \label{n.3}
\end{equation}
where 
\begin{equation}
\widetilde{T}_{q,P}=
    \begin{pmatrix}
    \bm{\Gamma}_{P}&0\\
    0&\bm{\Gamma}_{P'}
\end{pmatrix}
\widetilde{T}_{q}
\begin{pmatrix}
    \bm{\Gamma}_{P}&0\\
    0&\bm{\Gamma}_{P'}
\end{pmatrix}.
\end{equation}

We denote $T_{q}=D-\varepsilon S_{q}$, where
\begin{equation}
    D=
    \begin{pmatrix}
        k\cdot\lambda'-\mu_n&0\\
        0&  -k\cdot\lambda'-\mu_{-n}
    \end{pmatrix},
    \ \ \ \ 
    S_{q}=
    \begin{pmatrix}
        S_{\frac{\partial^2 H}{\partial q \partial \bar{q}}(q,\bar{q})}& S_{\frac{\partial^2 H}{\partial \bar{q}^2}(q,\bar{q})}\\
        S_{\frac{\partial^2 H}{\partial q^2}(q,\bar{q})} & S_{\frac{\partial^2 H}{\partial q \partial \bar{q}}(q,\bar{q})}
    \end{pmatrix},
\end{equation}
and $S_{\phi}$ represents  the Toeplitz operator corresponding to $\phi$-multiplication. The matrix index is $(\pm,n,k)$, where $n\in\mathbb{Z}^{d}$, $k\in\mathbb{Z}^{b}$, and the index of $\begin{pmatrix}
    \widehat{\Delta q}\\
    \widehat{\overline{\Delta q}}
\end{pmatrix}$ is also $(\pm,n,k)$. More precisely, we have
\begin{equation}
        S_{q}((+,n,k),(+,n',k'))=S_{\frac{\partial^2 H}{\partial q \partial \bar{q}}(q,\bar{q})}((n,k),(n',k')),
        \notag
\end{equation}
\begin{equation}
    S_{q}((+,n,k),
        (-,n',k'))=S_{\frac{\partial^2 H}{\partial \bar{q}^2}(q,\bar{q})}((n,k),(n',k')),
        \notag
\end{equation}

\begin{equation}
          S_{q}((-,n,k),(+,n',k'))=S_{\frac{\partial^2 H}{\partial q^2}(q,\bar{q})}((n,k),(n',k')),
          \notag
\end{equation}

\begin{equation}
    S_{q}((-,n,k),(-,n',k'))=S_{\frac{\partial^2 H}{\partial q \partial \bar{q}}(q,\bar{q})}((n,k),(n',k')),
    \notag
\end{equation}

$$
\begin{pmatrix}
    \widehat{\Delta q}\\
    \widehat{\overline{\Delta q}}
\end{pmatrix}(+,n,k)=\widehat{\Delta q}(n,k)
$$
and 
$$
\begin{pmatrix}
    \widehat{\Delta q}\\
    \widehat{\overline{\Delta q}}
\end{pmatrix}(-,n,k)=\widehat{\overline{\Delta q}}(n,k).
$$

Passing equation (\ref{n.3})  to Fourier coefficients,  we obtain the homological equation of (\ref{e2.5}): 
\begin{equation}
    T_{q,P}\begin{pmatrix}
        \widehat{\Delta q}\\
        \widehat{\overline{\Delta q}}
    \end{pmatrix}=-
    \begin{pmatrix}
            R_{P}\widehat{F(q)}\\
            R_{P'}\widehat{\bar{F}(q)}
        \end{pmatrix},
        \label{linear equation}
\end{equation}
where the linearized operator is given by 
\begin{equation}
    T_{q,P}=R_{P\cup P'}(D-\varepsilon S_{q})R_{P\cup P'},
    \label{n.4}
\end{equation}
and $R_{P}$ refers to coordinate restriction to $(\mathbb{Z}^{b+d}\setminus \mathcal{R})\subset\mathbb{Z}^{b+d}$, $R_{P'}$ refers to coordinate restriction to $(\mathbb{Z}^{b+d}\setminus (-\mathcal{R}))\subset\mathbb{Z}^{b+d}$ and $R_{P\cup P'}$ refers to coordinate restriction to $\{(+,n,k)|(n,k)\notin \mathcal{R}\}\cup\{(-,n,k)|(n,k)\notin -\mathcal{R}\}$.

However, we cannot solve (\ref{linear equation}) directly. Instead, we aim to solve an approximation of (\ref{linear equation}) as follows:
\begin{equation}
    T_{q,P,N} \begin{pmatrix}
        \widehat{\Delta q}\\
        \widehat{\overline{\Delta q}}
    \end{pmatrix}
    =\begin{pmatrix}
        -R_{P,N} \widehat{F(q)}\\
        -R_{P',N} \widehat{\bar{F}(q)}
    \end{pmatrix},
    \label{sol}
\end{equation}
i.e.,
\begin{equation}
    \begin{pmatrix}
        \widehat{\Delta q}\\
        \widehat{\overline{\Delta q}}
    \end{pmatrix}
    =(T_{q,P,N})^{-1}\begin{pmatrix}
        -R_{P,N} \widehat{F(q)}\\
        -R_{P',N} \widehat{\bar{F}(q)}
    \end{pmatrix},
    \label{solution}
\end{equation}
where  $T_{q,P,N}=R_{P\cup P',N}T_{q}R_{P\cup P',N}$, and $R_{P,N}$ refers to coordinate restriction to 
$$\left((\mathbb{Z}^{b+d}\setminus \mathcal{R})\cap \{|n|<N,|k|<N\}\right)\subset\mathbb{Z}^{b+d},$$
$R_{P',N}$ refers to coordinate restriction to 
$$\left((\mathbb{Z}^{b+d}\setminus (-\mathcal{R}))\cap \{|n|<N,|k|<N\}\right)\subset\mathbb{Z}^{b+d},$$
and $R_{P\cup P', N}$ refers to coordinate restriction to
$$
\left(\{(+,n,k)|(n,k)\notin \mathcal{R}\}\cup\{(-,n,k)|(n,k)\notin -\mathcal{R}\}\right)\cap\{|n|<N,|k|<N\}.
$$
  The solution $\Delta q$ and $\overline{\Delta q}$ obtained by solving equations (\ref{sol}) are conjugates of each other as the relation $\widehat{\overline{\Delta q}}(-n,-k)=\overline{\widehat{\Delta q}(n,k)}$ holds.  Up to a Fourier transform, equation (\ref{sol}) can be rewritten as:
\begin{equation}
    \widetilde{T}_{q,P,N}
    \begin{pmatrix}
        \Delta q\\
        \overline{\Delta q}
    \end{pmatrix}=
    \begin{pmatrix}
        -\bm{\Gamma}_{P,N}F(q)\\
        -\bm{\Gamma}_{P',N}\bar{F}(q)
    \end{pmatrix},
    \label{solfou}
\end{equation}
where
\begin{equation}
    \widetilde{T}_{q,P,N}
    =\begin{pmatrix}
    \bm{\Gamma}_{P,N}& 0\\
    0 & \bm{\Gamma}_{P',N}
\end{pmatrix}
\widetilde{T}_{q}
\begin{pmatrix}
    \bm{\Gamma}_{P,N}& 0\\
    0 & \bm{\Gamma}_{P',N}
\end{pmatrix}
,
\end{equation}
and $$\bm{\Gamma}_{P,N} h(\theta,\tilde{x})=\sum\limits_{\substack{(n,k)\in\mathbb{Z}^{b+d}\setminus\mathcal{R}\\ |n|<N,|k|<N}}\hat{h}(n,k)e^{i n\cdot \tilde{x}+ik\cdot \theta},$$ 
$$\bm{\Gamma}_{P',N} h(\theta,\tilde{x})=\sum\limits_{\substack{(n,k)\in\mathbb{Z}^{b+d}\setminus(-\mathcal{R})\\ |n|<N,|k|<N}}\hat{h}(n,k)e^{i n\cdot \tilde{x}+ik\cdot \theta}$$ 
for $h\in L^{2}(\mathbb{T}^{b+d})$. 
For $\Delta q,\overline{\Delta q}$ satisfying (\ref{sol}) (i.e.,(\ref{solfou}))  we have
$\textup{supp } \widehat{\Delta q}\subset (\mathbb{Z}^{b+d}\setminus \mathcal{R})\cap\{|n|<N,|k|<N\}$ and 
\begin{align}
    \begin{pmatrix}
        \bm{\Gamma}_{P}F(q+\Delta q)\\
        \bm{\Gamma}_{P'}\bar{F}(q+\Delta q)
    \end{pmatrix}
    =&
    \begin{pmatrix}
        \bm{\Gamma}_{P} F(q)\\
        \bm{\Gamma}_{P'}\bar{F}(q)
    \end{pmatrix}
    +\widetilde{T}_{q,P}
    \begin{pmatrix}
        \Delta q\\
        \overline{\Delta q}
    \end{pmatrix}
    +O(\Delta q^{2})\notag\\
    =&\begin{pmatrix}
        \bm{\Gamma}_{P,N} F(q)\\
        \bm{\Gamma}_{P',N}\bar{F}(q)
    \end{pmatrix}+
    \begin{pmatrix}
        (\bm{\Gamma}_{P}-\bm{\Gamma}_{P,N}) F(q)\\
        (\bm{\Gamma}_{P'}-\bm{\Gamma}_{P',N})\bar{F}(q)
    \end{pmatrix}\notag\\
    & +\widetilde{T}_{q,P,N}
    \begin{pmatrix}
        \Delta q\\
        \overline{\Delta q}
    \end{pmatrix}
    +(\widetilde{T}_{q,P}-\widetilde{T}_{q,P,N})
    \begin{pmatrix}
        \Delta q\\
        \overline{\Delta q}
    \end{pmatrix}
    +O(\Delta q^{2})\notag\\
    =&\begin{pmatrix}
        (\bm{\Gamma}_{P}-\bm{\Gamma}_{P,N}) F(q)\\
        (\bm{\Gamma}_{P'}-\bm{\Gamma}_{P',N})\bar{F}(q)
    \end{pmatrix}
    +(\widetilde{T}_{q,P}-\widetilde{T}_{q,P,N})
    \begin{pmatrix}
        \Delta q\\
        \overline{\Delta q}
    \end{pmatrix}
    +O(\Delta q^{2}),\label{n.7}
\end{align}
where
\begin{equation}
    O(\Delta q^{2}):=
    \begin{pmatrix}
        -\varepsilon\bm{\Gamma}_{P} \int_{0}^{1}(1-s)(\Delta q \frac{\partial}{\partial q}+\overline{\Delta q}\frac{\partial}{\partial\bar{q}})^{2}\frac{\partial H}{\partial \bar{q}}(q+s\Delta q, \bar{q}+s\overline{\Delta q})ds\\
        -\varepsilon \bm{\Gamma}_{P'} \int_{0}^{1}(1-s)(\Delta q \frac{\partial}{\partial q}+\overline{\Delta q}\frac{\partial}{\partial q})^{2}\frac{\partial H}{\partial q}(q+s\Delta q, \bar{q}+s\overline{\Delta q})ds
    \end{pmatrix},
\end{equation}
and we denote the first and the second components of $O(\Delta q^{2})$  by $O(\Delta q^{2})_{1}$ and $O(\Delta q^{2})_{2}$ respectively.
Assume one succeeds in ensuring that
\begin{equation}
    |T_{q,P,N}^{-1}((\nu,x),(\nu',x'))|<B(r) e^{-\alpha L^{(1)}|x-x'|_{\alpha}},
    \label{n.8.1}
\end{equation}
where $\nu,\nu'=\pm$, $x,x'\in\mathbb{Z}^{b+d}$, $B(r)$ and $N$ is to be specified,  and
\begin{equation}
\left\{
    \begin{split}
        &\left\lVert\bm{\Gamma}_{P} F(q) \right\rVert_{\mathcal{F}_{\alpha, L^{(0)}}}<\epsilon,\\
        &\left\lVert \bm{\Gamma}_{P'} \bar{F}(q) \right\rVert_{\mathcal{F}_{\alpha, L^{(0)}}}<\epsilon,\\
        &\left\lVert q \right\rVert_{\alpha,L^{(0)}}\ll 1,
    \end{split}\right.
    \label{n.8}
\end{equation}
where $\epsilon$ is small, $L^{(0)},L^{(1)}>0$ and $ L^{(0)}>L^{(1)}$. 

First, we estimate $\Delta q$. According to equation (\ref{solution}),  for any $x\in\mathbb{Z}^{b+d}\setminus\mathcal{R}$ and $|x|<N$, we have
\begin{equation}
    \widehat{\Delta q}(x)=\sum\limits_{\substack{|y|<N\\ y\in\mathbb{Z}^{b+d}\setminus\mathcal{R}}}T_{q,P,N}^{-1}((+,x),(+,y))\widehat{F(q)}(y)+\sum\limits_{\substack{|y|<N\\ y\in\mathbb{Z}^{b+d}\setminus-\mathcal{R}}}T_{q,P,N}^{-1}((+,x),(-,y))\widehat{\bar{F}(q)}(y).
    \notag
\end{equation}
Since $\left\lVert \bm{\Gamma}_{P}F(q) \right\rVert_{\mathcal{F}_{\alpha, L^{(0)}}}<\epsilon$ and $\left\lVert \bm{\Gamma}_{P'}\bar{F}(q) \right\rVert_{\mathcal{F}_{\alpha, L^{(0)}}}<\epsilon$, we have
$$|\widehat{F(q)}(x)|<\epsilon e^{-\alpha L^{(0)} |x|_{\alpha}}, \textup{ for } x\in\mathbb{Z}^{b+d}\setminus \mathcal{R},$$   and 
$$|\widehat{\bar{F}(q)}(x)|<\epsilon e^{-\alpha L^{(0)} |x|_{\alpha}}, \textup{ for } x\in\mathbb{Z}^{b+d}\setminus-\mathcal{R}.$$ 
Thus,
\begin{align}
    |\widehat{\Delta q}(x)|
        \le& \sum\limits_{\substack{|y|<N\\ y\in\mathbb{Z}^{b+d}\setminus\mathcal{R}}}|T_{q,P,N}^{-1}((+,x),(+,y))||\widehat{F(q)}(y)| \notag \\
        &+\sum\limits_{\substack{|y|<N\\ y\in\mathbb{Z}^{b+d}\setminus-\mathcal{R}}}|T_{q,P,N}^{-1}((+,x),(-,y))||\widehat{\bar{F}(q)}(y)| \notag \\
        \le&  \sum\limits_{\substack{|y|<N\\ y\in\mathbb{Z}^{b+d}\setminus\mathcal{R}}} \epsilon B(r)e^{-\alpha L^{(1)}|x-y|_{\alpha}}e^{-\alpha L^{(0)} |y|_{\alpha}}+
        \sum\limits_{\substack{|y|<N\\ y\in\mathbb{Z}^{b+d}\setminus-\mathcal{R}}}\epsilon B(r)e^{-\alpha L^{(1)}|x-y|_{\alpha}}e^{-\alpha L^{(0)} |y|_{\alpha}} \notag \\
        \le& \epsilon B(r) C_{\alpha} \delta^{-\alpha(b+d)}e^{-L^{(2)}|x|_{\alpha}},\label{estimateofdq}
\end{align}
where $0<\delta<L^{(1)}$, $L^{(2)}=L^{(1)}-\delta$ and $C_{\alpha}=\alpha^{-\alpha(b+d)}(\int_{0}^{+\infty}e^{-x^{1/\alpha}}dx)^{b+d}$. 

In the following part of this section, we  make the following  assumptions:
\begin{itemize}
    \item $\epsilon=\varepsilon^{a^{r}} \textup{for some } 1<a<2$,
    \item $\log B(r)< Cr^{C} \log\log\frac{1}{\varepsilon}$,
    \item $\delta\sim r^{-C}$.
\end{itemize}
Based on these assumptions, we derive that $$|\widehat{\Delta q}(x)|\le \epsilon^{1-} e^{-\alpha L^{(2)} |x|_{\alpha}} \textup{ for } x\in \mathbb{Z}^{b+d}\setminus\mathcal{R}.$$ 
 Additionally, the norm $\left\lVert \Delta q \right\rVert_{\mathcal{F}_{\alpha,L^{(2)}-\delta}}$ can be bounded as follows:
\begin{align}
    \left\lVert \Delta q \right\rVert_{\mathcal{F}_{\alpha,L^{(2)}-\delta}}    
    &=\sum\limits_{x\in \mathbb{Z}^{b+d}}|\widehat{\Delta q}(x)|\cdot e^{\alpha(L^{(2)}-\delta)|x|_{\alpha}}\notag\\
    &\le\epsilon^{1-} \sum\limits_{x\in \mathbb{Z}^{b+d}}e^{-\alpha \delta |x|_{\alpha}}\notag\\
    &\le \epsilon^{1-}C_{\alpha}\delta^{-\alpha (b+d)}\notag\\
    &\le \epsilon^{1-}.
\end{align}

Now we estimate the three terms in (\ref{n.7}).

Firstly, we consider $O(\Delta q^{2})$.  Let $L^{(3)}=L^{(2)}-\delta$, then $\left\lVert \Delta q \right\rVert_{\mathcal{F}_{\alpha,L^{(3)}}}\le \epsilon^{1-}$.  Therefore, we have
$$\left\lVert O(\Delta q^{2})_{1}\right\rVert_{\mathcal{F}_{\alpha,L^{(4)}}}<\epsilon^{2-} \textup{ and }\left\lVert O(\Delta q^{2})_{2}\right\rVert_{\mathcal{F}_{\alpha,L^{(4)}}}<\epsilon^{2-},$$ 
where $L^{(4)}=L^{(3)}-\delta$.

Recalling the definition of $T_{q}$, we get
\begin{equation}
    \begin{cases}
        |T_{q}((\nu,x),(\nu',x'))|<\varepsilon,\ \ \nu,\nu'=\pm, x\ne x', |x-x'|<\delta^{-C},\\
        |T_{q}((\nu,x),(\nu',x'))|< e^{-\alpha (L^{(0)}-\delta)|x-x'|_{\alpha}},\ \ |x-x'|\ge \delta^{-C},
    \end{cases}
\end{equation}
since $\left\lVert q \right\rVert_{\alpha,L^{(0)}}\ll 1$. 

Secondly, we consider the term: 
$$\begin{pmatrix}
    (\bm{\Gamma}_{P}-\bm{\Gamma}_{P,N})F(q)\\  (\bm{\Gamma}_{P'}-\bm{\Gamma}_{P',N})\bar{F}(q)
\end{pmatrix}.$$ 
Taking $(\bm{\Gamma}_{P}-\bm{\Gamma}_{P,N})F(q)$ for example, since $\left\lVert \bm{\Gamma}_{P} F(q) \right\rVert_{\mathcal{F}_{\alpha,L^{(0)}}}<\epsilon$, we have
\begin{align}
    \left\lVert (\bm{\Gamma}_{P}-\bm{\Gamma}_{P,N})F(q) \right\rVert_{\mathcal{F}_{\alpha,L^{(2)}}}
        &=\sum\limits_{|x|\ge N,x\notin\mathcal{R}}|\widehat{F(q)}(x)|e^{\alpha L^{(2)}|x|_{\alpha}}\notag\\
        &\le \sum\limits_{|x|\ge N,x\notin\mathcal{R}} \epsilon e^{-\alpha L^{(0)} |x|_{\alpha}+\alpha L^{(2)}|x|_{\alpha}}\notag\\
        &\le \epsilon \sum\limits_{|x|\ge N,x\notin\mathcal{R}} e^{-\alpha \delta |x|_{\alpha}}\notag\\
        &\lesssim \epsilon (\alpha \delta)^{-\alpha (b+d)}e^{-\frac{1}{2}\alpha \delta N^{1/\alpha}}\label{Gamm}.
\end{align}
If we choose $N=A^{\alpha r}$ for large $A$ (for example $A=(\log \frac{1}{\varepsilon})^{\alpha}$),  then $\epsilon r^{C}e^{-\frac{1}{2}r^{-C}A^{r}}<\epsilon^{2}$. 

Thirdly, we consider 
$$(\widetilde{T}_{q,P}-\widetilde{T}_{q,P,N})
    \begin{pmatrix}
        \Delta q\\
        \overline{\Delta q}
    \end{pmatrix}.$$
For $\nu=\pm$ and $|x|<N$, we have
\begin{equation}
\begin{split}
    &\left((\widetilde{T}_{q,P}-\widetilde{T}_{q,P,N})
    \begin{pmatrix}
        \Delta q\\
        \overline{\Delta q}
    \end{pmatrix}\right)^{\land}(\nu,x)\\
    =&\sum\limits_{|y|< N}[T_{q,P}((\nu,x),(+,y))-T_{q,P,N}((\nu,x),(+,y))]\widehat{\Delta q}(y)+\\
    &\sum\limits_{|y|< N}[T_{q,P}((\nu,x),(-,y))-T_{q,P,N}((\nu,x),(-,y))]\widehat{\overline{\Delta q}}(y)\\
    =&0.
\end{split}
\notag
\end{equation}
For $\nu=\pm$ and  $|x|\ge N$, we have
\begin{align}
    &\left((\widetilde{T}_{q,P}-\widetilde{T}_{q,P,N})
    \begin{pmatrix}
        \Delta q\\
        \overline{\Delta q}
    \end{pmatrix}\right)^{\land}(\nu,x)\notag\\
        =&\sum\limits_{|y|<N}|T_{q,P}((\nu,x),(+,y))\widehat{\Delta q}(y)|+\sum\limits_{|y|<N}|T_{q,P}((\nu,x),(-,y))\widehat{\overline{\Delta q}}(y)|\notag\\
        =&\sum\limits_{|x-y|<\delta^{-C}}N^{2}(|\widehat{\Delta q}(y)|+|\widehat{\overline{\Delta q}}(y)|)+\sum\limits_{|x-y|\ge \delta^{-C}} e^{-\alpha(L^{(0)}-\delta)|x-y|_{\alpha}}(|\widehat{\Delta q}(y)|+|\widehat{\overline{\Delta q}}(y)|)\notag\\
        \le& \epsilon^{1-}\left(\sum\limits_{|x-y|<\delta^{-C}}e^{-\alpha L^{(3)} |y|_{\alpha}}+\sum\limits_{|x-y|\ge \delta}e^{-\alpha L^{(3)}|x-y|_{\alpha}}e^{-\alpha L^{(3)} |y|_{\alpha}}\right)\notag\\
        \le& \epsilon^{1-}\left( \delta^{-C}e^{\alpha L^{(3)} \delta^{-C}}e^{-\alpha L^{(3)}|x|_{\alpha}}+C_{\alpha}\delta^{-\alpha (b+d)}e^{-\alpha L^{(4)}|x|_{\alpha}} \right)\notag\\
        \le& \epsilon^{1-}e^{-\alpha L^{(4)} |x|_{\alpha}}.\label{ttn}
\end{align}
Denote by 
$$\left((\widetilde{T}_{q,P}-\widetilde{T}_{q,P,N})
    \begin{pmatrix}
        \Delta q\\
        \overline{\Delta q}
    \end{pmatrix}\right)_{1}$$
    and 
    $$\left((\widetilde{T}_{q,P}-\widetilde{T}_{q,P,N})
    \begin{pmatrix}
        \Delta q\\
        \overline{\Delta q}
    \end{pmatrix}\right)_{2}$$
    the first and the second components of $(\widetilde{T}_{q,P}-\widetilde{T}_{q,P,N})
    \begin{pmatrix}
        \Delta q\\
        \overline{\Delta q}
    \end{pmatrix}$ respectively.
 Let $L^{(5)}=L^{(4)}-\delta$,  as discussion in (\ref{Gamm}).  We then have 
\begin{equation}
\left\{
\begin{split}
    \left\lVert \left((\widetilde{T}_{q,P}-\widetilde{T}_{q,P,N})
    \begin{pmatrix}
        \Delta q\\
        \overline{\Delta q}
    \end{pmatrix}\right)_{1} \right\rVert_{\mathcal{F}_{\alpha,L^{(5)}}}\le \epsilon^{1-} (\alpha\delta)^{-\alpha(b+d)}e^{-\frac{1}{2}\alpha\delta N^{1/\alpha}}<\epsilon^{2},\\
    \left\lVert \left((\widetilde{T}_{q,P}-\widetilde{T}_{q,P,N})
    \begin{pmatrix}
        \Delta q\\
        \overline{\Delta q}
    \end{pmatrix}\right)_{2} \right\rVert_{\mathcal{F}_{\alpha,L^{(5)}}}\le \epsilon^{1-} (\alpha\delta)^{-\alpha(b+d)}e^{-\frac{1}{2}\alpha\delta N^{1/\alpha}}<\epsilon^{2}.
\end{split}\right.
\end{equation}
Thus, we have $\left\lVert \bm{\Gamma}_{P} F(q+\Delta q) \right\rVert_{\mathcal{F}_{\alpha,L^{(5)}}}<\epsilon^{2-}$  and $\left\lVert \bm{\Gamma}_{P'} \bar{F}(q+\Delta q) \right\rVert_{\mathcal{F}_{\alpha,L^{(5)}}}<\epsilon^{2-}$.

Thus under the assumptions (\ref{n.8.1}) and (\ref{n.8}), we  can find $\Delta q$ such that
\begin{equation}
\left\{
    \begin{split}
        &\left\lVert \bm{\Gamma}_{P} F(q+\Delta q) \right\rVert_{\mathcal{F}_{\alpha,L^{(5)}}}<\epsilon^{2-}<\varepsilon^{a^{r+1}},\\
        &\left\lVert \bm{\Gamma}_{P'} \bar{F}(q+\Delta q) \right\rVert_{\mathcal{F}_{\alpha,L^{(5)}}}<\epsilon^{2-}<\varepsilon^{a^{r+1}},\\
        &\left\lVert q+\Delta q \right\rVert_{\alpha, L^{(3)}}\ll 1.
    \end{split}\right.
    \label{n.8.2}
\end{equation}
Moreover, we have
\begin{equation}
\left\{
    \begin{split}
        &\textup{supp } \widehat{\Delta q}\subset \{x\in\mathbb{Z}^{b+d}| |x|<N\}\cap\mathcal{R},\\
        &\left\lVert \Delta q \right\rVert_{\mathcal{F}_{\alpha,L^{(3)}}}<\epsilon^{1-}.
    \end{split}\right.
\end{equation}
Thus our main task is to obtain  (\ref{n.8.1}) at each step, i.e., the estimate of the inverse of the linearized operator $T_{q,P,N}^{-1}$. In the sequel, we refer to $T_{q,P,N}^{-1}$ 
 as the Green function.

Now we consider the Q-equations:
\begin{equation}
    (\lambda'_j-\lambda_j)a_j-\varepsilon\widehat{\frac{\partial H}{\partial \bar{q}}}(q,\bar{q})(n_j,e_j)=0,\ \ 1\le j \le b,
\end{equation}
i.e.,
\begin{equation}
    \lambda'_j=\lambda_j+\varepsilon\frac{1}{a_j}\widehat{\frac{\partial H}{\partial \bar{q}}}(q,\bar{q})(n_j,e_j).
    \label{Q-equation}
\end{equation}

In fact, to maintain the bound of the Green function, some conditions on $(\lambda,\lambda',a)$ need to be satisfied. However, we assume  that $\hat{q}|_{(n,k)\notin \mathcal{R}}$ are defined as  $C^1$ functions over the entire $(\lambda,\lambda')$-parameter space $I_0 \times I_0\subset \mathbb{R}^{2b}$ (we will extend q to the entire parameter space). Moreover,  assume  $\left\lVert \partial q \right\rVert_{C(I_0\times I_0)}<C$ (where $\partial$ refers to derivative with respect to $\lambda$ or $\lambda'$), then we have $|\partial\widehat{\frac{\partial H}{\partial \bar{q}}}(n_j,e_j)|<C$. 

Let 
\begin{equation}
G(\lambda,\lambda')=\left(\lambda_j'-\lambda_j-\varepsilon\frac{1}{a_j}\widehat{\frac{\partial H}{\partial \bar{q}}}(q,\bar{q})(n_j,e_j)\right)_{j=1,...,b},
\end{equation}
 then $$\frac{\partial G(\lambda,\lambda')}{\partial \lambda'}=Id+O(\varepsilon),$$ 
 and
 $$\det\left(\frac{\partial G(\lambda,\lambda')}{\partial \lambda'}\right)\ne 0 \textup{  for all  }(\lambda,\lambda')\in I_0\times I_0.$$
Thus, we can solve $\lambda'$ from (\ref{Q-equation}) to get  a solution such that:
\begin{equation}
\lambda'=\lambda+\varepsilon \varphi(\lambda), \ \ 
        \left\lVert \varphi \right\rVert_{C^{1}(I_0)}<C.
\end{equation}
If there are two functions $q^{(1)}, q^{(2)}$ satisfying
$$\left\lVert  q^{(1)} \right\rVert_{C^{1}(I_0)}<C,\ \ \left\lVert  q^{(2)} \right\rVert_{C^{1}(I_0)}<C,$$
and we denote their respective solutions as $\lambda'=\lambda+\varepsilon \varphi^{(1)}(\lambda)$ and $\lambda'=\lambda+\varepsilon \varphi^{(2)}(\lambda)$, 
then we have the following inequality:
\begin{equation}
    \left\lVert\varphi^{(1)}-\varphi^{(2)}\right\rVert_{C^{0}(I_0)}\lesssim\left\lVert q^{(1)}-q^{(2)}\right\rVert_{C^{0}(I_0)}.
\end{equation}

\section{The Iterative Lemma }
We define the following iterative constants and domains:
\begin{itemize}
    \item $\alpha>1$, $L_0>0$ corresponding to the Gevrey function $f$.
    \item $r\ge 0$, the number of Newton iteration steps.
    \item $q_r(\lambda,\lambda')(\theta,\tilde{x})\in L^{2}(\mathbb{T}^{b+d})$, the approximate solution at the $r$-th step. 
    \item $q_0(\theta,\tilde{x})=\sum\limits_{1\le j\le b}a_j e^{i n_j\cdot \tilde{x}+i e_j\cdot \theta}$.
    \item $\lambda'=\lambda+\varepsilon\varphi_{r}(\lambda)$ is the solution of \begin{equation}
    (\lambda'_j-\lambda_j)a_j-\varepsilon\widehat{\frac{\partial H}{\partial \bar{q}}}(q_{r},\bar{q}_{r})(n_j,e_j)=0,\ \ 1\le j \le b.
    \notag
\end{equation}
    \item $A=(\log \frac{1}{\varepsilon})^{\alpha}$, $N_r=A^{r}$.
    \item $\epsilon_r=\varepsilon^{(\frac{4}{3})^{r}}$.
    \item $\delta_r=\frac{3 L_0}{10\pi^{2}r^2}$.
    \item $L_r=L_0-10\sum\limits_{j=1}^{r}\delta_j$, and for $1\le j\le 10$, $L_{r}^{(j)}=L_{r-1}-j\delta_{r}$, $L_{r}^{(0.j)}=L_{r-1}-\frac{j\delta_{r}}{10}$.
    \item $B(0,N):=\{x\in \mathbb{Z}^{b+d}:|x|<N\}$.
    \item $\Gamma_{r}$, the graph of $\lambda'=\lambda+\varepsilon\varphi_{r}(\lambda)$. 
\end{itemize}
Choose constants $C_1,C_2,C_3$ such that $C_1>C_2+2$ and $C_2>C_3>10$. 
We now state the following Iterative Lemma:

\begin{lemma}[\textbf{Iterative Lemma}]
 Let $r\ge 1$, and for each step $r$, there  exists an  approximate solution $q_r(\lambda,\lambda')(\theta,\tilde{x})\in L^{2}(\mathbb{T}^{b+d})$  defined on $I_0\times I_0\subset \mathbb{R}^{2b}$, and these solutions are $C^{1}$ with respect to $(\lambda,\lambda')$. They satisfy the following statements:
\begin{itemize}
    \item[\textbf{(r.1)}] $\textup{supp }  \hat{q}_r|_{\mathbb{Z}^{b+d}\setminus\mathcal{R}}\subset B(0,N_r)$, and $\hat{q}_{r}(x)\in\mathbb{R}$ for $x\in\mathbb{Z}^{b+d}$.
    \item[\textbf{(r.2)}] Let $\Delta q_r=q_r-q_{r-1}$. Then $\textup{supp } \widehat{\Delta q}_{r}\subset B(0,N_{r})\setminus\mathcal{R}$ and 
$\left\lVert \Delta q_r \right\rVert_{C^{1}(I_0\times I_0)}<\epsilon_{r-1}^{1-}$.
    \item[\textbf{(r.3)}] There exists a collection $\Lambda_r$ of intervals $I$ in $\mathbb{R}^{2b}$ of size $A^{-\alpha (r+2)^{C_1}}$,   such that:
    \begin{itemize}
        \item[\textbf{(r.3.a)}] Let $T=T_{q_{r-1}}$. For $(\lambda,\lambda')\in \mathop{\cup}\limits_{I\in \Lambda_{r}} I$,  we have
            \begin{equation}
                |T_{P,N_r}^{-1}((\nu,x),(\nu',x'))|<2B(r) e^{-\alpha L_r^{(1)} |x-x'|_{\alpha}}\ \  \text{for}\ \ \nu, \nu'=\pm,\    x,x'\in \mathbb{Z}^{b+d}, 
                \label{rbujvzhen}
            \end{equation}
        where $B(r)=A^{r^{C_3}}$.
    \item[\textbf{(r.3.b)}] Let  $I\in \Lambda_r$. Then  $q_{r}(\lambda,\lambda')$ is given by a rational function of $(\lambda,\lambda')$ of degree at most $A^{r^{3}}$ on $I$, and we have
        \begin{equation}
        \left\{
            \begin{split}
            &\left\lVert \Delta q_r\right\rVert_{\alpha,  L_{r}}<\epsilon_{r-1}^{1-},\\
            &\left\lVert \bm{\Gamma}_{P} F(q_r) \right\rVert_{\mathcal{F}_{\alpha,L_r}}\lesssim\epsilon_{r},\\
            &\left\lVert \bm{\Gamma}_{P'} \bar{F}(q_r) \right\rVert_{\mathcal{F}_{\alpha,L_r}}\lesssim\epsilon_{r}.
            \end{split}\right.
        \label{rbushuliang}
    \end{equation}
    \item[\textbf{(r.3.c)}] For $I\in \Lambda_{r}$, regarding $q_r$ as a function defined on $I\times \mathbb{T}^{b+d}$, we have $q_r,\Delta q_r\in G^{\alpha,A^{-(r+2)^{C_1}},L_r}(I\times \mathbb{T}^{b+d})$, and  
    $$\left\lVert \Delta q_{r} \right\rVert_{\alpha,A^{-(r+2)^{C_1}},L_r}<\epsilon_{r-1}^{1-}.$$
    Moreover,   $\left\lVert q_r \right\rVert_{\alpha,A^{- (r+2)^{C_1}},L_r}\ll 1$.
    \item[\textbf{(r.3.d)}] Each $I\in \Lambda_r$ is contained in an interval $I'\in \Lambda_{r-1}$. 
    For $r\gtrsim \log\log\frac{1}{\varepsilon}$, we have
        \begin{equation}
            \text{mes}_{b}\left( \Gamma_r \cap\left( \mathop{\cup}\limits_{I'\in \Lambda_{r-1}} I'\setminus\mathop{\cup}\limits_{I\in\Lambda_{r}}I \right) \right)<[\exp\exp(\log r)^{1/4}]^{-1}.
        \label{measureestimate}
        \end{equation}
    For $r\lesssim \log\log\frac{1}{\varepsilon}$, we have
        \begin{equation}
            \text{mes}_{b}\left( \Gamma_r \cap\left( I_0\times I_0 \setminus\mathop{\cup}\limits_{I\in\Lambda_{r}}I \right) \right)<(\log\log\frac{1}{\varepsilon})^{-\frac{1}{2}}.
        \label{measureestimateyouxianbu}
        \end{equation}
    \end{itemize}
    \item[\textbf{(r.4)}] On $\Gamma_r \cap (\mathop{\cup}\limits_{I\in \Lambda_r} I)$, we have
        \begin{equation}
        \left\lVert\frac{1}{i}\lambda'\cdot \partial_{\theta}q_{r}-\mathcal{L}q_r-\varepsilon\frac{\partial H}{\partial \bar{q}}(q_r,\bar{q_r})\right\rVert_{\mathcal{F}_{\alpha,L_r}}\lesssim\epsilon_{r}.
        \end{equation}
    i.e.,  $q_{r}(\lambda't,x)$ is an $O(\epsilon_{r})$ approximate solution of (\ref{e1}) for $(\lambda,\lambda')\in \Gamma_{r}$.
\end{itemize}
\label{irlnls}
\end{lemma}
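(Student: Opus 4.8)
The plan is to argue by induction on $r$, treating the passage from step $r-1$ to step $r$ by the Newton iteration already set up in Section 4, so that the bulk of the work is organizing the hypotheses into the exact form required to invoke the estimates around equations (\ref{n.8.1})--(\ref{n.8.2}). First I would establish the base case $r=1$ directly: take $q_0=\sum_j a_j e^{in_j\cdot\tilde x+ie_j\cdot\theta}$, note that $\bm{\Gamma}_P F(q_0)$ and $\bm{\Gamma}_{P'}\bar F(q_0)$ are $O(\varepsilon)$ in $\mathcal{F}_{\alpha,L_0}$ (the linear part vanishes on $q_0$ once $\lambda'=\lambda$, and the nonlinear term carries the factor $\varepsilon$), and verify the Green's function bound (\ref{rbujvzhen}) for $T_{q_0,P,N_1}^{-1}$ on a collection $\Lambda_1$ of small intervals. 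The separation-of-singular-sites and Cartan-type argument producing $\Lambda_1$ together with the measure bound (\ref{measureestimateyouxianbu}) is deferred to Section 6, so at this stage I would simply cite that the required $\Lambda_1$ exists and record that $mes$ of its complement along $\Gamma_1$ is controlled.

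For the inductive step, assume (r$-1$.1)--(r$-1$.4). I would run one Newton step: solve the truncated linearized equation (\ref{sol}) on each interval $I\in\Lambda_{r-1}$ where the Green's function estimate holds, set $\Delta q_r=(T_{q_{r-1},P,N_r})^{-1}(\cdots)$, and let $q_r=q_{r-1}+\Delta q_r$. The estimate (\ref{estimateofdq}) and the displayed consequences give $\|\Delta q_r\|_{\mathcal{F}_{\alpha,L_r^{(3)}}}<\epsilon_{r-1}^{1-}$; the quadratic error control (\ref{n.7})--(\ref{ttn}) together with the choices $N_r=A^r$, $\epsilon_r=\varepsilon^{(4/3)^r}$, $\delta_r\sim r^{-2}$ yields $\|\bm{\Gamma}_P F(q_r)\|_{\mathcal{F}_{\alpha,L_r}}\lesssim\epsilon_r$ and likewise for $\bm{\Gamma}_{P'}\bar F(q_r)$, which is (r.2), (r.3.b) (the $\mathcal{F}$-norm parts), and (r.4). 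The support statement in (r.1)--(r.2) is immediate from $R_{P,N_r}$, and reality $\hat q_r(x)\in\mathbb{R}$ follows by induction using the conjugation symmetry $\widehat{\overline{\Delta q}}(-x)=\overline{\widehat{\Delta q}(x)}$ noted after (\ref{sol}), together with the reality of the multipliers $\mu_n$ and of $f,f',f''$. The Q-equation analysis at the end of Section 4 provides $\varphi_r$ with $\|\varphi_r\|_{C^1}<C$ and the Lipschitz dependence on $q_r$, giving $\Gamma_r$ and the $C^1$ bounds on $q_r$ over $I_0\times I_0$ after extension.

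The genuinely new point, and the one I expect to be the main obstacle, is (r.3.c): upgrading the $\mathcal{F}_{\alpha,L_r}$-type bounds, which live on the real parameter slice $\Gamma_{r-1}$, to a genuine Gevrey bound $\|\Delta q_r\|_{\alpha,A^{-(r+2)^{C_1}},L_r}<\epsilon_{r-1}^{1-}$ in the parameter variables $(\lambda,\lambda')$ as well. Since we never pass to an analytic approximation, there is no Cauchy estimate available on a complex parameter polydisc; instead I would exploit that, on each $I\in\Lambda_r$, $q_r$ (equivalently $\Delta q_r$) is a rational function of $(\lambda,\lambda')$ of controlled degree $\le A^{r^3}$ (from Cramer's rule applied to $T_{q_{r-1},P,N_r}$, whose entries are polynomial in $(\lambda,\lambda')$ after the previous steps), and apply the Markov inequality to bound $\partial_\lambda^{k_1}\partial_{\lambda'}^{k_2}\Delta q_r$ on $I$ by $(\text{size}(I))^{-|k|}\,(\deg)^{2|k|}$ times the sup-norm. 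The arithmetic $\text{size}(I)=A^{-\alpha(r+2)^{C_1}}$, $\deg\le A^{r^3}$, $C_1>C_2+2$ is then arranged precisely so that $\sum_k (L_1^{|k_1|}/k_1!)^\alpha(\cdots)^{|k|}$ converges and reproduces the $\epsilon_{r-1}^{1-}$ bound with Gevrey width $A^{-(r+2)^{C_1}}$ in the parameters, while the $\theta$-width stays $L_r$ by the $\mathcal{F}$-estimate and Proposition \ref{pe}(1). Finally, (r.3.a) and (r.3.d) — the new collection $\Lambda_r\subset\Lambda_{r-1}$ on which (\ref{rbujvzhen}) holds with $B(r)=A^{r^{C_3}}$, and the measure bounds (\ref{measureestimate})--(\ref{measureestimateyouxianbu}) — are the multiscale/semialgebraic heart of the argument and I would invoke them from Section 6, using the matrix-valued Cartan theorem of Appendix C and the coupling Lemma \ref{couplelemma1}, noting only here that the rationality and degree control from (r.3.b) is exactly what feeds the semialgebraic complexity bounds needed there.
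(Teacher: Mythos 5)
Your overall plan follows the paper's structure (base case via Neumann, inductive Newton step, defer the multiscale/semialgebraic work to Section 6, use Markov-type estimates to get Gevrey control in the parameters), and you correctly identify (r.3.c) as the genuinely new difficulty. However, the route you sketch for (r.3.c) has a gap at its foundation.

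You claim that the entries of $T_{q_{r-1},P,N_r}$ are "polynomial in $(\lambda,\lambda')$ after the previous steps," so that Cramer's rule makes $\widehat{\Delta q_r}(x)$ rational of degree $\le A^{r^3}$ and Markov's inequality can be applied. This is not the case. The off-diagonal part $S_{q_{r-1}}$ has entries built from $\frac{\partial^2 H}{\partial q\partial\bar q}(q_{r-1},\bar q_{r-1}) = f''(|q_{r-1}|^2)|q_{r-1}|^2 + f'(|q_{r-1}|^2)$ and the like. Even though $q_{r-1}$ is rational in $(\lambda,\lambda')$ by the inductive hypothesis, $f,f',f''$ are only Gevrey, so the composition $f''(|q_{r-1}|^2)$ is not a polynomial or rational function of $(\lambda,\lambda')$. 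Cramer's rule therefore does not yield a rational $\widehat{\Delta q_r}(x)$, and Markov's inequality (which is for polynomials) does not apply. This is precisely why the paper devotes Lemma \ref{di} to \emph{polynomial replacement}: on a slightly finer mesh $\Lambda_{r-1}^{(1)}$ of intervals of size $A^{-\alpha((r+2)^{C_1}-(r+2)^{C_2})}$, one replaces the $H$-derivatives by their $(\lambda,\lambda')$-Taylor polynomials of degree $p+1 = 2^{r+1}\log\frac{1}{\varepsilon}$ (possible thanks to the Gevrey bound from (r$-$1.3.c) and Proposition \ref{bijinjino}), incurring an error $\lesssim\epsilon_r^2$ that is absorbed into (\ref{n.7'}). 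It is the polynomial matrix $T'_{r-1}$ and polynomial vector $F'_{r-1}$ that are then used in (\ref{sol'}), and the equivalence of Green's function bounds for $T_{r-1}$ and $T'_{r-1}$ is supplied by Lemma \ref{equi}. Without this step your argument for (r.3.c) does not get off the ground, and also (r.3.b)'s rationality-and-degree claim — which you rely on further to feed the semialgebraic machinery in Section 6 — is unjustified.

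A secondary inaccuracy: even once the entries are polynomial, the paper does not apply Markov's inequality to $\Delta q_r$ itself (which is rational, not polynomial). In Lemma \ref{qvyvraodong}, Markov is applied to the \emph{polynomial entries} of $T'_{r-1}$ and $(\bm{\Gamma}_P F'_{r-1})^\wedge$ to control their variation on a complex neighborhood of $(\lambda_0,\lambda_0')$ of size $\sim A^{-\alpha(r+2)^{C_1}}$; Lemma \ref{equi} then transfers the Green's function bound to that complex neighborhood, whence $\widehat{\Delta q_r}(x)$ is a rational function bounded there, and the ordinary \emph{Cauchy estimate} on this rational function produces the parameter-derivative bounds in (\ref{5.57}). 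You should reorganize this part of the argument along those lines. You also do not mention the analogous polynomial replacement of the $Q$-equations (giving $\Gamma'_r$, needed for the semialgebraic description in Section 6.4) or the Whitney extension of $\widehat{\Delta q_r}$ from $\cup_{I\in\Lambda_r} I$ to $I_0\times I_0$, which is what actually produces the globally defined $C^1$ function asserted in (r.2); you gesture at "after extension" but this should be made explicit since it interacts with the measure bookkeeping.
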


When $r=0$, since $q_0$ is independent of $(\lambda,\lambda')$, it is easy to check that $\left\lVert q_0\right\rVert_{\alpha,L,L_0}\ll 1$ if we take $\{a_j\}$ sufficiently small. It is  also easy to check that $$\left\lVert\bm{\Gamma}_{P}F(q_0)\right\rVert_{\mathcal{F}_{\alpha,L_0}}\lesssim\varepsilon=\epsilon_{0}$$
and 
$$\left\lVert\bm{\Gamma}_{P'}\bar{F}(q_0)\right\rVert_{\mathcal{F}_{\alpha,L_0}}\lesssim\varepsilon=\epsilon_{0}.$$ In the sequel, when we say the Iterative Lemma holds at step 0, we mean that:
$$\left\lVert q_0\right\rVert_{\alpha,L,L_0}\ll 1, \ \ \left\lVert\bm{\Gamma}_{P}F(q_0)\right\rVert_{\mathcal{F}_{\alpha,L_0}}\lesssim\varepsilon=\epsilon_{0},\ \ \left\lVert\bm{\Gamma}_{P'}\bar{F}(q_0)\right\rVert_{\mathcal{F}_{\alpha,L_0}}\lesssim\varepsilon=\epsilon_{0}.$$    Moreover,  we denote $\Lambda_{0}=\{I_0\times I_0\}$.

\subsection{Estimate of the Green function in first finite steps}
     As mentioned in the previous section, the key point is to obtain the bound for the Green function. 
    Throughout the estimation of the Green function, one can easily observe that the index $\nu=\pm$ plays a minimal role in establishing the bound. Therefore,  for simplicity, we omit the index $\nu=\pm$ and write $T(x,x')=T((\nu,x),(\nu',x'))$ in the sequel. 
    
    In this subsection, we will prove \textbf{(r.3.d)} for $r\le C \log\log\frac{1}{\varepsilon}$ ($C>2\alpha$ can be large).
    
    Recall that $T_{q,P,N}=D_{P,N}-\varepsilon S_{q,P,N}$, and we have the following expressions:
\begin{equation}
    \begin{split}
        &\frac{\partial^{2} H}{\partial q\partial\bar{q}}(q,\bar{q})=f''(|q|^2)|q|^{2}+f'(|q|^2)q,\\
        &\frac{\partial^2 H}{\partial \bar{q}^2}(q,\bar{q})=f''(|q|^2)q^2,\\
        &\frac{\partial^2 H}{\partial q^2}(q,\bar{q})=f''(|q|^2)\bar{q}^2.
    \end{split}
    \nonumber
\end{equation}
Assuming the Iterative Lemma holds up to step $r-1$, then for $(\lambda,\lambda')\in \mathop{\cup}\limits_{I\in\Lambda_{r-1}} I$, we have 
$$\left\lVert \frac{\partial^2 H}{\partial q \partial \bar{q}}(q_{r-1}, \bar{q}_{r-1}) \right\rVert_{\alpha,L_{r-1}}<1,\left\lVert \frac{\partial^{2}H}{\partial \bar{q}^2} (q_{r-1}, \bar{q}_{r-1})\right\rVert_{\alpha,L_{r-1}}<1,\left\lVert \frac{\partial^2 H}{\partial q^2} (q_{r-1}, \bar{q}_{r-1})\right\rVert_{\alpha,L_{r-1}}<1.$$ 
Thus we have
\begin{equation}
    |T(x,x')|<\varepsilon \delta_{r}^{-C_{\alpha}}e^{-\alpha L_{r}^{(0.1)}|x-x'|_{\alpha}},
    \label{decay3}
\end{equation}
where $C_{\alpha}$ depends on $\alpha$. 

In fact,  we can also derive 
\begin{equation}
    |T(x,x')|\le
       e^{-\alpha L_r^{(0.2)}|x-x'|_{\alpha}}, \ \  \text{for}\ \   x\ne x',
    \label{decay1}
\end{equation}
by  applying  Lemma \ref{chajv}  and (l.3.b)$ (l\le r-1)$ inductively. 

We impose the following Diophantine condition on the vector $\lambda$:
\begin{equation}
    |k\cdot \lambda-\mu_{n}|>\gamma(1+|k|)^{-C},\ \ |k|<N_r,\  (n,k)\notin\mathcal{R},
    \label{diophantine}
\end{equation}
where $C>b+d$. We denote this condition by $DC_{\gamma,C}$, and we can easily derive that 
$$\text{mes}\{I_0\setminus DC_{\gamma,C}\}\lesssim \gamma.$$ Also, note that we have $\lambda'=\lambda+O(\varepsilon)$ by solving Q-equations at each step. It follows that 
\begin{align}
    |k\cdot \lambda'-\mu_n|&>|k\cdot \lambda-\mu_n|-|k\cdot O(\varepsilon)|\notag\\
        &>\frac{1}{2}\gamma(1+|k|)^{-C}
\end{align}
for $|k|<\gamma^{\frac{1}{2C}}\varepsilon^{-\frac{1}{2C}}$.
Let $\gamma=(\log\log\frac{1}{\varepsilon})^{-\frac{1}{2}}$, and since $N_r=A^{r}<e^{C(\log\log\frac{1}{\varepsilon})^{2}}$,  we have 
$$\left\lVert D_{P,N}^{-1} \right\rVert<2\gamma^{-1}N_{r}^{C}<e^{C(\log\log \frac{1}{\varepsilon})^{2}}.$$  
For simplicity, we omit the subscripts $q$ and $P$ when there is no confusion in the sequel.
Now we use the Neumann argument to estimate the bound of the Green function:

  We have
\begin{align}
    T_N^{-1}&=(D_N-\varepsilon S_N)^{-1}\notag\\
        &=D_N^{-1}(I-\varepsilon S_N D_N^{-1})^{-1}\notag\\
        &=D_N^{-1}+\mathop{\sum}\limits_{j=1}^{\infty}\varepsilon^{j} D_{N}^{-1}(S_N D_N^{-1})^{j}.
\end{align}
Notice that 
$$\left\lVert (I-\varepsilon S_N D_N^{-1})^{-1} \right\rVert<\frac{1}{1-\left\lVert 
\varepsilon S_N D_N^{-1} \right\rVert}<\frac{1}{1-\varepsilon^{1-}}<2,$$
thus we have $$\left\lVert T_N^{-1} \right\rVert<2\left\lVert D_N^{-1} \right\rVert<4\gamma^{-1}N^C<2e^{C(\log\log \frac{1}{\varepsilon})^{2}}.$$ 
Now we consider the off-diagonal decay of $T_N^{-1}$. We have
\begin{equation}
    |\varepsilon S_N D_N^{-1}(x,x')|<\varepsilon^{1-}|S_N(x,x')|<\varepsilon^{1-} \delta_{r}^{-C_{\alpha}} e^{-\alpha L_{r}^{(0.1)}|x-x'|_{\alpha}}.
\end{equation}
Thus we have
\begin{equation}
    \begin{split}
        |(\varepsilon S_N D_N^{-1})^{j}(x,x')|
        &=\left|\sum\limits_{k_1,...,k_{j-1}\in \mathbb{Z}^{b+d}}(\varepsilon S_N D_N^{-1})(x,k_1)\cdots (\varepsilon S_N D_N^{-1})(k_{j-1},x')\right|\\
        &\le \sum\limits_{k_1,...,k_{j-1}\in\mathbb{Z}^{b+d}}\varepsilon^{\frac{1}{4}j}(\delta_{r}^{-C_{\alpha}})^{j}e^{-\alpha L_r^{(0.1)}|x-x'|_{\alpha}}\\
        &\le (2N)^{(b+d)j} \varepsilon^{\frac{1}{4}j} (\delta_{r}^{-C_{\alpha}})^{j}e^{-\alpha L_{r}^{(0.1)}|x-x'|_{\alpha}}\\
        &\le ((2N)^{b+d}\delta_{r}^{-C_{\alpha}}\varepsilon^{\frac{1}{4}})^{j} e^{-\alpha L_{r}^{(0.1)}|x-x'|_{\alpha}}.
    \end{split}
    \notag
\end{equation}
Since $N_r<e^{C(\log\log\frac{1}{\varepsilon})^{2}}$,  we have 
\begin{equation}
    (2N_{r})^{b+d}\delta_{r}^{-C_{\alpha}}\varepsilon^{1/4}<\varepsilon^{\frac{1}{5}},
\end{equation}
thus 
$$|(\varepsilon S_N D_{N}^{-1})^j(x,x')|<(\varepsilon^{\frac{1}{5}})^j e^{-\alpha L_r^{(0.1)}|x-x'|_{\alpha}}.$$
So for $x\ne x'$, we have
\begin{equation}
    |T_N^{-1}(x,x')|\le \varepsilon^{\frac{1}{5}}\cdot2\gamma^{-1} N^{C} e^{-\alpha L_{r}^{(0.1)}|x-x'|_{\alpha}}<e^{-\alpha L_{r}^{(0.1)}|x-x'|_{\alpha}}.
\end{equation}
Thus, we have the bound (\ref{rbujvzhen}), and the measure estimate (\ref{measureestimateyouxianbu}) is straightforward to verify.

\subsection{Polynomials replacement in each step}
To estimate the Green functions in the following steps, we must impose further conditions on $(\lambda,\lambda')$. To make our techniques work,  we need to replace $T_{q}$, $\widehat{F(q)}$ and $\widehat{\bar{F}(q)}$ by polynomials of $(\lambda,\lambda')$ at each step. First, the approximate solution $q_{0}$ is a rational function of $(\lambda,\lambda')$. Actually, it does not depend on $(\lambda,\lambda')$.

\begin{lemma}
    Let $r\ge 1$, and suppose the Iterative Lemma holds up to  steps $r-1$. Then, there exists a collection $\Lambda_{r-1}^{(1)}$ of intervals $I\subset R^{2b}$ of size $A^{-\alpha ((r+2)^{C_1}-(r+2)^{C_2})}$. For each $I\in \Lambda_{r-1}^{(1)}$, there exist $T_{r-1}'$ and $F_{r-1}'$, whose elements are given by polynomials of degree at most $2^{r+1}\log\frac{1}{\varepsilon}$ in $(\lambda,\lambda')$. Furthermore, they satisfy the following inequalities on each $I$:     
    \begin{equation}
    \left\{
        \begin{split}
            &|(T_{r-1}-T_{r-1}')(x,x')|<\epsilon_{r}^2 e^{-\alpha L_{r}^{(0.1)}|x-x'|_{\alpha}},\\
            &|((F(q_{r-1})-F_{r-1}'))^{\land}(x)|<\epsilon_{r}^{2} e^{-\alpha L_{r}^{(0.1)}|x|_{\alpha}},\\
            &|\partial(T_{r-1}-T_{r-1}')(x,x')|<\epsilon_{r}^2 e^{-\alpha L_{r}^{(0.1)}|x-x'|_{\alpha}},\\
            &|\partial((F(q_{r-1})-F_{r-1}'))^{\land}(x)|<\epsilon_{r}^{2} e^{-\alpha L_{r}^{(0.1)}|x|_{\alpha}},
        \end{split}\right.
        \label{yaoyanzheng}
    \end{equation}
where $T_{r-1}:=T_{q_{r-1}}$, and $\partial$ refers to $\partial_\lambda$ or  $\partial_{\lambda'}$. Here, $C_1>C_2+2$.   
Moreover, each $I\in \Lambda_{r-1}^{(1)}$ is contained in an interval $I'\in \Lambda_{r-1}$ and $$\mathop{\cup}\limits_{I\in \Lambda_{r-1}^{(1)}} I=\mathop{\cup}\limits_{I'\in\Lambda_{r-1}} I'.$$
    \label{di}
\end{lemma}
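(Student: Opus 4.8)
\emph{Plan.} The idea is to use the parameter--Gevrey regularity of $q_{r-1}$ supplied by (r-1.3.c) — on each $I'\in\Lambda_{r-1}$ one has $q_{r-1}\in G^{\alpha,L_1,L_{r-1}}(I'\times\mathbb{T}^{b+d})$ with $L_1:=A^{-(r+1)^{C_1}}$ and $\lVert q_{r-1}\rVert_{\alpha,L_1,L_{r-1}}\ll1$ — and to replace each scalar function of $(\lambda,\lambda')$ occurring in $T_{r-1}=D-\varepsilon S_{q_{r-1}}$, in $\widehat{F(q_{r-1})}$ and in $\widehat{\bar F(q_{r-1})}$ by its Taylor polynomial at the centre of a sufficiently small sub-cube, estimating the tail by Proposition \ref{bijinjino} and converting it into an entrywise exponentially weighted bound by Proposition \ref{pfc}.

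\emph{Step 1: regularity of the coefficient functions.} First I would check that the three multipliers $\partial^2_{q\bar q}H,\ \partial^2_{\bar q\bar q}H,\ \partial^2_{qq}H$ displayed above, being built from $q_{r-1},\bar q_{r-1}$ and $f',f''$ by products and by composition with $|q_{r-1}|^2$, lie in $G^{\alpha,L_1,L_{r-1}}(I'\times\mathbb{T}^{b+d})$ with norm $\lesssim\lVert f'\rVert_{\alpha,L_0}+\lVert f''\rVert_{\alpha,L_0}\lesssim1$; the smallness hypothesis (\ref{co1}) of Proposition \ref{composition} holds because $\lVert|q_{r-1}|^2\rVert_{\alpha,L_1,L_{r-1}}\le\lVert q_{r-1}\rVert_{\alpha,L_1,L_{r-1}}^2\ll1\le L_0^{\alpha}$, so Propositions \ref{A1} and \ref{composition} (in their two-width form) apply. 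Likewise $F(q_{r-1})=\tfrac1i\lambda'\cdot\partial_\theta q_{r-1}-\mathcal Lq_{r-1}-\varepsilon f'(|q_{r-1}|^2)q_{r-1}$ and $\bar F(q_{r-1})$ lie in $G^{\alpha,L_1,L_{r-1}}(I'\times\mathbb{T}^{b+d})$ — here $\lambda'$ and the only $\lambda$-dependent part of $\mathcal L$ (the eigenvalues $\mu_{n_j}=\lambda_j$, $1\le j\le b$) enter affinely — with norm $\lesssim1$ up to a harmless factor $N_{r-1}^{O(1)}=A^{O(r)}$ coming from the differentiations $\partial_\theta$ and from the finite-rank multiplier $\mathcal L$ acting on the trigonometric polynomial $q_{r-1}$ (which costs no width loss).

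\emph{Step 2: the refinement and the polynomial remainder.} Partition each $I'\in\Lambda_{r-1}$ into congruent sub-cubes of side $A^{-\alpha((r+2)^{C_1}-(r+2)^{C_2})}$; this is a genuine refinement because $(r+2)^{C_1}-(r+1)^{C_1}\ge(r+2)^{C_1-1}\ge(r+2)^{C_2}$ (using $C_1-1>C_2$), so the sub-cube is no larger than $I'$. Let $\Lambda_{r-1}^{(1)}$ be the collection of all these sub-cubes, so that $\cup_{I\in\Lambda_{r-1}^{(1)}}I=\cup_{I'\in\Lambda_{r-1}}I'$ and each $I$ lies in some $I'\in\Lambda_{r-1}$. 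On $I\in\Lambda_{r-1}^{(1)}$ with centre $(\lambda_0,\lambda_0')$ put $p:=2^{r+1}\log\tfrac1\varepsilon$ and define $T_{r-1}'=D-\varepsilon S_{r-1}'$ and $F_{r-1}'$ by replacing, in each entry of $S_{q_{r-1}}$ and each Fourier coefficient of $F(q_{r-1})$ (resp. $\bar F(q_{r-1})$), the relevant $G^{\alpha,L_1,L_{r-1}}$-function of $(\lambda,\lambda')$, multiplied by at most one affine factor, by its degree-$(p-1)$ Taylor polynomial at $(\lambda_0,\lambda_0')$; the diagonal $D$ is already affine, and the entries of $T_{r-1}',F_{r-1}'$ are polynomials of degree $\le p$. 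Viewing a coefficient function $\varphi$ as a $G^{\alpha,L_{r-1}}(\mathbb{T}^{b+d})$-valued map of $(\lambda,\lambda')\in I$ and using $\lvert(\lambda,\lambda')-(\lambda_0,\lambda_0')\rvert_1\le bA^{-\alpha((r+2)^{C_1}-(r+2)^{C_2})}$, Proposition \ref{bijinjino} gives
$$\lVert\varphi(\lambda,\lambda',\cdot)-\widetilde\varphi_{p-1}(\lambda,\lambda',\cdot)\rVert_{\alpha,L_{r-1}}\le\lVert\varphi\rVert_{\alpha,L_1,L_{r-1}}\,p!^{\,\alpha-1}\,A^{\alpha p(r+1)^{C_1}}\bigl(bA^{-\alpha((r+2)^{C_1}-(r+2)^{C_2})}\bigr)^{p},$$
and then Proposition \ref{pfc} in the $(\theta,\tilde x)$ variables, with $\epsilon$ so small that $(1-\epsilon)L_{r-1}>L_r^{(0.1)}$, turns this (together with the affine factor) into
$$\bigl|(T_{r-1}-T_{r-1}')(x,x')\bigr|\ \lesssim\ p!^{\,\alpha-1}\,b^{\,p}\,A^{-\alpha p\,\beta_r}\,e^{-\alpha L_r^{(0.1)}|x-x'|_\alpha},\qquad \beta_r:=(r+2)^{C_1}-(r+2)^{C_2}-(r+1)^{C_1},$$
and the analogous bounds for $F(q_{r-1})-F_{r-1}'$ and $\bar F(q_{r-1})-\bar F_{r-1}'$.

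\emph{Step 3: the arithmetic (the crux).} It remains to verify that the prefactor $p!^{\alpha-1}b^pA^{-\alpha p\beta_r}$ is $<\epsilon_r^2=\varepsilon^{2(4/3)^r}$ — this is the only delicate point. From $(r+2)^{C_1}-(r+1)^{C_1}\ge(r+2)^{C_1-1}$ and $C_1-1>C_2+1$ one gets $\beta_r\ge\tfrac12(r+2)^{C_1-1}$; then, using $p!^{\alpha-1}\le p^{(\alpha-1)p}$, $\log A=\alpha\log\log\tfrac1\varepsilon$, $\log p\lesssim(r+2)^{C_1-1}+\log\log\tfrac1\varepsilon$ and $p=2^{r+1}\log\tfrac1\varepsilon$, one obtains $\log\bigl(p!^{\alpha-1}b^pA^{-\alpha p\beta_r}\bigr)\le-\tfrac14\alpha^2\,p\,(r+2)^{C_1-1}\log\log\tfrac1\varepsilon$ for $\varepsilon$ small; since $\log\epsilon_r^2=-2(\tfrac43)^r\log\tfrac1\varepsilon$ and $2^{r+1}\log\log\tfrac1\varepsilon\ge2(\tfrac43)^r$, this closes the estimate. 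The $\partial$-bounds in (\ref{yaoyanzheng}) follow from the same computation applied to $\partial_\lambda\varphi,\partial_{\lambda'}\varphi$, whose $G^{\alpha,L_1/2,L_{r-1}}$-norms are $\lesssim A^{\alpha(r+1)^{C_1}}$ by the Gevrey Cauchy estimate (Proposition \ref{Ap3.2}) — a loss absorbed by the margin above — together with the fact that the $(\lambda,\lambda')$-derivative of a degree-$(p-1)$ Taylor polynomial is the degree-$(p-2)$ Taylor polynomial of the derivative. The containments $\cup_{I\in\Lambda_{r-1}^{(1)}}I=\cup_{I'\in\Lambda_{r-1}}I'$ and $I\subset I'$ were built into the construction. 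The whole difficulty is the balancing in Step 3 of the tiny parameter width $L_1=A^{-(r+1)^{C_1}}$, the even tinier sub-cube side $A^{-\alpha((r+2)^{C_1}-(r+2)^{C_2})}$, and the degree $p$, so that the Taylor tail beats $\varepsilon^{2(4/3)^r}$; Steps 1 and 2 are direct applications of the propositions of Section 2.
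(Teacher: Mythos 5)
Your proof is correct and follows essentially the same route as the paper's: refine $\Lambda_{r-1}$ into sub-cubes of side $A^{-\alpha((r+2)^{C_1}-(r+2)^{C_2})}$, apply Proposition \ref{bijinjino} with the parameter Gevrey width $L_1=A^{-(r+1)^{C_1}}$ to bound the Taylor tail of each coefficient of $S_{q_{r-1}}$ and of $\partial H/\partial\bar q$ in the $\lVert\cdot\rVert_{\alpha,L_{r-1}}$ norm, convert to weighted entrywise bounds via Proposition \ref{pfc}, and handle the $\partial$-estimates by first paying $A^{\alpha(r+1)^{C_1}}$ through the Gevrey Cauchy estimate (Proposition \ref{Ap3.2}). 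Your arithmetic in Step 3 (the lower bound $\beta_r\ge\tfrac12(r+2)^{C_1-1}$ and the comparison with $\log\epsilon_r^2$) is a minor rewriting of the paper's reduction to $\varepsilon^{2^{r+1}}\ll\epsilon_r^2$ and is sound.
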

\begin{proof}
    Take $I'\in \Lambda_{r-1}$. By our assumptions, we have 
    \begin{equation}
        \left\lVert q_{r-1} \right\rVert_{\alpha, A^{-(r+1)^{C_1}},L_{r-1}}\ll 1\ \  \text{for}\  I'\times \mathbb{T}^{b+d}.
        \label{control}
    \end{equation}
Mesh $I'$ into intervals of size $A^{-\alpha((r+2)^{C_1}-(r+2)^{C_2})}$. Take an interval $I$ from this mesh and assume $(\lambda_0,\lambda_0')$ is the center of  $I$. 
Regard $q_{r-1}$ as a function defined on $I\times \mathbb{T}^{b+d}$.
From (\ref{control}), we have 
    \begin{equation}
    \left\{
        \begin{split}
            &\left\lVert \frac{\partial^2 H}{\partial q \partial \bar{q}}(q_{r-1},\bar{q}_{r-1}) \right\rVert_{\alpha,A^{-(r+1)^{C_1}},L_{r-1}}<C,\\
            &\left\lVert \frac{\partial^2 H}{\partial q^2} (q_{r-1},\bar{q}_{r-1})\right\rVert_{\alpha,A^{-(r+1)^{C_1}},L_{r-1}}<C,\\
            &\left\lVert \frac{\partial^2 H}{\partial \bar{q}^2} (q_{r-1},\bar{q}_{r-1})\right\rVert_{\alpha,A^{-(r+1)^{C_1}},L_{r-1}}<C,\\
            &\left\lVert \frac{\partial H}{\partial \bar{q}}(q_{r-1},\bar{q}_{r-1}) \right\rVert_{\alpha,A^{-(r+1)^{C_1}},L_{r-1}}<C.
        \end{split}\right.
    \end{equation}
Take $\frac{\partial^2 H}{\partial q\partial \bar{q}}$ as an example. 
Let $\psi_{p}\left(\frac{\partial^2 H}{\partial q\partial \bar{q}}\right)(\lambda,\lambda')$ be the Taylor polynomial of degree $p$ for  $\frac{\partial^2 H}{\partial q\partial\bar{q}}(q_{r-1},\bar{q}_{r-1})$  centered at $(\lambda_{0},\lambda_{0}')$.  By Proposition \ref{bijinjino}, we have
\begin{align}
    &\left\lVert\frac{\partial^2 H}{\partial q \partial\bar{q}}(\lambda,\lambda')-\psi_{p}\left(\frac{\partial^2 H}{\partial q\partial \bar{q}}\right)(\lambda,\lambda')\right\rVert_{\alpha,L_{r-1}}\notag\\
    \le& C(p+1)!^{\alpha-1} (A^{(r+1)^{C_1}})^{(p+1)\alpha} (A^{-\alpha ((r+2)^{C_1}-(r+2)^{C_2})})^{p+1}
\end{align}
for $(\lambda,\lambda')\in I$. Taking $p+1= 2^{r+1}\log\frac{1}{\varepsilon}$, we obtain
\begin{equation}
    \begin{split}
        \left\lVert\frac{\partial^2 H}{\partial q \partial\bar{q}}-\psi_{p}\left(\frac{\partial^2 H}{\partial q\partial \bar{q}}\right)\right\rVert_{\alpha,L_{r-1}}
        &\le C((p+1)^{\alpha-1} A^{-\alpha (C_1-2)(r+1)^{C_1-1}})^{p+1}\\
        &\le A^{-\alpha (r+1)^{C_1-1} (p+1)}\\
        &\le \varepsilon^{\frac{\log A}{\log\frac{1}{\varepsilon}}\alpha (r+1)^{C_1-1}(p+1)}\\
        &\le \varepsilon^{\frac{\log A}{\log\frac{1}{\varepsilon}}\alpha (r+1)^{C_1-1}2^{r+1} \log\frac{1}{\varepsilon}}\\
        &\le \varepsilon^{2^{r+1}}.
    \end{split}
    \notag
\end{equation}
Thus, we have
\begin{equation}
    \left|\left(\frac{\partial^2 H}{\partial q \partial\bar{q}}-\psi_{p}\left(\frac{\partial^2 H}{\partial q\partial \bar{q}}\right)\right)^{\land}(x)\right|<\epsilon_{r}^{2} e^{-\alpha L_{r}^{(0.1)}|x|_{\alpha}}.
\end{equation}
Moreover, by Proposition \ref{Ap3.2}, we have 
\begin{equation}
\left\lVert \partial\frac{\partial^2 H}{\partial q \partial \bar{q}}(q_{r-1},\bar{q}_{r-1}) \right\rVert_{\alpha,\frac{1}{2}A^{-(r+1)^{C_1}},L_{r-1}}<2^{\alpha}C A^{\alpha(r+1)^{C_1}}.   
\end{equation}
 Similarly, we obtain
\begin{equation}
    \begin{split}
        \left\lVert\partial\frac{\partial^2 H}{\partial q \partial\bar{q}}-\partial\psi_{p}\left(\frac{\partial^2 H}{\partial q\partial \bar{q}}\right)\right\rVert_{\alpha,L_{r-1}}
        &\le 2C A^{(r+1)^{C_1}}
       A^{-\alpha (C_1-3)(r+1)^{C_1-1} (p+1)} \\
        &\le 2C A^{ (r+1)^{C_1}}
       A^{-\alpha (C_1-3)(r+1)^{C_1-1} 2^{r+1} \log \frac{1}{\varepsilon}}\\
       &\le A^{- 2^{r+1}\log \frac{1}{\varepsilon}}\\
       &\le \varepsilon^{2^{r+1}}.
    \end{split}
    \notag
\end{equation}
Thus, we have
\begin{equation}
    \left|\partial \left(\frac{\partial^2 H}{\partial q \partial\bar{q}}-\psi_{p}(\frac{\partial^2 H}{\partial q\partial \bar{q}})\right)^{\land}(x)\right|<\epsilon_{r}^{2} e^{-\alpha L_{r}^{(0.1)}|x|_{\alpha}}.
\end{equation}
The remainder of the argument proceeds similarly. Denote the Taylor polynomial of degree $p$ for a function $h$  centered at $(\lambda_{0},\lambda_{0}')$
by $\psi_{p}(h)$.
Let 
\begin{equation}
\begin{split}
    &\widetilde{T}_{r-1}'\\
    =&\left[
    \begin{pmatrix}
        \frac{1}{i}\lambda'\cdot\partial_{\theta}-\mathcal{L}& 0\\
        0 & -\frac{1}{i}\lambda'\cdot\partial_{\theta}-\mathcal{L}'
    \end{pmatrix}
    -\varepsilon
    \begin{pmatrix}
        \psi_{p}(\frac{\partial^2 H}{\partial q \partial \bar{q}}(q_{r-1},\bar{q}_{r-1})) &\psi_{p}(\frac{\partial^2 H}{\partial \bar{q}^2}(q_{r-1},\bar{q}_{r-1}))\\
        \psi_{p}(\frac{\partial^2 H}{\partial q^2}(q_{r-1},\bar{q}_{r-1})) & \psi_{p}(\frac{\partial^2 H}{\partial q\partial \bar{q}}(q_{r-1},\bar{q}_{r-1}))
    \end{pmatrix}
    \right],
\end{split}
\notag
\end{equation}

\begin{equation}
    T_{r-1}'=D-\varepsilon
    \begin{pmatrix}
        S_{\psi_{p}(\frac{\partial^2 H}{\partial q \partial \bar{q}}(q_{r-1},\bar{q}_{r-1}))}&S_{\psi_{p}(\frac{\partial^2 H}{\partial \bar{q}^2} (q_{r-1},\bar{q}_{r-1}))}\\
        S_{\psi_{p}(\frac{\partial^2 H}{\partial q^2} (q_{r-1},\bar{q}_{r-1}))}&S_{\psi_{p}(\frac{\partial^2 H}{\partial q \partial \bar{q}} (q_{r-1},\bar{q}_{r-1}))}
    \end{pmatrix}
\end{equation}
and 
\begin{equation}
    F_{r-1}'=\frac{1}{i}\lambda'\cdot \partial_{\theta}q_{r-1}-\mathcal{L}q_{r-1}-\varepsilon \psi_{p}(\frac{\partial H}{\partial \bar{q}}(q_{r-1},\bar{q}_{r-1})).
\end{equation}
Then they satisfy (\ref{yaoyanzheng}).
\end{proof}
From now on, we assume that the Iterative Lemma holds up to  step $r-1$.
We denote 
\begin{equation}
    S_{r-1}=\begin{pmatrix}
        S_{\psi(\frac{\partial^2 H}{\partial q \partial \bar{q}}(q_{r-1},\bar{q}_{r-1}))}&S_{\psi(\frac{\partial^2 H}{\partial \bar{q}^2} (q_{r-1},\bar{q}_{r-1}))}\\
        S_{\psi(\frac{\partial^2 H}{\partial q^2} (q_{r-1},\bar{q}_{r-1}))}&S_{\psi(\frac{\partial^2 H}{\partial q \partial \bar{q}} (q_{r-1},\bar{q}_{r-1}))}
    \end{pmatrix}
\end{equation}
for ease of reference later. 

 We will  replace $T_{q_{r-1}}$ and $F(q_{r-1})$ by $T_{r-1}'$ and $F_{r-1}'$ in (\ref{sol}), respectively.
 
First, we show that the following two inequalities:
\begin{equation}
    |T_{r-1,N_{r}}^{-1}(x,x')|<B(r)e^{-\alpha L_{r}^{(1)}|x-x'|_{\alpha}} 
\end{equation}
and
\begin{equation}
        |T_{r-1,N_{r}}^{\prime-1}(x,x')|<B(r)e^{-\alpha L_{r}^{(1)}|x-x'|_{\alpha}} 
\end{equation}
are essentially equivalent. This can be easily verified by applying Lemma \ref{di} and the following Lemma \ref{equi} whose proof is a standard application of Neumann series.

\begin{lemma}
    Assume that
    \begin{equation}
    \left\{
 \begin{split}
     &\left\lVert T_{N}^{-1} \right\rVert<B,\\
     &|T_{N}^{-1}(x,x')|<e^{-\alpha L|x-x'|_{\alpha}}\ \text{for} \ |x-x'|> D,
 \end{split}   
 \right.
 \label{youjvli}
\end{equation}
and there is another matrix $T'$ satisfying
\begin{equation}
    |T'(x,x')-T(x,x')|<\eta e^{-\alpha L|x-x'|_{\alpha}}.
    \label{diff}
\end{equation}
Furthermore, if $\eta N^{C_5}B^{2}e^{D}\ll 1$,
then 
\begin{equation}
\left\{
    \begin{split}
     &\left\lVert T_{N}^{\prime-1} \right\rVert<2B,\\
     &|T_{N}^{\prime-1}(x,x')|<2e^{-\alpha L|x-x'|_{\alpha}}\ \text{for} \ |x-x'|> D,
 \end{split}\right.  
\end{equation}
where $C_5=2(b+d)+1$ is a constant.

Additionally, if condition (\ref{youjvli}) is replaced by
\begin{equation}
     |T_{N}^{-1}(x,x')|<B e^{-\alpha L|x-x'|_{\alpha}},
\end{equation}
and if $\eta N^{C_5}B^2\ll 1$, then
\begin{equation}
     |T_{N}^{\prime -1}(x,x')|<2 B e^{-\alpha L|x-x'|_{\alpha}}.
\end{equation}

\label{equi}
\end{lemma}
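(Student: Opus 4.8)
The plan is to treat $T'$ as a Neumann perturbation of $T$. Write $E:=T'_N-T_N=(T'-T)_N$ for the restriction of the difference to the box $\{|x|<N\}$, so that by (\ref{diff}) we have $|E(x,x')|<\eta\,e^{-\alpha L|x-x'|_\alpha}$ for all $x,x'$. Since
\[
\sum_{y\in\mathbb{Z}^{b+d}}e^{-\alpha L|y|_\alpha}=\Big(\sum_{m\in\mathbb{Z}}e^{-\alpha L|m|^{1/\alpha}}\Big)^{b+d}=:C_{\alpha,L}<\infty ,
\]
the Schur test gives $\|E\|\le C_{\alpha,L}\,\eta$. The smallness hypothesis $\eta N^{C_5}B^2e^D\ll1$ forces $B\eta\ll1$ (as $N,B,e^D\ge1$), hence $\|T_N^{-1}E\|\le B\|E\|\ll1$, so $T'_N=T_N(I+T_N^{-1}E)$ is invertible with
\[
(T'_N)^{-1}=\sum_{j\ge0}(-1)^j(T_N^{-1}E)^jT_N^{-1},\qquad \|(T'_N)^{-1}\|\le\frac{\|T_N^{-1}\|}{1-\|T_N^{-1}E\|}<2B .
\]
This already gives the operator-norm conclusion in both cases.

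For the off-diagonal decay I would first put the two hypotheses on $T_N^{-1}$ on a common footing. In the ``resolvent'' case we are handed $|T_N^{-1}(x,y)|<B\,e^{-\alpha L|x-y|_\alpha}$ for all $x,y$. In the case (\ref{youjvli}), for $|x-y|\le D$ one has $|x-y|_\alpha\le(b+d)D^{1/\alpha}$, so $|T_N^{-1}(x,y)|\le\|T_N^{-1}\|\le B\le\widetilde B\,e^{-\alpha L|x-y|_\alpha}$ with $\widetilde B:=B\,e^{\alpha L(b+d)D^{1/\alpha}}$, while for $|x-y|>D$ the given bound $|T_N^{-1}(x,y)|<e^{-\alpha L|x-y|_\alpha}\le\widetilde B\,e^{-\alpha L|x-y|_\alpha}$ holds trivially. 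Thus in either situation
\[
|T_N^{-1}(x,y)|\le\widetilde B\,e^{-\alpha L|x-y|_\alpha}\qquad\text{for all }x,y\text{ in the box,}
\]
with $\widetilde B=B$ in the resolvent case and $\widetilde B=B\,e^{\alpha L(b+d)D^{1/\alpha}}\le B\,e^{O(D)}$ in case (\ref{youjvli}).

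Next I would expand the $j$-th Neumann term entrywise along chains $x=z_0,z_1,\dots,z_{2j+1}=x'$, the steps $z_{2i+1}\!\to\!z_{2i+2}$ carrying a factor of $E$ and the steps $z_{2i}\!\to\!z_{2i+1}$ a factor of $T_N^{-1}$. Bounding each $T_N^{-1}$-factor by $\widetilde B\,e^{-\alpha L|\cdot|_\alpha}$ and each $E$-factor by $\eta\,e^{-\alpha L|\cdot|_\alpha}$, using the subadditivity $|a+b|_\alpha\le|a|_\alpha+|b|_\alpha$ — a consequence of the concavity of $s\mapsto s^{1/\alpha}$ — to telescope $\prod_i e^{-\alpha L|z_i-z_{i+1}|_\alpha}\le e^{-\alpha L|x-x'|_\alpha}$, and bounding the sum over the $2j$ intermediate lattice points by $(2N)^{2j(b+d)}$, one gets
\[
\big|(T_N^{-1}E)^jT_N^{-1}(x,x')\big|\le\widetilde B\,\big((2N)^{2(b+d)}\widetilde B\,\eta\big)^{j}\,e^{-\alpha L|x-x'|_\alpha}.
\]
The ratio $(2N)^{2(b+d)}\widetilde B\,\eta\ll1$ follows from $\eta N^{C_5}B^2e^D\ll1$ with $C_5=2(b+d)+1$: the one spare power of $N$ absorbs the $2^{2(b+d)}$, the spare power of $B$ together with $e^{O(D)}$ absorbs $\widetilde B^2$, and $2\alpha L(b+d)D^{1/\alpha}=O(D)$. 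Summing $j\ge1$ then yields a tail $\ll e^{-\alpha L|x-x'|_\alpha}$ (in the resolvent case one keeps a factor $B$, giving $<B\,e^{-\alpha L|x-x'|_\alpha}$). Adding the $j=0$ term $T_N^{-1}(x,x')$ finishes it: in the resolvent case $|T_N^{-1}(x,x')|<B\,e^{-\alpha L|x-x'|_\alpha}$, so $|(T'_N)^{-1}(x,x')|<2B\,e^{-\alpha L|x-x'|_\alpha}$; in case (\ref{youjvli}), for $|x-x'|>D$ the $j=0$ term obeys the sharper $|T_N^{-1}(x,x')|<e^{-\alpha L|x-x'|_\alpha}$, so $|(T'_N)^{-1}(x,x')|<2\,e^{-\alpha L|x-x'|_\alpha}$ there.

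The main obstacle is the bookkeeping in this last step: one must telescope the sub-exponential weights along chains via subadditivity of $|\cdot|_\alpha$ \emph{without} reserving any decay for the lattice sums — which is why one works inside the finite box and pays the polynomial factor $(2N)^{(b+d)\cdot(\#\text{steps})}$ — and, simultaneously, control the near-diagonal block $|x-y|\le D$ of $T_N^{-1}$, where only the crude bound $\|T_N^{-1}\|\le B$ is available; the cost $e^{\alpha L(b+d)D^{1/\alpha}}$ of recasting that crude bound in weighted form is exactly what the factor $e^{D}$ in the hypothesis is designed to absorb. Fixing the exponent of $N$ at $C_5=2(b+d)+1$, one unit above the worst per-term lattice-sum factor $(2N)^{2(b+d)}$, makes the geometric series close.
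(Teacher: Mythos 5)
Your proof is correct and follows essentially the same route as the paper's: a Neumann series $(T'_N)^{-1}=\sum_{j\ge0}\Delta^j T_N^{-1}$ with $\Delta=T_N^{-1}(T_N-T'_N)$, entrywise chain expansion of $\Delta^j T_N^{-1}$, telescoping of the sub-exponential weights via subadditivity of $|\cdot|_\alpha$, a factor $(2N)^{b+d}$ per lattice sum in the finite box, and the factor $e^{O(D)}$ paid to upgrade the operator-norm bound $\|T_N^{-1}\|<B$ to a weighted entrywise bound on the near-diagonal block. The paper carries this out with the single uniform replacement $|T_N^{-1}(x,y)|\le B\,e^{\alpha L D^{1/\alpha}}e^{-\alpha L|x-y|_\alpha}$ (omitting the $(b+d)$ in the exponent, which you correctly restore as $\widetilde B=B\,e^{\alpha L(b+d)D^{1/\alpha}}$; this is harmless since $D^{1/\alpha}=o(D)$ absorbs the constant), and then lumps the extra $B\,e^{\alpha L D^{1/\alpha}}$ and lattice sum from the trailing $T_N^{-1}$ back into the $j$-th power, arriving at the geometric ratio $\eta(2N)^{2(b+d)+1}B^2e^{2\alpha L D^{1/\alpha}}$. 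One small caution in your write-up: when you assert ``the ratio $(2N)^{2(b+d)}\widetilde B\eta\ll1$'' but then explain by absorbing $\widetilde B^2$, what you actually need for the tail $\sum_{j\ge1}\widetilde B\bigl((2N)^{2(b+d)}\widetilde B\eta\bigr)^j$ to be $\ll e^{-\alpha L|x-x'|_\alpha}$ is the stronger statement $\widetilde B^2(2N)^{2(b+d)}\eta\ll1$; your explanation establishes exactly that, so the argument is sound, but the sentence as phrased understates the inequality you are really verifying.
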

Since we  replace $T_{q_{r-1}}, F(q_{r-1})$ by $T_{r-1}^{\prime}, F_{r-1}^{\prime}$ in (\ref{sol}),   we will solve the following equation at each step:
\begin{equation}
    T_{r-1,P,N}' \begin{pmatrix}
        \widehat{\Delta q}\\
        \widehat{\overline{\Delta q}}
    \end{pmatrix}
    =\begin{pmatrix}
        -R_{P,N} \widehat{F_{r-1}'}\\
        -R_{P',N} \widehat{\overline{F_{r-1}'}}
    \end{pmatrix}.
    \label{sol'}
\end{equation}
Let 
\begin{equation}
    \begin{pmatrix}
        \widehat{\Delta q_{r}}\\
        \widehat{\overline{\Delta q_{r}}}
    \end{pmatrix}
    =
    \left(  T_{r-1,P,N}' \right)^{-1}
    \begin{pmatrix}
        -R_{P,N} \widehat{F_{r-1}'}\\
        -R_{P',N} \widehat{\overline{F_{r-1}'}}
    \end{pmatrix}.
    \label{solution'}
\end{equation}
 By Proposition \ref{p1}, we know that the components of $T_{r-1}'$ and $F_{r-1}'$ are real. 
  Therefore, we have $\widehat{\Delta q_{r}}(x)\in \mathbb{R}$ for $x\in\mathbb{Z}^{b+d}$.
Thus (r.1) is proved.
 
 Besides, there will be more remainders at each step compared with the scheme we described in Section 4. In fact, we have 

 \begin{align}
     &\begin{pmatrix}
        \bm{\Gamma}_{P}F(q_{r-1}+\Delta q_{r})\\
        \bm{\Gamma}_{P'}\bar{F}(q_{r-1}+\Delta q_{r})
    \end{pmatrix}\notag\\
    =&
    \begin{pmatrix}
        \bm{\Gamma}_{P} F(q_{r-1})\\
        \bm{\Gamma}_{P'}\bar{F}(q_{r-1})
    \end{pmatrix}
    +\widetilde{T}_{q_{r-1},P}
    \begin{pmatrix}
        \Delta q_{r}\\
        \overline{\Delta q_{r}}
    \end{pmatrix}
    +O(\Delta q_{r}^{2})\notag\\
    =&\begin{pmatrix}
        (\bm{\Gamma}_{P}-\bm{\Gamma}_{P,N}) F(q_{r-1})\\
        (\bm{\Gamma}_{P'}-\bm{\Gamma}_{P',N})\overline{F(q_{r-1})}
    \end{pmatrix}
    +(\widetilde{T}_{q_{r-1},P}-\widetilde{T}_{q_{r-1},P,N})
    \begin{pmatrix}
        \Delta q_{r}\\
        \overline{\Delta q_{r}}
    \end{pmatrix}
    +O(\Delta q_{r}^{2})\notag\\
    &+\begin{pmatrix}
        \bm{\Gamma}_{P,N} \left(F(q_{r-1})-F_{r-1}'\right)\\
        \bm{\Gamma}_{P',N}\left(\bar{F}(q_{r-1})-\overline{F_{r-1}'}\right)
    \end{pmatrix}+
    \left(\widetilde{T}_{q_{r-1},P,N}-\widetilde{T}_{r-1,P,N}'\right)
    \begin{pmatrix}
        \Delta q_{r}\\
        \overline{\Delta q_{r}}
    \end{pmatrix}.\label{n.7'}
 \end{align}

Note that the first three terms were already discussed in Section 4, so we only need to estimate the last two terms. By  applying Lemma \ref{di}, we get 
\begin{equation}
    \left\lVert \begin{pmatrix}
        \bm{\Gamma}_{P,N} \left(F(q_{r-1})-F_{r-1}'\right)\\
        \bm{\Gamma}_{P',N}\left(\bar{F}(q_{r-1})-\overline{F_{r-1}'}\right)
    \end{pmatrix} \right\rVert_{\mathcal{F}_{\alpha,L_{r}}} <\epsilon_{r}^{2},
\end{equation}
and
\begin{equation}
    \left\lVert 
    \left(\widetilde{T}_{q_{r-1},P,N}-\widetilde{T}_{r-1,P,N}'\right)
    \begin{pmatrix}
        \Delta q_{r}\\
        \overline{\Delta q_{r}}
    \end{pmatrix}
    \right\rVert_{\mathcal{F}_{\alpha,L_{r}}} <\epsilon_{r}^{2},
\end{equation}
where $$\left\lVert \begin{pmatrix}
    h_1\\
    h_2
\end{pmatrix} \right\rVert_{\mathcal{F}_{\alpha,L}}:=\left\lVert 
h_2 \right\rVert_{\mathcal{F}_{\alpha,L}}+
\left\lVert 
h_2 \right\rVert_{\mathcal{F}_{\alpha,L}}.$$
Thus, after the replacement, we can  obtain (r.3.b) at each step.

\subsection{Construction of $q_{r}$}
In this subsection, we construct $q_{r}$ and $\Lambda_{r}$ satisfying (r.1), (r.2), (r.3.a), (r.3.b), (r.3.c) and (r.4).
To obtain (r.3.c), we need the following theorem:
\begin{theorem}[\textbf{Markov Inequality}]
Suppose $I\subset \mathbb{R}^{n}$ is an interval, and let $p$ be a polynomial of total degree k or less in n variables. Then we have 
\begin{equation}
    \left\lVert \nabla p\right\rVert_{C^{0}(I)}\le \frac{4 k^2}{\omega_{I}} \left\lVert p \right\rVert_{C^{0}(I)},
\end{equation}
where $$\omega_{I}=\min\limits_{\lVert u \rVert=1}\textup{dist }(\mathcal{H}_{u},\mathcal{H}_{-u}),$$
and $\mathcal{H}_{u}$  is the support hyperplane of  $I$  with outer normal vector $u$.

(see \cite{wilhelmsen1974markov} for a proof).
\end{theorem}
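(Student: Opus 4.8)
The plan is to reduce everything to the classical one–dimensional Markov inequality and then to lift it to $I$ by slicing along line segments. Recall the one–variable statement: if $g$ is a polynomial of degree at most $k$ on an interval $[a,b]$ of length $\ell=b-a$, then $\lVert g'\rVert_{C^0([a,b])}\le \frac{2k^2}{\ell}\lVert g\rVert_{C^0([a,b])}$, the extremal function being the affinely rescaled Chebyshev polynomial $T_k$. I would simply invoke this; its proof is the standard zero–counting argument, comparing $c\,T_k$ with $g$ after the change of variables $[a,b]\to[-1,1]$, together with $T_k'(1)=k^2$, which after rescaling produces the factor $2/\ell$.

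Next I would exploit that in this paper every such $I$ is an axis–parallel cube of side length $\omega_I$. Fix $x\in I$ and an index $i$; the segment $S_{x,i}=\{x+te_i:\ x+te_i\in I\}$ is a translate of a coordinate edge of $I$ and therefore has length exactly $\omega_I$. Applying the one–dimensional inequality to $t\mapsto p(x+te_i)$ on $S_{x,i}$ gives $|\partial_{x_i}p(x)|\le \frac{2k^2}{\omega_I}\lVert p\rVert_{C^0(I)}$ for every $x\in I$, hence $\lVert\partial_{x_i}p\rVert_{C^0(I)}\le \frac{2k^2}{\omega_I}\lVert p\rVert_{C^0(I)}$, and summing the squares of the $n$ partial derivatives, $\lVert\nabla p\rVert_{C^0(I)}\le \frac{2\sqrt{n}\,k^2}{\omega_I}\lVert p\rVert_{C^0(I)}$. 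Since in every application $n=2b$ is a fixed integer, this dimension–dependent version already suffices for all subsequent uses of the theorem, the harmless factor $\sqrt{2b}$ being absorbed into the constants hidden in ``$\lesssim$''.

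To obtain the clean dimension–free constant $4k^2/\omega_I$ as stated — valid for an arbitrary convex interval, not merely a cube — one cannot argue by slicing through the point $x_0$ at which $\lVert\nabla p\rVert_{C^0(I)}$ is attained, because a chord of a convex body through a point near its boundary can be arbitrarily short even though the minimal width $\omega_I=\min_{\lVert u\rVert=1}\mathrm{dist}(\mathcal H_u,\mathcal H_{-u})$ stays bounded below (the tip of a long thin triangle is the model example). This gap is the only genuine obstacle, and it is precisely what is handled in \cite{wilhelmsen1974markov}: taking $v=\nabla p(x_0)/\lVert\nabla p(x_0)\rVert$, one notes that $\lVert\partial_v p\rVert_{C^0(I)}$ is attained at $x_0$ since $\partial_v p(x)=\langle\nabla p(x),v\rangle\le\lVert\nabla p(x)\rVert$, and then runs a geometric/continuity argument that trades the possibly short chord through $x_0$ for the full width of $I$ in the direction $v$ (which is $\ge\omega_I$), at the cost of replacing the constant $2$ by $4$. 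I would follow that argument for the general statement; for the needs of the present paper, however, the elementary cube estimate of the previous paragraph is entirely sufficient.
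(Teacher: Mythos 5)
Your proposal is correct as far as it goes, but it is worth noting that the paper itself gives \emph{no} proof of this statement---it is quoted verbatim from \cite{wilhelmsen1974markov} and used as a black box---so there is nothing in the source to compare against; what you have supplied is a genuine argument where the paper has only a citation. Your two-part structure is well judged: the one-dimensional Markov bound $\lVert g'\rVert_{C^0}\le (2k^2/\ell)\lVert g\rVert_{C^0}$ on an interval of length $\ell$, applied along axis-parallel segments of an axis-parallel cube, does give $\lVert\partial_{x_i}p\rVert_{C^0(I)}\le (2k^2/\omega_I)\lVert p\rVert_{C^0(I)}$ for each $i$, and hence $\lVert\nabla p\rVert_{C^0(I)}\le (2\sqrt{n}\,k^2/\omega_I)\lVert p\rVert_{C^0(I)}$. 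Since every interval $I$ in this paper is an axis-parallel cube in $\mathbb{R}^{2b}$ with $n=2b$ fixed, the dimension-dependent constant $2\sqrt{n}$ is indeed harmless for all downstream uses (it is absorbed into the implicit constants in Lemma 5.5), so your elementary slicing argument fully discharges the theorem \emph{as it is actually used}. Your diagnosis of why this does not upgrade to the dimension-free constant $4k^2/\omega_I$ for a general convex body is also accurate: the chord of $I$ through the maximizer $x_0$ in the direction $v=\nabla p(x_0)/\lVert\nabla p(x_0)\rVert$ can be far shorter than $\omega_I$ when $x_0$ sits near the boundary (the thin-triangle example is exactly right), whereas $\omega_I$ measures only the width between parallel support hyperplanes. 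That gap is precisely what Wilhelmsen's geometric argument closes, at the price of replacing $2$ by $4$, and deferring to \cite{wilhelmsen1974markov} for it is reasonable. In short, what your approach buys is a self-contained, elementary proof of a slightly weaker constant that is nevertheless sufficient for every invocation in the paper; what the cited general theorem buys is a cleaner, dimension-free statement valid for arbitrary convex intervals, at the cost of a less elementary argument.
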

We can easily derive from the Markov Inequality that 
\begin{equation}
    \left\lVert \partial^{l} p \right\rVert_{C^{0}(I)}\le \frac{(4 k^2)^{|l|_{1}}}{(\omega_{I})^{|l|_{1}}} \left\lVert p \right\rVert_{C^{0}(I)}.
\end{equation}
\begin{lemma}
   Suppose  the Iterative Lemma holds up to  step $r-1$. Let   $T=T_{r-1}^{\prime}$, and
   let   $\widetilde{\Lambda_{r}}$ be the collection of real intervals $I$ of size $A^{-\alpha (r+2)^{C_1}}$, where each $I$ contains $(\lambda_0,\lambda_0')$  satisfying
 \begin{equation}
    |T_{P,N}^{\prime -1}(\lambda_0,\lambda_0')(x,x')|<B(r) e^{-\alpha L_{r}^{(1)}|x-x'|_{\alpha}}. 
    \label{chufadian}
\end{equation}
 Moreover, we assume that $I\subset I'$ for some $I'\in\Lambda_{r-1}^{(1)}$.
   Then for $I\in \widetilde{\Lambda_{r}}$  we have:
   \begin{itemize}
       \item[(1)] For $(\lambda,\lambda')\in I$,
            \begin{equation}
                |T_{P,N}^{\prime -1}|<2B(r) e^{-\alpha L_{r}^{(1)}|x-x'|_{\alpha}}, 
            \end{equation}
        where  $N=N_{r}$ and $B(r)$ is given in the Iterative lemma.
        \item[(2)] Let $\left\lVert \cdot \right\rVert_{\alpha,A^{- (r+2)^{C_1}},L_r}$ be the Gevrey norm for functions defined on $I\times \mathbb{T}^{b+d}$, we have
           \begin{align}
               &\left\lVert \Delta q_r \right\rVert_{\alpha,A^{- (r+2)^{C_1}},L_r}<\epsilon_{r-1}^{1-}, \\
                    & \left\lVert  q_r \right\rVert_{\alpha,A^{- (r+2)^{C_1}},L_r}\ll 1. 
           \end{align}
   \end{itemize}

Moreover,  if we replace $I$ by $\frac{3}{2}I$, (1) and (2) still hold.
\label{qvyvraodong}

\end{lemma}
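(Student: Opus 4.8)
The plan is to upgrade the single-point bound (\ref{chufadian}) on the Green function of $T_{r-1}'$ to a bound valid on a thin \emph{complex} neighbourhood of $\frac32 I$, and then to read off the Gevrey regularity of $\Delta q_r$ by a Cauchy estimate. The whole argument rests on coupling the Markov inequality with Lemma \ref{equi}: by Lemma \ref{di}, the entries of $T_{r-1}'$ and of $F_{r-1}'$ are polynomials in $(\lambda,\lambda')$ of degree $p:=2^{r+1}\log\frac1\varepsilon$, so their variation across an interval $I$ of size $A^{-\alpha(r+2)^{C_1}}$ lying inside an interval $I'\in\Lambda_{r-1}^{(1)}$ of size $A^{-\alpha((r+2)^{C_1}-(r+2)^{C_2})}$ is governed by the tiny ratio $A^{-\alpha(r+2)^{C_2}}$ of the two sizes. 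One numerical coincidence drives everything: the interval size $A^{-\alpha(r+2)^{C_1}}$ is exactly the $\alpha$-th power of the target Gevrey width $A^{-(r+2)^{C_1}}$.

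\emph{Proof of (1).} On $I'$ each entry $T_{r-1}'(x,x')$ is a polynomial of degree $\le p$ whose $C^0(I')$-norm is $\lesssim N_r^2$ when $x=x'$ and otherwise $\lesssim \varepsilon\delta_r^{-C_\alpha}e^{-\alpha L_r^{(0.1)}|x-x'|_\alpha}$ (combine (\ref{decay3}) and (\ref{yaoyanzheng})). The Markov inequality bounds $\|\partial^m T_{r-1}'(x,x')\|_{C^0(I')}$ by $\big(4p^2A^{\alpha((r+2)^{C_1}-(r+2)^{C_2})}\big)^{|m|_1}$ times that norm, so Taylor expanding the global polynomial $T_{r-1}'$ around the centre $(\lambda_0,\lambda_0')$ of $I$ — a geometrically convergent sum because $p^2A^{-\alpha(r+2)^{C_2}}\ll1$ — gives, for $(\lambda,\lambda')\in\frac32 I$,
\[
|(T_{r-1}'(\lambda,\lambda')-T_{r-1}'(\lambda_0,\lambda_0'))(x,x')|\ \lesssim\ p^2A^{-\alpha(r+2)^{C_2}}e^{-\alpha L_r^{(1)}|x-x'|_\alpha}\ =:\ \eta\,e^{-\alpha L_r^{(1)}|x-x'|_\alpha}.
\]
Since $C_1>C_2+2>C_3+2>12$, one checks $\eta N_r^{C_5}B(r)^2\ll1$, so the second part of Lemma \ref{equi}, applied with $T=T_{r-1,P,N}'(\lambda_0,\lambda_0')$ (which satisfies its hypothesis by (\ref{chufadian})), yields $|T_{r-1,P,N}^{\prime-1}(\lambda,\lambda')(x,x')|<2B(r)e^{-\alpha L_r^{(1)}|x-x'|_\alpha}$ on $\frac32 I$. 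This is (1).

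\emph{Proof of (2).} The identical Taylor bound holds for \emph{complex} increments, so on the polydisc $\mathcal D$ of radius of order $A^{-\alpha(r+2)^{C_1}}$ around $\frac32 I$ one still has $|(T_{r-1}'(\lambda,\lambda')-T_{r-1}'(\lambda_*,\lambda_*'))(x,x')|<\eta\,e^{-\alpha L_r^{(1)}|x-x'|_\alpha}$ with $(\lambda_*,\lambda_*')$ a nearby real point; hence, by Lemma \ref{equi} again, $T_{r-1,P,N}^{\prime-1}$ extends holomorphically to $\mathcal D$ with the same bound, and so therefore does $\widehat{\Delta q_r}(x)$ through (\ref{solution'}). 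On $I'$, (r.3.b) at step $r-1$ and (\ref{yaoyanzheng}) give $|\widehat{F_{r-1}'}(x)|\lesssim\epsilon_{r-1}e^{-\alpha L_r^{(0.1)}|x|_\alpha}$, which likewise propagates to $\mathcal D$; then, exactly as in (\ref{estimateofdq}) and using the subadditivity $|x|_\alpha\le|x-y|_\alpha+|y|_\alpha$,
\[
|\widehat{\Delta q_r}(x)|\ \lesssim\ B(r)\,\epsilon_{r-1}\,\delta_r^{-\alpha(b+d)}e^{-\alpha L_r^{(1)}|x|_\alpha}\ <\ \epsilon_{r-1}^{1-}e^{-\alpha L_r^{(1)}|x|_\alpha}\qquad\text{on }\mathcal D.
\]
A Cauchy estimate in the parameters then gives $\|\partial^{k_1}\widehat{\Delta q_r}(x)\|_{C^0(\frac32 I)}\le k_1!\big(CA^{\alpha(r+2)^{C_1}}\big)^{|k_1|_1}\epsilon_{r-1}^{1-}e^{-\alpha L_r^{(1)}|x|_\alpha}$ for all $k_1\in\mathbb N^{2b}$. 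Summing the Gevrey series with parameter-width $A^{-(r+2)^{C_1}}$ and $\theta$-width $L_r$: because $A^{-(r+2)^{C_1}}$ is the $\alpha$-th root of the interval size, the $k_1$-sum collapses to $\sum_{k_1}C^{|k_1|_1}/(k_1!)^{\alpha-1}$, a constant depending only on $\alpha$ and $b$ (here $\alpha>1$ is essential), while Lemma \ref{leg} turns the $\theta$-derivatives and the Fourier sum into $\sum_x e^{-\alpha(L_r^{(1)}-L_r)|x|_\alpha}=\sum_x e^{-9\alpha\delta_r|x|_\alpha}\lesssim\delta_r^{-\alpha(b+d)}$. Absorbing $\delta_r^{-\alpha(b+d)}$, $B(r)$ and the remaining $r$- and $\log\frac1\varepsilon$-polynomial factors into $\epsilon_{r-1}^{-\delta'}$ yields $\|\Delta q_r\|_{\alpha,A^{-(r+2)^{C_1}},L_r}<\epsilon_{r-1}^{1-}$ on $\frac32 I\times\mathbb T^{b+d}$. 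Finally $q_r=q_{r-1}+\Delta q_r$; since $A^{-(r+2)^{C_1}}<A^{-(r+1)^{C_1}}$ and $L_r<L_{r-1}$, monotonicity of the Gevrey norm under shrinking the domain and the widths bounds $\|q_{r-1}\|_{\alpha,A^{-(r+2)^{C_1}},L_r}$ (on $\frac32 I\times\mathbb T^{b+d}$) by $\|q_{r-1}\|_{\alpha,A^{-(r+1)^{C_1}},L_{r-1}}\ll1$, which is (r.3.c) at step $r-1$; hence $\|q_r\|_{\alpha,A^{-(r+2)^{C_1}},L_r}\ll1$. The whole argument ran on $\frac32 I$, so the final sentence of the lemma is covered.

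\emph{Main obstacle.} The delicate point is the complexification in (2): the radius of $\mathcal D$ must be taken of the same order as the interval size $A^{-\alpha(r+2)^{C_1}}$ — large enough that the Cauchy estimate divides only by that size, so that matching the width $A^{-(r+2)^{C_1}}$ forces the $k_1$-sum to a convergent $\alpha$-dependent constant rather than an $\varepsilon$-dependent blow-up — yet small enough (far below $A^{-\alpha((r+2)^{C_1}-(r+2)^{C_2})}$) that the polynomial entries of $T_{r-1}'$ barely move and $T_{r-1,P,N}^{\prime-1}$ survives via Lemma \ref{equi}. This is feasible precisely because $p^2A^{-\alpha(r+2)^{C_2}}N_r^{C_5}B(r)^2\ll1$, which the hierarchy $C_1>C_2+2>C_3+2>12$ (together with $\varepsilon$ small, so that $A=(\log\frac1\varepsilon)^{\alpha}$ is large) is designed to furnish; keeping all these bookkeeping inequalities mutually consistent across the relevant range of $r$ is the one genuine difficulty. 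The Markov/Taylor estimates, the Neumann series concealed inside Lemma \ref{equi}, and the convolution estimate for $\widehat{\Delta q_r}$ are all routine.
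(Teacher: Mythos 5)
Your proposal is correct and follows essentially the same route as the paper's proof of Lemma~\ref{qvyvraodong}: complexify $(\lambda,\lambda')$ using the polynomial structure from Lemma~\ref{di}, bound the variation of $T_{r-1}'$ and $(F_{r-1}')^\land$ on a complex neighbourhood of size $O(A^{-\alpha(r+2)^{C_1}})$ via the Markov inequality and Taylor expansion, propagate the Green-function bound by Lemma~\ref{equi}, and then read off the Gevrey norm of $\widehat{\Delta q_r}(x)$ by a Cauchy estimate whose radius matches the $\alpha$-th power of the target Gevrey width $A^{-(r+2)^{C_1}}$, so that the $k_1$-sum collapses to a constant depending only on $\alpha$ and $b$. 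The only minor discrepancies are bookkeeping slips in the Gevrey-width subscript (the convolution in the analogue of (\ref{estimateofdq}) lands at $L_r^{(2)}$ rather than $L_r^{(1)}$), which do not affect the argument.
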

\begin{proof}
Take $I\in\widetilde{\Lambda_{r}}$, and suppose $(\lambda_{0},\lambda_{0}')$ satisfies (\ref{chufadian}).
    Since the elements of $T_{r-1}^{\prime}$ and $(F_{r-1}^{\prime})^{\land}$ are polynomials, we can complexize $\lambda$ and $\lambda'$. 
Let $(\lambda,\lambda')$ be in the complex neighborhood of $(\lambda_0,\lambda_{0}')$ of size $10\sqrt{2b} A^{-\alpha (r+2)^{C_1}}$.   Let p be an element of $T_{r-1,P}'$ or $(\bm{\Gamma}_{P}F_{r-1}')^{\land}$.  Since the degree of the polynomial is at most $ 2^{r+1}\log \frac{1}{\varepsilon}$, and  $\omega_{I'}=A^{-\alpha ((r+2)^{C_1}-(r+2)^{C_2})}$, by the Markov Inequality,  we have
\begin{equation}
    |p^{(l)}(\lambda_0,\lambda_0')|\le \left(\frac{4\cdot 4^{r+1}(\log \frac{1}{\varepsilon})^{2}}{A^{-\alpha ((r+2)^{C_1}-(r+2)^{C_2})}}\right)^{|l|_{1}}\left\lVert p \right\rVert_{C(I')}.
\end{equation}
Let $(\Delta \lambda,\Delta\lambda')=(\lambda,\lambda')-(\lambda_0,\lambda_0')$, since 
\begin{equation}
    p(\lambda,\lambda')=\sum\limits_{l=0}^{\textup{degree}}(\Delta \lambda \cdot \partial_{\lambda}+\Delta \lambda'\cdot\partial_{\lambda'})^{l} p(\lambda_0,\lambda_0'),
\end{equation}
we have
\begin{equation}
    \begin{split}
        |p(\lambda,\lambda')-p(\lambda_0,\lambda_0')|
        &\le \sum\limits_{l=1}^{\textup{degree}} \left(2b \frac{4\cdot 4^{r}(\log \frac{1}{\varepsilon})^{2}}{A^{-\alpha ((r+2)^{C_1}-(r+2)^{C_2})}}\cdot 10\sqrt{2b} A^{-\alpha (r+2)^{C_1}}\right)^{l} \left\lVert p \right\rVert_{C(I')}\\
        &\le \sum\limits_{l=1}^{\textup{degree}} (A^{-\alpha (r+2)^{C_2} +r})^{l} \left\lVert p \right\rVert_{C(I')}\\
        &\le A^{-\alpha (r+2)^{C_2} +r} \left\lVert p \right\rVert_{C(I')}.
    \end{split}
    \notag
\end{equation}
Thus, for $x\notin\mathcal{R}$, we have 
\begin{equation}
\left\{
    \begin{split}
        &|T_{r-1}'(\lambda,\lambda')(x,x')-T_{r-1}'(\lambda_0,\lambda_0')(x,x')|<A^{-\alpha (r+2)^{C_2} +2r} e^{-\alpha L_{r}^{(0.2)}|x-x'|_{\alpha}},\\
        &|\left(F_{r-1}'(\lambda,\lambda')\right)^{\land}(x)-\left(F_{r-1}'(\lambda_0,\lambda_0')\right)^{\land}(x)|<\epsilon_{r-1} A^{-\alpha (r+2)^{C_2} +2r} e^{-\alpha L_{r}^{(0.2)}|x|_{\alpha}}.
    \end{split}\right.
\end{equation}
This implies 
\begin{equation}
    |\left(F_{r-1}'(\lambda,\lambda')\right)^{\land}(x)|<2\epsilon_{r-1} e^{-\alpha L_{r}^{(0.2)}|x|_{\alpha}},\ x\notin\mathcal{R}.
\end{equation}
Since $A^{-\alpha (r+2)^{C_2}+2r} N_r^{C_5} B(r)^2 \ll 1$ (since $C_2>C_3$), by Lemma \ref{equi}, we have
\begin{equation}
        |T_{P,N}^{\prime -1}(x,x')|<2B(r) e^{-\alpha L_{r}^{(1)}|x-x'|_{\alpha}} 
\end{equation}
at $(\lambda,\lambda')$.  Moreover by the same  argument as in (\ref{estimateofdq}),   we have
\begin{equation}
    \begin{split}
        |\widehat{\Delta q_{r}}(x)(\lambda,\lambda')|
        \le \epsilon_{r-1}^{1-} e^{-\alpha L_r^{(2)} |x|_{\alpha}}.
    \end{split}
\end{equation}
Let $\left\lVert \cdot \right\rVert_{\alpha,A^{-(r+2)^{C_1}},L_r}$ be the Gevrey norm for function defined on $I\times\mathbb{T}^{b+d}$. Fix $x\in\mathbb{Z}^{b+d}$. Since $\Delta q_{r}(x)(\lambda,\lambda')$ is a rational function of $(\lambda,\lambda')$, by Cauchy estimate, we have
\begin{equation}
    \left\lVert\partial^l \widehat{\Delta q_{r}}(x)\right\rVert_{C(I)}\le\frac{l!}{(8 A^{-\alpha (r+2)^{C_1}})^{l}}\epsilon_{r-1}^{1-}e^{-\alpha L_r^{(2)}|x|_{\alpha}}.
\end{equation}
Thus
\begin{equation}
    \left\lVert \widehat{\Delta q_r}(x)(\lambda,\lambda') \right\rVert_{\alpha,A^{-(r+2)^{C_1}},L_r}=\left\lVert \widehat{\Delta q_r}(x)(\lambda,\lambda') \right\rVert_{\alpha,A^{-(r+2)^{C_1}}}<\epsilon_{r-1}^{1-}e^{-\alpha L_{r}^{(2)}|x|_{\alpha}},
    \label{5.57}
\end{equation}
which implies that
\begin{equation}
\begin{split}
    \left\lVert \Delta q_r \right\rVert_{\alpha,A^{-(r+2)^{C_1}},L_r}
    &= \left\lVert\sum\limits_{x\in\mathbb{Z}^{b+d}} \widehat{\Delta q_r}(x)e^{ix\cdot\theta} \right\rVert_{\alpha,A^{-(r+2)^{C_1}},L_r}\\
    &\le\sum\limits_{x\in\mathbb{Z}^{b+d}} e^{\alpha L_r |x|_{\alpha}} \left\lVert \widehat{\Delta q_r}(x)(\lambda,\lambda') \right\rVert_{\alpha,A^{-(r+2)^{C_1}},L_r} \\
    &<\epsilon_{r-1}^{1-}.
\end{split}
\notag
\end{equation}
Thus,  we also have 
$$\left\lVert q_r\right\rVert_{\alpha,A^{- (r+2)^{C_1}},L_r }\ll 1.$$ 
Moreover, if $I$ is replaced by $\frac{3}{2}I$, the above discussion still holds.
\end{proof}
Thus, under the assumption that the Iterative Lemma holds up to $r-1$ step, if we take $\Lambda_{r}$ to be a subset of $\widetilde{\Lambda_{r}}$ and let 
\begin{equation}
    \begin{pmatrix}
        \widehat{\Delta q_{r}}\\
        \widehat{\overline{\Delta q_{r}}}
    \end{pmatrix}
    =
    \left(  T_{r-1,P,N}' \right)^{-1}
    \begin{pmatrix}
        -R_{P,N} \widehat{F_{r-1}'}\\
        -R_{P',N} \widehat{\overline{F_{r-1}'}}
    \end{pmatrix},
\notag
\end{equation}
then we have verified conditions (r.3.a), (r.3.b) and (r.3.c) of the Iterative Lemma.

Notice that, up to this point,  $\Delta q_{r}$ is only defined on $(\lambda,\lambda')\in \mathop{\cup}\limits_{I\in \Lambda_{r}} I$. We can apply the Whitney Extension Theorem to extend $\Delta q_{r}$ to the entire $(\lambda,\lambda')$ parameter space (see \cite{poschel1982integrability}\cite{stein1970singular} for the Whitney Extension Theorem). Suppose the distance of two distinct intervals $I\in \Lambda_{r}$ is greater than $[\exp\exp\exp(\log r)^{1/2}]^{-1}$, which only causes a negligible loss in measure compared with the estimate we have made (i.e., $[\exp\exp(\log r)^{1/4}]^{-1}$).     

Given (r.3.c), it follows that  
$$\left\lVert \widehat{\Delta q_r}(x)\right\rVert_{C^{1}(I)}<\epsilon_{r-1}^{1-}$$
for $I\in \Lambda_{r}$ and $x\in\mathbb{Z}^{b+d}\setminus\mathcal{R}$.
Thus we can use the Whitney Extension Theorem to extend $\widehat{\Delta q_r}(x)$ to $I_0\times I_0$, yielding $\widehat{\Delta q_r}(x)\in \mathbb{R}$ and $\left\lVert \widehat{\Delta q_r}(x)\right\rVert_{C^{1}(I_0\times I_0)}<\epsilon_{r-1}^{1-}$. Since $\textup{supp } \widehat{\Delta q_{r}}\subset B(0,N_r)$, we have $\left\lVert \Delta q_{r} \right\rVert_{C^{1}(I_0\times I_0)}<\epsilon_{r-1}^{1-}$.

At this point, under the assumption that the Iterative Lemma holds up to step $r-1$, we have established conditions (r.1) and (r.2). Furthermore, condition (r.4) is easy to verify.

\subsection{The reality of $\lambda'$ and the approximation of $Q$-equations}

Since $\hat{q}_{r}(x)\in\mathbb{R}$ for all $x\in\mathbb{Z}^{b+d}$, we have 
$\widehat{\frac{\partial H}{\partial \bar{q}}}(q_{r},\bar{q}_{r})(n_j,e_j)(\lambda,\lambda')\in\mathbb{R}$ by Proposition \ref{p1}.
Thus, the solution  $\lambda'=\lambda+\varepsilon\varphi_{r}(\lambda)$   of \begin{equation}
    (\lambda'_j-\lambda_j)a_j-\varepsilon\widehat{\frac{\partial H}{\partial \bar{q}}}(q_{r},\bar{q}_{r})(n_j,e_j)=0,\ \ 1\le j \le b,
    \notag
\end{equation}
is real.

For measure estimate, we will consider an approximation of $Q$-equations to obtain a semi-algebraic description. Since $\left\lVert q_{r} \right\rVert_{\alpha,A^{-(r+2)^{C_1}},L_r}\ll 1$ for $q_{r}$ on $I\times \mathbb{T}^{b+d}$, where $I\in \Lambda_{r}$, we have that $\widehat{\frac{\partial H}{\partial \bar{q}}}(q_{r},\bar{q}_{r})(n_j,e_j)(\lambda,\lambda')$ is a Gevrey function in $(\lambda,\lambda')$, and 
\begin{equation}
    \left\lVert \widehat{\frac{\partial H}{\partial\bar{q}}}(q_{r},\bar{q}_{r})(n_j,e_j) \right\rVert_{\alpha,A^{-(r+2)^{C_1}}}\lesssim 1.
\end{equation}
Let $I'\in \Lambda_{r}^{(1)}$, then the size of $I'$ is $A^{-\alpha((r+3)^{C_1}-(r+3)^{C_2})}$. Let $(\lambda_0,\lambda'_0)$ be the center of $I'$ and let $\psi_{p}(\lambda,\lambda')$ be the Taylor polynomial of   $\widehat{\frac{\partial H}{\partial\bar{q}}}(q_{r},\bar{q}_{r})(n_j,e_j)(\lambda,\lambda')$ of degree $p$ centered at $(\lambda_0,\lambda'_0)$.  Then for $(\lambda,\lambda')\in I'$ we have 
\begin{equation}
\begin{split}
      &\left|\widehat{\frac{\partial H}{\partial\bar{q}}}(q_{r},\bar{q}_{r})(n_j,e_j)(\lambda,\lambda')-\psi_p(\lambda,\lambda')\right|\\
      \le& C(p+1)!^{\alpha-1}(A^{(r+2)^{C_1}})^{\alpha(p+1)} (A^{-\alpha((r+3)^{C_1}-(r+3)^{C_2})})^{p+1}\\
      \le& C (p+1)^{(\alpha-1)(p+1)} A^{-\alpha((r+3)^{C_1}-(r+2)^{C_1}-(r+3)^{C_2})(p+1)}\\
      \le& C((p+1)^{\alpha-1}\cdot A^{-\alpha(C_1-2)(r+2)^{C_1-1}})^{p+1}.
\end{split}
\notag
\end{equation}
Take $p+1=2^{r+1} \log\frac{1}{\varepsilon}$, then we obtain
\begin{equation}
    \begin{split}
       &\left|\widehat{\frac{\partial H}{\partial\bar{q}}}(n_j,e_j)(\lambda,\lambda')-\psi_p(\lambda,\lambda')\right|\\
      \le& C A^{-\alpha (r+2)^{C_1-1}(p+1)}\\
      \le & \varepsilon^{\frac{\log A}{\log1/\varepsilon}\alpha (r+2)^{C_1-1}(p+1)}\\
      \le& \varepsilon^{2^{r+1}}.
    \end{split}
    \notag
\end{equation}
Thus, at step $r$, we can replace $Q$-equations by 
\begin{equation}
    (\lambda'_j-\lambda_j)a-\varepsilon \psi_p(\lambda,\lambda')=0,\ \ 1\le j\le b,
    \label{tidaiQ}
\end{equation}
on $I'$ with a perturbation of $\varepsilon^{2^{r+1}}$. Moreover, equations (\ref{tidaiQ}) have a semi-algebraic description of degree less than $A^{r}$. In the sequel, we will   denote  by $\Gamma_r'$ the graph of (\ref{tidaiQ}) defined on $I'\in \Lambda_{r}^{(1)}$. We will denote  the solution of (\ref{tidaiQ}) as $\lambda'=\lambda+\varepsilon\varphi_{r}'(\lambda)$.  Now we are only left with proving (r.3.d) for $r\gtrsim \log\log\frac{1}{\varepsilon}$.

\section{Measure Estimates}
In this section, we will prove (r.3.d) for $r\gtrsim\log\log\frac{1}{\varepsilon}$. For simplicity, we omit the subscript $P$.
\subsection{Coupling Lemma}
In this subsection, we present two coupling lemmas. Here, we  denote $c=\frac{1}{\alpha}$, where $\alpha$ is the Gevrey index. Let $T_{\Lambda}$ be the restriction of the matrix $T$ to $\Lambda\times\Lambda$, where $\Lambda$ is a subset of the index set, and we denote $G_{\Lambda}:=T_{\Lambda}^{-1}$. Recall that if 
 \begin{equation}
     \Lambda=\Lambda_1\cup \Lambda_{2},\ \  \Lambda_1\cap\Lambda_2=\varnothing,
 \end{equation}
where $\Lambda$, $\Lambda_1$ and $\Lambda_2$ are finite subsets, then we have the resolvent identity:
\begin{equation}
    G_{\Lambda}=(G_{\Lambda_1}+G_{\Lambda_2})-(G_{\Lambda_1}+G_{\Lambda_2})(T_{\Lambda}-T_{\Lambda_1}-T_{\Lambda_2})G_{\Lambda}.
\end{equation}
\textbf{The following coupling lemma is the key improvement that enable us to obtain the quasi-periodic solution with arbitrary Gevrey index $\alpha>1$.}
\begin{lemma}
    Assume that  $T$ satisfies the off-diagonal estimate 
    \begin{equation}
        |T(x,x')|<e^{-L |x-x'|_{\alpha}},\ \ x\ne x'.
    \end{equation}
Let $M_0,M_1\in \mathbb{Z}_{+}$, and suppose $ (\log M_0)^{\iota'}  \le \log \log M_1\le \iota\log M_0 $, where $\iota<c$ and $\iota'$ is a  small constant. Let  $Q_0=[-M_0,M_0]^d$ and  $\Omega=[-M_1,M_1]^b \times Q$, where $Q$ is an $M_1$ interval in  $\mathbb{Z}^{d}$. Assume that there is a decomposition $\Omega=\Omega_0\cup\Omega_1$ satisfying the following conditions: 
\begin{itemize}
    \item For $x=(k,n)\in \Omega_0$, define $V_{x}=(k+[-M_0,M_0]^{b})\times(n+Q_0)$, and we have
\begin{equation}
\left\{
\begin{split}
    &\left\lVert G_{V_{x}} \right\rVert\le e^{M_0^{\kappa}},\\
    &|G_{V_{x}}(\eta,\eta')|<e^{-L|\eta-\eta'|_{\alpha}},\ \ |\eta-\eta'|>M_0^{\theta},
\end{split}\right.
\end{equation}
where $\kappa<c$ and $0<\theta<1$.
\item $\Omega_1=\mathop{\cup}\limits_{\beta}\Omega_{1,\beta}$, with $\text{diam }\Omega_{1,\beta}<M_1^{\theta_1}$ and $\text{dist }(\Omega_{1,\beta},\Omega_{1,\beta'})>M_1^{\theta_2}$ for $\beta\ne\beta'$.
\item  The $M_1^{\theta_3}$-neighborhood $\widetilde{\Omega}_{1,\beta}$ of $\Omega_{1,\beta}$ satisfies 
\begin{equation}
    \left\lVert G_{\widetilde{\Omega}_{1,\beta}} \right\rVert<e^{M_1^{\theta_4}},
\end{equation}
\label{couplelemma1}

\end{itemize}
where $0<\theta_4<\theta_3<\theta_2<\theta_1<\theta<1$.  Furthermore, assume  $c>\frac{\theta_4}{\theta_3}$ and $c<\frac{\theta-\theta_1-\tilde{\delta}}{1-\theta_3}$ for 
 some small constant $\tilde{\delta}>0$. Then, we have 
\begin{equation}
\left\{
    \begin{split}
        &\left\lVert G_{\Omega} \right\rVert<e^{M_1^{\theta_4+}},\\
        &|G_{\Omega}(x,y)|<e^{-(L-2 M_{0}^{\max\{ \iota,\kappa,\theta c \}-c})|x-y|_{\alpha}},\ \ |x-y|>M_1^{\theta}.
    \end{split}\right.
\end{equation}
\label{6.1}
   
\end{lemma}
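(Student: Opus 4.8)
I would estimate $G_\Omega:=T_\Omega^{-1}$ by iterating the resolvent identity (a ``chain expansion''), organising the chains around the scale-$M_0$ boxes $V_x$ and the resonant clusters $\widetilde\Omega_{1,\beta}$, and using the strict concavity of $t\mapsto t^{1/\alpha}$ to make the loss of decay rate tend to $0$ with $M_0$. Write $c=1/\alpha$ and $\rho:=M_0^{\max\{\iota,\kappa,\theta c\}-c}$; since $\iota,\kappa,\theta c<c$ one has $\rho\to0$. Two elementary facts are used repeatedly: concavity gives the subadditivity $|a+b|_\alpha\le|a|_\alpha+|b|_\alpha$ on $\mathbb Z^{b+d}$ (hence $\sum_i|z_i-z_{i+1}|_\alpha\ge|x-y|_\alpha$ along any finite chain $x=z_0,\dots,z_m=y$), and — decisively — for $\alpha>1$ the ``deficit sums'' $\sum_{u}e^{-L(|z-u|_\alpha+|u-u'|_\alpha-|z-u'|_\alpha)}$ are bounded by a constant $C(L,\alpha,b,d)$, whereas for $\alpha=1$ they would be exponential; this is what lets us avoid the $\tfrac12L$ loss of Lemma~7 of \cite{bourgain1998quasi}. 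The region assigned to a site $z$ is $V_z$ (the box $z+[-M_0,M_0]^{b+d}$, \emph{centred} at $z$) when $z$ is at $\infty$-distance $>\tfrac12 M_1^{\theta_3}$ from every cluster core, and the enlargement $\widetilde\Omega_{1,\beta}$ when $\mathrm{dist}(z,\Omega_{1,\beta})\le\tfrac12 M_1^{\theta_3}$ (well defined, since the $\widetilde\Omega_{1,\beta}$ are pairwise disjoint because $\theta_3<\theta_2$); in the first case $z$ lies at distance $M_0$ from $\partial V_z$, in the second at distance $\ge\tfrac12 M_1^{\theta_3}$ from $\partial\widetilde\Omega_{1,\beta}$.

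\textbf{The single-hop estimates.} For $R=R_z\subseteq\Omega$ and $y\notin R$, the resolvent identity gives $G_\Omega(z,y)=-\sum_{u\in R,\,u'\notin R}G_R(z,u)\,T(u,u')\,G_\Omega(u',y)$, and I would estimate the kernel $K(z,u'):=\sum_{u\in R}|G_R(z,u)||T(u,u')|$. If $R=V_z$: split the $u$-sum at $|z-u|=M_0^\theta$; the far part is $\le\sum_u e^{-L|z-u|_\alpha}e^{-L|u-u'|_\alpha}\le C(L,\alpha,b,d)\,e^{-L|z-u'|_\alpha}$ by the deficit bound, and the near part is $\le e^{M_0^\kappa}M_0^{\theta(b+d)}e^{-L(|z-u'|_\alpha-(b+d)M_0^{\theta c})}$, so altogether $K(z,u')\le e^{O(M_0^\kappa\vee M_0^{\theta c}\vee\log M_0)}e^{-L|z-u'|_\alpha}$ with advance $|z-u'|_\alpha\ge M_0^{c}$ — hence a prefactor $\le e^{\rho\cdot(\text{advance})}$ and \emph{no} rate loss. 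If $R=\widetilde\Omega_{1,\beta}$: only $|G_R(z,u)|\le\|G_R\|\le e^{M_1^{\theta_4}}$ is available, so $K(z,u')\le e^{M_1^{\theta_4}}M_1^{\theta_1(b+d)}e^{-L\,\mathrm{dist}(u',\widetilde\Omega_{1,\beta})^{\,c}}$; the advance $|z-u'|_\alpha\ge\tfrac12 M_1^{\theta_3c}$, but the decay is only from the cluster boundary, so this hop both pays a factor $e^{M_1^{\theta_4}}M_1^{\theta_1(b+d)}$ and ``wastes'' up to $M_1^{\theta_1c}$ worth of $\alpha$-decay (the cluster diameter).

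\textbf{Assembling and concluding.} Iterating on the first argument (extract $G_{R_x}(x,y)$ when $y\in R_x$, otherwise recurse) expresses $G_\Omega(x,y)$ as an absolutely convergent sum over finite chains of products of hop-kernels times a terminal factor $G_{R_{z_m}}(z_m,y)$, which is $\le e^{M_1^{\theta_4}}$ if $z_m$ lies in the cluster containing $y$ and $\le e^{M_0^{\kappa}}$ otherwise. For a chain with good hops only, the prefactors multiply to $e^{\rho\sum_i|z_i-z_{i+1}|_\alpha}$, which against the decay leaves $e^{-(L-2\rho)\sum_i|z_i-z_{i+1}|_\alpha}\le e^{-(L-2\rho)|x-y|_\alpha}$, with long chains extra-suppressed (advance $\ge M_0^c$ per hop while $\kappa<c$); this both gives convergence and, run on a putative solution of $T_\Omega v=0$, forces $v\equiv0$, hence invertibility of the finite matrix $T_\Omega$. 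Chains transiting clusters are controlled by the exponent hypotheses: $c>\theta_4/\theta_3$ (so $e^{M_1^{\theta_4}}=e^{o(M_1^{\theta_3c})}$ is dominated by the advance of a cluster hop), $\theta_4<\theta_3<\theta_2<\theta_1<\theta$ together with $c<(\theta-\theta_1-\tilde\delta)/(1-\theta_3)$ (so the $M_1^{\theta_1c}$ wasted per transit, summed against the $\ge M_1^{\theta_2c}$-separation of consecutive transits and the fact that a transited cluster occupies scale $M_1^{\theta_1}\ll M_1^{\theta}\le|x-y|$, stays below a $\tilde\delta$-fraction of $|x-y|_\alpha$), and $\log M_1\le M_0^{\iota}$ (relating the two scales). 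Collecting everything, $|G_\Omega(x,y)|\le e^{M_1^{\theta_4+}}e^{-(L-2\rho)|x-y|_\alpha}$ for all $x,y$; summing over $x$ gives $\|G_\Omega\|\le e^{M_1^{\theta_4+}}\sum_x e^{-(L-2\rho)|x|_\alpha}<e^{M_1^{\theta_4+}}$, and when $|x-y|>M_1^{\theta}$, since $\theta_4<\theta_3c<\theta c$ the factor $e^{M_1^{\theta_4+}}$ is itself absorbed into the exponent (it is $\le e^{\tilde\delta|x-y|_\alpha}$ up to constants), yielding $|G_\Omega(x,y)|<e^{-(L-2\rho)|x-y|_\alpha}$, i.e. the stated off-diagonal bound with $\rho=M_0^{\max\{\iota,\kappa,\theta c\}-c}$.

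\textbf{Main obstacle.} The genuine difficulty — exactly the gap left by the outline in \cite{bourgain2005green}, flagged in \cite{wang2016energy} — is the treatment of chains passing through resonant clusters: $\widetilde\Omega_{1,\beta}$ has diameter $\sim M_1^{\theta_1}$, which \emph{exceeds} both its separation $M_1^{\theta_2}$ from other clusters and the width $M_1^{\theta_3}$ of the neighbourhood on which only a norm bound is known, so one cannot ``see through'' a cluster for free, and a priori a single chain could cross $\sim|x-y|_\alpha/M_1^{\theta_3c}$ clusters, each wasting $M_1^{\theta_1c}$ of decay rate. Making this work requires (i) the region-assignment near clusters chosen so that a cluster hop always advances $\ge\tfrac12 M_1^{\theta_3c}$, and (ii) a book-keeping argument showing that the cumulative cluster-borne loss over any chain contributing to $G_\Omega(x,y)$ with $|x-y|>M_1^{\theta}$ is only a $\tilde\delta$-fraction of $|x-y|_\alpha$, using all of $\theta_4<\theta_3<\theta_2<\theta_1<\theta$, $c>\theta_4/\theta_3$, $c<(\theta-\theta_1-\tilde\delta)/(1-\theta_3)$ and the strict concavity of $s\mapsto s^{1/\alpha}$ (which, unlike the $\alpha=1$ case, keeps the deficit and combinatorial factors merely polynomial). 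This step is the heart of the proof and the place where the improvement over Lemma~7 of \cite{bourgain1998quasi} — loss of decay rate $\to0$ rather than $\tfrac12L$ — is realised.
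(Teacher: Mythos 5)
Your overall strategy matches the paper's in spirit (iterated resolvent identity organised around $V_z$ boxes for good sites and the enlargements $\widetilde\Omega_{1,\beta}$ near clusters, deficit bounds from subadditivity of $|\cdot|_\alpha$, refreshing of the rate by a negligible amount $\rho=M_0^{\max\{\iota,\kappa,\theta c\}-c}$), and you have correctly located the crux: controlling chains that transit resonant clusters whose diameter $M_1^{\theta_1}$ exceeds their separation $M_1^{\theta_2}$. But precisely at that crux the proposal stops at assertion rather than proof. You write that the loss per transit ``$M_1^{\theta_1 c}$, summed against the $\ge M_1^{\theta_2 c}$-separation of consecutive transits, \dots\ stays below a $\tilde\delta$-fraction of $|x-y|_\alpha$'' — but this heuristic fails on its face: since $\theta_1>\theta_2$, the per-transit waste $M_1^{\theta_1 c}$ \emph{exceeds} the per-segment gain $M_1^{\theta_2 c}$, and counting transits by separation only makes things worse. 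This is exactly the obstruction flagged in \cite{wang2016energy} and is the reason the naive Bourgain argument does not close; no combination of the stated exponent hypotheses rescues a ``separation vs.\ diameter'' count.

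What the paper actually does is different and is the genuine content of the lemma. It does not expand into chains; it derives the self-referential maximum inequalities (\ref{zongeq}) and (\ref{jincheng}) for $|G_\Omega(x,y)|$ directly and then solves them. The decisive step is a \emph{dimension-by-dimension accounting binned by hop size}: after splitting off the case where ``small hops'' in a fixed coordinate occur $\ge M_1^{\theta-\theta_1-\tilde\delta}$ times (which already over-supplies decay under $c<(\theta-\theta_1-\tilde\delta)/(1-\theta_3)$), the remaining decay distance $R$ in that coordinate is partitioned into bins $l_jR$ according to whether $|z_1-w_1|\in[jM_1^{\theta_1},(j+1)M_1^{\theta_1})$, and the two elementary inequalities
\begin{equation}
\frac{R}{M_1^{\theta_1}}M_1^{\theta_3 c}\ge R^{c},\qquad
\frac{j^{c}R}{(j+3)M_1^{\theta_1}}M_1^{\theta_1 c}\ge\bigl(1-O(M_1^{-\theta+\frac{3\tilde\delta}{4}+\theta_1})\bigr)R^{c},
\end{equation}
valid under $c<(\theta-\tfrac{\tilde\delta}{2}-\theta_1)/(\theta-\tfrac{\tilde\delta}{2}-\theta_3)$, show that the cumulative decay across all bins is $\ge(1-O(M_1^{-c\tilde\delta/2}))|x_1-y_1|^c$. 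This is where the strict concavity of $s\mapsto s^{1/\alpha}$ is really used — not merely for a bounded deficit constant as in your single-hop estimates, but to make a \emph{long} hop of length $jM_1^{\theta_1}$ nearly as efficient per unit distance as a hop of the full length $R$. That binning argument is absent from your proposal, and without it (or a correct replacement) the off-diagonal bound $|G_\Omega(x,y)|<e^{-(L-2M_0^{\max\{\iota,\kappa,\theta c\}-c})|x-y|_\alpha}$ is not established.
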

\begin{proof}
    For  $x\in \Omega_{0}$ and $\text{dist}(x,\Omega_1)>M_1^{\theta_3}$, let $W_{x}$ be the $2M_1^{\theta_3}$-interval centered at $x$. Let $\Lambda=W_x$, $\Lambda_1=V_{x'}$, $\Lambda_2=W_x\setminus V_{x'}$. Then, we have 
    \begin{equation}
        \begin{split}
            |G_{\Lambda}(x',y)|
            \le & |G_{\Lambda_1}(x',y)|+\sum\limits_{\substack{z\in\Lambda_1\\ w\in\Lambda_2}}|G_{\Lambda_1}(x',z) T(z,w) G_{\Lambda}(w,y)|\\
            \le & |G_{\Lambda_1}(x',y)|+\sum\limits_{\substack{z\in \Lambda_1\\ w\in \Lambda_2}} e^{M_0^{\kappa}}\cdot e^{L(b+d)M_0^{\theta c}} e^{-L|x'-z|_{\alpha}} e^{-L|z-w|_{\alpha}}|G_{\Lambda}(w,y)|\\
            \le & |G_{\Lambda_1}(x',y)|+M_1^{2\theta_3 (b+d)}e^{M_0^{\kappa}+L(b+d)M_0^{\theta c}}\max\limits_{w\in \Lambda_2} e^{-L|x'-w|_{\alpha}}|G_{\Lambda}(w,y)|\\
            \le & |G_{\Lambda_1}(x',y)|+M_1^{2\theta_3 (b+d)}e^{M_0^{\kappa}+L(b+d)M_0^{\theta c}} e^{-\delta M_{0}^c} \max\limits_{w\in \Lambda_2}  e^{-(L-\delta)|x'-w|_{\alpha}}|G_{\Lambda}(w,y)|\\
            \le & |G_{\Lambda_1}(x',y)|+\max\limits_{w\in \Lambda_2} e^{-(L-\delta)|x'-w|_{\alpha}}|G_{\Lambda}(w,y)|,
        \end{split}
        \notag
    \end{equation}
where $\delta=M_{0}^{\max\{ \iota,\kappa,\theta c \}-c}$.  Thus, we need $\iota<c$ and $\kappa<c$ to obtain an acceptable $\delta$, i.e., $\delta=M_0^{-\gamma}$ for some $\gamma>0$.  

The inequality we just derived tells us 
\begin{equation}
    \max |G_{\Lambda}(x',y)|\le \max |G_{\Lambda_{1}}(x',y)|+\frac{1}{2} \max |G_{\Lambda}(x',y)|,
\end{equation}
thus we  have
\begin{equation}
    \max |G_{\Lambda}(x,y)|<2e^{M_0^{\kappa}}.
\end{equation}
Let $|\eta-\eta'|>M_0^{\frac{10}{c}}$, then we have
\begin{equation}
\begin{split}
    |G_{\Lambda}(\eta,\eta')|&\le e^{-(L-\delta)|\eta-\eta'|_{\alpha}}e^{L(b+d)M_0^c+M_0^{\kappa}}\\
    &\le e^{-(L-\delta-\delta')|\eta-\eta'|_{\alpha}},
\end{split}
\end{equation}
where $\delta'=M_0^{-9}$. For simplicity, we make some symbol abuse in the following, i.e.,  we say
\begin{equation}
    |G_{W_x}(\eta,\eta')|<e^{-L|\eta-\eta'|_c}, \ \ |\eta-\eta'|>M_0^{\frac{10}{c}},
\end{equation}
but we need to remember that here $L$ is actually $L-\delta-\delta'$.

Now we consider $\Omega$, and let $x,y \in \Omega$.

    \textbf{Case 1}:  $\text{dist }(x,\Omega_1)>M_1^{\theta_3}$.

Let $W_x$ be the $2 M_1^{\theta_3}$-interval centered at $x$, and let $\Lambda_1=W_{x}$ and $  \Lambda_2=\Omega\setminus\Lambda_{1}$, then we have 
    \begin{equation}
    \begin{split}
        |G_{\Omega}(x,y)|
        &\le |G_{\Lambda_1}(x,y)|+\sum\limits_{\substack{z\in \Lambda_1\\ w\in\Lambda_{2}}} e^{M_0^{\kappa}}e^{M_0^{C_{\alpha}}}e^{-L|x-z|_{\alpha}}e^{-L|z-w|_{\alpha}}|G_{\Omega}(w,y)|\\
        &\le |G_{\Lambda_1}(x,y)|+M_1^{b+d} e^{M_0^{C_{\alpha}}}\max\limits_{w\in{\Lambda_2}}e^{-L|x-w|_{\alpha}}|G_{\Omega}(w,y)|\\
        &\le |G_{\Lambda_1}(x,y)|+ \max\limits_{w\in\Lambda_2} 
e^{-(L-\delta_1)|x-w|_{\alpha}} |G_{\Omega}(w,y)|,
    \end{split}
    \notag
    \end{equation}
where $\delta_1=M_1^{-\theta_3 c-}$. When $|x-y|>M_1^{\theta_3}$, the first term vanish, and we get 
  \begin{equation}
      |G_{\Omega}(x,y)|<\max\limits_{w\in\Omega\setminus W_x} 
e^{-(L-\delta)|x-w|_{\alpha}} |G_{\Omega}(w,y)|.
  \end{equation}
  Clearly, $|x-w|>M_1^{\theta_3}$.

  \textbf{Case 2}:  $\text{dist }(x,\Omega_1)<M_1^{\theta_3}$.

  If this case happens, $x$ must be in some $\widetilde{\Omega}_{1,\beta}$. Let $\Lambda_1=\widetilde{\Omega}_{1,\beta}$ and $ \Lambda_2=\Omega\setminus\Lambda_1$, we have
  \begin{equation}
      \begin{split}
          |G_{\Omega}(x,y)|
        &\le |G_{\Lambda_1}(x,y)|+\sum\limits_{\substack{z\in \Lambda_1,  w\in\Lambda_{2}}} |G_{\Lambda_1}(x,z)||T(z,w)| |G_{\Omega}(w,y)|\\
        &\le |G_{\Lambda_1}(x,y)|+\sum\limits_{\substack{z\in \Lambda_1,  w\in\Lambda_{2}}} e^{M_1^{\theta_4}}e^{-L|z-w|_{\alpha}}|G_{\Omega}(w,y)|\\
        &\le |G_{\Lambda_1}(x,y)|+M_1^{b+d} e^{M_1^{\theta_4}} \max\limits_{\substack{z\in \Lambda_1,  w\in\Lambda_{2}}} e^{-L|z-w|_{\alpha}}|G_{\Omega}(w,y)|\\
        &\le |G_{\Lambda_1}(x,y)|+e^{2M_1^{\theta_4}} \max\limits_{\substack{z\in \Lambda_1,  w\in\Lambda_{2}}} e^{-L|z-w|_{\alpha}}|G_{\Omega}(w,y)|.
      \end{split}
      \notag
  \end{equation}
  
    \textbf{Subcase 2.1}.  $\text{dist }(w,\Omega_{1,\beta})>2 M_1^{\theta_3}$.

    In this case, since $\textup{dist}(w,\widetilde{\Omega}_{1,\beta})>M_1^{\theta_3}$, it follows that $|z-w|>M_1^{\theta_{3}}$. Therefore, we have  
    $$e^{2 M_1^{\theta_4}}e^{-\delta_2 |z-w|_{\alpha}}<1,$$ 
    where $\delta_2=M_1^{\theta_4-\theta_3 c}$, given that $\theta_4<\theta_3 c$. Thus we obtain
    \begin{equation}
        |G_{\Omega}(x,y)|\le |G_{\Lambda_1}(x,y)|+\max\limits_{\substack{R=|z-w|>M_1^{\theta_3}\\ z\in\widetilde{\Omega}_{1,\beta}, w\in \Omega\setminus\widetilde{\Omega}_{1,\beta} }} e^{-(L-\delta_2)|z-w|_{\alpha}} |G_{\Omega}(w,y)|.
    \end{equation}
    When $|x-y|>2 M_1^{\theta_1}$, the first term vanishes.

    \textbf{Subcase 2.2}. $\text{dist }(w,\Omega_{1,\beta})\le 2M_{1}^{\theta_3}$.

In this case, since $\text{dist }(\Omega_{1,\beta},\Omega_{1,\beta'})>M_1^{\theta_2}$ for $\beta\ne\beta'$, we have $\text{dist }(w,\Omega_1)>M_1^{\theta_3}$, so $G_{\Omega}(w,y)$ can be estimated as in Case 1. We have
\begin{equation}
    \begin{split}
        |G_{\Omega}(x,y)|
        \le &|G_{\Lambda_1}(x,y)|+e^{2M_1^{\theta_4}} \max\limits_{\substack{z\in \Lambda_1, w\in\Lambda_2\\ \text{dist}(w,\Omega_{1,\beta})\le 2M_1^{\theta_3}}} e^{-L|z-w|_{\alpha}}|G_{\Omega}(w,y)|\\
        \le &|G_{\Lambda_1}(x,y)|+e^{2 M_1^{\theta_4}} \max\limits_{\substack{z\in \Lambda_1,  w\in\Lambda_2\\ \text{dist}(w,\Omega_{1,\beta})\le 2M_1^{\theta_3}}} e^{-L|z-w|_{\alpha}}|G_{W_{w}}(w,y)|\\
        &+e^{2M_1^{\theta_4}} \max\limits_{\substack{z\in \Lambda_1, w\in\Lambda_2\\ \text{dist}(w,\Omega_{1,\beta})\le 2M_1^{\theta_3}}} e^{-L|z-w|_{\alpha}}\max\limits_{\substack{w'\in \Omega\setminus W_{w}} } e^{-(L-\delta_1)|w-w'|_{\alpha}} |G_{\Omega}(w',y)|\\
        \le &|G_{\Lambda_1}(x,y)|+e^{2 M_1^{\theta_4}} |G_{W_w}(w,y)|+  e^{2M_1^{\theta_4}}\max\limits_{\substack{\text{dist }(w,\widetilde{\Omega}_{1,\beta})<M_1^{\theta_3}\\ |w'-w|>M_1^{\theta_3}}} e^{-(L-\delta_1)|w-w'|_{\alpha}}|G_{\Omega}(w',y)|\\
        \le & |G_{\Lambda_1}(x,y)|+e^{2 M_1^{\theta_4}} |G_{W_w}(w,y)|+      \max\limits_{\substack{\text{dist }(w,\widetilde{\Omega}_{1,\beta})<M_1^{\theta_3}\\ |w'-w|>M_1^{\theta_3}}} e^{-(L-\delta_1-\delta_2)|w-w'|_{\alpha}}|G_{\Omega}(w',y)|.
\end{split}
\notag
\end{equation}
When $|x-y|>M_1^{\theta_1}+3 M_1^{\theta_3}$,  the first two terms vanish. In general, when $|x-y|>2 M_1^{\theta_1}$, the first two terms vanish, i.e.,
\begin{equation}
    |G_{\Omega}(x,y)|\le \max\limits_{\substack{z\in\widetilde{\Omega}_{1,\beta}\\ |w'-z|>M_1^{\theta_3}}} e^{-(L-\delta_1-\delta_2)|z-w'|_{\alpha}}|G_{\Omega}(w',y)|.
\end{equation}

Note that when $|x-y|>2 M_1^{\theta_1}$, the first one or two terms vanish in all cases. In fact, we can apply Case 2.2 to replace all cases, so we have
\begin{equation}
    |G_{\Omega}(x,y)|\le 2 e^{3 M_1^{\theta_4}}+
    \max\limits_{\substack{|z-x|<2 M_1^{\theta_1}\\ |w-z|>M_1^{\theta_3}}} e^{-(L-\delta_1-\delta_2)|z-w|_{\alpha}}|G_{\Omega}(w,y)|.
    \label{zongeq}
\end{equation}
When $|x-y|>2M_1^{\theta_1}$, the first term vanishes:
\begin{equation}
    |G_{\Omega}(x,y)|\le 
    \max\limits_{\substack{|z-x|<2 M_1^{\theta_1}\\ |w-z|>M_1^{\theta_3}}} e^{-(L-\delta_1-\delta_2)|z-w|_{\alpha}}|G_{\Omega}(w,y)|.
    \label{jincheng}
\end{equation}
From (\ref{zongeq}), we obtain
\begin{equation}
    |G_{\Omega}(x,y)|<2 e^{3 M_1^{\theta_4}}+\frac{1}{2}\max |G_{\Omega}(x,y)|,
\end{equation}
thus 
$$\max |G_{\Omega}(x,y)|<4e^{3 M_1^{\theta_4}}.$$ 

Now we consider the off-diagonal decay. Assume that $|x-y|>M_1^{\theta}$. Our goal is to achieve the desired off-diagonal decay by iteratively applying inequality (\ref{jincheng}).
The iteration process continues until the distance between $w$ and $y$ becomes less than $2 M_{1}^{\theta_{1}}$, which serves as the termination condition.

In each iteration, the distance between $w$ and $y$ decreases relative to the distance between $x$ and $y$. Specifically, from (\ref{jincheng}), we observe that when the distance between $w$ and $y$ in all dimensions decreases by $2M_1^{\theta_1}$, we obtain a decay factor of at least $e^{-M_1^{\theta_3 c}}$.
We will analyze the decay in each dimension separately.  

Suppose that, in (\ref{jincheng}), for some $j$, the distance $|z_j-w_j|<M_{1}^{\theta_3}$ is repeated $M_{1}^{\theta-\theta_1-\tilde{\delta}}$ times for a small $\tilde{\delta}$. After $M_{1}^{\theta-\theta_1-\tilde{\delta}}$ iterations, the repeated application of (\ref{jincheng}) leads to a decay of at least $e^{- M_1^{\theta-\theta_1-\tilde{\delta}}\cdot M_1^{\theta_3 c}}$, which simplifies to $e^{-(b+d)LM_1^{c}}$ under the condition $\theta-\theta_1-\tilde{\delta}+\theta_3 c>c$.

Next, consider the case where $|z_j-w_j|<M_{1}^{\theta_3}$ occurs fewer than $M_{1}^{\theta-\theta_1-\tilde{\delta}}$ times. In this case, for every dimension, there is at most a loss of distance of $M_1^{\theta-\tilde{\delta}}$.  Without loss of generality, we  assume that the distance between $x$ and $y$ in all dimension (i.e., $|x_j-y_j|$) is greater than $M_1^{\theta-\tilde{\delta}/2}$. Otherwise, the dimension with the greatest distance will provide the decay for those dimensions where the distance is less than $M_1^{\theta-\tilde{\delta}/2}$, with a small  reduction in $L$ by $O(M_1^{-c\tilde{\delta}})$.

Let us now focus on each individual dimension.  Take $x_1$, $y_1$ for example.
Let $R$ denote the distance that contributes to the decay, then we have 
$$R+M_1^{\theta-\tilde{\delta}}\ge|x_1-y_1|.$$ 
We refer to the distance that contributes to the decay as the decay distance.
For $1\le j\le R/M_1^{\theta}$,  denote by $l_j R$  the sum of the decay distances provided by  (\ref{jincheng}) where $j M_1^{\theta_1} \le|z_1-w_1|<(j+1) M_{1}^{\theta_1}$. Denote by $l_0 R$  the sum of the decay distances provided by  (\ref{jincheng}) where $M_1^{\theta_3} \le|z_1-w_1|<M_{1}^{\theta_1}$.
 Clearly, we have 
 $$\sum\limits_{j=0}^{R/M_1^{\theta_1}} l_j \ge 1.$$ 
 Thus, the total decay across iteration is greater than
\begin{equation}
    \begin{split}
        \frac{l_0 R} {3M_1^{\theta_1}} M_1^{\theta_3 c}+\sum\limits_{j=1}^{R/M_1^{\theta_1}}\frac{l_j R}{(j+3)M_1^{\theta_1}} j^c M_1^{\theta_1 c}
        &\ge \sum\limits_{j=0}^{R/M_1^{\theta_1}}l_j (1-O(M_1^{-\theta+\frac{3\tilde{\delta}}{4}+\theta_1}))R^c\\
        &\ge (1-O(M_1^{-\theta+\frac{3\tilde{\delta}}{4}+\theta_1}))R^c\\
        &\ge (1-O(M_1^{-\theta+\frac{3\tilde{\delta}}{4}+\theta_1})) \left(1-\frac{M_1^{c(\theta_1-\tilde{\delta})}}{|x_1-y_1|^c}\right)|x_1-y_1|^c\\
        &\ge (1-O(M_1^{-c\tilde{\delta}/2}))|x_1-y_1|^c.
    \end{split}
    \notag
\end{equation}
The inequality above relies on the facts that
\begin{equation}
    \frac{R}{3 M_{1}^{\theta_1}} M_{1}^{\theta_3 c}\ge R^{c},
\end{equation}
and 
\begin{equation}
    \frac{j^{c} R}{(j+3)M_{1}^{\theta_1}}M_{1}^{\theta_1 c}\ge (1-O(M_{1}^{-\theta+\frac{3\tilde{\delta}}{4}+\theta_1}))R^{c},
    \label{yigbudengshiwoxianzaizhengzaicheck}
\end{equation}
under the assumption $$c<\frac{\theta-\frac{\tilde{\delta}}{2}-\theta_1}{\theta-\frac{\tilde{\delta}}{2}-\theta_3}.$$
The inequality (\ref{yigbudengshiwoxianzaizhengzaicheck}) follows from
\begin{equation}
    \frac{j}{j+3}\left(\frac{R}{j M_{1}^{\theta_{1}}}\right)^{1-c}\ge 1-O(M_{1}^{-\theta+\frac{3\tilde{\delta}}{4}+\theta_1}),
\end{equation}
which holds for all $j$. 
Actually, when $j\ge \frac{R}{M_{1}^{\theta_{1}+\frac{\tilde{\delta}}{4}}}$, we have
\begin{equation}
    \frac{j}{j+3}\left(\frac{R}{j M_{1}^{\theta_{1}}}\right)^{1-c}\ge 1-O(M_{1}^{-\theta+\frac{3\tilde{\delta}}{4}+\theta_1}),
    \notag
\end{equation}
and when $j< \frac{R}{M_{1}^{\theta_{1}+\frac{\tilde{\delta}}{4}}}$, we have
\begin{equation}
    \frac{j}{j+3}\left(\frac{R}{j M_{1}^{\theta_{1}}}\right)^{1-c}>1.
\end{equation}
We conclude that
\begin{equation}
    |G_{\Omega}(x,y)|<e^{-(L-O(M_1^{-c\tilde{\delta}/2}))|x-y|_{\alpha}}, \textup{ for } |x-y|>M_1^{\theta}.
\end{equation}
Finally, note that we have refreshed the parameter $L$ earlier. Hence, we obtain the final off-diagonal decay result:
\begin{equation}
     |G_{\Omega}(x,y)|<e^{-(L-2 M_{0}^{\max\{ \iota,\kappa,\theta c \}-c})|x-y|_{\alpha}}, \textup{ for } |x-y|>M_1^{\theta}.
\end{equation}
This complete the proof. 
\end{proof}

Now we give another coupling lemma.

\begin{lemma}
    Assume that $T$ satisfies the off-diagonal estimate 
    \begin{equation}
        |T(x,x')|<e^{-L |x-x'|_{\alpha}},\ \ x\ne x'.
    \end{equation}
Let $N=A^{r+1}$ (where $r\gtrsim\log\log\frac{1}{\varepsilon}$), $M=(\log N)^{C_6}$, and $0<\kappa,\theta<1$. Denote by $T_{A^r}$ the restriction of $T$ to $\{|x|<A^{r}\}$. The constants below satisfy the following conditions:
$$\kappa<c,\ cC_6>4,\ C_6(c-\kappa)>3,\ C_6 c(1-\theta)>3,\ 2 c C_6<C_3-1,\ C_3>10.$$ 
Additionally, assume that: 
\begin{itemize}
    \item For $|x|,|x'|<A^{r}$, we have
    \begin{equation}
            |G_{A^{r}}(x,x')|<A^{r^{C_3}} e^{- L|x-x'|_{\alpha}}.
    \end{equation}
    \item For $|x|\ge \frac{1}{2}A^r$, let $W_x$ be the $2M$-interval centered at $x$ (if the $2M$-interval centered at $x$ is not contained within $[-N,N]^{b+d}$, let $W_{x}$ be the $2M$-interval whose  boundary is as far as possible from $x$). We assume the following holds:
\begin{equation}
\left\{
    \begin{split}
        &\left\lVert G_{W_x}\right\rVert<e^{M^{\kappa}},\\
        &|G_{W_x}(\eta,\eta')|<e^{-L|\eta-\eta'|_{\alpha}},\ \ |\eta-\eta'|>M^{\theta}.
    \end{split}\right.
\end{equation}
\end{itemize}

Then we have 
\begin{equation}
            |G_{N}(x,x')|<A^{(r+1)^{C_3}} e^{- (L-O(r^{-3}))|x-x'|_{\alpha}}.
\end{equation}

\label{nianjie}
\end{lemma}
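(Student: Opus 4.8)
The plan is to prove Lemma~\ref{nianjie} by the same resolvent–iteration scheme as Lemma~\ref{couplelemma1}, with the inner box $\Omega_0:=\{|x|<A^{r}\}$ now playing the role of a single ``cluster'' whose Green function is already under control (at the price of the large but harmless prefactor $A^{r^{C_3}}$), and the shell $\{\tfrac12 A^{r}\le |x|<A^{r+1}\}$ playing the role of the ``regular region'' tiled by the $M$-intervals $W_x$. Write $\Omega:=\{|x|<N\}$ and $G:=G_{\Omega}=T_{\Omega}^{-1}$. First I would insulate $\Omega_0$ by a moat: put $P:=\lceil (C\,r^{3}(r+1)^{C_3}\log A)^{\alpha}\rceil$ for a suitable large constant $C$, so that $e^{-LP^{c}}\le A^{-(r+1)^{C_3}}$ while $P\ll\tfrac12 A^{r}$ (possible because $P$ is polynomial in $r$ and $\log A$ whereas $A^{r}$ is exponential in $r$). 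Call $x$ \emph{deep} if $|x|_\infty<A^{r}-P$ and set $\Lambda_x:=\Omega_0$ in that case; for the remaining $x$, which necessarily satisfy $\tfrac12 A^r\le |x|_\infty<N$, set $\Lambda_x:=W_x$. For deep $x$ the hypothesis on $G_{A^r}$ gives $|G_{\Lambda_x}(x,z)|<A^{r^{C_3}}e^{-L|x-z|_\alpha}$ for all $z\in\Lambda_x$; for the rest, the hypothesis on $G_{W_x}$ combined with the short Neumann argument at the start of the proof of Lemma~\ref{couplelemma1} (using $\kappa<c$) gives $|G_{\Lambda_x}(x,z)|\le e^{M^{\kappa}}$ for $|x-z|_\infty\le M^{\theta}$ and $|G_{\Lambda_x}(x,z)|<e^{-L|x-z|_\alpha}$ for $|x-z|_\infty> M^{\theta}$. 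The purpose of the moat is that every $w\notin\Lambda_x$ has $|x-w|_\infty\ge P$ (deep case) or $|x-w|_\infty>M$ (shallow case).

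Next I would derive a self-improving inequality from the resolvent identity with $\Lambda_1=\Lambda_x$, $\Lambda_2=\Omega\setminus\Lambda_x$:
\[
G(x,y)=G_{\Lambda_x}(x,y)\,\mathbf 1_{\{y\in\Lambda_x\}}-\sum_{z\in\Lambda_x}\sum_{w\notin\Lambda_x}G_{\Lambda_x}(x,z)\,T(z,w)\,G(w,y).
\]
Carrying out the $z$-sum with the local estimates, the off-diagonal bound on $T$, the coordinatewise subadditivity $|x-w|_\alpha\le|x-z|_\alpha+|z-w|_\alpha$, and the elementary convolution bound $\sum_{z}e^{-L|x-z|_\alpha-L|z-w|_\alpha}\lesssim e^{-L|x-w|_\alpha}$, one obtains $\sum_{z}|G_{\Lambda_x}(x,z)|\,|T(z,w)|\le \mathcal P_x\,e^{-L|x-w|_\alpha}$ with $\mathcal P_x=A^{r^{C_3}}$ (deep) or $\mathcal P_x=e^{M^{\sigma}}$, $\sigma:=\max\{\kappa,\theta c\}<c$ (shallow). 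Splitting $e^{-L|x-w|_\alpha}=e^{-(L-2\eta_x)|x-w|_\alpha}\cdot e^{-\eta_x|x-w|_\alpha}\cdot e^{-\eta_x|x-w|_\alpha}$ and using $|x-w|_\infty\ge P$ resp. $>M$: the first spare factor swallows $\mathcal P_x$ as soon as $\eta_x\ge (r^{C_3}\log A)/P^{c}$ resp. $\eta_x\ge 2M^{\sigma-c}$, and the choices of $P$ and $M=(\log N)^{C_6}$ make both bounds $O(r^{-3})$ --- here the hypotheses $C_6(c-\kappa)>3$ and $C_6c(1-\theta)>3$ are exactly what force $M^{\sigma-c}=(\log N)^{-C_6\min\{c-\kappa,\,c(1-\theta)\}}=O((\log N)^{-3})$ --- while the second spare factor makes the remaining $w$-sum converge. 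Taking the $\sum_w$ version and checking that the coefficient of $\max|G|$ is $<\tfrac12$ (this is where the moat width $P$ is needed) yields the a priori bound $\max_{x,y}|G(x,y)|<2A^{r^{C_3}}<A^{(r+1)^{C_3}}$; taking the $\max_w$ version, and noting that the diagonal term has either vanished ($x$ shallow, $|x-y|_\infty>M$) or is already below the target bound ($x$ deep), gives, for $|x-y|_\infty$ above the threshold, the contraction
\[
|G(x,y)|\le A^{r^{C_3}}e^{-L|x-y|_\alpha}\mathbf 1_{\{x\text{ deep}\}}+\max_{\substack{w\ne y\\ |x-w|_\infty\ge M}}e^{-(L-\eta)|x-w|_\alpha}|G(w,y)|,\qquad \eta=O(r^{-3}).
\]

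Then I would iterate this contraction into a chain $x=w_0,w_1,\dots,w_K$, stopping once $|w_K-y|_\infty$ drops below the threshold. By coordinatewise subadditivity of $s\mapsto s^{c}$ the ``clean'' factors multiply to at most $e^{-L|x-y|_\alpha}$; the accumulated $\eta$-losses, together with the contributions of the deep-case diagonal terms and the polynomial-in-$|x-y|$ factors from the convergent sums, are all absorbed into a net loss $O(r^{-3})|x-y|_\alpha$ in the exponent, by the same dimension-by-dimension ``decay distance'' bookkeeping as in the final part of the proof of Lemma~\ref{couplelemma1} (grouping the steps by the size of their displacement in each coordinate and invoking strict concavity of $s\mapsto s^{c}$; the constraints $cC_6>4$, $C_6c(1-\theta)>3$, $2cC_6<C_3-1$, $C_3>10$ enter here precisely as they do there). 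Combining this with the a priori bound to cover small $|x-y|$ gives $|G_N(x,y)|<A^{(r+1)^{C_3}}e^{-(L-O(r^{-3}))|x-y|_\alpha}$, which is the assertion.

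The main obstacle is the quantitative bookkeeping needed to keep the loss in the decay rate at $O(r^{-3})$. Two points are delicate: (i) every resolvent step that crosses the boundary of $\Omega_0$ must pay the prefactor $A^{r^{C_3}}$, and the only way to do this cheaply is to insulate $\Omega_0$ by the moat of width $P$ --- polynomial in $r$ --- so that such a step necessarily travels distance $\ge P$ and hence the prefactor is swallowed by an $O(r^{-3})$-fraction of that step's decay distance; (ii) the resolvent chain can have a number of steps that is exponential in $r$, so the total loss cannot be estimated by summing per-step losses and must instead be controlled coordinate by coordinate via the concavity of $s\mapsto s^{c}$, exactly as in Lemma~\ref{couplelemma1}. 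Matching the various exponents against the stated constant inequalities is then routine but has to be carried out with care.
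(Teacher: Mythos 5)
Your proof is essentially correct and achieves the same conclusion, but it replaces the paper's key structural observation with a different device. The paper applies the resolvent identity with $\Lambda_1=[-A^r,A^r]^{b+d}$ only when $|x|\le\tfrac12 A^r$, so that every exit site $w\notin\Lambda_1$ satisfies $|x-w|_\infty>\tfrac12 A^r$; the prefactor $A^{r^{C_3}}$ is then absorbed at an astronomically small cost $O(A^{-rc})$ per such step, and the paper's whole accounting rests on the observation that these ``inner-block'' steps occur at most $\sim 2A$ times along any useful chain. You instead insulate the inner box by a moat of polynomial width $P$, declare a site deep whenever $|x|_\infty<A^r-P$, and pay $O(r^{-3})$ per deep step rather than $O(A^{-rc})$; this keeps the loss within budget but makes the deep region much larger than the paper's Case~1 region (in particular you use $G_{A^r}$ for $\tfrac12 A^r\le|x|_\infty<A^r-P$, where the paper more efficiently uses the $W_x$-hypothesis). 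Your approach hews more closely to the coordinate-by-coordinate concavity bookkeeping of Lemma~\ref{couplelemma1}, whereas the paper's case-count argument is more tailored and dispenses with the need to track per-step losses from inner-block traversals. Both routes require the same constant inequalities and lead to the same prefactor $A^{(r+1)^{C_3}}$ and the same net loss $O(r^{-3})$. One small inaccuracy: for shallow $x$ you invoke ``the short Neumann argument at the start of the proof of Lemma~\ref{couplelemma1},'' but no such extension is needed here --- the $W_x$-hypothesis already gives the off-diagonal decay and the entry bound $|G_{W_x}(x,z)|\le\lVert G_{W_x}\rVert<e^{M^\kappa}$ directly; the Neumann step in Lemma~\ref{couplelemma1} was there to pass from $V_x$ to the larger $W_x$, which has no analogue in this lemma.
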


\begin{proof}
   Denote $\Lambda=[-N,N]^{b+d}$ and let $x,y\in\Lambda$.
    
    \textbf{Case 1}: $|x|\le \frac{1}{2} A^r$.

    Let $\Lambda_1=[-A^r, A^r]^{b+d}$ and $\Lambda_2=\Lambda\setminus\Lambda_1$. We have
\begin{equation}
    \begin{split}
        |G_{\Lambda}(x,y)|
        \le & |G_{\Lambda_1}(x,y)|+\sum\limits_{\substack{z\in \Lambda_1\\ w\in\Lambda_2}}
        |G_{\Lambda_1}(x,z)||T(z,w)||G_{\Lambda}(w,y)|\\
        \le & |G_{\Lambda}(x,y)|+\sum\limits_{\substack{z\in \Lambda_1\\ w\in\Lambda_2}} A^{r^{C_3}} e^{-L|x-z|_{\alpha}}e^{-L|z-w|_{\alpha}}|G_{\Lambda}(w,y)|\\
        \le & |G_{\Lambda_1}(x,y)|+N^{b+d} A^{r^{C_3}} \max\limits_{w\in\Lambda_2}e^{-L|x-w|_{\alpha}}|G_{\Lambda}(w,y)|.
    \end{split}
    \notag
\end{equation}
Obviously, in this case, $|w-x|>\frac{1}{2} A^{r}$, thus we have
\begin{equation}
    |G_{\Lambda}(x,y)|\le |G_{\Lambda_1}(x,y)|+\max_{w\in \Lambda_2}e^{-(L-O(A^{-r\cdot c-}))|x-w|_{\alpha}}|G_{\Lambda}(w,y)|.
\end{equation}

\textbf{Case 2}:    $|x|>\frac{1}{2}A^{r}$.

Let $\Lambda_1=W_x$ and $\Lambda_2=\Lambda\setminus W_{x}$. We have
\begin{equation}
    \begin{split}
    |G_{\Lambda}(x,y)|
    \le& |G_{\Lambda_1}(x,y)|+\sum\limits_{\substack{z\in \Lambda_1\\ w\in\Lambda_2}}|G_{\Lambda_1}(x,z)||T(z,w)||G_{\Lambda}(w,y)|\\
    \le & |G_{\Lambda_1}(x,y)|+\sum\limits_{\substack{z\in \Lambda_1\\ w\in\Lambda_2}} e^{M^{\kappa}+M^{\theta c}}e^{-L|x-z|_{\alpha}}e^{-L|z-w|_{\alpha}}|G_{\Lambda}(w,y)|\\
    \le & |G_{\Lambda_1}(x,y)|+N^{b+d}e^{M^{\kappa}+M^{\theta c}}\max\limits_{w\in \Lambda_2}e^{-L|x-w|_{\alpha}}|G_{\Lambda}(w,y)|\\
    \le & |G_{\Lambda_1}(x,y)|+e^{ (b+d)\log N +M^{\kappa}+M^{\theta c}}\max\limits_{w\in \Lambda_2}e^{-L|x-w|_{\alpha}}|G_{\Lambda}(w,y)|\\
    \le & |G_{\Lambda_1}(x,y)|+\max\limits_{w\in \Lambda_2}e^{-(L-O(r^{-3}))|x-w|_{\alpha}}|G_{\Lambda}(w,y)|.
    \end{split}
    \notag
\end{equation}
When $|x-y|>M$, the first term vanishes.

Combining Case 1 and Case 2, we get
\begin{equation}
    |G_{\Lambda}(x,y)|\le \max \{ A^{r^{C_3}}, e^{M^{\kappa}} \}+\frac{1}{2}\max\limits_{w} |G_{\Lambda}(w,y)|.
\end{equation}
Therefore, we obtain 
\begin{equation}
    \max_{x,y} |G_{\Lambda}(x,y)| \le 2\max\{A^{r^{C_3}},e^{M^{\kappa}}\}.
\end{equation}
Note that Case 1  is carried out no more than $2A$ times; otherwise, $2A\cdot \frac{1}{2}A^{r}=N$  provides sufficient off-diagonal decay. 
Thus, for $x,y\in\Lambda$, we have 
\begin{equation}
    |G_{\Lambda}(x,y)|
        \le A e^{-(L-O(r^{-3}))|x-y|_{\alpha}}A^{r^{C_3}+ M^{\kappa}+M^{c}}.
\end{equation}
    Since $2\kappa C_6<C_3 -1$, $2c C_6 <C_3 -1$ and $C_3>10$, we have $r^{C_3}+M^{\kappa}+M^{c}<r^{C_3}+2 (r+1)^{C_3-1}<(r+1)^{C_3}$ for $r\gtrsim\log\log\frac{1}{\varepsilon}$.
    Thus, we conclude
    \begin{equation}
    |G_{\Lambda}(x,y)|
        \le A^{(r+1)^{C_3}} e^{-(L-O(r^{-3}))|x-y|_{\alpha}},
\end{equation}
for $x,y\in \Lambda$.
\end{proof}

\subsection{The matrices $T_{N}^{\sigma}$ and its estimation}

Assume that the Iterative Lemma holds up to  step $r-1$. To control the Green function through a multi-scale analysis, we introduce  a  matrix $T_{r-1}^{\prime \sigma}$, which involves an additional parameter $\sigma$. For simplicity, in this subsection,  we denote $T_{r-1}^{\prime}$ as $T_{r-1}$. We define $T_{r-1}^{\sigma}$ as follows:
\begin{equation}
    T_{r-1}^{\sigma}=D^{\sigma}-\varepsilon S_{r-1},
\end{equation}
where 
\begin{equation}
    D^{\sigma}=\begin{pmatrix}
        k\cdot\lambda'+\sigma-\mu_n&0\\
        0&  -k\cdot\lambda'-\sigma-\mu_{-n}
    \end{pmatrix}
\end{equation}
and $S_{r-1}$ is the matrix obtained in Lemma \ref{di}. Clearly, $T_{r-1}^{\sigma}$ is defined on $\mathop{\cup}\limits_{I\in \Lambda_{r-1}^{(1)}}I$.  It is  easy to see that $S_{r-1}$ is also a Toeplitz operator and is therefore invariant under translation. Moreover, the entries of $T_{r-1}^{\sigma}$ are polynomials in $(\lambda,\lambda',\sigma)$. Denote $D_{\pm,n,k}^{\sigma}=\pm(k\cdot \lambda'+\sigma)-\mu_{\pm n}$.
For $M_0\in\mathbb{Z}_{+}$ and $Q$ an $M_{0}$-interval in $\mathbb{Z}^d$, denote by $T_{l,M_0,Q}^{\sigma}$ the restriction of $T_{l}^{\sigma}$ to $|k|<M_0$ and $n\in Q$. Observe that  we have
\begin{equation}
    |D_{\pm,n,k}^{\sigma}|>|\mu_{\pm n}-|k\cdot \lambda'+\sigma||.
\end{equation}
Note that for $n\notin \{ n_1,...,n_b \}$, we have $\mu_{n}=|n|^{2}$,  and for $n\in \{ n_1,...,n_b \}$, we have $\mu_{n}=|n|^{2}+O(1)$.
Hence, if $|D_{\pm,n,k}^{\sigma}|<C(M_0)$ and $|D_{\pm,n',k'}^{\sigma}|<C(M_0)$, we obtain
\begin{equation}
    ||n|^2-|n'|^2|<C|k-k'|+2C(M_0)+C.
    \label{diagonal}
\end{equation}

We will use the following arithmetical lemma, see \cite{bourgain2005green} for a proof.

\begin{lemma}
    Fix any large number $B$. There is a partition $\{\pi_{\zeta}\}$ of $\mathbb{Z}^d$ satisfying the following properties:
    \begin{equation}
        \text{diam } \pi_{\zeta}<B^{C_0},
    \end{equation}
    and 
    $$|n-n'|+||n|^2-|n'|^2|>B \textup{ if } n\in \pi_{\zeta}, n'\in \pi_{\zeta'}, \zeta\ne\zeta',$$   where $C_0=C(d)$.
    \label{jihe}
\end{lemma}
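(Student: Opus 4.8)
The plan is to read the statement as a connectivity statement, after which the separation clause comes for free. Define a graph $\mathcal G$ on $\mathbb Z^{d}$ by joining $n\neq n'$ precisely when $|n-n'|+\big||n|^{2}-|n'|^{2}\big|\le B$, and let $\{\pi_{\zeta}\}$ be the vertex sets of the connected components of $\mathcal G$, isolated vertices counting as singleton classes. These partition $\mathbb Z^{d}$; and if $n\in\pi_{\zeta}$, $n'\in\pi_{\zeta'}$ with $\zeta\neq\zeta'$, then $n$ and $n'$ are non-adjacent, i.e. $|n-n'|+\big||n|^{2}-|n'|^{2}\big|>B$. Thus the entire content of the lemma is the uniform diameter bound $\operatorname{diam}\pi_{\zeta}<B^{C_{0}}$ with $C_{0}=C_{0}(d)$.

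\textbf{Diameter bound.} Here I would pass to the lift $n\mapsto \widehat n:=(n,|n|^{2})\in\mathbb Z^{d+1}$, under which an edge of $\mathcal G$ becomes a step of $\ell^{\infty}$-size $\lesssim B$ between points of the integer paraboloid $\mathcal P=\{\widehat n\}$; so each class corresponds to a set of lattice points on $\mathcal P$ connected by such small hops, and the claim is that any such set has diameter $\lesssim B^{C_{0}}$. The $d=1$ case is explicit and already exhibits the mechanism: for $n\neq n'$ one has $|n-n'|+|n^{2}-n'^{2}|=|n-n'|(1+|n+n'|)\ge 1+|n+n'|$, so an edge forces $|n+n'|\le B$ and hence, together with $|n-n'|\le B$, $|n|,|n'|\le B$, so every nontrivial component sits in $[-B,B]$. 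In general, fix a class $\pi$ and a point $n^{\ast}\in\pi$ of minimal norm $\rho$; if $\rho\le B^{C}$ the diameter is controlled by a crude volume count, so assume $\rho$ large. Along a shortest $\mathcal G$-path $n^{\ast}=n^{(0)},\dots,n^{(m)}=n$ in $\pi$, set $v_{j}=n^{(j+1)}-n^{(j)}$ and expand $|n^{(j+1)}|^{2}-|n^{(j)}|^{2}=2\langle n^{(j)},v_{j}\rangle+|v_{j}|^{2}$; since $|v_{j}|\le B$ and the left-hand side has absolute value $\le B$, we get $|\langle n^{(j)},v_{j}\rangle|\le B^{2}$, so each step is almost tangent to the sphere through $n^{(j)}$. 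Because $|n^{(j)}|^{2}$ can never fall below $\rho^{2}$ while the path stays in $\{|n|\le\rho+mB\}$, the norms are pinched, $|n^{(j)}|=\rho+O(mB/\rho)$, the whole path lies in the thin shell $\{\rho^{2}\le |n|^{2}\le \rho^{2}+O(m\sqrt B)\}$, and the "no decrease past $n^{\ast}$" constraint sharpens $|v_{j}|\le B$ to $|v_{j}|\lesssim\sqrt B$. It then remains to bound the diameter of a $\sqrt B$-hop-connected subset of a spherical shell of thickness $O(m\sqrt B)$, uniformly in $\rho$: this follows from the classical count of lattice points of such a shell inside a ball of radius $R$ (an upper bound polynomial in $R$ with the shell-thickness as a factor), which forces $R\le B^{C_{0}(d)}$; reinserting $m\le\operatorname{diam}\pi$ and solving closes the estimate. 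This can be organized as an induction on $d$, the shell count in $\mathbb Z^{d}$ reducing slice-by-slice (fixing one coordinate) to shells in $\mathbb Z^{d-1}$.

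\textbf{Main obstacle.} Everything above except the shell step is bookkeeping: triangle inequalities, expanding a square, geometric series. The genuine difficulty — and the reason $C_{0}$ must depend on $d$ — is the shell-confinement together with the sparsity estimate: one must show that a nearest-neighbour chain trapped in a $\operatorname{poly}(B)$-thick spherical shell cannot travel further than $\operatorname{poly}(B)$, with a bound independent of the (possibly enormous) radius $\rho$. This is precisely where the curvature of the sphere, equivalently of $\mathcal P$, enters, and it is the only input that is not purely formal. I expect this to be the main work; the rest of the argument is routine.
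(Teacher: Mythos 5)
The paper does not prove this lemma; it points directly to \cite{bourgain2005green}, so there is no in-paper argument to compare against. What follows is an assessment of your proposal on its own terms.

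Your reduction is correct and canonical: taking $\{\pi_\zeta\}$ to be the connected components of the proximity graph (an edge whenever $|n-n'|+\bigl||n|^2-|n'|^2\bigr|\le B$) makes the separation clause automatic, and since every admissible partition must coarsen this one, the connected-component partition is the right object to analyze. The $d=1$ computation is complete and correct: an edge forces $|n+n'|\le B-1$, hence $|n|,|n'|\le B$, so every nontrivial component lies in $[-B,B]$.

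For $d\ge 2$, however, the diameter bound --- which you yourself identify as the entire content of the lemma --- is not established, and the scaffolding of your sketch contains errors. First, the assertion that ``if $\rho\le B^C$ the diameter is controlled by a crude volume count'' fails already at $\rho=0$: after $k$ BFS layers from the origin the component lies in $\{|n|\le\sqrt{kB}\}$, but the count $(kB)^{d/2}$ of lattice points there does not bound $k$ once $d\ge 2$. Second, the radial thickness of the shell containing the path is $O(mB)$, not $O(m\sqrt B)$, since each edge changes $|n|^2$ by at most $B$. Third, the sharpening to $|v_j|\lesssim\sqrt B$ only follows from $0\le 2\langle n^{(j)},v_j\rangle+|v_j|^2\le B$ when $n^{(j)}$ is itself at the minimal norm and the step is outward; elsewhere along the path one recovers only $|v_j|\lesssim B$. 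Finally, the ``classical count of lattice points of such a shell inside a ball of radius $R$'' is not something you can simply cite to force $R\le B^{C_0(d)}$: a naive volume estimate for a shell of thickness $\tau$ inside a ball of radius $R$ gives roughly $R^{d-1}\tau$ points, and substituting $\tau\sim mB/\rho$ with $D\le mB$ does not close for $d\ge 2$. The curvature of the paraboloid has to be exploited more sharply --- this is exactly where $C_0=C_0(d)$ and the induction on $d$ you gesture at in the last sentence must actually do work --- and that step is missing.

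In short, the framing (proximity graph, paraboloid lift, shell confinement plus lattice sparsity) is in the right spirit and the $d=1$ case is done, but the diameter bound for $d\ge 2$ is neither proven nor reduced to a citable statement, and several of the intermediate quantitative claims do not hold as written. This is a genuine gap --- one you flagged yourself --- not a routine omission.
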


We first consider the simpler case where $Q$ is an $M_0$-interval in $\mathbb{Z}^{d}$, satisfying 
\begin{equation}
    \min\limits_{n\in Q}|n|>M_0.
\end{equation}
We also impose a Diophantine condition:
\begin{equation}
    |k\cdot \lambda'|>\gamma (1+|k|)^{-C}, |k|<M,
\end{equation}
where $\gamma$ is the same constant as in (\ref{diophantine}), and $C$ is a constant depending on $d$. We refer to this condition as $DC_{M}$. 

Without loss of generality, assume that for $(\lambda,\lambda')\in I$, where $I\in\Lambda_{l}$, $\lambda'$ satisfies $DC_{N_l}$. 

For $l\lesssim \log\log \frac{1}{\varepsilon}$, this is evident. For $l\gtrsim \log\log \frac{1}{\varepsilon}$, suppose that  this condition holds for $l-1$. Taking $M=N_{l-1}$ and $N=N_{l}$ in Lemma \ref{diufantudaxiao},  we again denote by  $\{I_{\zeta}\}$ the disjoint $N_{l}^{-C}$-intervals described in Lemma \ref{diufantudaxiao}. If the $A^{-\alpha (l+2)^{C_{1}}}$-interval $I\in \Lambda_{l}$ intersects with $I_{0}\times I_{\zeta}$ for some $ I_{\zeta}\in \{I_{\zeta}\}$, we delete it. This process results in the loss of a measure on the order $N_{l}^{-C}$. Compared with $[\exp\exp(\log l)^{1/4}]^{-1}$, this loss is negligible.

\begin{lemma}
    There exist constants  $\rho$, $\kappa$,  $c_2$, $C_{7}$ and $0<\theta<1$ such that
    $$
    \frac{2}{3}\rho<c<\theta-2\rho C_0 d,\ c_2=\frac{\rho^2}{2}, \ \kappa=\frac{3}{4}\rho^2,\ C_7>\frac{2}{c_2},\ \frac{2}{c_2}(\max\{\frac{c_2}{2},\kappa,\theta c\}-c)<-3.
    $$
    Assume the Iterative Lemma holds up to  step $r-1$.  
    Consider $T_{l}^{\sigma}$ ($l\le r-1$) defined on $\mathop{\cup}\limits_{I\in \Lambda_{l}^{(1)}} I$. 
    
     Let $M_0$ be a large integer, and let  $Q$ be an $M_0$-interval in $\mathbb{Z}^{d}$ satisfying 
\begin{equation}
    \min\limits_{n\in Q}|n|>M_0.
\end{equation}
 Then,
there exists a system of Lipschitz functions $\sigma_s=\sigma_s (\lambda,\lambda'), s<e^{(\log M_0)^{C_7}}$ defined on $I_0\times I_0$ (depending on $Q$), such that
\begin{equation}
    \left\lVert \sigma_s \right\rVert_{Lip}\le C M_0.
\end{equation}
For $(\lambda,\lambda')\in I$ such that $\lambda'$ satisfies $DC_{M_{0}}$, we have
\begin{equation}
    \left\lVert G_{l,M_0,Q}^{\sigma} \right\rVert <e^{M_0^{\kappa}}
    \label{good1}
\end{equation}
and
\begin{equation}
    |G_{l,M_0,Q}^{\sigma}(x,x')|<e^{-\alpha L_{l+1}^{(0.3)}|x-x'|_{\alpha}},\ \ |x-x'|>M_0^{\theta},
    \label{good2}
\end{equation}
provided that
\begin{equation}
    \min\limits_{s}|\sigma-\sigma_s(\lambda,\lambda')|>e^{-M_0^{c_2}}.
\end{equation}

\label{19.13}

\end{lemma}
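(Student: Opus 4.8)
The plan is to prove Lemma \ref{19.13} by induction on the scale $M_0$, following the multiscale scheme of Bourgain but with the Gevrey-adapted parameters. The base of the induction is the smallest scale, where $M_0$ is a power of $\log\frac{1}{\varepsilon}$ (or bounded by $N_0$); there, the restriction $T^\sigma_{l,M_0,Q}$ is a matrix of size at most a fixed power of $M_0$, so we may use the a priori off-diagonal decay $|T^\sigma(x,x')|<e^{-\alpha L_l^{(0.2)}|x-x'|_\alpha}$ for $x\ne x'$ (which follows from the Iterative Lemma at steps $\le r-1$ via Lemma \ref{chajv}) together with the diagonal lower bound coming from the $DC_M$ Diophantine condition. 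The role of the parameter $\sigma$ is that, for each fixed $(\lambda,\lambda')$, the eigenvalues of $T^\sigma_{l,M_0,Q}$ (equivalently the zeros of $\det T^\sigma_{l,M_0,Q}$) are Lipschitz functions of $\sigma$ with bounded derivative, and the number of them is controlled by the matrix size $\le e^{(\log M_0)^{C_7}}$; labelling these zeros $\sigma_s(\lambda,\lambda')$ and invoking the resolvent/eigenvalue-variation estimate gives $\|G^\sigma_{l,M_0,Q}\|<e^{M_0^\kappa}$ whenever $\sigma$ is $e^{-M_0^{c_2}}$-separated from every $\sigma_s$.

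For the inductive step, suppose the conclusion holds at all scales $M_0'\le M_0/C$ (with appropriate $Q'$), and we want it at scale $M_0$. I would cover the interval $[-M_0,M_0]^b\times Q$ by blocks $V_x$ of the smaller scale $M_0'=(\log M_0)^{C_6}$ (for the norm bound) and a somewhat larger intermediate scale (for the decay), exactly in the format required by the coupling lemmas. The set $\Omega$ is split as $\Omega_0\cup\Omega_1$: $\Omega_0$ is the "good" region where the small-scale resolvent estimates \eqref{good1}--\eqref{good2} are available because $\sigma$ stays away from the small-scale functions $\sigma_s^{(V_x)}$; $\Omega_1$ is the "bad" region, which by the arithmetic Lemma \ref{jihe} (the partition $\{\pi_\zeta\}$ separating lattice points with different $|n|^2$) decomposes into clusters $\Omega_{1,\beta}$ that are well-separated and of controlled diameter, because a small-divisor resonance $|D^\sigma_{\pm,n,k}|$ small forces $||n|^2-|n'|^2|$ small via \eqref{diagonal}. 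On each enlarged cluster $\widetilde\Omega_{1,\beta}$ one again gets a norm bound $e^{M_1^{\theta_4}}$ from the eigenvalue-variation argument applied at that intermediate scale. Then Lemma \ref{6.1} (when the scale jump is large, $\log\log M_1\le\iota\log M_0$) or Lemma \ref{nianjie} (the $N=A^{r+1}$ regime) assembles these into $\|G^\sigma_\Omega\|<e^{M_1^{\theta_4+}}$ and the off-diagonal decay with a decay rate lowered only by $O(M_0^{\cdots-c})$ — and the new set of bad $\sigma$-values is the union of the old $\sigma_s$'s, whose cardinality stays $\le e^{(\log M_0)^{C_7}}$ because at each of the boundedly many scales we add only $e^{(\log M_0')^{C_7}}$ of them. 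The Lipschitz bound $\|\sigma_s\|_{\mathrm{Lip}}\le CM_0$ is inherited: the zeros of $\det T^\sigma$ move at speed $O(1)$ in $\sigma$ and at speed $O(M_0)$ in $(\lambda,\lambda')$ since $D^\sigma$ depends on $\lambda'$ through $k\cdot\lambda'$ with $|k|<M_0$.

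The arithmetic of the exponents is what makes everything close: one needs $\kappa<c$, $\tfrac23\rho<c<\theta-2\rho C_0 d$, $c_2=\tfrac{\rho^2}{2}$, $\kappa=\tfrac34\rho^2$, $C_7>\tfrac{2}{c_2}$, and $\tfrac{2}{c_2}(\max\{\tfrac{c_2}{2},\kappa,\theta c\}-c)<-3$; these are precisely the hypotheses of Lemmas \ref{6.1} and \ref{nianjie} once one sets $M_1\sim M_0^{\rho/\cdots}$ or $M_1=N_{l+1}$, so the verification is a bookkeeping check that the chosen $\theta_1,\ldots,\theta_4$ fit strictly between $\theta_4<\theta_3<\theta_2<\theta_1<\theta<1$ and satisfy $c>\theta_4/\theta_3$ and $c<(\theta-\theta_1-\tilde\delta)/(1-\theta_3)$. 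The main obstacle — and the place where the Gevrey setting genuinely differs from Bourgain's analytic one — is the coupling step: because the sub-exponential weight $|x-x'|_\alpha=\sum|x_j|^{1/\alpha}$ is only \emph{concave} rather than additive, the naive resolvent-identity propagation loses a fixed fraction of the decay rate per cluster (this is the "$\tfrac12 L$ loss" in Lemma 7 of \cite{bourgain1998quasi}). We circumvent this by exploiting the strict concavity of $s\mapsto s^{1/\alpha}$ in Lemma \ref{couplelemma1} (our Lemma \ref{6.1}): the decay distances accumulate across clusters so that $\sum_j l_j\ge1$ and $\sum_j l_j R^c$ recovers $|x-y|_\alpha$ up to a factor $1-O(M_1^{-c\tilde\delta/2})$, which is the crux of why the decay-rate loss can be made $o(1)$ rather than a constant. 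Handling the resonant clusters $\Omega_{1,\beta}$ whose size may dwarf their separation — the gap that \cite{wang2016energy} pointed out in Bourgain's sketch — is exactly what the hypothesis structure of Lemma \ref{6.1} is designed to absorb, and tracking that $\widetilde\Omega_{1,\beta}$ still admits the bound $e^{M_1^{\theta_4}}$ (rather than something polynomially worse) under the $\sigma$-separation condition is the delicate accounting I expect to occupy most of the write-up.
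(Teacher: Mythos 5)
Your outline captures the right skeleton (induction on scale, a priori Gevrey decay of $T_l$ plus Diophantine diagonal bounds at the smallest scale, arithmetic clustering via Lemma~\ref{jihe}, the $\Omega_0\cup\Omega_1$ decomposition, and closing the step with the coupling Lemma~\ref{6.1}), but there is a genuine gap at the heart of the inductive step, namely the construction of the new $\sigma_s$-functions on the resonant clusters and the control of their number. You describe the $\sigma_s$ as ``zeros of $\det T^\sigma_{l,M_0,Q}$'' moving Lipschitz-continuously in $\sigma$, but $T^\sigma$ is not self-adjoint (it has the $\pm$ block structure), so this assertion does not by itself yield the Lipschitz eigenvalue estimates one needs. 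The paper's actual mechanism is: because $\min_{n\in Q}|n|>M_0$ one may assume WLOG $\sigma>0$, whence the ``$-$'' block is diagonally dominant with $|-\sigma-|n|^2|>M_1^2$ and $\|(R_-T^\sigma_{\widetilde\Omega_{1,\beta}}R_-)^{-1}\|\lesssim M_1^{-2}$; the Schur complement to the ``$+$'' block then produces a genuinely self-adjoint matrix $\sigma+\mathcal T(\lambda,\lambda',\sigma)$ whose eigenvalues $E_j$ are Lipschitz with $\|E_j\|_{\mathrm{Lip}(\lambda)}\le C$, $\|E_j\|_{\mathrm{Lip}(\lambda')}\le M_1$, $\|E_j\|_{\mathrm{Lip}(\sigma)}\le M_1^{-4}$; the last bound is what allows the implicit equation $\sigma+E_j(\lambda,\lambda',\sigma)=0$ to be solved for $\sigma_j(\lambda,\lambda')$ with $\|\sigma_j\|_{\mathrm{Lip}}\lesssim M_1$. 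None of this Schur-complement reduction, nor the sign observation that justifies it, appears in your argument, and without it the passage from ``$\sigma$ is separated from $\sigma_s$'' to the norm bound $\|G^\sigma_{\widetilde\Omega_{1,\beta}}\|<e^{M_1^{\rho^2/2}}$ is unsupported.

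Two smaller but concrete errors: (i) the scale relation in this lemma is $M_1=e^{M_0^{c_2/2}}$ (double-exponential jump), not $M_0'=(\log M_0)^{C_6}$; the latter and the invocation of Lemma~\ref{nianjie} belong to the assembly of $G_{N_{r+1}}$ in the next subsection, not to the proof of Lemma~\ref{19.13}, which uses only Lemma~\ref{6.1}. (ii) Your counting argument is off: the number of $\sigma_s$ at scale $M_1$ is not ``the union of old ones plus $e^{(\log M_0')^{C_7}}$ new ones''; it grows as $M_1^{b+d}$ (eigenvalues per cluster) times the number $\sim M_1^{d+1}(M_1e^{2M_0})^{2b+1}$ of $O(M_1^{-1}e^{-2M_0})$ parameter neighborhoods on which the decomposition is locally constant, giving a total $\lesssim e^{(4b+4)M_0}=e^{(4b+4)(\log M_1)^{2/c_2}}$, and it is precisely the inequality $C_7>2/c_2$ that ensures this stays below $e^{(\log M_1)^{C_7}}$. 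The constraint $C_7>2/c_2$ is not just bookkeeping but exactly compensates this growth, and your sketch does not establish it.
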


\begin{proof}
Recall that we say $T_l$($l\le r-1$) satisfies (\ref{decay1}). We now provide a detailed  proof. For $l\lesssim \log\log \frac{1}{\varepsilon}$, (\ref{decay1}) can be readily verified  by applying (\ref{decay3}) and Lemma \ref{di}. Assume that $T_{l-1}(l\le r-1)$ already satisfies (\ref{decay1}),   we now prove that $T_{l}$ also  satisfies (\ref{decay1}). 

Applying   Lemma \ref{chajv} and Lemma \ref{di}, we obtain 
\begin{equation}
    |(T_{l}-T_{l-1})(x,x')|<(\epsilon_{l+1}^2+\epsilon_{l}^2+\left\lVert\Delta q_{l}\right\rVert_{\alpha,L_{l}}) \delta_l^{C_{\alpha}} e^{-\alpha L_{l+1}^{(0.2)}|x-x'|_{\alpha}}.
\end{equation}
Since $\left\lVert\Delta q_{l}\right\rVert_{\alpha,L_{l}}\le \epsilon_{l-1}^{1-}$ for $(\lambda,\lambda')\in \mathop{\cup}\limits_{I\in \Lambda_{l}^{(1)}} I$, we have
\begin{equation}
    |(T_{l}-T_{l-1})(x,x')|<\epsilon_{l-1}^{1-} e^{-\alpha L_{l+1}^{(0.2)}|x-x'|_{\alpha}}.
    \label{dituiguanxi}
\end{equation}
By applying the principle of induction, we have
\begin{equation}
    |T_{l}(x,x')|<e^{-\alpha L_{l+1}^{(0.2)}|x-x'|_{\alpha}}
\end{equation}
for $(\lambda,\lambda')\in \mathop{\cup}\limits_{I\in \Lambda_{l}^{(1)}} I$ and $l\le r-1$. Furthermore, noting that $\left\lVert \Delta q_l \right\rVert_{\alpha,A^{-(l+2)^{C_1}},L_l}<\epsilon_{l-1}^{1-}$, 
applying   Lemma \ref{chajv} and Lemma \ref{di} again gives us the following estimate:
\begin{equation}
    |\partial(S_{l}-S_{l-1})(x,x')|<\left(\epsilon_{l+1}^{2}+\epsilon_{l}^{2}+A^{\alpha(l+2)^{C_1}}\epsilon_{l-1}^{1-}\right) e^{-\alpha L_{l+1}^{(0.2)}|x-x'|_{\alpha}}.
    \label{dituiguanxidaoshu}
\end{equation}
Using induction again, we have
\begin{equation}
    \left\lVert\partial S_l\right\rVert<C.
\end{equation}

Now we prove Lemma \ref{19.13} by double induction.

First, we prove this lemma for $r=1$.  For  $M_{0}< (\log\frac{1}{\varepsilon})^{C_{10}}$($c_2 C_{10}<1$), taking the diagonal of $T_{1}$ as $\sigma_{s}(\lambda,\lambda')$(with a sign choice of $\pm$), and applying a standard Neumann argument,  we have
\begin{equation}
    \left\lVert G_{1,M_0,Q}^{\sigma} \right\rVert <e^{M_0^{\kappa}}
    \notag
\end{equation}
and
\begin{equation}
    |G_{1,M_0,Q}^{\sigma}(x,x')|<e^{-\alpha L_{2}^{(0.1)}|x-x'|_{\alpha}},\ \ |x-x'|>M_0^{\theta},
    \notag
\end{equation}
provided that
\begin{equation}
    \min\limits_{s}|\sigma-\sigma_s(\lambda,\lambda')|>e^{-M_0^{c_2}}.
    \notag
\end{equation}
Moreover, we have $\lVert \sigma_{s} \rVert_{Lip}\le C M_{0}$.

For convenience, we say the statement $P(l,M,L)$ holds if for any $M$-interval $Q$ satisfying 
\[
\min\limits_{n\in Q} |n|>M,
\]
there exists a system of Lipschitz functions $\sigma_{s}=\sigma_{s}(\lambda,\lambda'), s<e^{(\log M)^{C_{7}}}$ defined on $I_{0}\times I_{0}$ (depending on $Q$), such that $\lVert \sigma_{s} \rVert_{Lip}\le CM$. For $(\lambda,\lambda')\in I\in\Lambda_{l}^{(1)}$ such that $\lambda'$ satisfies $DC_{M}$, we have
\begin{equation}
    \left\lVert G_{l,M,Q}^{\sigma} \right\rVert <e^{M^{\kappa}}
    \notag
\end{equation}
and
\begin{equation}
    |G_{l,M,Q}^{\sigma}(x,x')|<e^{-\alpha L|x-x'|_{\alpha}},\ \ |x-x'|>M^{\theta},
    \notag
\end{equation}
provided that
\begin{equation}
    \min\limits_{s}|\sigma-\sigma_s(\lambda,\lambda')|>e^{-M^{c_2}}.
    \notag
\end{equation}

Let $\tilde{M}$ be an large integer, assume that we have confirmed the statement $P(1,M,L)$.
Let $f(M)=e^{M^{\frac{1}{2}c_{2}}}$ and denote by $f^{-1}$ the inverse of $f$. 
Combing the multiscale reasoning in the proof of  Lemma 19.13 of \cite{bourgain2005green} and Lemma \ref{couplelemma1}, we can confirm  the 
statement $P(1,M,L-(f^{-1}(M))^{\max\{\frac{c_{2}}{2}, \kappa, \theta c\}-c})$ for $\tilde{M}\le M<f(\tilde{M})$. Thus we have the statement $P(1, M, L-(f^{-1}(\tilde{M}))^{\max\{\frac{c_{2}}{2}, \kappa, \theta c\}-c})$ for $\tilde{M}\le M<f(\tilde{M})$. By induction from scale $\tilde{M}=(\log\frac{1}{\varepsilon})^{C_{10}}$, we have the statement $P(1, M, L')$ for all $M$, where 
\begin{equation}
    L'=L_{2}^{(0.1)}-\sum\limits_{j=-1}^{\infty}(f^{j}(\tilde{M}))^{\max\{\frac{c_{2}}{2}, \kappa, \theta c\}-c}>L_{2}^{(0.3)}.
\end{equation}
Thus, we have prove Lemma \ref{19.13} for $r=1$. We call the induction we have made by induction-1.


Assume that Lemma \ref{19.13} holds for $T_{l-1}^{\sigma}$. Using (\ref{dituiguanxi}) and Lemma \ref{equi}, we obtain the statement $P(r,M,L_{l}^{(0.4)})$ for $M<(\frac{5}{4})^{l}$.  We demonstrate  induction-1 from scale $\tilde{M}=(\frac{5}{4})^{l-1}$, then we obtain the statement $P(r,M,L')$ for all $M$, where 
\begin{equation}
\begin{split}
     L'&=L_{l}^{(0.4)}-\sum\limits_{j=-1}^{\infty}(f^{j}(\tilde{M}))^{\max\{\frac{c_{2}}{2}, \kappa, \theta c\}-c}>L_{l}^{(0.4)}-2(f^{-1}(\tilde{M}))^{\max\{\frac{c_{2}}{2}, \kappa, \theta c\}-c}\\
     &> L_{l}^{(0.4)}-C(l-1)^{\frac{2}{c_{2}}(\max\{\frac{c_{2}}{2}, \kappa, \theta c\}-c)}>L_{l}^{(0.4)}-C(l-1)^{-3}>L_{l+1}^{(0.3)},
\end{split}
\end{equation}
as long as $c_2$ is sufficiently small such that
\begin{equation}
    \frac{2}{c_2}(\max\{\frac{c_2}{2},\kappa,\theta c\}-c)<-3.
\end{equation}
This completes the proof.
\end{proof}

Next, we will prove an estimate for $G_{l,M_0,Q}^{\sigma}$, where  $Q=[-10 M_0,10 M_0 ]^d$. To establish this estimate,  some further restrictions in the  $(\lambda,\lambda')$-parameter space will be required. To describe these restrictions, we introduce the definition of sectional measure.

\begin{definition}
    We say that $\mathcal{A}\subset I_0\times I_0\subset \mathbb{R}^{2b}$ has sectional measure at most $\eta$, denoted by $mes_{sec}\mathcal{A}<\eta$, if the following condition  holds:

    Let $\varphi$ be a $C^1$-function defined on $I_{0}$, with $\left\lVert \nabla \varphi \right\rVert_{C^{0}(I_{0})}<10^{-2}$. Then 
    \begin{equation}
        mes\{ \lambda\in I_{0} | (\lambda, \lambda+ \varphi(\lambda)) \in\mathcal{A} \}<\eta.
    \end{equation}
\end{definition}
Before stating the estimate for $G_{l,M_0,Q}^{\sigma}$, we first present a lemma concerning  multi-scale reasoning.

\begin{lemma}
    Let $S$ be a matrix defined on $(\lambda,\lambda')\in \mathop{\cup}\limits_{I\in\Lambda} I$, where $\Lambda$ is a collection of disjoint intervals and $\mathop{\cup}\limits_{I\in\Lambda} I\subset I_{0}\times I_{0}$. The entries of $S$ are polynomials in $(\lambda,\lambda')$, and $\left\lVert \partial S \right\rVert<C$, while $|S(x,x')|<e^{-L|x-x'|_{\alpha}}$ for $x\ne x'$.
    Let $T^{\sigma}=D^{\sigma}-\varepsilon S$.
    Let $M_0,M_1\in \mathbb{Z}_{+}$ be large integers satisfying
    \begin{equation}
        M_0<M_1<\exp\exp(\log M_0)^{\frac{1}{10}},\ \ M_{1}\ge\log\frac{1}{\varepsilon},
    \end{equation}
and  assume that, for any $(\lambda,\lambda')\in I\in\Lambda$,  we have
\begin{equation}
    | k\cdot \lambda' |> \gamma |k|^{-C},  \textup{  for } |k|<M_1,
\end{equation}
where $\gamma$ is the same as before.
Assume further that:
\begin{itemize}
    \item  For $M\le M_0$ and $(\lambda,\lambda')\in I\in\Lambda$, $T_{M}^{\sigma}:=T_{M,Q=[-10M,10M]^{d}}^{\sigma}$ satisfies
\begin{equation}
\left\{
    \begin{split}
        &\left\lVert G_M^{\sigma} \right\rVert<e^{M^{\kappa'}},\\
        &|G_M^{\sigma} (x,x')|<e^{-L|x-x'|_{\alpha}},\ \ |x-x'|>M^{\theta},
    \end{split}\right.
    \label{heyhey}
\end{equation}
for all $\sigma$ outside a set of measure less than $e^{-M^{c_3}}$. 
    \item For $M\le M_1$ and $Q$, an $M$-interval in $\mathbb{Z}^d$ satisfying $\min\limits_{n\in Q} |n|>M$, there exists a system of Lipschitz functions $\sigma_s=\sigma_s (\lambda,\lambda')$ (associated with $Q$), with $s<e^{(\log M)^{C_7}}$, defined on $I_{0}\times I_{0}$, such that
\begin{equation}
    \left\lVert \sigma_{s} \right\rVert_{Lip} \le CM.
\end{equation}
Moreover, if $(\lambda,\lambda') \in I\in\Lambda$,  and $\min\limits_{s} |\sigma-\sigma_s (\lambda,\lambda')|>e^{-M^{c_2}}$, then
\begin{equation}
\left\{
    \begin{split}
        &\left\lVert G_{M,Q}^{\sigma} \right\rVert<e^{M^{\kappa}},\\
        &|G_{M,Q}^{\sigma}(x,x')|< e^{-L|x-x'|_{\alpha}}, \ \  |x-x'|>M^{\theta}.
    \end{split}\right.
    \label{hey}
\end{equation}
\end{itemize}

Then there exists a set $\mathcal{A}$, obtained as a union of intervals of size $[\exp\exp(\log\log M_1)^3]^{-1}$, such that
\begin{equation}
    mes_{sec} \mathcal{A}<[\exp\exp(\log\log M_3)^2]^{-1},
\end{equation}
and for $(\lambda,\lambda')\in \left( \mathop{\cup}\limits_{I\in \Lambda} I \right)\setminus \mathcal{A}$,
\begin{equation}
\left\{
\begin{split}
    &\left\lVert G_{M_1}^{\sigma} \right\rVert<e^{M_1^{\kappa'}},\\
    &|G_{M_1}^{\sigma}(x,x')|<e^{-(L-O(\exp(-(\log\log  M_1)^{4})))|x-x'|_{\alpha}},\ \ |x-x'|>M_1^{\theta},
\end{split}\right.
    \label{heyheyhey}
\end{equation}
    for all $\sigma$ outside a set of measure less than $e^{-M_1^{c_3}}$.
    
    Here, $\kappa,\theta,c_2$ are the  same as in Lemma \ref{19.13}, and  $M_3=M_1^{\frac{\tilde{\kappa}}{10}\cdot d}$, with constants $c, \kappa', \tilde{\kappa}, \theta, c_3$  satisfying 
    \begin{align}
        c=\frac{1}{\alpha}, \quad \quad c>\kappa, \quad \quad \tilde{\kappa}=10^{-2}d^{-1}\kappa',\\
        c_3=\frac{1}{4}\min\{ 10^{-4}c\tilde{\kappa}^{2}d,10^{-3} \kappa\tilde{\kappa}^{2}d \}, \quad\quad  2\kappa'<c<\theta-\frac{\tilde{\kappa}}{10}\cdot d.
    \end{align}

    \label{lemma5.4}
\end{lemma}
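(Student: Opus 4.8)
\textbf{Proof plan for Lemma \ref{lemma5.4}.}

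The plan is to run a multi-scale induction on the scale $M$, starting from $M_0$ and climbing up to $M_1$ through the geometric sequence $M_{j+1} = e^{M_j^{\frac{1}{2}c_2}}$ (or a similar sublogarithmic doubling), exactly as in the proof of Lemma \ref{19.13}, but now keeping track of the \emph{sectional} measure of the exceptional parameter set rather than just the $\sigma$-measure. At each scale $M$ with $M_0 \le M \le M_1$ I would fix an interval $I \in \Lambda$, work in the box $\Omega = [-M,M]^b \times [-10M,10M]^d$, and partition it into sub-boxes $P$ of size $M'$ (the previous scale). Using the second hypothesis (the Lipschitz functions $\sigma_s$ associated to translated boxes $n+Q_0$ with $\min|n|$ large) together with the first hypothesis (\ref{heyhey}) for boxes near the origin, I classify each sub-box as good or bad: a sub-box is bad only if it contains a singular site $(\pm,n,k)$ with $|D^\sigma_{\pm,n,k}|$ small, \emph{and} the relevant $\sigma + k\cdot\lambda'$ lands within $e^{-M'^{c_2}}$ of one of the $\sigma_s(\lambda,\lambda')$. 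The Diophantine hypothesis $|k\cdot\lambda'| > \gamma|k|^{-C}$ for $|k| < M_1$ guarantees that for each $s$ there is at most one admissible $k$, so the count of bad sub-boxes is polynomially bounded in $M$; the arithmetic Lemma \ref{jihe} (with $B = M^\rho$) forces the bad sub-boxes to cluster into well-separated clumps $\Omega_{1,\beta}$ of small diameter. This reproduces the hypotheses of the coupling Lemma \ref{6.1}.

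Next I would estimate $\|G^\sigma_{\widetilde\Omega_{1,\beta}}\|$ on the enlarged clumps. As in Lemma \ref{19.13}, since $n$ ranges over a box with $\min|n|$ large (here we are in the case $Q = [-10M_0, 10M_0]^d$, so I must be careful: when the box touches the origin I fall back on hypothesis (\ref{heyhey}), and when it does not I use the Schur-complement/self-adjoint reduction (\ref{kongz})), the Green's function on $\widetilde\Omega_{1,\beta}$ is governed by the eigenvalues $\sigma + E_j(\lambda,\lambda',\sigma)$ of a self-adjoint matrix whose entries are polynomial in $(\lambda,\lambda',\sigma)$ and whose $\lambda,\lambda',\sigma$-Lipschitz constants are controlled ($O(1)$, $O(M)$, $O(M^{-4})$ respectively, thanks to $\|\partial S\| < C$). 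Each eigenvalue branch yields a Lipschitz function $\sigma = \sigma_j(\lambda,\lambda')$; removing an $e^{-M^{c_2}}$-neighborhood of each $\sigma_j$ gives $\|G^\sigma_{\widetilde\Omega_{1,\beta}}\| < e^{M^{\rho^2/2}} = e^{M^{c_2}}$, and then Lemma \ref{6.1} upgrades this to the bounds (\ref{hey})/(\ref{heyheyhey}) on the full box $\Omega$, with the loss in $L$ being $O(M_0^{\max\{\iota,\kappa,\theta c\}-c})$ per step; summed over the $O((\log\log M_1)^{O(1)})$ scales this telescopes to $O(\exp(-(\log\log M_1)^4))$. The norm bound $e^{M_1^{\kappa'}}$ at the top follows because $\kappa < c$ forces the decay in Lemma \ref{6.1} to beat the diameter growth, so the "good" part dominates.

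The construction of the exceptional set $\mathcal{A}$ is where the sectional-measure bookkeeping happens. At each intermediate scale the bad parameter set (where some $\sigma_j$ for some clump $\beta$ cannot be separated, or where $DC$ fails) is a union of neighborhoods of graphs $\{(\lambda,\lambda') : |\sigma + E_j(\lambda,\lambda',\sigma)| \text{ small for a fixed admissible } \sigma\}$; since each $\sigma_j$ is Lipschitz in $(\lambda,\lambda')$ with a controlled constant and there are at most $e^{(\log M)^{C_7}}$ of them per box and polynomially many boxes, the $\sigma$-measure of the discarded set at scale $M$ is $< e^{-M^{c_3}}$, and the \emph{sectional} measure of the residual $(\lambda,\lambda')$-set — intersecting with an arbitrary Lipschitz graph $\lambda' = \lambda + \varphi(\lambda)$, $\|\nabla\varphi\| < 10^{-2}$ — is controlled by the transversality of the graphs $\sigma_j$ to such sections, giving a bound like $e^{-M^{c_3/2}}$ at scale $M$. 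Summing the geometric sequence of scales up to $M_1$ produces the claimed $[\exp\exp(\log\log M_3)^2]^{-1}$ with $M_3 = M_1^{\tilde\kappa d/10}$, and rounding each discarded region up to intervals of size $[\exp\exp(\log\log M_1)^3]^{-1}$ (negligible extra loss) makes $\mathcal{A}$ a union of intervals as required. The main obstacle, and the step I would spend the most care on, is precisely this transfer from the $\sigma$-measure estimate (which is one-dimensional and essentially free) to the sectional measure estimate in the $2b$-dimensional $(\lambda,\lambda')$-space: one must show that the Lipschitz graphs $\sigma_j(\lambda,\lambda')$ are genuinely transverse to the admissible resonance conditions along every allowed section, using $\|\sigma_j\|_{Lip(\lambda)} \le C$ versus $\|\sigma_j\|_{Lip(\lambda')} \le CM$ and the constraint $\|\nabla\varphi\| < 10^{-2}$, so that the bad set does not accidentally align with a section and blow up the measure. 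This is exactly the subtlety flagged in the introduction as the gap in Bourgain's outline, so it deserves the detailed complexification-plus-Markov argument already developed in Lemma \ref{qvyvraodong} and Proposition \ref{19.13'}.
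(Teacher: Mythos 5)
Your outline reproduces the machinery of Lemma \ref{19.13} (paving into sub-boxes, Lemma \ref{jihe} for clumping, Schur complement and eigenvalue branches, then Lemma \ref{6.1}), but it does not address what is genuinely new in Lemma \ref{lemma5.4}, namely the box $Q_0=[-10M,10M]^d$ containing the origin. The Schur-complement reduction (\ref{kongz}) that you invoke only works because in Lemma \ref{19.13} one has $\min_{n\in Q}|n|>M_0$, which makes $|-\sigma-|n|^2|>M_1^2$ and hence $\|(R_-T^\sigma R_-)^{-1}\|\lesssim M_1^{-2}$; when $Q$ contains $n=0$ this collapses, so the eigenvalue-branch construction of the $\sigma_j$ cannot be carried out there. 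The paper handles this precisely by bringing in the matrix-valued Cartan-type theorem (Proposition \ref{Cartanestimate}) applied to $A(\sigma)=T_{M_1}^\sigma$, with the crucial case split on $R^2=|\bar k\cdot\lambda'+\sigma|$: when $R\le 2M_1^{\tilde\kappa}$ the bad-site count $\#\Lambda$ is small enough for Cartan directly; when $R>2M_1^{\tilde\kappa}$ one first Cartan-controls the inner block $T^\sigma_{M_1,|n|<M_1^{\tilde\kappa}}$ and then resolvent-identities against the outer region, which the hypotheses cover. None of this appears in your plan, and without it Step 1 (the norm bound $\|G^\sigma_{M_1}\|<e^{M_1^{\kappa'}}$) does not close.

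Two further mismatches. First, the proof in the paper is not a long multi-scale induction through the chain $M_{j+1}=e^{M_j^{c_2/2}}$; the hypotheses already supply estimates for every $M\le M_0$ (and for the far-from-origin boxes for every $M\le M_1$), and the lemma is then proved at the single new scale $M_1$ in a two-step argument: first the Cartan-based norm bound, then the Lemma \ref{6.1}-based off-diagonal decay via the intermediate scale $M_3=M_1^{\tilde\kappa d/10}$ and translated blocks $(k,0)+\Omega'$. Second, the sectional-measure estimate is not as delicate as you suggest: the discarded set $\mathcal A'$ is defined by the conditions
\begin{equation}
    \min_{s,s'}\left|k\cdot\lambda'-\sigma_s(\lambda,\lambda')+\sigma_{s'}(\lambda,\lambda')\right|>2e^{-M^{c_2}},\qquad M^{1+}<|k|\le M_1,
    \notag
\end{equation}
and along any admissible section $\lambda'=\lambda+\varphi(\lambda)$ with $\|\nabla\varphi\|<10^{-2}$ the derivative of the left-hand side in $\lambda$ has size $\gtrsim M^{1+}-M\gg1$ (since $\|\sigma_s\|_{Lip}\le CM$), so the sublevel sets have measure $e^{-M^{c_2}}$ immediately; no complexification-plus-Markov argument is needed here, and the worry you flag about alignment with sections is resolved by the lower bound $|k|>M^{1+}$ in the definition of $\mathcal A'$, not by Lemma \ref{qvyvraodong}.
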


The proof framework of Lemma \ref{lemma5.4} is the same as that  in Chapter 19 of \cite{bourgain2005green}, except that the constants need to be precisely selected. For the convenience of the readers, we have placed the proof in the appendix.

Now, we use Lemma \ref{lemma5.4} to establish the estimate for  $T_{l,M,Q=[-10M,10M]^d}^{\sigma}$, which we refer to as the large deviation theorem for $T_{l,M,Q=[-10M,10M]^d}^{\sigma}$.

\begin{lemma}
    Assume that the Iterative Lemma holds up to  step $r-1$.  Consider $T_{l}^{\sigma}$ for $l\le r-1$. There exists $\Lambda_{l}^{(2)}\subset \Lambda_{l}^{(1)}$ such that we have the following statement holds:
    
    Let $\exp(\log\log M)^{3}\le l\le r-1$. Then, for all $(\lambda,\lambda')\in I\in\Lambda_{l}^{(2)}$, $T_{l,M,Q=[-10M,10M]^{d}}^{\sigma}$ satisfies that
    \begin{equation}
    \left\{
        \begin{split}
            &\left\lVert G_{l,M,Q=[-10M,10M]^d}^{\sigma} \right\rVert<e^{M^{\kappa'}},\\
            &|G_{l,M,Q}^{\sigma}(x,x')|<e^{-L_{l+1}^{(0.4)}|x-x'|_{\alpha}}, \ \ |x-x'|>M^{\theta},
        \end{split}\right.
    \end{equation}
for $\sigma$ outside a set of measure less than $e^{-M^{c_3}}$. Moreover,
\begin{equation}
    mes_{sec}\left(  \mathop{\cup}\limits_{I\in \Lambda_{l}^{(1)}} I\setminus \mathop{\cup}\limits_{I\in \Lambda_{l}^{(2)}} I \right)<[\exp\exp(\log l)^{1/4}]^{-1}.
\end{equation}

\label{5.5}

\end{lemma}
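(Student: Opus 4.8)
The plan is to prove this by induction on $l$, running the machinery of Lemma \ref{lemma5.4} at successive dyadic-type scales. Assume the statement of the Iterative Lemma holds up to step $r-1$; in particular, by Lemma \ref{19.13} and Proposition \ref{19.13'} we already have control of $G^{\sigma}_{l,M,Q}$ when $Q$ is an $M$-interval with $\min_{n\in Q}|n|>M$, together with the associated system of Lipschitz functions $\{\sigma_s\}$. We also have, by the argument in Lemma \ref{19.13} (the inductive derivation of (\ref{decay1}) and $\lVert\partial S_l\rVert<C$), that $S=S_l$ satisfies the hypotheses on $S$ required by Lemma \ref{lemma5.4}: polynomial entries, $\lVert\partial S\rVert<C$, and off-diagonal decay $|S(x,x')|<e^{-L|x-x'|_\alpha}$ for $x\ne x'$. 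The Diophantine hypothesis $|k\cdot\lambda'|>\gamma|k|^{-C}$ for $|k|<M_1$ is the condition $DC_{M_1}$, which we arranged to hold on every $I\in\Lambda_l^{(1)}$ (with the negligible measure loss discussed before Lemma \ref{19.13}).

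First I would set up the scale recursion: for each $l$ in the range $\exp(\log\log M)^3\le l\le r-1$ we want control at scale $M$, so we climb from a base scale of size $\sim(\log\frac1\varepsilon)^{C}$ (or $\sim\log\frac1\varepsilon$, where the Neumann argument of Lemma \ref{19.13} gives everything for free) up to $M$ via the iteration $M_1=\exp(\log\log M_0)^{?}$, consistent with the constraint $M_0<M_1<\exp\exp(\log M_0)^{1/10}$ imposed in Lemma \ref{lemma5.4}. At each application we invoke Lemma \ref{lemma5.4} with $T^\sigma=T_l^\sigma$: the hypothesis (\ref{heyhey}) at scales $M\le M_0$ is the inductive hypothesis for $G_{M}^{\sigma}=G^{\sigma}_{l,M,[-10M,10M]^d}$, and the hypothesis (\ref{hey}) for $M$-intervals $Q$ with $\min_{n\in Q}|n|>M$ is exactly Lemma \ref{19.13}/Proposition \ref{19.13'}. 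The conclusion (\ref{heyheyhey}) then upgrades the estimate to scale $M_1$, outside a $\sigma$-set of measure $<e^{-M_1^{c_3}}$, at the cost of excising a parameter set $\mathcal{A}$ which is a union of intervals of size $[\exp\exp(\log\log M_1)^3]^{-1}$ with $\mathrm{mes}_{sec}\,\mathcal{A}<[\exp\exp(\log\log M_3)^2]^{-1}$, and at the cost of a decay-rate loss $O(\exp(-(\log\log M_1)^4))$.

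Next I would define $\Lambda_l^{(2)}$. Starting from $\Lambda_l^{(1)}$, at each scale of the climb we delete from each $I\in\Lambda_l^{(1)}$ the portion meeting the corresponding excised set $\mathcal{A}$ (subdividing $I$ into subintervals of the prescribed size so that the surviving piece is again a union of intervals); the collection of all surviving subintervals is $\Lambda_l^{(2)}$. Each $I\in\Lambda_l^{(2)}$ is contained in some $I'\in\Lambda_l^{(1)}$. The number of scales in the climb from the base up to $M$ is $O(\log\log\log M)$ (iterated $\exp\exp(\log\cdot)^{1/10}$), so summing the sectional-measure losses $[\exp\exp(\log\log M_3)^2]^{-1}$ over all intermediate scales — the largest of which dominates — gives a total bounded by $[\exp\exp(\log l)^{1/4}]^{-1}$, using $M\lesssim l$ in the relevant range $\exp(\log\log M)^3\le l$, i.e. $\log\log M\lesssim (\log l)^{1/3}$, comfortably absorbing the exponent. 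For the decay rate, the cumulative loss across all scales is a convergent sum of terms $O(\exp(-(\log\log M_j)^4))$ dominated by the coarsest scale, which is absorbed into the gap $L_{l+1}^{(0.3)}>L_{l+1}^{(0.4)}$ (recall $L_r^{(0.j)}=L_{r-1}-j\delta_r/10$); thus the final off-diagonal bound has rate $L_{l+1}^{(0.4)}$ as claimed, and $\lVert G^{\sigma}\rVert<e^{M^{\kappa'}}$ follows directly from (\ref{heyheyhey}).

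The main obstacle, and the step deserving the most care, is the bookkeeping of the iterated scale recursion: one must verify that every application of Lemma \ref{lemma5.4} is legitimate — in particular that the constants $c,\kappa,\kappa',\tilde\kappa,\theta,c_2,c_3$ satisfy the inequalities demanded there at \emph{every} scale, that the Diophantine condition $DC_{M_1}$ survives (using $\lambda'=\lambda+O(\varepsilon)$ and $M_1\le N_l<e^{C(\log\log\frac1\varepsilon)^2}$ as in the first-finite-steps argument), that the range $\exp(\log\log M)^3\le l\le r-1$ is exactly what is needed so the base scale is $\gtrsim\log\frac1\varepsilon$ and the top scale is $M$, and that the accumulated measure and decay losses telescope to the stated bounds. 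The geometric-series structure of the scale tower makes each individual loss negligible against the previous one, so the sums are dominated by their first (coarsest) terms; once this is checked the conclusion follows.
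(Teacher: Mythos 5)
Your high-level plan — induction on $l$, invoking Lemma~\ref{lemma5.4} to pass to a new scale, defining $\Lambda_l^{(2)}$ by excising the set $\mathcal{A}$, and verifying the sectional-measure bound — is the right skeleton, and you correctly identify the needed inputs (Lemma~\ref{19.13}/Proposition~\ref{19.13'} for the $\min|n|>M$ intervals, the off-diagonal decay~(\ref{decay1}) and $\|\partial S_l\|<C$ established inductively, the $DC_{M_1}$ restriction). However, there are two genuine gaps.

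First, you have the inductive architecture wrong. You propose, for each fixed $l$, to ``climb from a base scale of size $\sim(\log\frac1\varepsilon)^C$ up to $M$'' and then sum the sectional-measure losses over all intermediate scales. This is not what happens, and it cannot give the required bound. In the range $\exp[(\log\log M)^3]\le l$, the base scale is a constant depending only on $\varepsilon$, and since the sectional-measure loss $[\exp\exp(\log\log M_3)^2]^{-1}$ is \emph{largest at the coarsest} (not the finest) scale, your sum would be dominated by that $l$-independent term: the total would be roughly constant in $l$, not $[\exp\exp(\log l)^{1/4}]^{-1}$. The paper avoids this entirely: passing from $l$ to $l+1$, all ``old'' scales $M$ with $\exp[(\log\log M)^3]\le l$ are \emph{inherited for free}, because $|(T_{l+1}-T_l)(x,x')|<\epsilon_l^{1-}e^{-\alpha L_{l+2}^{(0.2)}|x-x'|_\alpha}$ with $\epsilon_l^{1-}<e^{-M}$, so Lemma~\ref{equi} transfers the estimate from $T_l^\sigma$ on $\Lambda_l^{(2)}$ to $T_{l+1}^\sigma$ on $\Lambda_{l+1}^{(1)}\subset\Lambda_l^{(2)}$. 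Only \emph{one} new scale $M_1$ with $l<\exp[(\log\log M_1)^3]\le l+1$ requires a fresh argument, hence a \emph{single} application of Lemma~\ref{lemma5.4} and a single excision $\mathcal{A}$ per step of $l$. Since $\log\log M_1\approx(\log l)^{1/3}$ and $M_3=M_1^{\tilde\kappa d/10}$, the sectional measure $[\exp\exp(\log\log M_3)^2]^{-1}\sim[\exp\exp((\log l)^{2/3})]^{-1}$ is indeed $<[\exp\exp(\log l)^{1/4}]^{-1}$. (Incidentally, your assertion ``$M\lesssim l$'' is false — $M$ can be as large as $\exp\exp[(\log l)^{1/3}]$; what is true, and what you need, is $\log\log M\lesssim(\log l)^{1/3}$.)

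Second, you miss the lagged-operator device, which the paper uses when establishing the one new scale. Rather than apply Lemma~\ref{lemma5.4} to $T_{l+1}^\sigma$ (whose polynomial degree is $\sim 2^{l+2}\log\frac1\varepsilon$, astronomical in $M_1$) on the fine collection $\Lambda_{l+1}^{(1)}$, the paper applies it to $T_{l_0}^\sigma$ with $l_0=C\log M_1$ (so degree $\sim M_1^{C\log 2}$, polynomial in $M_1$), on the coarser collection $\Lambda_{l_0}^{(2+)}$ obtained from $\Lambda_{l_0}^{(2)}$ by restricting to $DC_{M_1}$, and then transfers the conclusion to $T_{l+1}^\sigma$ via Lemma~\ref{equi} because $|(T_{l+1}-T_{l_0})(x,x')|<\epsilon_{l_0-1}^{1-}e^{-\alpha L_{l+1}^{(0.2)}|x-x'|_\alpha}$ with $\epsilon_{l_0-1}^{1-}<e^{-M_1}$. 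The final step then compares the size $e^{-l}$ of the intervals composing $\mathcal{A}$ with the size $A^{-\alpha((l+3)^{C_1}-(l+3)^{C_2})}$ of the intervals in $\Lambda_{l+1}^{(1)}$ to convert the sectional-measure bound of Lemma~\ref{lemma5.4} into the stated bound on $\mathrm{mes}_b(\cup_{\Lambda_{l+1}^{(1)}}(\Gamma\cap I)\setminus\cup_{\Lambda_{l+1}^{(2)}}(\Gamma\cap I))$. Your proposal does not contain this mechanism, and without it the degree and interval-count bookkeeping does not close.
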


\begin{proof}
    We prove this lemma by induction.
    
    When $l\lesssim \log\log\frac{1}{\varepsilon}$, this lemma can be easily  checked using Neumann argument. Now,  suppose that this lemma holds for  $l'\le l $, where  $l\le r-2$. Since 
    $$|(T_{l+1}-T_{l})(x,x')|<\epsilon_{l}^{1-} e^{-\alpha L_{l+2}^{(0.2)}|x-x'|_{\alpha} }$$ 
    and $\epsilon_{l}^{1-}< e^{-M}$ for $\exp(\log\log M)^3\le l$, by Lemma \ref{equi} and the inductive assumption, we know that for $(\lambda,\lambda')\in \Lambda_{l+1}^{(1)}$ and $\exp(\log\log M)^{3}\le l$, we have 
    \begin{equation}
    \left\{
        \begin{split}
            &\left\lVert G_{l+1,M,Q=[-10 M,10M]^d}^{\sigma} \right\rVert<e^{M^{\kappa'}},\\
            &|G_{l+1,M,Q}^{\sigma}(x,x')|<e^{-\alpha L_{l+2}^{(0.4)}|x-x'|_{\alpha}},\ \  |x-x'|>M^{\theta},
        \end{split}\right.
    \end{equation}
for $\sigma$ outside a set of measure less than $e^{-M^{c_3}}$. Thus, we only need to establish this statement at scale $M_1$ satisfying
\begin{equation}
    l<\exp(\log\log M_1)^{3}\le l+1.
\end{equation}
Let $l_0=C \log M_1$  ($C>\frac{2}{\log\frac{4}{3}}$) and consider $T_{l_0}^{\sigma}$. If we manage to establish that for some $(\lambda,\lambda')$,
\begin{equation}
\left\{
        \begin{split}
            &\left\lVert G_{l_0,M_1,Q=[-10 M_1,10M_1]^d}^{\sigma} \right\rVert<e^{M_1^{\kappa'}},\\
            &|G_{l_0,M_1,Q}^{\sigma}(x,x')|<e^{-\alpha L_{l+2}^{(0.4)}|x-x'|_{\alpha}},\ \  |x-x'|>M_1^{\theta},
        \end{split}\right.
\end{equation}
for $\sigma$ outside a set of measure less than $e^{-M_1^{c_3}}$. Since $$|(T_{l+1}-T_{l_0})(x,x')|<\epsilon_{l_0-1}^{1-} e^{-\alpha L_{l+1}^{(0.2)}|x-x'|_{\alpha}}$$
and $\epsilon_{l_0-1}^{1-}< e^{-M_1}$, by Lemma \ref{equi}, we know that for the same $(\lambda,\lambda')$,
\begin{equation}
\left\{
        \begin{split}
            &\left\lVert G_{l+1,M_1,Q=[-10 M_1,10M_1]^d}^{\sigma} \right\rVert<e^{M_1^{\kappa'}},\\
            &|G_{l+1,M_1,Q}^{\sigma}(x,x')|<e^{-\alpha L_{l+2}^{(0.4)}|x-x'|_{\alpha}},\ \  |x-x'|>M_1^{\theta}.
        \end{split}\right.
\end{equation}
for $\sigma$ outside a set of measure less than $e^{-M_1^{c_3}}$.

Let $M_0=\exp(\log\log M_1)^{10}$, thus $\exp(\log\log M_0)^{3}<l_0$. We have for $M<M_0$,
\begin{equation}
\left\{
        \begin{split}
            &\left\lVert G_{l_0,M,Q=[-10 M,10M]^d}^{\sigma} \right\rVert<e^{M^{\kappa'}},\\
            &|G_{l_0,M,Q}^{\sigma}(x,x')|<e^{-\alpha L_{l_0+1}^{(0.4)}|x-x'|_{\alpha}},\ \  |x-x'|>M^{\theta},
        \end{split}\right.
    \end{equation}
for $\sigma$ outside a set of measure$< e^{-M^{c_3}}$ for $(\lambda,\lambda')\in I\in \Lambda_{l_0}^{(2)}$.

Take $\tilde{I}\in \Lambda_{l_0}^{(2)}$ and restrict $\tilde{I}$ in $DC_{M_1}$, then there are at most $M_1^{C}$ intervals $\tilde{I}'\subset \tilde{I}$. Denote the set composed  of all $\tilde{I}'$ by $\Lambda_{l_0}^{(2+)}$, and we know
\begin{equation}
   \#\{\tilde{I}'\}\le M_1^{C} A^{\alpha((l_0+3)^{C_1}-(l_0+3)^{C_2})}.
\end{equation}
Consider $T_{l_0}^{\sigma}$  on $\mathop{\cup}\limits_{\tilde{I}'\in\Lambda_{l_{0}}^{(2+)}}\tilde{I}'$. Lemma \ref{lemma5.4} tells us there is a set $\mathcal{A}$ satisfying 
\begin{equation}
    mes_{sec}\mathcal{A}<[\exp\exp(\log\log M_3)^2]^{-1},
    \label{shanchudedaxiao}
\end{equation}
and $\mathcal{A}$ is composed of intervals of size
\begin{equation}
    [\exp\exp(\log\log M_1)^3]^{-1}\sim e^{-l}.
    \label{sizededaxiao}
\end{equation}
For $(\lambda,\lambda')\in \mathop{\cup}\limits_{\tilde{I}'\in\Lambda_{l_{0}}^{(2+)}}\tilde{I}'\setminus \mathcal{A}$, we have the required statement (since $[\exp(\log\log M_1)^{4}]^{-1}\ll(l+1)^{-3}$). 

Now we define $\Lambda_{l+1}^{(2)}$.  If $I\in \Lambda_{l+1}^{(1)}$ intersects with $\mathcal{A}$, we delete it from $\Lambda_{l+1}^{(1)}$. We collect the remaining $I\in \Lambda_{l+1}^{(1)}$ in a set and denote it by $\Lambda_{l+1}^{(2)}$. 
Let $\Gamma$ be the graph of $\lambda'=\lambda+\varphi(\lambda)$ where $\lVert \nabla \varphi \rVert<10^{-2}$.
By (\ref{shanchudedaxiao}) and (\ref{sizededaxiao}), we have
\begin{align}
    & mes_{b}\left( \mathop{\cup}\limits_{I\in\Lambda_{l+1}^{(1)}} (\Gamma_{}\cap I) \setminus \mathop{\cup}\limits_{I\in\Lambda_{l+1}^{(2)}} (\Gamma_{}\cap I) \right)\notag\\
        <& \frac{e^{-l}+A^{-\alpha((l+3)^{C_1}-(l+3)^{C_2})}}{e^{-l}} [\exp\exp(\log\log M_1^{\frac{\tilde{\kappa}d}{10}})^2]^{-1}\notag\\
        <& \frac{e^{-l}+A^{-\alpha((l+3)^{C_1}-(l+3)^{C_2})}}{e^{-l}} [\exp\exp(\frac{1}{2}\log l_0)^2]^{-1}\notag\\
        <&[\exp\exp (\log l)^{1/4}]^{-1}.
\end{align}

Thus, the proof is completed.
\end{proof}

\subsection{Estimation of $G_{N_{r+1}}$}

Suppose that the Iterative Lemma holds up to step $r$ (the reason we assume the Iterative Lemma holds up to step $r$ here, rather than till $r-1$ as before, is solely for the sake of notation simplicity). In this section, we will generate $\Lambda_{r+1}$ by refining $\Lambda_{r}^{(2)}$ to ensure that $T_{r,N_{r+1}}^{-1}$ satisfies condition $(r.3.a)$. Once this is confirmed, the previous sections indicate that all other conditions at  step $r+1$, except for the measure estimate (\ref{measureestimate}), will hold. Therefore, our goal in this section is to find $\Lambda_{r+1}$ and obtain the measure estimate.

To invert $T_{r,N_{r+1}}$, consider a paving of $[-N_{r+1},N_{r+1}]^{b+d}$ by intervals $J$ of size $M=(\log N_{r+1})^{C_6}$, where $C_6$ is a sufficiently large constant. 
Since $T_{r-1,N_{r}}^{-1}$ satisfies 
\begin{equation}
|G_{r-1, N_{r}}(x,x')|< A^{(r-1)^{C_3}} e^{-\alpha L_{r}^{(1)} |x-x'|_{\alpha}},
\end{equation}
 and 
 $$|(T_{r-1}-T_{r})(x,x')|<\epsilon_{r-1}^{1-} e^{-\alpha L_r^{(0.2)}|x-x'|_{\alpha}}$$
 for $(\lambda,\lambda')\in I\in \Lambda_{r}$, we have
\begin{equation}
|G_{r, N_r}(x,x')|<A^{r^{C_3}} e^{-\alpha L_{r+1}^{(1)} |x-x'|_{\alpha}}.
\end{equation}

By Lemma \ref{nianjie}, we only need to ensure that for $M$-interval $J=([-M,M]^b+ k)\times Q$, where $|k|>\frac{A^{r}}{2}$ or $\min\limits_{n\in Q} |n|>\frac{A^{r}}{2}$,
we have
\begin{equation}
\left\{
    \begin{split}
        &\left\lVert G_{r,J} \right\rVert<e^{M^{\kappa''}},\\
        &|G_{r,J}(x,x')|<e^{-\alpha L_{r+1}^{(0.4)}|x-x'|_{\alpha}},
    \end{split}\right.
\end{equation}
for some $\kappa''$ satisfying the conditions in  Lemma \ref{nianjie}.
 When $\min\limits_{n\in Q} |n|>\frac{A^{r}}{2}, |k|<\frac{A^{r}}{2}$,  we  have
\begin{equation}
    |\pm k\cdot \lambda'-|n|^2|>A^{3r/2}.
\end{equation}
Thus, for those $J=([-M,M]^b+ k)\times Q$, we can invert the matrix by Neumann argument.  

Hence, it suffices to consider $\frac{1}{2} A^{r}\le |k|\le N$. Denote $T=T_{r}$ and
\begin{equation}
    R_{J}T R_{J}=T_{M,Q}^{\sigma=k\cdot \lambda'}
\end{equation}
and we distinguish the cases $\min\limits_{n\in Q}|n|>M$ and $Q=[-10 M,10 M]=:Q_0$. 

\textbf{Case 1}: $\min\limits_{n\in Q} |n|>M$.

Define $r_0=\exp(\log\log M)^{3}<r$ and restrict $(\lambda,\lambda')$ to the graph $\Gamma_{r_0}$.
If $\min\limits_{n\in Q} |n|>M$, by Lemma \ref{19.13}, we know for those fixed $Q$ there are at most $e^{(\log M)^{C_7}}$ Lipschitz functions $\sigma_s$ defined on $I_0\times I_0$ satisfy the statement in Lemma \ref{19.13}.  For fixed $Q$, the number of  lipschitz functions is at most $e^{(\log M)^{C_7}}$, and  if 
$$\min\limits_{s}|k\cdot \lambda'-\sigma_{s}(\lambda,\lambda')|>e^{-M^{c_2}},$$
we have
\begin{equation}
\left\{
    \begin{split}
        &\left\lVert G_{r_0,M,Q}^{\sigma=k\cdot \lambda'} \right\rVert<e^{M^{\kappa}},\\
        &|G_{r_0,M,Q}^{\sigma=k\cdot \lambda'}(x,x')|<e^{-\alpha L_{r+1}^{(0.3)}|x-x'|_{\alpha}},\text{  for  } |x-x'|>M^{\theta}.
    \end{split}\right.
    \label{xiaoggod}
\end{equation}

If we can ensure
\begin{equation}
    \min\limits_{Q} \min\limits_{s} |k\cdot \lambda'-\sigma_{s}(\lambda,\lambda')|>e^{-M^{c_2}}
\end{equation}
for all $|k|>\frac{A^r}{2}$, then (\ref{xiaoggod}) holds for all $|k|>\frac{A^r}{2}$ and $Q$.

Denote
\begin{equation}
    R_1=\{\lambda | \min\limits_{Q} \min\limits_{s} |k\cdot (\lambda+\varepsilon \varphi_{r_0}(\lambda) )-\sigma_{s}(\lambda,\lambda+\varepsilon \varphi_{r_0}(\lambda))|\le e^{-M^{c_2}}, \forall \  |k|>\frac{A^{r}}{2}\}.
\end{equation}
After excluding $R_1$, (\ref{xiaoggod})  holds for $\lambda'=\lambda+\varepsilon\varphi_{r_0}$ for all $|k|>\frac{A^{r}}{2}$ and $Q$.
Since $\left\lVert \sigma_{s} \right\rVert_{Lip}\le C M$, if $c_2 C_6>1$,  we have
\begin{equation}
    mes_{b} R_1 \le N_{r+1}^{b+d} e^{(\log M)^{C_7}} e^{-M^{c_2}}<e^{-\frac{1}{2} M^{c_2}}<A^{-(r+1)^{c_2 C_6}}.
\end{equation}

\textbf{Case 2}: $Q=Q_0:=[-10 M,10M]^{d}$.

Again define $r_0=\exp(\log\log M)^{3}<r$. By Lemma \ref{5.5}, we know that for $(\lambda,\lambda')\in I'\in \Lambda_{r_0}^{(2)}$, we have
\begin{equation}
\left\{
\begin{split}
     &\left\lVert G_{r_0,M,Q_0}^{\sigma} \right\rVert<e^{M^{\kappa'}},\\
     &|G_{r_0,M,Q_0}^{\sigma}(x,x')|<e^{-\alpha L_{r_0+1}^{(0.4)}|x-x'|_{\alpha}},\text{  for  } |x-x'|>M^{\theta},
\end{split}\right.
\label{ogood}
\end{equation}
for $\sigma$ outside a set of measure$<e^{-M^{c_3}}$. Consider 
\begin{equation}
    \mathcal{O}=\{(\lambda,\lambda',\sigma)\in I'\times \mathbb{R}| T_{r_0,M,Q_0} \text{ fails (\ref{ogood})}      \}
\end{equation}
and 
\begin{equation}
    \mathcal{S}=\mathcal{O}\cap((\Gamma_{r_0}'\cap I')\times \mathbb{R}).
    \label{singularset}
\end{equation}
The set  $\mathcal{S}$  is semi-algebraic of degree at most
\begin{equation}
    (M A^{r_0})^C <\exp\exp(\log\log r)^{4}.
\end{equation}
We can restrict $\sigma$ to $[-M^{C},M^{C}]$, since otherwise (\ref{ogood}) holds by Neumann argument. We decompose $[-M^{C},M^{C}]$ into intervals of length $1$ and identify each of them with $[0,1]$. Then $\mathcal{S}$ is divided into $M^{C}$ semi-algebraic subsets $\mathcal{S}'$. 
By Lemma \ref{5.5}, the measure of each $1$-dimensional $(\lambda,\lambda')$-section is less than  $e^{-M^{c_3}}$, so we have
\begin{equation}
    mes_{1+b} \mathcal{S}'<e^{-M^{c_3}}.
    \notag
\end{equation}
Take $C_6$ such that $c_3 C_6>1$, then we have  
$$mes_{b+1} \mathcal{S}'<e^{-M^{c_3}}<A^{-(r+1)^{c_3 C_6}}$$

Our aim is to estimate the measure for $|k|>\frac{1}{2}A^{r}$:
\begin{equation}
    mes_{b}\{ \lambda| (\lambda, \lambda+\varepsilon \varphi_{r_0}', k\cdot(\lambda+\varepsilon \varphi_{r_0}')) \in \mathcal{S}' \}.
\end{equation}

  Since $\log (M A^{r_0})^{C}\ll \log A^{(r+1)^{c_3 C_6}}$, we can apply Lemma \ref{fenjiedingli} to the semi-algebraic set $\mathcal{S}'$ by identifying the algebraic curve $\Gamma_{r_{0}}'\cap I'$ with an interval in $\mathbb{R}^{b}$. Let $\tilde{\varepsilon}$ (from Lemma \ref{fenjiedingli}) be $\tilde{\varepsilon}=A^{-(r+1)/2}$.  Then we obtain the decomposition: 
\begin{equation}
    \mathcal{S}'=\mathcal{S}_1\cup\mathcal{S}_2
\end{equation}
such that:
\begin{itemize}
    \item The set $\mathcal{S}_1$ satisfies
\begin{equation}
    mes_{b} \Pi_{\Gamma_{r_{0}}} \mathcal{S}_1<(M A^{r_0})^{C} \tilde{\varepsilon}<A^{-\frac{r+1}{3}},
\end{equation}
where $\Pi_{\Gamma_{r_{0}}}$ denotes the projection onto $\Gamma_{r_0}$.
\item  For $|k|>\frac{1}{2}A^{r}$, suppose $|k_1|>1$ for example. Fix $\lambda_2,...,\lambda_{d}$, then we can identify 
$$\bm{L}=\{(\lambda,\lambda+\varepsilon \varphi_{r_0}(\lambda),k\cdot (\lambda+\varepsilon \varphi_{r_0}(\lambda)))\}$$
as a hyperplane satisfying condition (\ref{xielvhaodao}). Thus, we have
\begin{equation}
    mes_{1}\{ \lambda_1| (\lambda,\lambda+\varepsilon \varphi_{r_0}(\lambda),k\cdot (\lambda+\varepsilon \varphi_{r_0}(\lambda)))\in \mathcal{S}_2 \}
    <(M A^{r_0})^{C} A^{\frac{1}{2} r}\cdot e^{-\frac{1}{2b} M^{c_3}}.
\end{equation}
\end{itemize}
By Fubini's theorem, we have
\begin{align}
    mes_{b}\{ \lambda| (\lambda,\lambda+\varepsilon \varphi_{r_0}(\lambda),k\cdot (\lambda+\varepsilon \varphi_{r_0}(\lambda)))\in \mathcal{S}_2\}<(M A^{r_0})^{C} A^{\frac{1}{2} r}\cdot e^{-\frac{1}{2b} M^{c_3}}.
\end{align}
Thus, we have
\begin{align}
    &mes_{b}\{ \lambda| (\lambda,\lambda+\varepsilon \varphi_{r_0}(\lambda),k\cdot (\lambda+\varepsilon \varphi_{r_0}(\lambda)))\in \mathcal{S}_2 \forall \ \frac{1}{2} A^{r}<|k|<N_{r+1} \}\notag\\
        <&A^{(r+1)b}\cdot (M A^{r_0})^{C} A^{\frac{1}{2} r}\cdot e^{-\frac{1}{2b} M^{c_3}}\notag\\
        <&A^{-(r+1)}.
\end{align}
        
Therefore, there exists a set $\Gamma'\subset \Gamma_{r_0}'\cap I'$ such that
\begin{equation}
    mes_{b}\Gamma'<M^{C}(A^{-\frac{r+1}{3}}+A^{-(r+1)})<A^{-\frac{r+1}{4}},
\end{equation}
and for all $(\lambda,\lambda')\in (\Gamma_{r_0}\setminus \Gamma')\cap I'$ and $\frac{1}{2} A^{r}<|k|<N_{r+1}$,   (\ref{ogood}) holds.   The number of intervals $I'$ is at most $A^{\alpha (r_0+2)^{C_3}}$, so the total measure of the excluded $\Gamma''$ is at most
\begin{equation}
    A^{\alpha(r_0+2)^{C_3}} (A^{-\frac{1}{4}(r+1)}+A^{-(r+1)})<A^{-\frac{1}{5} (r+1)}.
\end{equation}

Now we construct $\Lambda_{r+1}$.
Since (\ref{xiaoggod}) and (\ref{ogood}) allow $O(e^{-M})$ perturbation, $\epsilon_{r_0-1}^{1-}\ll e^{-M}$, $\left\lVert\varphi_{r}-\varphi_{r_0}\right\rVert_{C^{0}(I_0)}<\epsilon_{r_0-1}^{1-}$ and $|(T_{r}-T_{r_0})(x,x')|<\epsilon_{r_0-1}^{1-} e^{-\alpha L_{r+1}^{(0.2)}|x-x'|_{\alpha}}$, we can obtain a subset $\widetilde{\Gamma}_r\subset\Gamma_{r}$, such that 
\begin{equation}
\left\{
    \begin{split}
        &\left\lVert G_{r,M,Q}^{\sigma=k\cdot \lambda'} \right\rVert<e^{M^{\kappa}},\\
        &|G_{r,M,Q}^{\sigma=k\cdot \lambda'}(x,x')|<e^{-\alpha L_{r+1}^{(0.3)}|x-x'|_{\alpha}},\text{  for  } |x-x'|>M^{\theta},
    \end{split}\right.
    \label{xiaoggodr}
\end{equation}
and 
\begin{equation}
\left\{
\begin{split}
     &\left\lVert G_{r,M,Q_0}^{\sigma} \right\rVert<e^{M^{\kappa'}},\\
     &|G_{r,M,Q_0}^{\sigma}(x,x')|<e^{-\alpha L_{r+1}^{(0.4)}|x-x'|_{\alpha}},\text{  for  } |x-x'|>M^{\theta},
\end{split}\right.
\label{ogoodr}
\end{equation}
hold on 
\begin{equation}
    \mathop{\cup}\limits_{I'\in\Lambda_{r_0}^{(2)}}(I'\cap(\Gamma_r\setminus \widetilde{\Gamma}_{r})).
\end{equation}
Hence,  (\ref{xiaoggodr}) and (\ref{ogoodr})  hold on 
\begin{equation}
    \mathop{\cup}\limits_{I\in\Lambda_{r}^{(2)}}(I\cap(\Gamma_r\setminus \widetilde{\Gamma}_{r})).
\end{equation}
Moreover,  we have 
$$\textup{mes}_{b}\widetilde{\Gamma}_{r}<A^{-(r+1)/5}+A^{-(r+1)}<A^{-(r+1)/6}.$$
We mesh all $I\in \Lambda_{r}^{(2)}$ into intervals of size $A^{-\alpha (r+3)^{C_1}}$. If an interval of size $A^{-\alpha (r+3)^{C_1}}$ intersects $\Gamma_r\setminus \widetilde{\Gamma}_{r}$, we keep it; otherwise, we delete it.  We denote by $\Lambda_{r+1}$ the set of the remaining intervals. By Lemma \ref{qvyvraodong},  $\Lambda_{r+1}$ satisfies the requirement.

Moreover, we have
\begin{equation}
    mes_{b}(\mathop{\cup}\limits_{I\in\Lambda_{r}^{(2)}} (\Gamma_{r}\cap I)\setminus\mathop{\cup}\limits_{I\in\Lambda_{r+1}} (\Gamma_{r}\cap I) )<A^{-r/6},
\end{equation}
which implies
\begin{align}
    &mes_{b}(\mathop{\cup}\limits_{I\in\Lambda_{r}} (\Gamma_{r}\cap I)\setminus\mathop{\cup}\limits_{I\in\Lambda_{r+1}} (\Gamma_{r}\cap  I) )\notag\\
    <& mes_{b}(\mathop{\cup}\limits_{I\in\Lambda_{r}^{(2)}} (\Gamma_{r}\cap I)\setminus\mathop{\cup}\limits_{I\in\Lambda_{r+1}} (\Gamma_{r}\cap I) )
    +mes_{b}(\mathop{\cup}\limits_{I\in\Lambda_{r}} (\Gamma_{r}\cap I)\setminus\mathop{\cup}\limits_{I\in\Lambda_{r}^{(2)}} (\Gamma_{r}\cap I) )\notag \\
    <&A^{-r/6}+ [\exp\exp(\log r)^{1/4}]^{-1}\notag\\
    \lesssim&[\exp\exp(\log r)^{1/4}]^{-1}.
\end{align}
Thus, we have proved the Iterative Lemma.

\section{Proof of Theorem \ref{zhudingli} }

Let $\lambda'=\lambda+\varepsilon \varphi$ be the limit of $\lambda_{r}'=\lambda+\varepsilon \varphi_{r}$. Thus, $\lambda'=\lambda+\varepsilon\varphi$ is a $C^{1}$ function defined on $I_0$, and $\left\lVert \varphi-\varphi_{r} \right\rVert_{C(I_0)}<\epsilon_{r-1}^{1-}$. Denote by $\Gamma$ the graph of $\lambda'=\lambda+\varepsilon\varphi$.  Since $\epsilon_{r-1}^{1-}$ is negligible compared with $A^{-\alpha(r+2)^{C_{1}}}$ and $\lVert\partial_{\lambda}(\lambda+\varepsilon \varphi_{r})\rVert_{C^{0}(I_0)}>\frac{1}{2}$,   we have
\begin{align}
    &\text{mes}_{b}\left( \Gamma \cap\left( \mathop{\cup}\limits_{I'\in \Lambda_{r-1}} I'\setminus\mathop{\cup}\limits_{I\in\Lambda_{r}}I \right) \right)\notag\\
    <&[\exp\exp(\log r)^{1/4}]^{-1}+\epsilon_{r-1}^{1-}\cdot A^{\alpha(r+2)^{C_{1}}}\notag\\
    <&2[\exp\exp(\log r)^{1/4}]^{-1},
\end{align}
for $r \gtrsim \log\log\frac{1}{\varepsilon}$, and
\begin{equation}
    \text{mes}_{b}\left( \Gamma \cap\left( I_0\times I_0\setminus\mathop{\cup}\limits_{I\in\Lambda_{r}}I \right) \right)<(\log\log\frac{1}{\varepsilon})^{-\frac{1}{2}}
\end{equation}
for  $r\lesssim \log\log\frac{1}{\varepsilon}$.

 Let $I^{(\infty)}=\mathop{\cap}\limits_{r=1}^{\infty}(\mathop{\cup}\limits_{I\in\Lambda_{r}}I)$, then $I^{(\infty)}$ is a Cantor-type subset of $I_0\times I_0$, and we have
\begin{align}
    &mes_{b} \left( \Gamma\cap (I_0\times I_0 \setminus I^{(\infty)}) \right)\notag\\
    <& (\exp\exp(\log\log\log\frac{1}{\varepsilon})^{1/3})^{-1}+(\log\log\frac{1}{\varepsilon})^{-\frac{1}{2}}\notag\\
    <&2(\log\log\frac{1}{\varepsilon})^{-\frac{1}{2}}.
\end{align}

Let $q^{(\infty)}(\lambda,\lambda')$ be the limit of $q_r(\lambda,\lambda')$. Then for $(\lambda,\lambda')\in I^{(\infty)}$, we have $q^{(\infty)}(\lambda,\lambda')\in G^{\alpha,\frac{L_0}{2}}(\mathbb{T}^{b+d})$. Moreover,   
for  $(\lambda,\lambda')\in \Gamma\cap I^{(\infty)}$, we have
\begin{equation}
\begin{split}
    &\left\lVert\frac{1}{i}\lambda'\cdot \partial_{\theta}q^{(\infty)}-\mathcal{L} q^{(\infty)}-\varepsilon \frac{\partial H}{\partial \bar{q}}(q^{(\infty)},\bar{q}^{(\infty)})\right\rVert\\
    \le
    &\left\lVert\frac{1}{i}\lambda'\cdot \partial_{\theta}q^{(\infty)}-\mathcal{L} q^{(\infty)}-\varepsilon \frac{\partial H}{\partial \bar{q}}(q^{(\infty)},\bar{q}^{(\infty)})-(\frac{1}{i}\lambda_{r}'\cdot \partial_{\theta}q_{r}-\mathcal{L}q_r-\varepsilon\frac{\partial H}{\partial \bar{q}}(q_r,\bar{q_r}))\right\rVert\\
    & +\left\lVert\frac{1}{i}\lambda_{r}'\cdot \partial_{\theta}q_{r}-\mathcal{L}q_r-\varepsilon\frac{\partial H}{\partial \bar{q}}(q_r,\bar{q_r})\right\rVert\\
    \le& \left\lVert (\lambda'-\lambda_r')\cdot\partial_{\theta}q^{(\infty)} \right\rVert+\left\lVert \lambda_r'\cdot(\partial_{\theta}q^{(\infty)}-\partial_{\theta}q_{r}) \right\rVert+\left\lVert \mathcal{L}q^{(\infty)}-\mathcal{L}q_{r} \right\rVert\\
    &+\left\lVert \varepsilon \frac{\partial H}{\partial \bar{q}}(q^{(\infty)},\bar{q}^{(\infty)})-\varepsilon\frac{\partial H}{\partial \bar{q}}(q_r,\bar{q_r})) \right\rVert+\left\lVert\frac{1}{i}\lambda_{r}'\cdot \partial_{\theta}q_{r}-\mathcal{L}q_r-\varepsilon\frac{\partial H}{\partial \bar{q}}(q_r,\bar{q_r})\right\rVert\\
    <&\epsilon_{r-1}^{1-}.
\end{split}
\end{equation}
Noting that $r$ is arbitrary,  we have 
\begin{equation}
    \left\lVert\frac{1}{i}\lambda'\cdot \partial_{\theta}q^{(\infty)}-\mathcal{L} q^{(\infty)}-\varepsilon \frac{\partial H}{\partial \bar{q}}(q^{(\infty)},\bar{q}^{(\infty)})\right\rVert=0.
\end{equation}
Thus, we have proved Theorem \ref{zhudingli}.

\appendix

\section{A technical proposition about the reality of Fourier coefficients}

\begin{proposition}[\textbf{Reality}]
    Let $f\in C^{1}([-1,1])$ be real-valued, and let $q$ be a function defined on $\mathbb{T}^n$, with $\hat{q}(k)\in \mathbb{R}$ for all $k\in \mathbb{Z}^n$. Suppose $\left\lVert q \right\rVert_{C^{0}(\mathbb{T}^{n})}$ is small enough, then $(f(\left\lvert q\right\rvert^2))^{\land} (k)\in \mathbb{R} $ for all $k\in \mathbb{Z}^n$.
    \label{p1}
\end{proposition}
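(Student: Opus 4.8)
The plan is to reduce the statement to a claim about convolutions of real-valued sequences and then exploit the fact that $f$ is real-valued together with the symmetry $\hat q(k)\in\mathbb{R}$. First I would recall that for a real-valued function $g$ on $\mathbb{T}^n$ one always has $\hat g(-k)=\overline{\hat g(k)}$; the extra hypothesis here is that moreover $\hat q(k)$ is \emph{real}, which is equivalent to $\hat q(-k)=\hat q(k)$, i.e. $q$ is an even function on $\mathbb{T}^n$. So the first step is the observation: \emph{$q$ has all real Fourier coefficients iff $q(\tilde x)=q(-\tilde x)$.} Since $f$ is real-valued on $[-1,1]$ and $\lvert q(\tilde x)\rvert^2 = q(\tilde x)\overline{q(\tilde x)}$, the composite $F(\tilde x):=f(\lvert q(\tilde x)\rvert^2)$ is a real-valued function of $\tilde x$, provided $\lVert q\rVert_{C^0}\le 1$ so that the argument stays in $[-1,1]$ (this is where the smallness hypothesis enters — merely to keep $\lvert q\rvert^2$ inside the domain of $f$; I would state explicitly that $\lVert q\rVert_{C^0(\mathbb{T}^n)}<1$ suffices). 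Being real-valued already gives $\widehat F(-k)=\overline{\widehat F(k)}$; it remains to show $\widehat F$ is also even, i.e. $\widehat F(-k)=\widehat F(k)$, equivalently $F(-\tilde x)=F(\tilde x)$.

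The key step is then immediate once the symmetry is isolated: since $q$ is even, $\overline{q}$ is even as well (complex conjugation commutes with the reflection $\tilde x\mapsto -\tilde x$), hence $\lvert q\rvert^2 = q\overline q$ is even, and therefore $F=f(\lvert q\rvert^2)$ is even. Combining ``$F$ real-valued'' ($\widehat F(-k)=\overline{\widehat F(k)}$) with ``$F$ even'' ($\widehat F(-k)=\widehat F(k)$) yields $\widehat F(k)=\overline{\widehat F(k)}$, i.e. $\widehat F(k)\in\mathbb{R}$ for every $k\in\mathbb{Z}^n$, which is the assertion.

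I would lay this out in three short steps: (i) $\lVert q\rVert_{C^0}<1 \Rightarrow \lvert q\rvert^2$ takes values in $[0,1]\subset[-1,1]$, so $F=f(\lvert q\rvert^2)$ is well-defined and real-valued, whence $\widehat F(-k)=\overline{\widehat F(k)}$; (ii) $\hat q(k)\in\mathbb{R}\ \forall k$ is equivalent, by uniqueness of Fourier expansion applied to $q(\tilde x)$ and $q(-\tilde x)$, to $q$ being even, and conjugation preserves evenness, so $\lvert q\rvert^2$ and hence $F$ are even, giving $\widehat F(-k)=\widehat F(k)$; (iii) subtract/compare (i) and (ii) to conclude $\widehat F(k)=\overline{\widehat F(k)}$. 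There is no real obstacle here — the only thing to be slightly careful about is that $C^1$ regularity of $f$ is used only to ensure $F\in C^1(\mathbb{T}^n)$ so that its Fourier series is well-behaved (one could even work with $f$ merely continuous), and that the ``smallness'' of $\lVert q\rVert_{C^0}$ is not a perturbative smallness but simply the requirement that $\lvert q\rvert^2\le 1$; I would phrase the hypothesis as $\lVert q\rVert_{C^0(\mathbb{T}^n)}\le 1$. The mild subtlety worth a sentence is the equivalence in step (ii): if $q=\sum_k\hat q(k)e^{ik\cdot\tilde x}$ then $q(-\tilde x)=\sum_k\hat q(k)e^{-ik\cdot\tilde x}=\sum_k\hat q(-k)e^{ik\cdot\tilde x}$, so $q(-\tilde x)=q(\tilde x)$ iff $\hat q(-k)=\hat q(k)$ for all $k$, and combined with $\hat q(-k)=\overline{\hat q(k)}$ (automatic since in our application $q$ itself will be real-valued, though this is not even needed) this is exactly $\hat q(k)\in\mathbb{R}$.
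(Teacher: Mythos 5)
Your overall strategy — show $F := f(\lvert q\rvert^2)$ is both real-valued and even, then combine $\widehat F(-k)=\overline{\widehat F(k)}$ with $\widehat F(-k)=\widehat F(k)$ — is a clean symmetry argument, quite different from the paper's proof (the paper truncates $f$ with a bump function, expands $f$ in a real sine/cosine Fourier series, approximates by trigonometric polynomials $f_n$, uses the power-series expansions of $\cos$ and $\sin$ applied to $\lvert q\rvert^2$, and then passes to the uniform limit). Your route would be shorter and more conceptual if it were correct. Unfortunately, there is a genuine error in step (ii).

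The claim ``$\hat q(k)\in\mathbb{R}$ for all $k$ is equivalent to $q$ being even'' is \emph{false}. Take $q(\tilde x)=e^{i\tilde x}$ on $\mathbb{T}^1$: its only nonzero Fourier coefficient is $\hat q(1)=1\in\mathbb{R}$, yet $q(-\tilde x)=e^{-i\tilde x}\neq q(\tilde x)$. What $\hat q(k)\in\mathbb{R}$ actually encodes is the symmetry $q(-\tilde x)=\overline{q(\tilde x)}$: writing $q(-\tilde x)=\sum_k\hat q(-k)e^{ik\cdot\tilde x}$ and $\overline{q(\tilde x)}=\sum_k\overline{\hat q(-k)}e^{ik\cdot\tilde x}$, these agree iff $\hat q(k)=\overline{\hat q(k)}$ for all $k$. (Evenness of $q$ would additionally require $q$ real-valued; in the paper's application $q$ is a complex-valued approximate NLS solution, so the ``automatic since $q$ is real-valued'' escape hatch you invoke is not available.) Your parenthetical ``though this is not even needed'' cannot rescue the argument as written, because your derivation of ``$\lvert q\rvert^2$ even'' explicitly passes through ``$q$ even.''

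The good news is that the \emph{conclusion} you want from step (ii), namely that $\lvert q\rvert^2$ is even, is still true and follows from the correct symmetry: $\lvert q(-\tilde x)\rvert^2 = q(-\tilde x)\,\overline{q(-\tilde x)} = \overline{q(\tilde x)}\,q(\tilde x)=\lvert q(\tilde x)\rvert^2$. With that one correction, $F$ is even, step (iii) goes through, and you get a proof that is shorter than the paper's. So: the gap is a misstatement of the symmetry implied by real Fourier coefficients; once repaired, the argument is valid and genuinely different in method from the paper's trigonometric-polynomial approximation.
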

\begin{proof}
    Since $\hat{q}(k)\in \mathbb{R}$, we know that $(q\bar{q})^{\land}(k)=(\hat{q}*\hat{\bar{q}})(k)\in \mathbb{R}$.
    Let $\chi$ be a smooth bump function such that $0\le \chi\le 1$, $\chi=1$ on $[-\frac{1}{4},\frac{1}{4}]$ and $\chi=0$ outside $[-\frac{1}{2},\frac{1}{2}]$.  If $\left\lVert q \right\rVert_{C^{0}(\mathbb{T}^{n})}$ is small enough, then $(f\cdot \chi) (\left\lvert q\right\rvert^2)=f(\left\lvert q\right\rvert^2)$. So without loss of generality, we can assume $f$ is periodic with period $1$. Since $f$ is real-valued, it is easy to check that the Fourier series of $f$ has the following form:
$$
f(x)=\sum\limits_{j\in\mathbb{Z}}l_j \cos 2\pi jx+m_j \sin2\pi jx,
$$
where $l_j, m_j(j\in\mathbb{Z})$ are real.
Let $f_n(x)=\sum\limits_{\left\lvert j\right\rvert \le n}l_j \cos 2\pi jx+m_j\sin 2\pi jx$. Since $f\in C^{1}([-\frac{1}{2},\frac{1}{2}])$, $f_n$ converges to $f$ uniformly.

For any $k\in \mathbb{Z}^n$, we have 
\begin{equation}
\begin{split}
    \left\lvert (f(\left\lvert q\right\rvert^2))^{\land} (k)-(f_n(\left\lvert q\right\rvert^2))^{\land} (k)\right\rvert &=\left\lvert\int_{\mathbb{T}^n}(f(\left\lvert q\right\rvert^2)-f_n(\left\lvert q\right\rvert^2))e^{-2\pi i k x}dx \right\rvert\\
    &\le \int \left\lvert f(\left\lvert q\right\rvert^2)-f_n(\left\lvert q\right\rvert^2) \right\rvert dx\\
    &\le \mathop{\textup{sup}}\limits_{x\in \mathbb{T}^n}\left\lvert f(\left\lvert q(x)\right\rvert^2)-f_n(\left\lvert q(x)\right\rvert^2) \right\rvert\\
    &\le \mathop{\textup{sup}}\limits_{s\in[-\frac{1}{2},\frac{1}{2}]}\left\lvert f(s)-f_n(s) \right\rvert.
\end{split}
\nonumber
\end{equation}
Thus, $(f_n(\left\lvert q\right\rvert^2))^{\land} (k)$ converges to $(f(\left\lvert q\right\rvert^2))^{\land} (k)$ as $n\to+\infty$ for all $k$. Therefore, it suffices to show that $(f_n(\left\lvert q\right\rvert^2))^{\land} (k)\in \mathbb{R}$ for all $k$. 

To prove $(f_n(\left\lvert q\right\rvert^2))^{\land} (k)\in \mathbb{R}$ for all $k$, it suffices to show that $(\cos2\pi j |q|^{2})^{\land}(k)\in \mathbb{R}$ and $(\sin 2\pi j |q|^{2})^{\land}(k)\in \mathbb{R} $ for all $k$.
Take $\cos2\pi j|q|^{2} $ for example. We have
\begin{equation}
    \cos2\pi j |q|^{2}=\sum\limits_{n\ge0}\frac{(-1)^{n}(2\pi j |q|^{2})^{2n}}{(2n)!}.
\end{equation}
Since $(|q|^{4n})^{\land}(k)\in \mathbb{R}$ for all $n$ and $k$, we have $(\cos2\pi j |q|^{2})^{\land}(k)\in \mathbb{R}$ for all $k$. Thus, the proof is completed.
\end{proof}

\section{Diophantine condition}

\begin{lemma}
    Let $I\subset \mathbb{R}^{b}$ be an interval of size $1$, and let $M, N$ be large integers, and $\gamma$ be a small constant. Furthermore, suppose that $N^{-1}<\gamma$. Then, there exists a family of disjoint intervals $\{I_{\zeta}\}_{\zeta\in J}$ of size $N^{-C}$ (where $C$ is a constant depending on $\tau$ and $b$) such that: 
    \begin{itemize}
        \item[(i)] $\mathop{\cup}\limits_{\zeta\in J} I_{\zeta}\subset I$.
        \item[(ii)] For $\lambda\in I\setminus (\mathop{\cup}_{\zeta\in J} I_{\zeta})$, we have
        \begin{equation}
            |k\cdot \lambda|>\gamma (1+|k|)^{-\tau},\  M<|k|<N,
            \notag
        \end{equation}
        where $\tau>b$.
        \item[(iii)] $\textup{mes } \mathop{\cup}\limits_{\zeta\in J} I_{\zeta} \lesssim \gamma M^{-\tau+1}$.
            
    \end{itemize}
    \label{diufantudaxiao}
\end{lemma}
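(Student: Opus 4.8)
\textbf{Proof plan for the Diophantine excision lemma (Lemma \ref{diufantudaxiao}).}

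The plan is to produce the intervals $\{I_\zeta\}$ by a straightforward covering-and-rounding argument applied to the exceptional set of $\lambda$, estimating measures via the standard small-divisor union bound and then discretizing the resulting open set into a disjoint family of dyadic-type cubes of the prescribed size. First I would define, for each $k\in\mathbb{Z}^b$ with $M<|k|<N$, the resonant slab
\[
R_k=\Big\{\lambda\in I:\ |k\cdot\lambda|\le \gamma(1+|k|)^{-\tau}\Big\},
\]
which is the intersection of $I$ with a neighborhood of the hyperplane $k\cdot\lambda=0$ of thickness $\sim \gamma(1+|k|)^{-\tau}/|k|$ in the direction normal to $k$; since $I$ has size $1$, we get $\mathrm{mes}(R_k)\lesssim \gamma(1+|k|)^{-\tau-1}\cdot|k|^{b-1}\lesssim \gamma(1+|k|)^{-\tau-2+b}$ (the $|k|^{b-1}$ factor accounts for the cross-sectional area of $I$). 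Summing over $|k|>M$ and using $\tau>b$ so that the exponent $-\tau-2+b<-2$ guarantees convergence, I obtain
\[
\mathrm{mes}\Big(\bigcup_{M<|k|<N} R_k\Big)\ \le\ \sum_{|k|>M}\mathrm{mes}(R_k)\ \lesssim\ \gamma\sum_{|k|>M}(1+|k|)^{-\tau-2+b}\ \lesssim\ \gamma\, M^{-\tau+b-1}.
\]
(If one only wants the cruder bound $\gamma M^{-\tau+1}$ as stated in (iii), it follows a fortiori since $b\ge 1$; I would keep the sharper exponent and note it implies the claimed one.)

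Next I would convert the exceptional set $\mathcal{E}:=I\cap\bigcup_{M<|k|<N}R_k$ into a disjoint family of intervals of size exactly $N^{-C}$. Partition the ambient cube $I$ into a grid of closed subcubes of side $N^{-C}$ (with $C$ chosen below, depending on $\tau$ and $b$), and let $J$ index exactly those grid cubes that meet $\mathcal{E}$; set $I_\zeta$ to be the corresponding cube. These are disjoint up to boundaries (and one may shrink slightly, or accept the standard convention that "disjoint" refers to interiors, consistent with the paper's usage), and (i) holds by construction. For (ii): if $\lambda\in I\setminus\bigcup_\zeta I_\zeta$, then the whole grid cube containing $\lambda$ avoids $\mathcal{E}$, so in particular $\lambda\notin R_k$ for every $M<|k|<N$, which is exactly the desired inequality. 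For (iii): each $R_k$, being a slab of thickness $t_k\sim\gamma(1+|k|)^{-\tau}/|k|$ inside the unit cube, is covered by $\mathcal{E}$-meeting grid cubes whose total measure is at most $\mathrm{mes}(R_k)$ inflated by a fattening of order $N^{-C}$ in each direction, i.e. $\lesssim \mathrm{mes}(R_k)+ (\text{surface area})\cdot N^{-C}$. Summing, the fattening contributes at most $(\#\{k: M<|k|<N\})\cdot(\text{unit surface area})\cdot N^{-C}\lesssim N^{b}\cdot N^{-C}$, which is $\ll \gamma M^{-\tau+1}$ provided $C>b+\tau$ (using $M<N$ and $\gamma>N^{-1}$); together with the measure bound from the previous paragraph this gives $\mathrm{mes}\bigcup_\zeta I_\zeta\lesssim \gamma M^{-\tau+1}$, establishing (iii). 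So I would fix $C:=b+\tau+1$, say.

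The only genuinely delicate point — and the one I would treat most carefully — is the bookkeeping in (iii): one must verify that rounding $\mathcal{E}$ up to a union of $N^{-C}$-cubes does not destroy the measure estimate, i.e. that the discretization error $N^{-C}\cdot(\text{number of slabs})$ is subordinate to the main term $\gamma M^{-\tau+1}$. This is where the hypotheses $N^{-1}<\gamma$ (so that $\gamma$ is not too small relative to the grid scale) and the freedom to take $C$ large in terms of $\tau,b$ are used; everything else is the routine hyperplane-neighborhood volume count. I would also remark that the slabs $R_k$ with $|k|$ comparable to $N$ have thickness $\gtrsim \gamma N^{-\tau-1}\gg N^{-C}$, so they are genuinely resolved by the grid and contribute meaningfully, confirming that the choice of $C$ depending on $\tau$ (and $b$) is both necessary and sufficient.
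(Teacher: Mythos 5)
Your proposal follows the same covering-and-rounding strategy as the paper: partition $I$ into a grid of $N^{-C}$-cubes and retain those meeting the exceptional set. The only structural difference is that the paper works with a $2\gamma$-to-$\gamma$ buffer—it excludes a grid cube from $\{I_\zeta\}$ as soon as the cube contains one point that is $2\gamma$-strongly Diophantine, then uses the bound $|k|\cdot N^{-C}<\gamma(1+|k|)^{-\tau}$ (valid once $C\gtrsim\tau+2$) to upgrade that single point to the whole cube—whereas you define the resonant slabs $R_k$ at level $\gamma$ directly and then account for the grid rounding by the explicit fattening estimate $N^{b}\cdot N^{-C}$. Both routes are sound and yield the same object; yours is arguably cleaner logically, and the choice $C=b+\tau+1$ is compatible with the paper's implicit requirement on $C$.

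Two bookkeeping points should be repaired. Your explanation for the factor $|k|^{b-1}$ in $\mathrm{mes}(R_k)$ is incorrect: the slab $\{|k\cdot\lambda|\le\gamma(1+|k|)^{-\tau}\}$ intersected with a unit cube has measure $\lesssim\gamma(1+|k|)^{-\tau}/\|k\|\sim\gamma(1+|k|)^{-\tau-1}$, with cross-sectional area $O(1)$, not $|k|^{b-1}$. A factor $R^{b-1}$ does legitimately appear, but as the cardinality of the lattice shell $\{k\in\mathbb{Z}^b:|k|=R\}$ when you sum—a factor your write-up omits at the summation step. The two errors exactly offset, so your final exponent agrees with the correct computation $\sum_{M<|k|<N}\mathrm{mes}(R_k)\lesssim\gamma\sum_{R>M}R^{b-1}\cdot R^{-\tau-1}\lesssim\gamma M^{b-\tau-1}$, but the intermediate reasoning is wrong and would mislead a reader. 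Second, the claim that $\gamma M^{b-\tau-1}$ implies the stated $\gamma M^{-\tau+1}$ ``a fortiori since $b\ge 1$'' is inverted: $M^{b-\tau-1}\le M^{-\tau+1}$ holds precisely when $b\le 2$. For $b\ge 3$ the bound you obtain (and the one the paper's own argument yields) is weaker than (iii) as written; the exponent in (iii) appears to be imprecise, and since only the qualitative smallness of the excised measure is used downstream this causes no harm in the paper, but you should not assert an implication that runs the wrong way.
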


\begin{proof}
    Partition $I$ into intervals of size $N^{-C}$. Consider an interval $I'$ from this partition.  If there exists $\lambda\in I'$ such that
    \begin{equation}
        |k\cdot \lambda|>2\gamma (1+|k|)^{-\tau},\ \  M<|k|<N,
            \notag
    \end{equation}
     we delete $I'$. Thus, for the deleted $I'$,  we have 
    \begin{equation}
            |k\cdot \lambda'|>\gamma (1+|k|)^{-\tau},\ \  M<|k|<N,
            \notag
        \end{equation}
     for $\lambda'\in I'$.
     The remaining $N^{-C}$-intervals form the collection $\{I_{\zeta}\}_{\zeta\in J}$,  which satisfies (i) and (ii). 
     
     For  $\lambda\in I_{\zeta}$,
     we have 
     \begin{equation}
         |k\cdot \lambda|\le 2\gamma (1+|k|)^{-\tau},
         \notag
     \end{equation}
    for some $M<|k|<N$. 
    Thus, the measure estimate follows easily.
   This completes the proof.\qedhere
\end{proof}

\section{A matrix-valued Cartan-type theorem}

\begin{proposition}
    Let $A(\sigma)$ be a self-adjoint $N\times N$ matrix function of a real parameter $\sigma\in[-\delta,\delta]$, satisfying the following conditions:
    \begin{itemize}
        \item[(i)] $A(\sigma)$ is real analytic in $\sigma$, and there exists a holomorphic extension to a strip
        \begin{equation}
            |Re z|<\delta,\ \   |Im z|<\gamma
            \label{14.2}
        \end{equation}
        satisfying
        \begin{equation}
            \left\lVert A(z) \right\rVert<B_1.
            \label{14.3}
        \end{equation}
        \item[(ii)] For each $\sigma\in[-\delta,\delta]$, there exists a subset $\Lambda\subset[1,N]$ such that
        \begin{equation}
            |\Lambda|<M
            \label{14.4}
        \end{equation}
        and 
        \begin{equation}
            \left\lVert (R_{[1,N]\setminus\Lambda}A(\sigma)R_{[1,N]\setminus\Lambda})^{-1} \right\rVert<B_{2}.
            \label{14.5}
        \end{equation}
        \item[(iii)] 
        \begin{equation}
            mes \left\{\sigma\in[-\delta,\delta]: \left\lVert A(\sigma)^{-1} \right\rVert>B_3  \right\}<10^{-3}\gamma (1+B_1)^{-1}(1+B_2)^{-1}.
            \label{14.6}
        \end{equation}
    \end{itemize}
    Then, letting
    \begin{equation}
        \iota<(1+B_{1}+B_{2})^{-10M},
        \notag
    \end{equation}
    we have 
    \begin{equation}
        mes \left\{ \sigma\in\left[ -\frac{\delta}{2},\frac{\delta}{2}: \left\lVert A(\sigma)^{-1} \right\rVert >\frac{1}{\iota}\right] \right\}<e^{-\frac{c\log \iota^{-1}}{ M\log(M+B_1+B_2+B_3) }}.
        \label{14.7}
    \end{equation}
    \label{Cartanestimate}
\end{proposition}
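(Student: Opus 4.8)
The statement to be proved is the matrix-valued Cartan-type theorem (Proposition \ref{Cartanestimate}). The plan is to reduce the matrix estimate to a scalar sublevel-set estimate for a subharmonic (or near-subharmonic) function, and then invoke a classical Cartan-type lemma for analytic functions of one variable. First I would introduce the function $u(z) = \log \left\lVert A(z)^{-1} \right\rVert$, or more precisely work with $\det$-type quantities: set $f(z)$ to be (a suitable normalization of) $\det A(z)$, which is holomorphic on the strip (\ref{14.2}) by (\ref{14.3}), with $\log|f(z)| \le N \log B_1$ there. The key is to get a \emph{lower} bound on $|f|$ at enough points, which is exactly what hypothesis (\ref{14.6}) provides: on the complement of a set of measure $< 10^{-3}\gamma(1+B_1)^{-1}(1+B_2)^{-1}$ we have $\left\lVert A(\sigma)^{-1}\right\rVert \le B_3$, hence a lower bound on $|\det A(\sigma)|$ in terms of the smallest singular value, which in turn is controlled once we know the other $N-|\Lambda|$ singular values are bounded below via (\ref{14.5}) and bounded above via (\ref{14.3}).

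The main work is the passage from $\det A$ back to $\left\lVert A^{-1}\right\rVert$: the resolvent norm is governed by the \emph{smallest} eigenvalue, not the whole determinant, so one must use the Schur complement / Cramer decomposition with respect to the ``bad'' index set $\Lambda$. Concretely, using (\ref{14.4}) and (\ref{14.5}) one writes $A^{-1}$ in block form relative to $\Lambda$ and $[1,N]\setminus\Lambda$; the norm of $A^{-1}$ is then comparable to the norm of the inverse of the $M\times M$ Schur complement $S(\sigma) = R_\Lambda A R_\Lambda - R_\Lambda A R_{\Lambda^c}(R_{\Lambda^c} A R_{\Lambda^c})^{-1} R_{\Lambda^c} A R_\Lambda$, up to factors polynomial in $B_1, B_2$. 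Since $S(z)$ is holomorphic on a slightly smaller strip with $\left\lVert S(z)\right\rVert \lesssim B_1(1 + B_1 B_2)$, and $\det S(z)$ is a holomorphic function of one variable of ``degree'' effectively $M$ (i.e. $\log|\det S| \le M\log(\text{const}\cdot B_1 B_2)$), we can apply the classical Cartan lemma: a one-variable holomorphic $g$ on a disc with $\log|g| \le \log B$ on the disc and $\log|g(z_0)| \ge -A_0$ at one point has $\{|g| < \iota\}$ contained in a union of discs of total radius $\le C\iota^{c/(A_0 + \log B)}$ — giving the exponential-in-$\log\iota^{-1}$ bound of (\ref{14.7}).

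So the key steps, in order, are: (1) fix $\sigma$ and reduce $\left\lVert A(\sigma)^{-1}\right\rVert$ to $\left\lVert S(\sigma)^{-1}\right\rVert$ for the $M\times M$ Schur complement $S$, recording the holomorphic extension and norm bound of $S$ on a strip of width $\sim\gamma$; (2) translate hypothesis (\ref{14.6}) into: there is at least one $\sigma_* \in [-\delta/2,\delta/2]$ (in fact a set of positive measure) with $\left\lVert S(\sigma_*)^{-1}\right\rVert \le$ something like $B_3(1+B_1+B_2)^{O(M)}$, hence $\log|\det S(\sigma_*)| \ge -CM\log(M+B_1+B_2+B_3)$; (3) apply the one-variable Cartan estimate to $\det S$ on a disc of radius $\sim\gamma$ centered near $\sigma_*$, with upper bound $\log|\det S| \le M\log(C B_1 B_2)$ and the lower bound from step (2), to conclude $\mathrm{mes}\{|\det S| < \iota'\} < e^{-c\log(\iota')^{-1}/(M\log(M+B_1+B_2+B_3))}$; (4) convert back: $\left\lVert S(\sigma)^{-1}\right\rVert > 1/\iota$ forces $|\det S(\sigma)|$ small (since the other $M-1$ singular values of $S$ are bounded above), and propagate the smallness threshold $\iota$ through the polynomial factors lost in steps (1)–(2), using $\iota < (1+B_1+B_2)^{-10M}$ to absorb them. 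The main obstacle is step (4) together with bookkeeping in step (1): one must make sure that the various powers of $(1+B_1+B_2)$ picked up when passing between $\left\lVert A^{-1}\right\rVert$, $\left\lVert S^{-1}\right\rVert$, $|\det S|$ and back are genuinely absorbed by the hypothesis $\iota < (1+B_1+B_2)^{-10M}$ and only degrade the constant $c$ and the $\log$ factor in the exponent of (\ref{14.7}) by controlled amounts; this is the standard but delicate part of Bourgain's Cartan machinery (cf. the matrix-valued Cartan theorem in \cite{bourgain2005green}), and I would handle it by carrying all norm bounds as explicit monomials in $B_1, B_2, B_3, M$ throughout.
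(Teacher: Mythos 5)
The paper does not actually prove this proposition — it simply states it as Proposition \ref{Cartanestimate} and cites \cite{bourgain2005green} for the proof — so there is no in-paper argument to compare against. Your sketch, however, is the argument that Bourgain uses: pass from $\lVert A^{-1}\rVert$ to the $M\times M$ Schur complement $S$ relative to the bad set $\Lambda$ (using that $R_\Lambda A^{-1}R_\Lambda = S^{-1}$), observe that (\ref{14.6}) forces $|\det S|\ge B_3^{-M}$ at a good point, apply the scalar one-variable Cartan lemma to the holomorphic function $\det S$, and then convert back, with the hypothesis $\iota<(1+B_1+B_2)^{-10M}$ absorbing the polynomial losses. The one place where your sketch is slightly too loose is step (3): since the set $\Lambda$ in (\ref{14.4})--(\ref{14.5}) may depend on $\sigma$, the function $S(z)$ with a \emph{fixed} $\Lambda$ is holomorphic only on a disc of radius $\sim\gamma(1+B_1)^{-1}(1+B_2)^{-1}$ around the base point (this is where Neumann and the Cauchy estimate for $A'$ enter), not on a disc of radius $\sim\gamma$ as you write. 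One therefore covers $[-\delta/2,\delta/2]$ by $\sim\delta(1+B_1)(1+B_2)/\gamma$ such discs, notes that the smallness of the exceptional set in (\ref{14.6}) — specifically its bound by $10^{-3}\gamma(1+B_1)^{-1}(1+B_2)^{-1}$, which is smaller than each covering disc — guarantees a good point in each disc, applies Cartan disc by disc, and sums; the extra multiplicative factor from the covering is again absorbed by $\iota<(1+B_1+B_2)^{-10M}$, degrading only the constant $c$ in (\ref{14.7}).
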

(see \cite{bourgain2005green} for a proof).

\section{Some facts about semi-algebraic sets}

\begin{definition}
    A set $S\subset \mathbb{R}^{n}$ is called semi-algebraic if it is a finite union of sets defined by a finite number of polynomial equalities and inequalities. More specifically, let $\mathcal{P}=\{ P_1,..., P_{s}\}\subset\mathbb{R}[X_1,...,X_n]$ be a family of $s$ real polynomials of degree bounded by $d$. A (closed) semi-algebraic set $S$ is given by an expression
    \begin{equation}
        S=\mathop{\cup}\limits_{j}\mathop{\cap}\limits_{l\in\mathcal{L}_{j}}\{P_{l}s_{jl}0\}
        \label{biaodashibandaishu}
    \end{equation}
    where $\mathcal{L}_{j}\subset\{1,...,s\}$ and $s_{jl}\in\{\ge, \le, =\}$ are arbitrary. We say that $S$ has degree at most $sd$, and its degree is the minimum $sd$ over all representations (\ref{biaodashibandaishu}) of $S$.
\end{definition}

Recall the following properties:

\begin{lemma}
    The number of connected components of $S$ in (\ref{biaodashibandaishu}) is bounded by $s^{n}(O(d))^{n}<CB^{C}$, where $C$ denotes constants depending only on $n$.
\end{lemma}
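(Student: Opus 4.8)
The plan is to reduce the assertion to the classical Oleĭnik–Petrovsky/Milnor/Thom bound on the number of connected components of a real algebraic (or semi-algebraic) set. First I would recall that for a single real polynomial $P$ of degree $d$ in $n$ variables, the zero set $\{P=0\}$ and each of the two open sets $\{P>0\}$, $\{P<0\}$ have at most $O(d)^n$ connected components; this is the Milnor–Thom estimate, obtained by perturbing $P$ to a Morse function and counting critical points via Bézout, or by the Oleĭnik–Petrovsky inequality. The extension to a \emph{basic} semi-algebraic set $\bigcap_{l\in\mathcal L_j}\{P_l\,s_{jl}\,0\}$ defined by $|\mathcal L_j|\le s$ polynomials, each of degree $\le d$, follows by the standard trick of replacing strict inequalities by $\{P_l>\eta\}$ (or $P_l\ge\eta$) for a suitably small $\eta>0$, so that the set becomes a closed set cut out by the single polynomial $\prod_l(\text{shifted }P_l)$ type construction, or more cleanly by Thom's lemma / the sign-condition counting argument. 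One obtains that each $\bigcap_{l\in\mathcal L_j}\{P_l\,s_{jl}\,0\}$ has at most $s^n\,O(d)^n$ connected components.

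Next I would pass from a basic semi-algebraic set to the general set $S=\bigcup_j\bigcap_{l\in\mathcal L_j}\{P_l\,s_{jl}\,0\}$ in (\ref{biaodashibandaishu}). Since there are at most $2^s$ choices of the index family $\mathcal L_j$ together with the sign vector $(s_{jl})$, and a union cannot have more connected components than the total over its pieces (a connected component of the union is a union of components-pieces but each piece contributes at most its own count, and here the finer point is that the whole arrangement of sign conditions on $P_1,\dots,P_s$ is what governs the count), the cleanest route is: the number of \emph{nonempty sign conditions} realized by $(P_1,\dots,P_s)$ on $\mathbb R^n$ is at most $s^n\,O(d)^n$ (Oleĭnik–Petrovsky–Thom–Milnor–Warren), and each such realizable sign cell has at most $O(d)^n$ connected components; hence the number of connected components of the arrangement, and a fortiori of any Boolean combination $S$, is bounded by $s^n\,O(d)^n = (sd)^{O(n)}$. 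Writing $B=sd$ for the degree of $S$ this gives the stated bound $s^n(O(d))^n < C B^C$ with $C=C(n)$.

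The only genuine obstacle is bookkeeping: making sure the estimate for the Boolean union $S$ is deduced from the sign-condition count rather than naively summing over the (possibly exponentially many) terms $\bigcap_{l\in\mathcal L_j}\{\cdots\}$, which would spoil the polynomial-in-$B$ bound. This is handled exactly as in the standard references, so I would simply cite them: see Bourgain's \cite{bourgain2005green} (Chapter~9 and the appendix on semi-algebraic sets) and the monograph of Basu–Pollack–Roy, \emph{Algorithms in Real Algebraic Geometry}, for the sign-condition counting theorem and the component bound; the result is entirely classical and no new argument is needed here. Thus the proof is one line modulo these citations: $S$ is a Boolean combination of the sets $\{P_i\,s\,0\}$, $1\le i\le s$, $\deg P_i\le d$, so its number of connected components is at most $s^n(O(d))^n<CB^C$.
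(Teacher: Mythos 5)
The paper does not prove this lemma at all: it appears under the heading ``Recall the following properties'' in Appendix~D, and the reference to Bourgain's work is given for ``the proof and more properties.'' So there is no in-paper argument for you to be compared against — the authors treat this as a citation-level classical fact.

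Your sketch is the standard argument behind that citation, and it is essentially correct. The Ole\u{\i}nik--Petrovsky/Milnor--Thom bound gives $O(d)^n$ connected components for the sets $\{P=0\}$, $\{P>0\}$, $\{P<0\}$ cut out by a single degree-$d$ polynomial; passing to a basic set $\bigcap_{l\in\mathcal L_j}\{P_l\,s_{jl}\,0\}$ by perturbing strict inequalities and then to the full Boolean combination $S$ via the sign-condition (Warren-type) count, one gets at most $s^n\,O(d)^n$ components. You correctly flag the one subtlety that actually matters for getting a polynomial-in-$B$ bound: one must argue through the arrangement of realizable sign conditions of $(P_1,\dots,P_s)$ rather than sum over the (potentially exponentially many) index sets $\mathcal L_j$ in the representation~(\ref{biaodashibandaishu}). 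Since $B=sd$, this yields $s^n(O(d))^n<CB^C$ with $C=C(n)$, exactly as claimed. Citing Basu--Pollack--Roy and Bourgain's monograph, as you do, is precisely what the paper intends; no further detail is needed.
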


\begin{lemma}
    Let $S\subset[0,1]^{d=d_1+d_2}$ be a semi-algebraic set of degree $B$ and $mes_{d} S<\eta$, where 
    \begin{equation}
        \log B\ll\log \eta.
    \end{equation}
    Denote by $(x,y)\in[0,1]^{d_1}\times [0,1]^{d_2}$ the product variable. Fix $\tilde{\varepsilon}>\eta^{1/d}$. There is a decomposition of $S$ as
    \begin{equation}
        S=S_1\cup S_2
        \notag
    \end{equation}
    with the following properties:
    \begin{equation}
        mes_{d_1} (Proj_{x} S_{1})<B^{C}\tilde{\varepsilon}
    \end{equation}
    and $S_2$ has the transversality property:
    \begin{equation}
        mes_{d_2} (S_2 \cap L)<B^{C}\tilde{\varepsilon}^{-1}\eta^{1/d}
    \end{equation}
    whenever $L$ is a $d_2$-dimensional hyperplane in $[0,1]^{d_1+d_2}$ such that
    \begin{equation}
        \max\limits_{1\le j\le d_1} |Proj_{L}(e_j)|<\tilde{\varepsilon}.
        \label{xielvhaodao}
    \end{equation}
    \label{fenjiedingli}
\end{lemma}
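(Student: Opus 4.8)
The plan is to prove Lemma~\ref{fenjiedingli} by an induction that keeps everything inside the semi-algebraic category via the Tarski--Seidenberg theorem, using as the two quantitative inputs the bound on the number of connected components of a semi-algebraic set of degree $B$ (the preceding lemma) and the Yomdin--Gromov re-parametrization theorem, which covers $S$ by at most $B^{C}$ real-analytic charts $\Phi_i\colon(0,1)^{k_i}\to[0,1]^{d}$ with $\|D\Phi_i\|\le 1$. A convenient way to organize the argument is to \emph{define} $S_1$ as the set of those $(x,y)\in S$ that lie on some $d_2$-plane $L$ with $\max_{1\le j\le d_1}|Proj_L(e_j)|<\tilde\varepsilon$ along which $mes_{d_2}(S\cap L)\ge B^{C}\tilde\varepsilon^{-1}\eta^{1/d}$, and to set $S_2:=S\setminus S_1$. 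By quantifier elimination $S_1$ is semi-algebraic of degree at most $B^{C'}$, and the transversality of $S_2$ then holds \emph{by construction}: if an admissible $L$ had $mes_{d_2}(S_2\cap L)\ge B^{C}\tilde\varepsilon^{-1}\eta^{1/d}$, then a fortiori $mes_{d_2}(S\cap L)$ would be that large, hence every point of the (nonempty) set $S_2\cap L$ would belong to $S_1$, a contradiction. Thus the entire lemma reduces to the single estimate $mes_{d_1}(Proj_xS_1)<B^{C}\tilde\varepsilon$.

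To establish this I would argue as follows. By Fubini the fibre-content function $g(x):=mes_{d_2}\bigl(S\cap(\{x\}\times[0,1]^{d_2})\bigr)$ has $\int g<\eta$, so by Chebyshev $mes_{d_1}\{x:g(x)>\eta^{1/d}\}<\eta^{1-1/d}\le\eta^{1/d}<\tilde\varepsilon$ (for $d\ge 2$; the case $d\le 1$ is trivial), and this ``thick-fibre'' locus is therefore affordable; over the complementary $x$-values one has $g(x)\le\eta^{1/d}$. Using the re-parametrization, split the $\le B^{C}$ charts of $S$ into (i) those on which $Proj_x$ is not a submersion onto an open subset of $x$-space -- the set of critical values being semi-algebraic of dimension $<d_1$, hence of $d_1$-measure zero -- and (ii) those on which it is. On a chart of type (ii), over the good $x$-region, the slope bound $\|D\psi\|\lesssim\tilde\varepsilon$ forces any admissible $L=\{(\psi(y),y)\}$ to meet the chart transversally, and the implicit function theorem together with $\|D\Phi_i\|\le 1$ bounds the slice $mes_{d_2}$ by $B^{C}$ times the transversal width of the chart times $\tilde\varepsilon^{-1}$, i.e.\ by $\lesssim B^{C}\eta^{1/d}\tilde\varepsilon^{-1}$; hence no point of such a chart lies in $S_1$. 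Consequently $Proj_xS_1$ is contained in the thick-fibre locus, the measure-zero critical loci, and the $x$-shadows of the finitely many ``slab-like'' charts of large $y$-extent, each such shadow being a ball of radius $\lesssim\tilde\varepsilon$, of which there are at most $B^{C}$, contributing $\le B^{C}\tilde\varepsilon^{d_1}\le B^{C}\tilde\varepsilon$. Summing the pieces yields $mes_{d_1}(Proj_xS_1)<B^{C}\tilde\varepsilon$, and the hypotheses $\tilde\varepsilon>\eta^{1/d}$ and $\log B\ll\log\eta$ are precisely what is needed to absorb all the $B^{C}$ factors.

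The main obstacle is making the transversal-slice estimate uniform over \emph{all} admissible $L$ simultaneously rather than for a generic one: a naive Fubini over translates $L+t$ only controls $mes_{d_2}(S_2\cap L_{t})$ for most $t$, and a bounded-degree function of $t$ need not have its maximum comparable to its average. This is exactly where the quantitative inputs are indispensable -- the bound on the number of connected components (equivalently, on the number of Yomdin--Gromov charts) limits how many genuinely distinct grazing configurations can occur, so the exceptional hyperplanes, i.e.\ the set $S_1$ itself, form a semi-algebraic set of controlled degree whose $x$-shadow is covered by $B^{C}$ balls of radius $\sim\tilde\varepsilon$. I expect the real work to be the resulting counting/charging estimate: a Vitali-type selection among the bad low-slope planes, combined with the observation -- forced by the derivative bound $\|D\Phi_i\|\le 1$ -- that $S$, being only $\eta$-thick in the directions transversal to a near-horizontal plane, can graze such a plane in $d_2$-measure $\ge B^{C}\tilde\varepsilon^{-1}\eta^{1/d}$ only at geometrically constrained $x$-locations, so that the total $x$-area swept out by the bad shadows is $\lesssim B^{C}\tilde\varepsilon$ once traded against the fixed total volume $\eta$ of $S$. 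The remaining steps are then the exponent arithmetic indicated above.
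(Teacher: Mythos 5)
The paper does not actually prove this lemma: immediately after the statement it writes ``See \cite{bourgain2007anderson} for the proof and more properties,'' so the result is imported from Bourgain's work and there is no argument in the paper to compare yours against.

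Evaluated on its own terms, your proposal has two genuine gaps. First, the reduction ``define $S_1$ as the set of points lying on some admissible $L$ with $mes_{d_2}(S\cap L)\ge B^{C}\tilde\varepsilon^{-1}\eta^{1/d}$, which is semi-algebraic by quantifier elimination'' is not valid: the predicate $mes_{d_2}(S\cap L)\ge c$ is a statement about Lebesgue measure, not a first-order formula in the language of ordered fields, so Tarski--Seidenberg does not apply to it. The function $L\mapsto mes_{d_2}(S\cap L)$ for a fixed semi-algebraic $S$ is in general only subanalytic, not semi-algebraic, so $S_1$ as you have defined it is not obviously a semi-algebraic set of controlled degree, and the entire reduction of the lemma to the single estimate $mes_{d_1}(Proj_xS_1)<B^{C}\tilde\varepsilon$ is unsupported. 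Second, and as you yourself flag in the final paragraph, the central quantitative step -- passing from control of $mes_{d_2}(S\cap L_t)$ for \emph{most} translates $t$ to a bound that holds for \emph{all} admissible $L$, via a Vitali-type selection and a charging argument against the total volume $\eta$ -- is described as ``the real work'' and then left undone. Several intermediate claims in the chart analysis (that the $x$-shadow of each ``slab-like'' chart is a ball of radius $\lesssim\tilde\varepsilon$, that the transversal width of a good chart is $\lesssim\eta^{1/d}$) are asserted without justification and are what the omitted counting argument would need to establish. Bourgain's own proof proceeds by a different route -- an induction on $d_2$ combined with the Yomdin--Gromov parametrization and a projection argument at each stage -- that avoids ever having to certify a measure-defined set as semi-algebraic; you should consult \cite{bourgain2007anderson} directly rather than try to shortcut through $S_1$.
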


See \cite{bourgain2007anderson} for the proof and more properties.

\section{The multiscale reasoning}
\begin{proof}[The multiscale reasoning of Lemma \ref{19.13}]
    Let $I$ be an interval in $\Lambda_{l}^{(1)}$, and suppose that $(\lambda,\lambda')\in I$. Let $M_1=e^{M_0^{\frac{1}{2}c_2}}$ (where $c_2$ will be specified). 
Let $Q\subset \mathbb{Z}^d$ be an $M_1$-interval such that $\min\limits_{n\in Q} |n| >M_1$. Define $Q_0=[-M_0,M_0]^d$, and consider the matrices $T_{M_0,n_0+Q_0}^{\sigma+k \cdot \lambda'}$, where  $|k|<M_1$ and $ n_0\in Q$. Choose $B=M_1^{\rho} (0<\rho<1)$ in Lemma \ref{jihe}, and let $\{\pi_{\zeta}\}$ be the corresponding partition of $\mathbb{Z}^d$. 

We call a site $(\pm,n,k)$  singular if $|D_{\pm,n,k}^{\sigma}|<M_0^{C}$, where $C>2(b+d)+2$.

Fix $\sigma$ and $(\lambda,\lambda')\in I$.
Suppose that $(\pm,n,k)$ and $(\pm, n',k')$ are singular, with $n\in\pi_{\zeta}$, $n\in\pi_{\zeta'}$, $\zeta\ne\zeta'$. It then follows that
\begin{equation}
    |n-n'|+C'|k-k'|>M_1^{\rho}-2M_0^{C},
\end{equation}
where $C'$ depends on the parameter set  $I_0\times I_0$ and the resonant set. Therefore, we have
\begin{equation}
    |n-n'|+|k-k'|>c' M_{1}^{\rho},
\end{equation}
where $c'$ is a constant depending on $b,d,C'$.

Thus, if we denote 
\begin{equation}
    \mathcal{S}=\{ (n,k)\in Q\times [-M_1,M_1]^b \big| (+,n,k) \text{ or } (-,n,k) \text{ is singular}  \},
\end{equation}
the following separation property holds:
\begin{equation}
    \text{dist} (\mathcal{S}\cap (\pi_{\zeta}\times \mathbb{Z}^b),\mathcal{S}\cap (\pi_{\zeta'}\times \mathbb{Z}^b) ) \ge c M_1^{\rho} \textup{ for } \zeta\ne\zeta'.
\end{equation}
Fix $n_0\in Q$. According to the induction hypothesis, $T_{M_0,n_0+Q_0}^{\sigma+k\cdot \lambda'}$ will satisfy (\ref{good1}) and (\ref{good2}), unless 
\begin{equation}
    \min\limits_{s} |\sigma+k\cdot \lambda'-\sigma_s(\lambda,\lambda')|<e^{-M_0^{c_2}}.
\end{equation}
Since $(\lambda,\lambda')\in I$ satisfies $DC_{N_l}$, it follows that for a fixed $s$, the inequality $|\sigma+k\cdot \lambda'-\sigma_s(\lambda,\lambda')|<e^{-M_{0}^{c_2}}$ holds for at most one value of $k$ with $|k|\le M_1$. Thus, for a fixed $\zeta$, there are at most 
\begin{equation}
    M_1^{\rho C_0\cdot d} e^{(\log M_0)^{C_7}}<M_1^{\rho C_0 d+}
\end{equation}
sites $(n,k)\in \pi_{\zeta}\times [-M_1,M_1]$ for which $G_{M_0,n+Q_0}^{k\cdot\lambda'+\sigma}$ fails to satisfy   (\ref{good1}) or (\ref{good2}). We refer to these intervals $(k+[-M_0,M_0]^b)\times (n+Q_0)$ as bad intervals. For these $(n,k)$, it is necessary that 
\begin{equation}
    ((n+Q_0)\times(k+[-M_0,M_0]^b))\cap \mathcal{S}\ne \varnothing.
\end{equation}
Otherwise, we can use the Neumann argument to derive a contradiction. 
A site $(n,k)$ is called good  if   $G_{M_0,n+Q_0}^{k\cdot\lambda'+\sigma}$  satisfies both  (\ref{good1}) and (\ref{good2}).

Let $\Omega=[-M_1,M_1]^b\times Q$.
Partition $\Omega$ into $M_0$-intervals $\{P\}$. If $P$ does not intersect with any  bad interval, we put it in $\Omega_0$. If $P$ and $P'$ both intersect with  bad intervals and the distance between them is less than $M_1^{\rho}$, we put them together, and  denote $\mathcal{P}_{\beta}=\{P, P'\}$. If $P''$ intersects with  bad intervals and $\min\limits_{P\in\mathcal{P}_{\beta}}\textup{dist }(P'', P)<M_{1}^{\rho}$, we put $P''$ in $\mathcal{P}_{\beta}$. In this way, we obtain $\{\mathcal{P}_{\beta}\}$. Denote
\begin{equation}
\Omega_{1,\beta}=\mathop{\cup}\limits_{P\in\mathcal{P}_{\beta}} P.
\end{equation}

The procedures outlined above allow us to get a decomposition of $\Omega$:
\begin{equation}
    \Omega=\Omega_{0}\cup\Omega_1,
\end{equation}
 where $\Omega_1=\mathop{\cup}\limits_{\beta}\Omega_{1,\beta}$ and both $\Omega_{0}$ and $\Omega_{1,\beta}$ are unions of $M_0$-intervals. The set $\Omega_0$ is made up of good sites, and 
\begin{align}
    &\text{diam }\Omega_{1,\beta}<M_{1}^{2\rho C_0 d},\\
     &\text{dist }(\Omega_{1,\beta},\Omega_{1,\beta'})>M_1^{\rho}, \ \ \beta\ne\beta'.
\end{align}

 Since we have 
\begin{equation}
    \left\lVert \partial S_{l}\right\rVert<C,
\end{equation}
the good sites allow an $O(M_1^{-1} e^{-M_0})$ perturbation of $(\lambda,\lambda',\sigma)$. Consequently, the decomposition $\Omega=\Omega_0\cup\Omega_1$ can be applied within an $M_1^{-1} e^{-2 M_0}$-neighborhood of an initial parameter choice $(\lambda,\lambda',\sigma)$.  Let $\widetilde{\Omega}_{1,\beta}$ denote an $M_{1}^{\frac{3}{4}\rho}$-neighborhood of $\Omega_{1,\beta}$. If we can ensure that
\begin{equation}
    \left\lVert G_{\widetilde{\Omega}_{1,\beta}}^{\sigma} \right\rVert<e^{M_1^{\frac{\rho^2}{2}}},
    \label{fanshudaxiao}
\end{equation}
then by Lemma \ref{6.1},  it follows that 
\begin{equation}
\left\{
    \begin{split}
        &\left\lVert G_{\Omega}^{\sigma} \right\rVert<e^{M_1^{\frac{3}{4}\rho^2}},\\
        &|G_{\Omega}^{\sigma}(x,x')|<e^{-\alpha L_{l+1}^{(0.3)}|x-x'|_{\alpha}},\ \  |x-x'|>M_1^{\theta},
    \end{split}\right.
    \label{fanduanyijv}
\end{equation}
as long as $c_2$ is sufficiently small such that
\begin{equation}
    \frac{2}{c_2}(\max\{\frac{c_2}{2},\kappa,\theta c\}-c)<-3.
\end{equation}
If we set $\kappa=\frac{3}{4}\rho^2$, then (\ref{fanduanyijv}) implies (\ref{good1}) and (\ref{good2}). Therefore, we only need to ensure that (\ref{fanshudaxiao}) holds. 

Now, we consider (\ref{fanshudaxiao}). Recall that for $(k,n)\in \Omega$, 
\begin{equation}
    D_{\pm,n,k}^{\sigma}=\pm (k\cdot \lambda' +\sigma)-|n|^2=\pm\sigma -|n|^2+O(M_1)
\end{equation}
with $n\in Q$, and hence $|n|>M_1$. Without loss of generality, we can assume that $\sigma>0$. Therefore, 
$$|-\sigma-|n|^2|>M_1^2,$$ 
and it follows that
\begin{equation}
    \left\lVert (R_{-} T_{\widetilde{\Omega}_{1,\beta}}^{\sigma}R_{-})^{-1} \right\rVert\lesssim M_1^{-2}.
\end{equation}
Furthermore, $G_{\widetilde{\Omega}_{1,\beta}}^{\sigma}$ is controlled by the inverse of the self-adjoint matrix (see \cite{bourgain2002anderson} for example): 
\begin{equation}
    \begin{split}
        &R_{+}T_{\widetilde{\Omega}_{1,\beta}}^{\sigma}R_{+}-\varepsilon^2 R_{+} S R_{-} (R_{-} T_{\widetilde{\Omega}_{1,\beta}}^{\sigma}R_{-})^{-1} R_{-}S R_{+}\\
        &=\sigma+R_{+}T_{\widetilde{\Omega}_{1,\beta}}R_{+}-\varepsilon^2 R_{+} S R_{-} (R_{-} T_{\widetilde{\Omega}_{1,\beta}}^{\sigma}R_{-})^{-1} R_{-}S R_{+},
    \end{split}
    \label{kongz}
\end{equation}
where $S=S_{l}$.
Denote $\mathcal{T}(\lambda,\lambda',\sigma)=R_{+}T_{\widetilde{\Omega}_{1,\beta}}R_{+}-\varepsilon^2 R_{+} S R_{-} (R_{-} T_{\widetilde{\Omega}_{1,\beta}}^{\sigma}R_{-})^{-1} R_{-}S R_{+}$, then 
\begin{equation}
    \left\lVert \partial_{\lambda} \mathcal{T} \right\rVert\lesssim\left\lVert \partial_{\lambda} S_{l} \right\rVert+ \left\lVert (R_{-} T_{\widetilde{\Omega}_{1,\beta}}^{\sigma}R_{-})^{-1}\right\rVert^{2}\left\lVert \partial_{\lambda} S_{l} \right\rVert\lesssim \left\lVert \partial_{\lambda} S_{l} \right\rVert<C,
\end{equation}
\begin{equation}
    \left\lVert \partial_{\lambda'} \mathcal{T} \right\rVert\lesssim M_1+\left\lVert\partial_{\lambda'} S_{l} \right\rVert\lesssim M_1,
\end{equation}
\begin{equation}
    \left\lVert \partial_{\sigma} \mathcal{T} \right\rVert<C\varepsilon^2  \left\lVert (R_{-} T_{\widetilde{\Omega}_{1,\beta}}^{\sigma}R_{-})^{-1} \right\rVert^2\lesssim M_1^{-4}.
\end{equation}
Let $\{E_j(\lambda,\lambda',\sigma)\}$ be  the eigenvalues of $\mathcal{T}$ in ascending order. Thus $\{ \sigma+E_j(\lambda,\lambda',\sigma) \}$ is the eigenvalue of (\ref{kongz}).   The eigenvalues $\{E_j\}$ are continuous functions of the parameters $\lambda, \lambda', \sigma$, and they are piecewisely holomorphic in each one-dimensional parameter component (since the entries of $T_{l}^{\sigma}$ are polynomials in $(\lambda,\lambda',\sigma)$ over I). It follows from the first-order eigenvalue variation that $E_j (\lambda,\lambda',\sigma)$ is Lipschitz continuous, and 
\begin{equation}
    \left\lVert E_j \right\rVert_{Lip(\lambda)}<C,\ \ \left\lVert E_j \right\rVert_{Lip(\lambda')}<M_1,\ \ \left\lVert E_j \right\rVert_{Lip(\sigma)}<M_1^{-4}.
    \label{lipguji}
\end{equation}

By (\ref{lipguji}), the equation
\begin{equation}
    \sigma+E_{j}(\lambda,\lambda',\sigma)=0
\end{equation}
defines a function
\begin{equation}
    \sigma=\sigma_j(\lambda,\lambda').
\end{equation}
Moreover,  we have
\begin{equation}
    |\sigma_j(\lambda,\lambda')-\sigma_{j}(\bar{\lambda},\bar{\lambda}')|\le C|\lambda-\bar{\lambda}|+M_1|\lambda'-\bar{\lambda}'|+M_1^{-4}|\sigma_j(\lambda,\lambda')-\sigma_{j}(\bar{\lambda},\bar{\lambda}')|,
\end{equation}
which implies that $\left\lVert \sigma_j \right\rVert_{Lip(\lambda)}\le C$ and $\left\lVert \sigma_j \right\rVert_{Lip(\lambda')}<M_1$.
Furthermore, 
\begin{equation}
\begin{split}
     |\sigma+E_j(\lambda,\lambda',\sigma)|&=|\sigma-\sigma_j(\lambda,\lambda')+E_j(\lambda,\lambda',\sigma)-E_j(\lambda,\lambda',\sigma_j(\lambda,\lambda'))|\\
     &\le |\sigma-\sigma_j (\lambda,\lambda')|(1+O(M_1^{-4})).
\end{split}
\end{equation}
Consequently, 
\begin{equation}
    \text{dist}(Spec (\ref{kongz}), 0)\sim \min\limits_{j}|\sigma-\sigma_j(\lambda,\lambda')|.
\end{equation}
Hence
\begin{equation}
    \left\lVert G_{\widetilde{\Omega}_{1,\beta}}^{\sigma} \right\rVert\le \left\lVert (\sigma+\mathcal{T})^{-1} \right\rVert\le \max_{j}|\sigma-\sigma_{j}(\lambda,\lambda')|^{-1}.
    \label{zhongji}
\end{equation}
If (\ref{zhongji}) is less than $e^{M_{1}^{\frac{\rho^2}{2}}}$, then (\ref{fanshudaxiao}) holds. Therefore, we set $c_2=\frac{\rho^2}{2}$.

By collecting the functions $\{\sigma_j(\lambda,\lambda')\}$ for all $\beta$  and extending $\sigma_j$ to $I_0\times I_0$ ( see Appendix B in \cite{poschel2009lecture} for Lipschitz extension, for example),  we obtain at most $M_1^{b+d}$ Lipschitz functions 
$\{\sigma_j=\sigma_j(\lambda,\lambda')\}$ such that 
\begin{equation}
    \left\lVert \sigma_j \right\rVert_{Lip}\lesssim M_1.
\end{equation}
The reason for extending $\sigma_j$ is that the above construction is carried out in an an $M_1^{-1} e^{-2 M_0}$-neighborhood of $(\lambda,\lambda',\sigma)$. The number of these parameter neighborhoods is bounded by $M_1^{d+1}(M_1 e^{2 M_0})^{2b+1}$, and thus the total number of $\{\sigma_j \}$   is at most
\begin{equation}
    M_1^{b+d} M_1^{d+1} (M_1 e^{2M_0})^{2b+1}<e^{(4b+4) M_0}<e^{(4b+4)(\log M_1)^{\frac{2}{c_2}}}<e^{(\log M_1)^{C_7}},
\end{equation}
as long as $C_7>\frac{2}{c_2}$.

For $M_{0}>N_{l}$, this result is established via an induction on scale starting from $N_{l}$. As we have proved, the statement holds for $M_{0}\le N_{l}$. Assume the statement holds for $M<M_{0}$ and set $M_{0}=e^{M^{\frac{1}{2}c_{2}}}$. Using the same argument, and noting that the reduction in $L$ is $O(l^{-3})$,  we obtain the case for $M_{0}>N_{l}$.
\end{proof}

\begin{proof}[The proof of Lemma \ref{lemma5.4}]
    We will prove  Lemma \ref{lemma5.4} in two steps. The first step is to establish the first inequality in (\ref{heyheyhey}), and the second step is to establish the second inequality in (\ref{heyheyhey}).

\textbf{Step 1}.

Fix $\sigma$ and let $M_0<M_1<\exp\exp (\log M_0)^{\frac{1}{10}}$.  Let
\begin{equation}
M=\exp(\log\log M_1)^{3}<\exp(\log M_0)^{\frac{3}{10}}.    
\end{equation}
 Pave $[-M_1,M_1]^b\times [-10 M_1,10 M_1]^d$ by translations of $[-M,M]^b\times [-M,M]^d$. We need to control $G_{M,Q}^{\sigma+k\cdot \lambda'}$, where $Q\subset [-10 M_1,10 M_1]^d$ is an  $M$-interval and $|k|<M_1$.
 
 If $\min\limits_{n\in Q}|n|>M$, then by the assumptions, for $G_{M,Q}^{\sigma+k\cdot\lambda'}$ to satisfy (\ref{hey}), it is sufficient to ensure that
\begin{equation}
    \min\limits_{s} |\sigma+k\cdot \lambda'-\sigma_{s}(\lambda,\lambda')|>e^{-M^{c_2}},
\end{equation}
where $\{\sigma_s\}$ depends on $Q$. Collecting $\sigma_s$ over all $M$-intervals $Q\subset[-10 M_1, 10 M_1]^d\setminus [|n|<M]$, the total number of functions $\sigma_{s}$ is at most 
\begin{equation}
    (20 M_1)^d e^{(log M)^{C_7}}<M_1^{d+1}.
\end{equation}
We denote by $\{ \sigma_s \}$ the collected system. We introduce $\mathcal{A}'$ such that for $(\lambda,\lambda')\notin \mathcal{A}'$ and $M^{1+}<|k|\le M_1$, we have
\begin{equation}
    \min\limits_{s,s'}|k\cdot \lambda'-\sigma_{s}(\lambda,\lambda')+\sigma_{s'}(\lambda,\lambda')|>2 e^{-M^{c_2}}.
\end{equation}
If $(\lambda,\lambda') \in I\setminus\mathcal{A}' $, where $I\in \Lambda$, then it follows that if $\min\limits_{n\in Q\cup Q'}|n|>M$ and both $G_{M,Q}^{\sigma+k\cdot \lambda'}$ and $G_{M,Q'}^{\sigma+k'\cdot \lambda'} $  fail to satisfy (\ref{hey}), we must have $|k-k'|<M^{1+}$. Thus,  the values of $k$ corresponding to the bad intervals lie within an $M^{1+}$-neighborhood of a single $\bar{k}\in [-M_1,M_1]^b$. 

If  $G_{M,Q}^{\sigma+k\cdot \lambda'}$ fails to satisfy (\ref{hey}),  then 
\begin{equation}
    \min\limits_{|k'-k|<M,n\in Q} |\pm (k'\cdot \lambda'+\sigma)-|n|^2|<M^{C}
\end{equation}
for some constant $C$ depending only on $b$ and $d$. It follows that
\begin{equation}
    \min\limits_{n\in Q} ||n|^2-|\bar{k}\cdot \lambda+\sigma||<M^{1+}+M+M^{C}<M^{C}.
\end{equation}
Denote $R^2=|\bar{k}\cdot \lambda'+\sigma|$. The above argument implies that  there exists  $\bar{k}\in [-M_1,M_1]^b$ and $R>0$ such that
\begin{equation}
    |k-\bar{k}|+\min\limits_{n\in Q}||n|^2-R^2|<M^{C}.
    \label{jvjiha}
\end{equation}
Unless (\ref{jvjiha}) holds, $G_{M,Q}^{\sigma+k\cdot \lambda'}$ satisfies (\ref{hey}). 

Returning to the definition of $\mathcal{A}'$, there are at most 
\begin{equation}
    M_1^b \cdot (M_1^{d+1})^2=M_1^{b+2d+2}
\end{equation}
conditions involved.  Let $\varphi$ satisfy $\left\lVert \nabla \varphi \right\rVert<10^{-2}$, then 
\begin{equation}
    mes\{ \lambda| |k\cdot(\lambda+\varphi(\lambda))-\sigma_{s}(\lambda,\lambda+\varphi(\lambda))+\sigma_{s'}(\lambda,\lambda+\varphi(\lambda))|<3 e^{-M^{c_2}} \}<e^{-M^{c_2}},
\end{equation}
since 
\begin{equation}
    \begin{split}
        \left|\partial_{\lambda}[k\cdot(\lambda+\varphi(\lambda))-\sigma_{s}(\lambda,\lambda+\varphi(\lambda))+\sigma_{s'}(\lambda,\lambda+\varphi(\lambda))]\right|>M^{1+}-M\gg 1.
    \end{split}
\end{equation}
This implies that
\begin{equation}
   mes_{sec} \mathcal{A}' <M_1^{b+2d+2} e^{-M^{c_2}}<[\exp\exp(\log\log M_1)^2]^{-1}. 
\end{equation}
Clearly, $\mathcal{A}'$ may  be taken to be a union of intervals of size $e^{-M}$.

Let $(\lambda,\lambda')\in I\setminus \mathcal{A}'$, where $I\in \Lambda$. 

Consider  $G_{M^2}^{\sigma+k\cdot \lambda'}$, $|k|<M_1$. 

The set of $\sigma$ satisfying (\ref{heyhey}) with $M$ replaced by  $M^2$ can be described as a semi-algebraic set of $\sigma$ of degree less than $(CM)^{2(b+d)}$. Since $M^2\le M_0$,  (\ref{heyhey})  with $M$ replaced by  $M^2$
essentially holds outside a set of $\sigma$ with measure less than $e^{-M^{2 c_3}}$. In fact, we have a semi-algebraic set $S$ of $\sigma$ with degree less than $M^{C}$ and with measure less than $e^{-M^{2c_3}}$, such that
\begin{equation}
\left\{
    \begin{split}
        &\left\lVert G_{M^2}^{\sigma} \right\rVert<e^{2 M^{2\kappa'}},\\
        &|G_{M^2}^{\sigma} (x,x')|<e^{-L|x-x'|_{\alpha}},\ \ |x-x'|>M^{2\theta}.
    \end{split}\right.
    \label{heyheyhh}
\end{equation}
The set $S$ we choose actually  lies between the set of $\sigma$ for which (\ref{heyhey}) fails and the set of $\sigma$ for which (\ref{heyheyhh}) fails. In this article, when we say that a set has a semi-algebraic description, we always mean this, and we will not explain it again below.

Noting that $|k\cdot \lambda'|>M_1^{-C}$, it follows that $G_{M^2}^{\sigma+k\cdot\lambda'}$ fails to satisfy (\ref{heyheyhh}) for at most $M^C$ values of $k$.

Partition $\Omega=[-M_1,M_1]^b\times[-10 M_1,10 M_1]^{d}$ into $M$-interval. For $|n|>M$, if the $M$-interval centered at $(n,k)$ for some $k$ does not intersect with any bad $M$-interval, we place the interval in $\Omega_0$. For $|n|<M$, if the $M^2$-interval $([-M^{2},M^{2}]+k)\times [-10 M^2,10 M^2]$ does not intersect with any bad $M^2$ interval, we place the interval in $\Omega_0$. All remaining $M$-intervals are placed in $\Omega_1$. Thus, we obtain the decomposition
\begin{equation}
    \Omega=\Omega_0\cup\Omega_1.
\end{equation}
Denote by $\pi_1$ the projection onto the $k$-variables. Then $$\# \pi_{1}(\Omega_1)<M^C+M^C\cdot M^2\le M^C,$$ 
where $C$ depends only on $b$ and $d$.
For the $n$-variables, when $(k,n)\in \Omega_1$, we have
\begin{equation}
    |n|<M^2 \text{ or } ||\tilde{n}|^2-R^2|<M^{C} \text{ for some } |\tilde{n}-n|\lesssim M,
\end{equation}
which implies 
\begin{equation}
    |n|<M^2 \text{ or } ||n|-R|<\frac{M^{C}}{R} +M,
\end{equation}
where $C$ depends only on $b$ and $d$. 

Our purpose is to apply Proposition \ref{Cartanestimate} in  the Appendix with $A(\sigma)=T_{M_1}^{\sigma}$. Thus, we have $$B_1\le M_1^{C}.$$ Take $\Lambda=\Omega_1$, then we have 
\begin{equation}
    \# \Lambda<M^C \cdot R^{d-1+}+M^{C}.
\end{equation}
 Since $\Omega_0$ is made up of good sites, $c>\kappa$ and $c>\kappa'$, we have 
 $$B_2=2 e^{M^{2\kappa'}}+2e^{M^{\kappa}}$$
 by using  the resolvent identity.
 To obtain (\ref{14.6}) in Proposition \ref{Cartanestimate}, we simply apply the assumptions at scale $M'=\exp(\log\log M_1)^{4}<M_0$, together with the resolvent identity. We then have 
\begin{equation}
\begin{split}
    mes\{ \sigma | \left\lVert G_{M_1}^{\sigma} \right\rVert> 2e^{M'^{\kappa}}+2 e^{M'^{\kappa'}}\}
    &<M_1^{b+d} e^{(\log M')^{C_7}}e^{-M'^{c_2}}+M_1^b e^{-M'^{c_3}}\\
    &<e^{-M'^{\min \{c_2/2,c_3/2\}}}\\
    &<10^{-2}\cdot \frac{1}{1+B_1}\cdot\frac{1}{1+B_2}.
    \notag
\end{split}
\end{equation}
We make the following distinction:

\textbf{Case 1}: $R\le 2M_1^{\tilde{\kappa}}$.

\textbf{Case 2}: $R>2M_1^{\tilde{\kappa}}$, where $\tilde{\kappa}$ is a constant to be specified.

\textbf{Case 1}:

In this case, $\#\Lambda< M_1^{\tilde{\kappa}d}$. Proposition \ref{Cartanestimate} then permits us to conclude that
\begin{equation}
\begin{split}
     mes\{ \sigma | \left\lVert G_{M_1}^{\sigma} \right\rVert >e^{M_1^{\kappa'}} \}
     &<e^{-\frac{M_1^{\kappa'}}{M_{1}^{\tilde{\kappa}d}  \log(M_1^{\tilde{\kappa}d}+M_1^{C}+2 e^{M^{\kappa}}+2 e^{M^{\kappa'}}+2e^{M'^{\kappa}}+2e^{M'^{\kappa'}})}}\\
     &<e^{-M_1^{\kappa'-\tilde{\kappa}d}}.
\end{split}
\end{equation}

\textbf{Case 2}:

Apply Proposition \ref{Cartanestimate} to $A(\sigma)=T_{M_1,|n|<M_1^{\tilde{\kappa}}}^{\sigma}$, for which $\#\Lambda<M^{C}$. It follows from Proposition \ref{Cartanestimate} that
\begin{equation}
    mes\{ \sigma | \left\lVert G_{M_1, |n|<M_{1}^{\tilde{\kappa}}} \right\rVert>e^{M_{1}^{\frac{c\tilde{\kappa}}{1000}}} \}<e^{-\frac{M_1^{c\tilde{\kappa}10^{-3}}}{M^C   \log(M_1^{\tilde{\kappa}d}+M_1^{C}+2 e^{M^{\kappa}}+2 e^{M^{\kappa'}}+2e^{M'^{\kappa}}+2e^{M'^{\kappa'}})}}<e^{-M_1^{c\tilde{\kappa}\cdot 10^{-3}}}.
\end{equation}
To control $G_{M_1,|n|>M_1^{\tilde{\kappa}/10}}$, we simply use the assumptions. More precisely, define
$M_2=M_1^{\tilde{\kappa}/100}$ and cover $[|k|<M_1]\times [M_1^{\tilde{\kappa}/10}<|n|<10 M_1]$ by  intervals $P$ of size $M_2$. From the assumptions,  we have
\begin{equation}
\left\{
    \begin{split}
        &\left\lVert G_{P}^{\sigma} \right\rVert<e^{M_2^{\kappa}},\\
        &|G_{P}^{\sigma}(x,x')|<e^{-L|x-x'|_{\alpha}},\ \ |x-x'|>M_2^{\theta},
    \end{split}\right.
    \label{Phao}
\end{equation}
for $\sigma$ outside a set of measure at most
\begin{equation}
    M_1^{b+d}\cdot e^{(\log M_2)^{C_7}}\cdot e^{-M_2^{c_2}}<e^{-M_{1}^{10^{-2}\tilde{\kappa}c_2}}.
\end{equation}
Thus, for $\sigma$ outside a set of measure at most $e^{-M_1^{\tilde{\kappa}\cdot 10^{-3}}}+e^{-M_1^{10^{-2}\tilde{\kappa}c_2}}$, we have
\begin{equation}
    \begin{split}
        &\left\lVert G_{M_1,|n|<M_1^{\tilde{\kappa}}}^{\sigma} \right\rVert<e^{M_1^{10^{-3}c\tilde{\kappa}}}
    \end{split}
\end{equation}
and (\ref{Phao}). By using the resolvent identity and letting  $\tilde{\kappa}=10^{-2}d^{-1}\kappa'$,  we obtain
\begin{equation}
    \left\lVert G_{M_1}^{\sigma} \right\rVert<e^{M_1^{10^{-3}c\tilde{\kappa}}+M_1^{10^{-2}\kappa\tilde{\kappa}}}<e^{M_1^{10^{-2}\kappa'}}.
\end{equation}

Since both Case 1 and Case 2  admit an $e^{-2M}$ perturbation of $\sigma$, it follows that, outside a $\sigma$-set of measure less than
\begin{equation}
    M_{1}^{C} e^{2 M} (e^{-M_1^{c\tilde{\kappa}\cdot 10^{-3}}}+e^{-M_1^{10^{-2}\tilde{\kappa}c_2}}+e^{-M_1^{\kappa'-\tilde{\kappa}d}})<e^{-M_{1}^{\frac{1}{2}\min\{ 10^{-3}c\tilde{\kappa},10^{-2} \kappa\tilde{\kappa} \}}},
\end{equation}
 we have
\begin{equation}
    \left\lVert G_{M_1} \right\rVert<e^{M_1^{\kappa'}}.
\end{equation}
Thus, we have completed Step 1.

\textbf{Step 2}.

Define again $M_2=M_1^{10^{-2}\tilde{\kappa} }$. First, consider $M_2$-intervals $Q\subset [-10 M_1,10 M_1]^d$ such that $\min\limits_{n\in Q}|n|>M_2$. The assumptions imply that  $G_{M_2,Q}^{\sigma+k\cdot \lambda'}$ satisfies (\ref{hey}) for all $k\in [-M_1,M_1]^{b}$ and $\sigma$ outside a set of measure at most
\begin{equation}
    M_1^{b+d} e^{(\log M_2)^{C_7}} e^{-M_2^{c_2}}<e^{-\frac{1}{2} M_2^{c_2}}<e^{-\frac{1}{2}M_{1}^{10^{-2}c_2\tilde{\kappa}}}.
\end{equation}
It remains to consider the lower region $[-M_1,M_1]^{b}\times [-100 M_2,100 M_2]^{d}$.

Let $M=\exp(\log\log M_1)^5<\exp(\log M_0)^{1/2}$. 
Let P be intervals of the form   $[k-M,k+M]^{b}\times [-10 M,10 M]^{d}$, and let $P'$ be intervals of the form $[k-M,k+M]^b\times Q$, where $Q$ is an $M$-interval in $[-100 M_2,100 M_2]^{d}$ such that $\min\limits_{n\in Q}|n|>M$.

Consider a paving of $[-M_1,M_1]^{b}\times [-100 M_2,100 M_2]^{d}$ by intervals $P$ and  $P'$. For $P$-intervals, the assumptions imply $G_{P}^{\sigma}$ satisfies (\ref{heyhey}) except for at most $M^{C}$ values of $k$ (note that $M_1^{-C}>e^{-M^{c_3}}$). For $P'$ intervals, the number of bad intervals is at most 
\begin{equation}
    e^{(\log M)^{C_7}}(100 M_2)^{d}< M_1^{\frac{\tilde{\kappa}}{99}\cdot d}.
\end{equation}
Thus, the total number of bad $P$ and $P'$ in $[-M_1,M_1]^b\times [-100 M_2,100M_2]^d$ is at most $M_1^{\frac{\tilde{\kappa}}{50}\cdot d}$.

Let $M_3'=M_1^{\frac{\tilde{\kappa}}{10}\cdot d}-M_1^{\frac{\tilde{\kappa}}{20}\cdot d}$ and $M_3=M_1^{\frac{\tilde{\kappa}}{10}\cdot d}$.  If the distance in the $k$-variables between two  bad interval is less than $M_1^{\frac{\tilde{\kappa}}{15}\cdot d}$, then we can find a $\bar{k}$ such that both  bad intervals lie in the same $(\bar{k}+[-M_3'',M_3'']^b)\times [-10 M_3',10 M_3']^d$, where $M_{3}''\le M_{3}'$. For two different intervals $(\bar{k}+[-M_3'',M_3'']^b)\times [-10 M_3',10 M_3']^d$ containing bad intervals, the $k$-distance between them is more than $M_1^{\frac{\tilde{\kappa}}{15}\cdot d}$. 

Let $\Omega'=[-M_3,M_3]^b\times [-10 M_3,10 M_3]^d$, and let $\mathcal{A}$ be  the set $\mathcal{A}'$ defined in Step 1, corresponding to $M_3$. Thus, $\mathcal{A}$ is a union of intervals of size $[\exp\exp(\log\log M_3)^3]^{-1}$, such that
\begin{equation}
    mes_{sec} \mathcal{A}<[\exp\exp(\log\log M_3)^2]^{-1}.
\end{equation}
Step 1 ensures that for $(\lambda,\lambda')\in I\setminus \mathcal{A}$, where $I\in \Lambda$, and $|k|<M_1$, 
we have
\begin{equation}
    \left\lVert G_{(k,0)+\Omega'}^{\sigma} \right\rVert<e^{M_3^{\kappa'}}
\end{equation}
except for $\sigma$ in a set of measure less than 
\begin{equation}
    M_1^b\cdot e^{-M_3^{\frac{1}{2}\min\{ 10^{-3}c\tilde{\kappa},10^{-2} \kappa\tilde{\kappa} \}}}< e^{-M_3^{\frac{1}{3}\min\{ 10^{-3}c\tilde{\kappa},10^{-2} \kappa\tilde{\kappa} \}}}.
\end{equation}
Since $2\kappa'<c<\theta-\frac{\tilde{\kappa}}{10}\cdot d $,   we have by Lemma \ref{6.1} that
\begin{equation}
\left\{
    \begin{split}
        &\left\lVert G_{M_1}^{\sigma}\right\rVert<e^{M_3^{\kappa'+ }}<e^{M_1^{\kappa'}},\\
        &|G_{M_1}^{\sigma}(x,x')|<e^{-(L-O(\exp(-(\log\log  M_1)^{4})))|x-x'|_{\alpha}},\ \ |x-x'|>M_1^{\theta},
    \end{split}\right.
\end{equation}
for $\sigma$ outside a set of measure less than 
\begin{align}
        &e^{-M_3^{\frac{1}{3}\min\{ 10^{-3}c\tilde{\kappa},10^{-2} \kappa\tilde{\kappa} \}}}+e^{-\frac{1}{2}M_{1}^{10^{-2}c_2\tilde{\kappa}}}\notag\\
    < &e^{-M_1^{10^{-1}\tilde{\kappa}d\cdot \frac{1}{3}\min\{ 10^{-3}c\tilde{\kappa},10^{-2} \kappa\tilde{\kappa} \} }}+e^{-\frac{1}{2}M_{1}^{10^{-2}c_2\tilde{\kappa}}}\notag\\
    <&e^{-M_1^{ \frac{1}{4}\min\{ 10^{-4}c\tilde{\kappa}^{2}d,10^{-3} \kappa\tilde{\kappa}^{2}d \} }}.
\end{align}

Let $c_3=\frac{1}{4}\min\{ 10^{-4}c\tilde{\kappa}^{2}d,10^{-3} \kappa\tilde{\kappa}^{2}d \} $,   and we have completed the proof.  
\end{proof}

\end{document}